\def\@evenhead{{\fontsize{6.5pt}{6.5pt}\selectfont \hfil \leftmark\hfil\thepage}}
\def\@oddhead{{\fontsize{6.5pt}{6.5pt}\selectfont\hfil\rightmark \hfil\thepage}}
\tikzset{->-/.style={decoration={  markings,  mark=at position #1 with
    {\arrow{>}}},postaction={decorate}}}
\tikzset{-<-/.style={decoration={  markings,  mark=at position #1 with
    {\arrow{<}}},postaction={decorate}}}
\newcommand{\mbR}{\mathbb{R}}
\newcommand{\mbT}{\mathbb{T}}
\newcommand{\mbZ}{\mathbb{Z}}
\theoremstyle{plain}
\newtheorem{theorem}{Theorem}[section]
\newtheorem{lemma}[theorem]{Lemma}
\newtheorem{corollary}[theorem]{Corollary}
\newtheorem{proposition}[theorem]{Proposition}
\newtheorem{conjecture}[theorem]{Conjecture}
\theoremstyle{definition}
\newtheorem{definition}[theorem]{Definition}
\newtheorem{example}[theorem]{Example}
\newtheorem{remark}[theorem]{Remark}
\newtheorem{question}[theorem]{Question}
\numberwithin{equation}{section}
\newtheorem{definition-proposition}[theorem]{Definition-Proposition}
\newtheorem{problem}[theorem]{Problem}
\title{Real $C$-, $G$-structures and sign-coherence of cluster algebras}
\date{\today}
\author{Ryota Akagi}
\address{Graduate School of Mathematics\\ nagoya University\\Chikusa-ku\\ Nagoya\\464-0813\\ Japan.}
\email{ryota.akagi.e6@math.nagoya-u.ac.jp}
\author{Zhichao Chen}
\address{School of Mathematical Sciences\\ University of Science and Technology of China \\ Hefei, Anhui 230026, P. R. China.}
\email{czc98@mail.ustc.edu.cn}
\begin{document}

\maketitle
%=====================================
\begin{abstract}
We generalize the theory of integer $C$-, $G$-matrices in cluster algebras to the real case. By a skew-symmetrizing method, we can reduce the problem of skew-symmetrizable patterns to skew-symmetric patterns. In this sense, the sign-coherence of a more general real class called of quasi-integer type can be inherited directly from that of integer $C$-, $G$-matrices proved by Gross-Hacking-Keel-Kontsevich. However, the sign-coherence of real $C$-, $G$-matrices does not always hold in general. For this purpose, we classify all the rank $2$ case and the finite type case via the Coxeter diagrams. We also give two conjectures about the real exchange matrices and $C$-, $G$-matrices. Under these conjectures, the dual mutation, $G$-fan structure and synchronicity property hold. As an application, the isomorphism of several kinds of exchange graphs is studied.
\\\\
Keywords: Skew-symmetrizing method, sign-coherence, real $C$-, $G$-matrices, Coxeter diagrams, $G$-fan. \\
2020 Mathematics Subject Classification: 13F60, 05E10, 20F55. 
\end{abstract}
%=======================================
\tableofcontents
\section{Introduction}
\subsection{Background}
Cluster algebras were introduced by \cite{FZ02} in the study of total positivity of Lie groups and canonical bases of quantum groups. The main object is the {\em seed} $\Sigma=({\bf x}=(x_1,x_2,\dots,x_n),B)$, where $x_1,\dots,x_n$ are called {\em cluster variables}, the integer skew-symmetrizable matrix $B$ is called an {\em exchange matrix}, and its transformation is called a {\em mutation}. By applying mutations repeatedly, we can obtain a collection of seeds ${\bf \Sigma}=\{\Sigma_t=({\bf x}_t,B_{t})\}_{t \in \mathbb{T}_{n}}$, which is called a {\em cluster pattern}. (The index set $\mathbb{T}_{n}$ is the $n$-regular tree.) The collection of exchange matrices ${\bf B}=\{B_{t}\}_{t \in \mathbb{T}_{n}}$ is called a {\em $B$-pattern}.
\par
One fundamental result is the {\em Laurent phenomenon} \cite{FZ02}, which states that any cluster variable can always be expressed as a Laurent polynomial in terms of the initial ones, despite being defined through rational mutations. This property ensures that cluster variables remain tractable in principle. However, after repeated mutations, their expressions quickly become complicated. To address this problem, $c$-vectors, $g$-vectors, and $F$-polynomials were introduced in \cite{FZ07}. These objects which are defined from specific features of cluster variables can surprisingly recover them via the {\em separation formula}. Moreover, they have simple and self-contained recursions. Thus, by focusing on these three objects instead of cluster variables, many problems become easier to handle.
\par
In this paper, we focus on the $c$-, $g$-vectors. The matrices whose row vectors are $c$-vectors (resp. $g$-vectors) are called {\em $C$-matrices} (resp. {\em $G$-matrices}). This matrix notation and the recursion (see Definition~\ref{def: C-, G-matrices}) were introduced by \cite{NZ12}. We call their collections ${\bf C}(B)=\{C_{t}\}_{t \in \mathbb{T}_{n}}$ and ${\bf G}(B)=\{G_{t}\}_{t \in \mathbb{T}_{n}}$ a {\em $C$-pattern} and a {\em $G$-pattern}, respectively. Uniformly, we call $B$-, $C$-, $G$-patterns the \emph{matrix pattern}.
\par
{\em Sign-coherence} is one of the most important properties of $C$-, $G$-matrices (See Definition~\ref{def: sign-coherence}.) It was conjectured by \cite{FZ07} and solved by different steps. For the skew-symmetric case, it was solved  by \cite{DWZ10, Pla11, Nag13} with the method of algebraic representation theory, and for the skew-symmetrizable case, this conjecture was completely proved by \cite{GHKK18} with the method of scattering diagrams. Moreover, under this conjecture, some important dualities among $C$-, $G$-matrices were obtained \cite{NZ12}. 
\par
Although $C$-, $G$-matrices are defined by the special information of cluster variables, they are still equipped with the information of periodicity. To state the claim, we define the action via a permutation $\sigma \in \mathfrak{S}_{n}$. For the matrices, we define two actions $\sigma,\tilde{\sigma}$ on $\mathrm{M}_{n}(\mathbb{Z})$ as in (\ref{eq: permutation actions}). For ${\bf x}=(x_1,x_2,\dots,x_n)$, we define $\sigma {\bf x}=(x_{\sigma^{-1}(1)},\dots,x_{\sigma^{-1}(n)})$. Then, as the following theorem indicates, the periodicity for seeds and cluster variables is inherited by $C$-, $G$-matrices.
\begin{theorem}[{\cite[Thm.~5.2]{Nak21}, \cite[Cor.~II.7.10]{Nak23}}, Synchronicity]\label{thm: ordinary synchronicity}
For any $t,t' \in \mathbb{T}_n$ and $\sigma \in \mathfrak{S}_n$, the following conditions are equivalent.
\begin{itemize}
\item The periodicity for seeds $({\bf x}_{t'},B_{t'})=(\sigma{\bf x}_{t},\sigma B_{t})$.
\item The periodicity for clusters ${\bf x}_{t'}=\sigma {\bf x}_{t}$.
\item The periodicity for $C$-matrices $C_{t'}=\tilde{\sigma} C_t$.
\item The periodicity for $G$-matrices $G_{t'}=\tilde{\sigma} G_t$.
\end{itemize}
Moreover, the $G$-fan has the periodicity $\mathcal{C}(G_{t'})=\mathcal{C}(G_{t})$ if and only if the permutation $\sigma \in \mathfrak{S}_{n}$ as above exists.
\end{theorem}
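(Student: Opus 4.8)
The plan is to prove the four conditions equivalent through the scheme
\[
(\mathrm{seeds})\ \Longrightarrow\ (\mathrm{clusters})\ \Longleftrightarrow\ (G\text{-matrices})\ \Longleftrightarrow\ (C\text{-matrices})\ \Longrightarrow\ (\mathrm{seeds}),
\]
and then to deduce the $G$-fan statement. The implication $(\mathrm{seeds})\Rightarrow(\mathrm{clusters})$ is trivial — one forgets the $B$-component — so the real content is the remaining arrows. Throughout I would work in the cluster algebra with principal coefficients at the fixed root $t_{0}$ (the $C$-, $G$-matrices are untouched, and which $t,t'$ carry the same cluster depends only on the $B$-pattern), so that $c$- and $g$-vectors become visible inside coefficients and cluster variables.

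For $(\mathrm{clusters})\Leftrightarrow(G\text{-matrices})$: by the separation formula of \cite{FZ07}, each $x_{i;t}$ is homogeneous for the $\mathbb{Z}^{n}$-grading with $\deg x_{j}=e_{j}$ and $\deg\hat{y}_{j}=-(\,j\text{-th column of }B_{t_{0}})$, of degree exactly the $g$-vector $g_{i;t}$; hence a cluster variable remembers its $g$-vector, and ${\bf x}_{t'}=\sigma{\bf x}_{t}$ forces $g_{i;t'}=g_{\sigma^{-1}(i);t}$ for all $i$, that is $G_{t'}=\tilde{\sigma}G_{t}$. For the converse I would invoke the deeper input, valid under sign-coherence \cite{GHKK18}, that distinct cluster monomials have distinct $g$-vectors: then $G_{t'}=\tilde{\sigma}G_{t}$ forces $x_{i;t'}=x_{\sigma^{-1}(i);t}$ for every $i$, which is the cluster periodicity.

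For $(G\text{-matrices})\Leftrightarrow(C\text{-matrices})$ and $(C\text{-matrices})\Rightarrow(\mathrm{seeds})$: the tropical duality of \cite{NZ12} gives, under sign-coherence, the relation $G_{t}=(C_{t}^{T})^{-1}$ in the skew-symmetric case (and the same up to a twist by the pattern's common skew-symmetrizer in general), and one checks routinely that this correspondence intertwines the two $\tilde{\sigma}$-actions, so the two periodicities are equivalent. To recover the whole seed, I would use the identity $G_{t}B_{t}=B_{t_{0}}C_{t}$: substituting $C_{t'}=\tilde{\sigma}C_{t}$ and $G_{t'}=\tilde{\sigma}G_{t}$ and using that $B_{t_{0}}$ is fixed yields $B_{t'}=\sigma B_{t}$; combined with the cluster periodicity already in hand this is exactly $({\bf x}_{t'},B_{t'})=(\sigma{\bf x}_{t},\sigma B_{t})$.

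Finally, for the $G$-fan statement: if $G_{t'}=\tilde{\sigma}G_{t}$ then the columns of $G_{t'}$ merely permute those of $G_{t}$, so $\mathcal{C}(G_{t'})=\mathcal{C}(G_{t})$. Conversely, the $G$-fan is a simplicial fan and each $G_{t}\in\mathrm{GL}_{n}(\mathbb{Z})$, so the columns of $G_{t}$ are precisely the primitive generators of the rays of the maximal cone $\mathcal{C}(G_{t})$; hence $\mathcal{C}(G_{t'})=\mathcal{C}(G_{t})$ forces $\{g_{i;t'}\}=\{g_{i;t}\}$ as sets, and since each such set is a $\mathbb{Z}$-basis the matching bijection is a well-defined permutation $\sigma$ with $G_{t'}=\tilde{\sigma}G_{t}$, so all the equivalent conditions hold. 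The hard parts are precisely the two inputs needing the full strength of sign-coherence — injectivity of $g$-vectors on cluster monomials, and the fan property of the $G$-fan, both from (consequences of) \cite{GHKK18} — whereas the rest is the separation formula, tropical duality, the $\hat{B}$-relation, and routine bookkeeping with the permutation actions.
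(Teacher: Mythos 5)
First, a point of comparison: the paper does not prove this statement at all --- it is quoted as background from \cite{Nak21} (Thm.~5.2) and \cite{Nak23} (Cor.~II.7.10), so there is no in-paper proof to match your argument against. Your outline does follow the standard route in the literature (separation formula and $\mathbb{Z}^n$-homogeneity for clusters~$\leftrightarrow$~$g$-vectors, the tropical dualities of \cite{NZ12}, the first duality $G_tB_t=B_{t_0}C_t$ to recover $B_{t'}=\sigma B_t$, and unimodularity of $G_t$ for the $G$-fan direction), and you correctly identify the two deep inputs from \cite{GHKK18}. The fan argument via primitive ray generators is fine precisely because the entries are integers; as the surrounding paper shows (Example~\ref{ex: bad phenomenon for periodicity}), this is exactly what fails over $\mathbb{R}$.

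However, two steps you call routine are genuine gaps. (i) In the step (clusters)~$\Rightarrow$~($G$-matrices) you pass to principal coefficients on the grounds that ``which $t,t'$ carry the same cluster depends only on the $B$-pattern.'' The seeds in the statement carry no coefficients, and the homogeneity/grading argument only lives in the principal-coefficient pattern; the claim that a coefficient-free cluster periodicity lifts to the principal-coefficient pattern is the nontrivial coefficient-independence of the exchange graph (morally equivalent to what is being proved), so as written this implication is unsupported rather than trivial. (ii) The equivalence $C_{t'}=\tilde{\sigma}C_t \Leftrightarrow G_{t'}=\tilde{\sigma}G_t$ does not follow formally from the second duality in the skew-symmetrizable case: from (\ref{eq: second duality}) one gets $G_t=D^{-1}(C_t^{\top})^{-1}D$, and substituting $C_{t'}=C_tP_{\sigma}$ yields $G_{t'}=\tilde{\sigma}G_t$ only after knowing $P_{\sigma}D=DP_{\sigma}$, i.e.\ $d_{\sigma(i)}=d_i$. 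This commutation is a real step --- it is exactly part~($a$) of the paper's Theorem~\ref{thm: CG synchronicity} in the generalized setting, and even in the integer case it requires an argument --- so ``one checks routinely that this correspondence intertwines the two $\tilde{\sigma}$-actions'' hides the one place where the skew-symmetrizer genuinely intervenes. Both gaps are repairable with known results, but they are precisely the points where the proof has content beyond bookkeeping.
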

Thus, $C$-, $G$-matrices encode sufficient information to capture the combinatorial structure of cluster variables. One of the most important objects is the {\em $G$-fan} (also called the {\em $g$-vector fan}), see Definition~\ref{def: G-fan}. The fact that $g$-vectors form the fan structure was conjectured in \cite{FZ07}, and proved by \cite{GHKK18}. Thanks to \Cref{thm: ordinary synchronicity}, this can be seen as a certain geometric realization of the cluster complex in \cite{FZ03a}.
\par
When an initial exchange matrix is acyclic and of finite or affine type, the structure of the $G$-fans (and the related structure such as {\em generalized associahedra}) is well-studied by the Coxeter groups \cite{FZ03a, FZ03b, FZ07, Ste13, RS16, RS20}. However, the structure of $G$-fans seems so complicated and varied in general.
\subsection{Purpose and related works}
Originally, $C$-, $G$-matrices are defined based on cluster variables. In this sense, we need to assume that the exchange matrix $B$ has integer components because they appear in exponents of cluster variables. On the other hand, we can give another equivalent definition by the recursion formulas. (See Definition~\ref{def: C-, G-matrices}.) Based on this definition, we can naturally generalize the definition for the {\em real} entries. {\em The purpose of this paper is to generalize and study the structure and sign-coherence of $C$-matrices and $G$-matrices admitting real entries}. To distinguish between this generalized real case and the integer case, we sometimes refer the integer case to the ordinary cluster algebra or the ordinary cluster theory.

% They showed that the exchange graphs are well-defined for non-integer finite types and are compatible with the classical exchange graphs arising from cluster algebras. They also studied rank 3 affine cases, where the exchange graphs are finite modulo the action of a lattice and exhibit polynomial growth.

\subsubsection*{Related works}
Such generalization was made slightly for $B$-matrices. In \cite{BBH11}, they studied a special type of matrices of rank $3$ called cluster-cyclic. In \cite{FT19}, they constructed a nice geometric realization of rank $3$ mutation equivalence class. In \cite{FT23}, they classified the finite type of $B$-matrices, that is, the number of $B$-matrices obtained by applying mutations is finite. In \cite{DP24, DP25}, some special $C$-, $G$-matrices (related to non-crystallographic root systems) are constructed by using the folding method. In \cite{Lam18}, the mutation of cluster variables was considered in the real setting.
\par
Felikson and Lampe \cite{FL23} introduced a notion of exchange graphs for some $\mathbb{R}$-valued quivers (real skew-symmetric matrices) through a geometric realization of a mutation equivalence class. More precisely, they defined a seed $(\mathbf{v}=(\mathbf{v}_1,\dots,\mathbf{v}_n),B)$ as a pair consisting of a skew-symmetric matrix $B \in \mathrm{M}_n(\mathbb{R})$ and a certain tuple of vectors $\mathbf{v}=(\mathbf{v}_1,\dots,\mathbf{v}_n)$ that realizes $B$. Then, by choosing a specific point referred to as a \emph{reference point}, they constructed the exchange graph as a quotient graph of these seeds. Even for type $A_2$, depending on a reference point, their construction might give another exchange graph. In \cite{FL23}, for each mutation equivalence class of finite type, they proved that there exists a geometric realization with a certain condition called \emph{compatibility}, and then the exchange graphs coincides with the ordinary one. We also introduce the exchange graphs in \Cref{def: exchange graph} in a completely different way. If the sign-coherence of $c$-vectors holds, the corresponding exchange graphs of \cite{FL23} coincide with those introduced in this paper since each vector $\mathbf{v}_i$ corresponds precisely to a $c$-vector. See \cite[Rem.~5.6]{FL23} and \cite{Sev15, Sev19}. On the other hand, if sign-coherence fails, the exchange graphs of \cite{FL23} no longer coincide with ours. For example, consider the quiver in \Cref{cap: non-sign-coherent quiver}. Then, in \cite[Fig.~6]{FT19}, it was shown that the corresponding exchange graph of type $H_3'$ is finite consisting of $48$ vertices. On the other hand, by a direct calculation, we verify that our exchange graph is much larger than that and is expected to be infinite. 
\begin{figure}[htbp]
\begin{tikzpicture}[dot/.style={circle, fill, inner sep=1.5pt}]
\node[dot] (1) at (0,0) {};
\node[dot] (2) at (2,0) {};
\node[dot] (3) at (4,0) {};
\draw[->] (1) node [xshift=-20] {$H_3':$}-> (2) node [pos=0.5, above] {$2\cos{\frac{2}{5}\pi}$};
\draw[->] (2)->(3) node [pos=0.5, above] {$1$};
\end{tikzpicture}
\caption{Example of sign-incoherent case (type $H_3'$ in \cite{FT23}).} \label{cap: non-sign-coherent quiver}
\end{figure}
\subsubsection*{Motivation of the generalization}
Before explaining main theorems, we explain the motivation of this generalization, which would be helpful even if we only focus on ordinary cluster algebras. The basic idea has already appeared in \cite{Rea14}, and the proof can be done by combining two known results in \cite{FZ03a, Nak21}.
\par
Let $B=(b_{ij}) \in \mathrm{M}_n(\mathbb{R})$ be a skew-symmetrizable matrix with a skew-symmetrizer $D$. Define the map $\mathrm{Sk}(B)=(\mathrm{sign}\sqrt{|b_{ij}b_{ji}|}) \in \mathrm{M}_n(\mathbb{R})$. Then, $\mathrm{Sk}(B)$ is a skew-symmetric matrix. Now, we consider the following two matrix patterns; one is ${\bf B}(B)=\{B_{t}\}$, ${\bf C}(B)=\{C_{t}\}$, and ${\bf G}(B)=\{G_{t}\}$, and the other is ${\bf B}(\mathrm{Sk}(B))=\{\hat{B}_{t}\}$, ${\bf C}(\mathrm{Sk}(B))=\{\hat{C}_{t}\}$, and ${\bf G}(\mathrm{Sk}(B))=\{\hat{G}_{t}\}$. Then, by combining the facts in \cite{FZ03a,Nak21}, these two kinds of matrices have the following relationship.
\begin{proposition}[\Cref{prop: skew-symmetrizing method}, Skew-symmetrizing method]\label{prop: intro skew-symmetrizing method}
We have the following correspondence.
\begin{equation}
B_t=D^{-\frac{1}{2}}\hat{B}_{t}D^{\frac{1}{2}},
\ 
C_t=D^{-\frac{1}{2}}\hat{C}_{t}D^{\frac{1}{2}},
\ 
G_t=D^{-\frac{1}{2}}\hat{G}_{t}D^{\frac{1}{2}}.
\end{equation}
\end{proposition}
Thanks to these equalities, we can reduce some problems of the skew-symmetrizable pattern ${\bf B}(B)$, ${\bf C}(B)$, and ${\bf G}(B)$ to the skew-symmetric pattern ${\bf B}(\mathrm{Sk}(B))$, ${\bf C}(\mathrm{Sk}(B))$, and ${\bf G}(\mathrm{Sk}(B))$. However, even if $B$ is an integer matrix, $\mathrm{Sk}(B)$ is not necessarily an integer matrix. This is one important reason why we want to consider the generalization to real entries.
\par
Thanks to \Cref{prop: intro skew-symmetrizing method}, to understand $c$- and $g$-vectors in ordinary cluster algebras, it is enough to consider 
\begin{equation}\label{eq: intro quasi-integer type}
\widehat{\mathcal{S}}_{n}=\{\mathrm{Sk}(B) \mid \textup{$B \in \mathrm{M}_{n}(\mathbb{Z})$ is an {\em integer} skew-symmetrizable matrix}\},
\end{equation}
and we say that each matrix $B \in \widehat{\mathcal{S}}_n$ is of \emph{quasi-integer type}.
By using the known fact from \cite{MS20}, the condition for determining whether $B \in \widehat{\mathcal{S}}_n$ or not is characterized combinatorially (\Cref{prop: classification of quasi-integer type}).

\begin{remark}
In the earlier version of this manuscript, \Cref{prop: classification of quasi-integer type} was stated as one of the main theorems. However, the authors subsequently noticed that this fact has already been shown in \cite{MS20}.
\end{remark}

\subsection{Fundamental properties and questions}
Although we introduce $C$- and $G$-matrices for an arbitrary real skew-symmetrizable matrix $B\in \mathrm{M}_n(\mathbb{R})$, not all such matrices seem to possess the rich structures familiar from ordinary cluster algebra theory. Thus, it is necessary to identify suitable conditions under which these structures can still be recovered in the real setting. In this paper, we work under the assumption of the \emph{sign-coherence} of $C$- and $G$-matrices, see Definition~\ref{def: sign-coherence}. We note that this property does not hold in general, as illustrated in Example~\ref{ex: sign-incoherent}.

Under the sign-coherence assumption for a single $C$-pattern $\mathbf{C}(B)$, some basic properties can be naturally generalized, see \Cref{prop: fundamental properties under sign-coherency}. However, further difficulties arise when deriving the \emph{dual mutation formulas} in \Cref{prop: dual mutation}. This leads us to consider two additional
conjectures (Conjecture~\ref{conj: standard hypothesis} and
Conjecture~\ref{conj: discreteness conjecture}). Under these assumptions, we can
further obtain enriched structures such as dualities and $G$-fan structures,
which are analogous to those in ordinary cluster algebra theory.
%Although we introduce $C$- and $G$-matrices for an arbitrary real skew-symmetrizable matrix $B \in \mathrm{M}_n(\mathbb{R})$, it seems that not all of them exhibit an interesting structure that appears in ordinary cluster algebra theory. Hence, we work under the assumption of the \emph{sign-coherence} of $C$- and $G$-matrices (see Definition~\ref{def: sign-coherence}), although this does not hold in general (see Example~\ref{ex: sign-incoherent}). Under this assumption, together with certain conjectures (Conjecture~\ref{conj: standard hypothesis} and Conjecture~\ref{conj: discreteness conjecture}), we can obtain enriched structures such as dualities and $G$-fan structures which have already appeared in the ordinary cluster algebras. 
%\par
%Although one can easily generalize the $C$- and $G$-matrices to real entries, we need to verify that the same properties still hold in this generalized setting.
%While the definitions of $C$- and $G$-matrices can be naturally extended to the real setting, it is still necessary to verify which of their fundamental properties continue to hold in this generalized framework.
%In fact, as shown in \Cref{prop: fundamental properties under sign-coherency}, some basic properties can be naturally generalized under the assumption of sign-coherence for a single $C$-pattern $\mathbf{C}(B)$. However, difficulties arise when deriving the \emph{dual mutation formulas} in \Cref{prop: dual mutation}. In this paper, we introduce the following two conjectures that arise from this generalization.
\begin{conjecture}[Conjectures~\ref{conj: standard hypothesis},~\ref{conj: discreteness conjecture},~\ref{conj: standard and discreteness conjecture}]
Let $B$ be a skew-symemtrizable matrix with a skew-symmetrizer $D=\mathrm{diag}(d_1,\dots,d_n)$. Suppose that $B$ satisfies the sign-coherent property.
\\
\textup{($a$)} All mutation-equivalent matrices $B' \in {\bf B}(B)$ also satisfy the sign-coherent property.
\\
\textup{($b$)} If a $c$-vector ${\bf c}_{i;t}$ is parallel to ${\bf e}_j$ ($i,j=1,\dots,n$), the length of ${\bf c}_{i;t}$ is $\sqrt{d_{i}d_{j}^{-1}}$.
\end{conjecture}
In particular, the condition ($b$) (Conjecture~\ref{conj: discreteness conjecture}) can be easily shown in ordinary cluster algebras, see Proposition~\ref{prop: discreteness lemma for integer case}.
\par
By assuming these conjectures, we can obtain the same phenomenon in ordinary cluster algebras. In particular, the following proposition holds.
\begin{proposition}[Proposition~\ref{prop: dual mutation}, \Cref{prop: fan}]
By assuming Conjecture~\ref{conj: standard and discreteness conjecture}, we obtain the following:
\begin{itemize}
\item Third duality (\ref{eq: thied duality}) and the dual mutation formula (\ref{eq: dual mutation formula}).
\item The $g$-vectors form a $G$-fan structure (Definition~\ref{def: G-fan}).
\end{itemize}
\end{proposition}
Hence, although two conjectures remain, we can naturally generalize the structures of the $G$-fans under the sign-coherence. However, for a given skew-symmetrizable matrix $B \in \mathrm{M}_n(\mathbb{R})$, it is quite difficult to determine whether $B$ satisfies the sign-coherent property. Thus, one of the most fundamental problems is the following.
\begin{question}\label{prob: sign-coherence}
When does the sign-coherence hold?
\end{question}
In \cite{AC25}, we showed that all the cluster-cyclic exchange matrices of rank $3$ satisfy this property including real entries.
\par
Another problem happens from \Cref{ex: bad phenomenon for periodicity}. The fundamental objects in real $C$- and $G$-matrices are the $C$-pattern, $G$-pattern, and the $G$-fan. In the ordinary cluster algebras, as stated in \Cref{thm: ordinary synchronicity}, all of them share the same periodicity. However, for the generalized setting, this is no longer true as \Cref{ex: bad phenomenon for periodicity} shows. Thus, we need to consider the following problem.
\begin{question}\label{prob: periodicity}
How are the periodicities appearing in these patterns related to each other?
\end{question}

\subsection{Main results}

We begin with considering \Cref{prob: sign-coherence}. The quasi-integer type in \eqref{eq: intro quasi-integer type} naturally generalizes the integer type considered in \cite{GHKK18}, and its combinatorial characterization implies sign-coherence in this setting. Beyond quasi-integer type, even the existence of sign-coherent matrices is non-trivial. Although a complete classification remains open, we identify several additional classes satisfying sign-coherence.
\par
Firstly, we can classify the case of rank $2$, which is  simplest but the most essential.
\begin{theorem}[Theorem~\ref{thm: rank 2 classification}]
Let the exchange matrix be $B=\left(\begin{smallmatrix}
0 & -a\\
b & 0
\end{smallmatrix}\right)$ with $a,b \in \mathbb{R}_{\geq  0}$. Then, all $C$-matrices are sign-coherent if and only if either of the following holds.
\begin{itemize}
\item $\sqrt{ab}=2\cos{\frac{\pi}{m}}$ holds for some $m \in \mathbb{Z}_{\geq 2}$.
\item $\sqrt{ab} \geq 2$.
\end{itemize}
\end{theorem}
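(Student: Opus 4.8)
The plan is to reduce to the skew-symmetric case, make the rank-$2$ $c$-vector recursion explicit, and then carry out an elementary trigonometric analysis of the resulting Chebyshev-type sequence. If $ab=0$ the $B$-matrix is degenerate and one checks directly that every $C$-matrix is a signed permutation matrix, so sign-coherence holds and $\sqrt{ab}=0=2\cos(\pi/2)$, consistent with the claim; so assume $a,b>0$. Then $B$ is skew-symmetrizable with symmetrizer $D=\mathrm{diag}(b,a)$, and by the skew-symmetrizing method (Theorem~\ref{thm: skew-symmetrizing method}) we have $C_t=D^{-1/2}\hat C_t D^{1/2}$, where the $\hat C_t$ are the $C$-matrices of $\mathrm{Sk}(B)=\left(\begin{smallmatrix}0&-\delta\\\delta&0\end{smallmatrix}\right)$ and $\delta:=\sqrt{ab}$. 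Since $D^{1/2}$ is a \emph{positive} diagonal matrix, conjugation by it preserves the sign pattern of each row, so $C_t$ is sign-coherent if and only if $\hat C_t$ is. Hence the statement depends only on $\delta=\sqrt{ab}\ge 0$, and we may assume $a=b=\delta$.

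In rank $2$ one has $\mu_k(B_t)=-B_t$, so the $B$-pattern is just $\{\pm B\}$ and the whole matrix pattern is produced by alternately mutating at $1$ and $2$ from $C_{t_0}=I$, in the two directions. Unwinding the $C$-matrix recursion (Definition~\ref{def: C-, G-matrices}) in this situation, one shows by induction that, so long as the $C$-matrices produced so far are sign-coherent, the sign appearing in the mutation formula is unambiguous, the recursion is linear, and the $c$-vectors occurring are, up to sign, the vectors $(p_{k-1},p_k)$ and $(p_k,p_{k-1})$ built from the Chebyshev-type sequence
\begin{equation}
p_{-1}=0,\qquad p_0=1,\qquad p_{k+1}=\delta\,p_k-p_{k-1}\quad(k\ge 0)
\end{equation}
(whose characteristic equation is $t^2-\delta t+1=0$; equivalently $\mu_1\mu_2$ acts on the $c$-vectors as a Coxeter-type transformation). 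Such a vector is sign-coherent exactly when $p_{k-1}$ and $p_k$ have the same sign, so the first $C$-matrix that fails sign-coherence, if any, occurs precisely at the first index $k_0$ with $p_{k_0}<0$ while $p_{k_0-1}>0$. Hence all $C$-matrices are sign-coherent if and only if the sequence $(p_k)$ never becomes negative.

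Now put $p_k=U_k(\delta/2)$, the value of the $k$-th Chebyshev polynomial of the second kind. If $0\le\delta<2$, write $\delta=2\cos\theta$ with $\theta\in(0,\pi/2]$, so $p_k=\sin\!\big((k+1)\theta\big)/\sin\theta$. When $\theta=\pi/m$ for an integer $m\ge 2$, we get $p_0,\dots,p_{m-2}>0$ and $p_{m-1}=0$; at that moment $\mu_1\mu_2$ has finite order, the $C$-pattern closes up into a finite periodic pattern — the real incarnations of the finite types, of dihedral Coxeter type $I_2(m)$, which for integer $\delta^2\in\{0,1,2,3\}$ are the familiar $A_1\times A_1,A_2,B_2,G_2$ — and every $c$-vector is a signed root, hence sign-coherent. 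When instead $\theta\ne\pi/m$ for every integer $m$, the quantity $(k+1)\theta$ first overshoots $\pi$ \emph{strictly}, so there is a least $k_0$ with $p_{k_0}<0$ and $p_{k_0-1}>0$, whence some $C$-matrix is not sign-coherent. If $\delta=2$ then $p_k=k+1>0$ for all $k$; and if $\delta>2$, writing $\delta=2\cosh\eta$ with $\eta>0$, then $p_k=\sinh\!\big((k+1)\eta\big)/\sinh\eta>0$ for all $k$; in both of these cases every $c$-vector has all entries of a single sign, so all $C$-matrices are sign-coherent. Assembling the three cases gives exactly the stated dichotomy: all $C$-matrices are sign-coherent iff $\sqrt{ab}=2\cos(\pi/m)$ for some $m\in\mathbb{Z}_{\ge 2}$ or $\sqrt{ab}\ge 2$.

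The main obstacle is the induction in the second step: showing that, up to and including the first possibly sign-incoherent $C$-matrix, the $c$-vectors are \emph{exactly} the vectors dictated by the linear recursion for $p_k$ — that is, that the piecewise-linear branching of the $C$-matrix mutation, whose sign choice at each step is governed by the sign pattern of the matrix just computed, does not depart from the linear recursion before an actual entry changes sign. The secondary delicate point is the finite-type closure when $p_{m-1}=0$: verifying that the pattern is then genuinely periodic, not merely that a single $C$-matrix becomes a signed permutation matrix. Both can be settled by explicit, if somewhat tedious, bookkeeping of the $2\times 2$ matrices along the two rays of mutations.
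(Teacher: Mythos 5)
Your proposal is correct and follows essentially the same route as the paper: reduce to $\mathrm{Sk}(B)$ by positive conjugation (Proposition~\ref{prop: positive conjegation and sign-coherency}), prove by induction along the alternating mutation sequence that the $c$-vectors are given by the Chebyshev-type recursion as long as the tropical signs follow the prescribed pattern (this is exactly Lemma~\ref{lem: calculation of rank 2 c-vectors}), and then split into $\sqrt{ab}=2\cos\frac{\pi}{m}$, $\sqrt{ab}\ge 2$, and the remaining angles via the $\sin$/$\sinh$ formulas, closing up the finite case by explicit periodicity and getting incoherence from the first strict overshoot of $\pi$, just as in Example~\ref{ex: rank 2 C matrices} and the paper's proof of Theorem~\ref{thm: rank 2 classification}. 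One caution: the induction must be conditioned on the specific \emph{alternating tropical signs} rather than on mere sign-coherence of the previous $C$-matrices — so your summary ``all $C$-matrices are sign-coherent iff $(p_k)$ never becomes negative'' is literally false when $\sqrt{ab}=2\cos\frac{\pi}{m}$ (there $p_{m-1}=0$, the sign pattern breaks, the linear recursion ceases to apply, and $p_m<0$ even though sign-coherence persists) — but your separate finite-type closure argument, which is what the paper also does, already supersedes that sentence.
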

Another classification is for the finite type via Coxeter diagrams. Note that, by the skew-symmetrizing method, it suffices to consider the skew-symmetric case. Since each skew-symmetric matrix corresponds to an $\mathbb{R}$-valued quiver, we use the quiver notation defined in \Cref{def: quiver}.
\begin{theorem}[Theorem~\ref{thm: finite type classifcation}]\label{thm: intro finite type classification}
Let $B \in \mathrm{M}_{n}(\mathbb{R})$ be skew-symmetric. Suppose that the corresponding quiver is connected. Then, $B$ satisfies both of
\begin{itemize}
\item for any $B' \in {\bf B}(B)$, $B'$ satisfies the sign-coherent property.
\item for any $B' \in {\bf B}(B)$, the number of $C$-matrices is finite.
\end{itemize}
if and only if the corresponding quiver is mutation-equivalent to any of the Coxeter quivers in Figure~\ref{fig: Coxeter diagrams}.
\end{theorem}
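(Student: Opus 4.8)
\emph{Strategy and Step 1 (reduction to Coxeter diagrams).} The plan is to run the Fomin--Zelevinsky finite type classification in the real skew-symmetric world: Theorem~\ref{thm: rank 2 classification} together with a finiteness analysis of rank $2$ patterns replaces the classical rank $2$ dichotomy, a subquiver-restriction lemma replaces restriction to sub-cluster-algebras, and the classification of finite Coxeter groups replaces the Dynkin classification (no use of Theorem~\ref{thm: skew-symmetrizing method} is needed, $B$ being already skew-symmetric). Write $Q_t$ for the $\mathbb{R}$-valued quiver of $B_t$ and $q_{ij}^t=|(B_t)_{ij}|$ for the weight of an edge $\{i,j\}$. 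Assume both conditions of the statement. Fix $t$; by hypothesis $\mathbf{C}(B_t)$ is sign-coherent with finitely many $C$-matrices, and with principal coefficients at $t$ we have $c^t_j=e_j$. Mutating from $t$ only in two fixed directions $i,j$, one checks inductively that $c^s_i,c^s_j$ stay supported on $\{i,j\}$ while the submatrix of $B_s$ on $\{i,j\}$ is always $\pm\left(\begin{smallmatrix}0&q_{ij}^t\\-q_{ij}^t&0\end{smallmatrix}\right)$; hence, read on the coordinates $i,j$, the pair $(c^s_i,c^s_j)$ is exactly the $c$-vector pattern of the rank $2$ matrix $\left(\begin{smallmatrix}0&q_{ij}^t\\-q_{ij}^t&0\end{smallmatrix}\right)$, which is therefore sign-coherent with finitely many $c$-vectors. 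Since $\sqrt{ab}\ge 2$ forces $c$-vectors of unbounded norm in rank $2$, we get $q_{ij}^t<2$, and then Theorem~\ref{thm: rank 2 classification} gives $q_{ij}^t=2\cos(\pi/m_{ij})$ for an integer $m_{ij}\ge 2$. Thus every $Q_t$ is an orientation of a Coxeter diagram $\Gamma_t$ with edge labels $m_{ij}$, and all $\Gamma_t$ carry the symmetric form $M(\Gamma_t)$ with $2$ on the diagonal and $-2\cos(\pi/m_{ij})$ off-diagonal.

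\emph{Step 2 (finiteness forces a finite type representative).} The same restriction argument, at any base point and for any vertex subset $S$, shows that the pattern of the principal submatrix $B_t|_S$ is again sign-coherent with finitely many $C$-matrices; so both hypotheses descend to principal submatrices and mesh with the diagram-mutation bookkeeping of Fomin--Zelevinsky, which by Step 1 never leaves the class of Coxeter diagrams. Running that induction reduces the problem to: a connected real weighted quiver whose Coxeter diagram $\Gamma$ has $M(\Gamma)$ not positive definite has infinitely many $c$-vectors somewhere in its mutation class. If $M(\Gamma)$ is positive semidefinite and degenerate, then $\Gamma$ is an affine (hence crystallographic) Coxeter diagram and the pattern is of infinite cluster type by the classical theory of affine cluster algebras --- concretely a periodic mutation sequence translating a fixed $c$-vector by the radical vector of $M(\Gamma)$ produces infinitely many distinct $c$-vectors. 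If $M(\Gamma)$ is indefinite, the Coxeter group of $\Gamma$ is infinite with infinite root system, and either a vector of negative norm for $M(\Gamma)$ drives the iterated $c$-vectors to unbounded norm, or one passes to an acyclic orientation of $\Gamma$ and uses that its $c$-vectors already realise infinitely many roots. In all cases finiteness fails, so some $B_t$ has $\Gamma_t$ positive definite. Since all finite type Coxeter diagrams are trees, $B_t=Q_t$ is then an acyclic orientation of $\Gamma_t$; the connected positive-definite Coxeter diagrams with labels $2\cos(\pi/m)$ are exactly those of Figure~\ref{fig: Coxeter diagrams}, so $B$, being mutation-equivalent to $B_t$, is mutation-equivalent to a Coxeter quiver oriented to one of them.

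\emph{Step 3 (converse).} Suppose now the quiver of $B$ is mutation-equivalent to a Coxeter quiver $Q$ oriented to a finite type Coxeter diagram $\Gamma$ of Figure~\ref{fig: Coxeter diagrams}. If $\Gamma$ is crystallographic ($A_n$, $B_n{=}C_n$, $D_n$, $E_{6,7,8}$, $F_4$, $G_2{=}I_2(6)$), then $Q$ has integer weights, hence is of quasi-integer type, so every $B'\in\mathbf{B}(B)$ is sign-coherent by Theorem~\ref{thm: sign-coherency for quasi-integer matrices}, while finiteness of the $C$-matrices is the classical finite type theorem. For the non-crystallographic $H_3$, $H_4$ and $I_2(5)=H_2$, one descends from a finite type simply-laced pattern ($D_6$, $E_8$, $A_4$ respectively) via the (un)folding method --- compatible with the skew-symmetrizing method --- which transports both finiteness and sign-coherence; equivalently, the $c$-vectors of $Q$ are, up to sign, the (finitely many, manifestly sign-coherent) roots of the root system of $\Gamma$. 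For the remaining $I_2(m)$ (i.e.\ $m=5$ and $m\ge 7$), sign-coherence is Theorem~\ref{thm: rank 2 classification}, and finiteness follows from the periodicity of the rank $2$ pattern when $\sqrt{ab}=2\cos(\pi/m)$, its $c$-vectors being (up to sign) the $m$ positive roots of $I_2(m)$. This exhausts the figure.

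\emph{Main obstacle.} The real work is in Step 2. Unlike the integer case, where a finite list of minimal (affine) obstructions suffices, the admissible labels $2\cos(\pi/m)$ also yield infinitely many connected indefinite Coxeter diagrams whose proper subdiagrams are all of finite type, so the unboundedness of the $c$-vectors must be proved uniformly --- for the affine types from classical infiniteness of affine cluster algebras, for the indefinite types from the infinitude of the associated Coxeter root system --- and threaded through the diagram-mutation induction so that the whole mutation class is controlled. The rest, including the (un)folding verifications in Step 3 for $H_3$, $H_4$, $I_2(5)$, is a careful but essentially formal transfer of the crystallographic arguments.
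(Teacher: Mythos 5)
Your Step 1 matches the paper's first reduction (restriction to rank-$2$ submatrices via Proposition~\ref{prop: submatrix} together with Theorem~\ref{thm: rank 2 classification} forces every weight in the mutation class to be of the form $2\cos(\pi/m)$), but Steps 2 and 3 have genuine gaps. In Step 2 you propose to rerun the Fomin--Zelevinsky diagram-mutation induction and to eliminate the non-positive-definite diagrams by asserting that indefinite real Coxeter diagrams produce unbounded families of $c$-vectors, either via a negative-norm vector or because the $c$-vectors of an acyclic orientation realise infinitely many roots. None of this is established: the identification of $c$-vectors of a real-weighted acyclic quiver with roots of the associated (non-crystallographic) Coxeter system is not available here, an arbitrary quiver in the class need not be mutation-equivalent to an acyclic orientation of its diagram (the mutation classes in question contain cyclic quivers, e.g.\ the $3$-cycle with all weights $2\cos(\pi/5)$ in the $H_3$ class), and, as you yourself concede under ``Main obstacle'', with infinitely many admissible labels the FZ minimal-obstruction analysis is no longer a finite check --- so the reduction you announce is precisely the unproved hard part. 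The paper takes a different and much shorter route: finiteness of the $C$-pattern forces mutation-finiteness of the $B$-pattern through $DB_t=C_t^{\top}DB_{t_0}C_t$ (\ref{eq: from C to B}), and then the Felikson--Tumarkin classification of mutation-finite quivers with real weights \cite{FT23} reduces everything to orbifold (quasi-integer) quivers plus $F_4$, $H_3$, $H_4$, $\tilde F_4$, of which only the affine $\tilde F_4$ has to be discarded. Without invoking \cite{FT23} or actually carrying out a uniform infiniteness argument, your necessity direction does not close.

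In Step 3 the crystallographic and $I_2(m)$ cases are fine, but for $H_3$ and $H_4$ the folding transfer only gives sign-coherence and finiteness for $C$-patterns based at quivers whose underlying diagram is the Coxeter diagram itself, whereas the theorem demands the sign-coherent property for every $B'\in{\bf B}(B)$, i.e.\ for every quiver in the mutation class taken as the initial matrix; those quivers (such as the weighted $3$-cycles above) are not foldings of integer quivers, so nothing is transported to them. This is exactly the deficiency the paper points out about \cite{DP24}, and it is resolved there not by folding but by an exhaustive computer verification of the entire mutation class in Lemma~\ref{lem: type H} ($32$ $C$-matrices for $H_3$, $280$ for $H_4$, plus the transposed patterns); in effect your argument silently assumes an instance of Conjecture~\ref{conj: standard hypothesis}, which for finite type is only known through that computation (Theorem~\ref{thm: conjecture for finite type}).
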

In \cite{FZ03a}, it was shown that the cluster algebras of finite type can be classified by Dynkin diagrams, which correspond to crystallographic root systems. On the other hand, by generalizing real entries, this classification can be done by Coxeter diagrams, which correspond to arbitrary root systems (including non-crystallographic ones).  It seems that these types are also related to the Coxeter groups. In Appendix~\eqref{eq: Number finite type}, we obtain the number of $g$-vectors and the $G$-matrices including the non-crystallographic type. The number of $g$-vectors coincides with the number of almost positive roots in the corresponding Coxeter group, and the number of $G$-matrices coincides with the number of chambers induced by the almost positive roots in the Coxeter arrangements, which has already been explored by \cite{FZ03b, FR07}.
\par
Next, we consider \Cref{prob: periodicity}.
When we discuss the periodicity, the following matrices are technical and useful.
\begin{equation}
\tilde{C}_{t}=C_{t}D^{-\frac{1}{2}},\quad \tilde{G}_{t}=G_{t}D^{-\frac{1}{2}},
\end{equation}
where $D$ is a fixed skew-symmetrizer of the initial exchange matrix $B$. We call $\tilde{C}_{t}$ and $\tilde{G}_{t}$ a {\em modified $C$-matrix} and a {\em modified $G$-matrix}, respectively. The most important motivation to introduce these matrices is that if $B$ satisfies the sign-coherent property, we obtain the following equivalence without any conjectural assumptions, see also \Cref{lem: equivalency of the periodicity among modified c- g-vectors}:
\begin{equation}
\tilde{C}_{t'}=\tilde{\sigma}\tilde{C}_{t} \Longleftrightarrow \tilde{G}_{t'}=\tilde{\sigma}\tilde{G}_{t'}.
\end{equation}
Moreover, by using these modified matrices, we can understand the relationship among the matrix patterns and the $G$-fan.
\begin{theorem}[{\Cref{thm: relationship among exchange graphs}}]
Let $B \in \mathbf{SC}$, and suppose that \Cref{conj: standard and discreteness conjecture} holds for this $B$. Then, the following canonical graph isomorphisms hold.
\begin{gather}
{\bf EG}(\tilde{\bf C}(B)) \cong {\bf EG}(\tilde{\bf G}(B)) \cong {\bf EG}(\Delta_{\bf G}(B)),
\\
{\bf EG}({\bf C}(B)) \cong {\bf EG}({\bf G}(B)).
\end{gather}
\end{theorem}
In particular, when $B$ is skew-symmetric, we can take $D$ as the identity matrix. Thus, each modified matrix is identical to the corresponding matrix. Thus, we conclude that the same statement in \Cref{thm: ordinary synchronicity} holds when $B$ is skew-symmetric.

\subsection{Further problems}
We provide a definition based on the recursion (Definition~\ref{def: C-, G-matrices}). While an explicit computation can be done, this definition makes it difficult to grasp the underlying structure.
On the other hand, based on the results in this paper, such as \Cref{thm: intro finite type classification}, the real $C$-, $G$-matrices with sign-coherence are related to topics in geometric combinatorics. From this perspective, it is worthwhile to consider the following open problem:
\begin{problem}
Provide an alternative and structural definition of real $C$-, $G$-matrices with sign-coherence.
\end{problem}
More specifically, the following problem remains to be addressed.
\begin{problem}
Give an alternative proof of \Cref{thm: intro finite type classification} by establishing a one-to-one correspondence between the mutation-equivalence classes of Coxeter quivers in Figure~\ref{fig: Coxeter diagrams} and the Coxeter groups of finite type.
\end{problem}
In ordinary cluster algebras, one good correspondence was constructed by \cite{FZ03a} for cases where an initial exchange matrix is bipartite. This correspondence was generalized to acyclic initial exchange matrices in \cite{RS16} via the {\em Cambrian fan}, which is a quotient of the weak order of the Coxeter group. Since the number of cluster variables is independent of the choice of initial exchange matrices, it suffices to consider such special cases. On the other hand, the counterpart in our setting is the dual mutation formula in \Cref{prop: dual mutation}. This formula ensures that the number of $C$-, $G$-matrices is independent of the initial exchange matrix. However, the current formula relies on sign-coherence, which significantly increases the difficulty of the problem.

Another natural open problem that we consider is how to properly define real cluster algebras based on the real patterns we studied. However, it seems quite difficult to give a definition compatible with the Laurent phenomenon, positivity and so on. 
\subsection{Structure of the paper}
Most of the notations in this paper follow from those of \cite{FZ07, NZ12, Nak23}. Additionally, some claims in Sections~\ref{sec: pre}, \ref{sec: sign-coherence}, \ref{sec: dual and fan} can be shown by doing the same arguments as in \cite[\S.~II.1, II.2]{Nak23}, so we omit their details and refer the proofs to them. This paper is organized as follows.
\par
In Section~\ref{sec: pre}, we define real $B$-, $C$-, $G$-matrices and introduce some basic facts and properties.
\par
In Section~\ref{sec: Skew-symmetrizing method}, we introduce the skew-symmetrizing method (\Cref{prop: skew-symmetrizing method}), which gives the motivation to generalize integer $C$-, $G$-matrices to the real entries.
\par
In Section~\ref{sec: sign-coherence}, we introduce the sign-coherence of real $C$- and $G$-matrices and show that the basic properties of the integer case still hold under the assumption of sign-coherence.
\par
In Section~\ref{sec: conjectures}, we introduce two conjectures (Conjecture~\ref{conj: standard hypothesis} and Conjecture~\ref{conj: discreteness conjecture}), which are needed to obtain the fan structure related to $G$-matrices.
\par
In Section~\ref{sec: dual and fan}, we prove the dual mutation and third duality (Proposition~\ref{prop: dual mutation}) and the $G$-fan structure (\Cref{prop: fan}) under certain conditions (\Cref{conj: standard and discreteness conjecture}).
%\par
%In Section~\ref{sec: fan}, we introduce $G$-fans and provide an example (\Cref{ex: bad phenomenon for periodicity}) to show that Theorem~\ref{thm: ordinary synchronicity} does not always hold for real $C$-, $G$-matrices.
\par
In Section~\ref{sec: classification of rank 2}, we classify the rank 2 sign-coherent class and give some examples of their $G$-fans (\Cref{thm: rank 2 classification}).
\par
In Section~\ref{sec: finite type classification}, we give a classification of sign-coherent finite type via Coxeter diagrams (Theorem~\ref{thm: finite type classifcation}).
\par
In Section~\ref{sec: modified and synchro}, we introduce modified $C$-, $G$-patterns, and we show some properties similar to Theorem~\ref{thm: ordinary synchronicity}, such as synchronicity among the matrix patterns and a $G$-fan (\Cref{prop: matrix-cone synchronicity} and \Cref{prop: CG synchronicity}).
\par
In Section~\ref{sec: exchange graphs}, we study the isomorphism among different exchange graphs (\Cref{thm: relationship among exchange graphs}).
\section{Preliminaries}\label{sec: pre}
\subsection{Basic notations}\label{sec: basic notations}
We fix a positive integer $n \in \mathbb{Z}_{\geq 2}$, and we refer it as a {\em rank}. We fix the notations for the following special matrices, sets, and operations.
\begin{itemize}
\item Let $E_{ij} \in \mathrm{M}_{n}(\mathbb{R})$ be a matrix obtained from the zero matrix by replacing $(i,j)$th entry with $1$.
\item Let $\mathrm{diag}(d_1,d_2,\dots,d_n)=d_1E_{11}+\dots+d_nE_{nn}$ be the diagonal matrix. We say that a diagonal matrix is {\em positive} if all diagonal entries are strictly positive.
\item Let $I_n=\mathrm{diag}(1,1,\dots,1)$ be the identity matrix of order $n$.
\item For each $k = 1,2,\dots,n$, let $J_k$ be the matrix obtained by replacing the $(k,k)$th entry of $I_n$ with $-1$.
\item For each $k =1,2,\dots,n$ and $A=(a_{ij}) \in \mathrm{M}_n(\mathbb{R})$, let $A^{k \bullet}=E_{kk}A \in \mathrm{M}_{n}(\mathbb{R})$ (resp. $A^{\bullet k}=AE_{kk} \in \mathrm{M}_{n}(\mathbb{R})$) be the matrix obtained by replacing all entries with $0$ except for the $k$th row (resp. the $k$th column).
\item For each $x \in \mathbb{R}$, let $[x]_{+}=\max(x,0)$.
\item For each $A=(a_{ij}) \in \mathrm{M}_{n}(\mathbb{R})$, let $[A]_{+}=([a_{ij}]_{+}) \in \mathrm{M}_{n}(\mathbb{R})$.
\item  Let $\mathbb{R}_{+}=\{x \geq 0\}$ and $\mathbb{R}_{-}=\{x \leq 0\}$. For any $\epsilon_1,\dots,\epsilon_{n} \in \{\pm 1\}$, we indicate the closed orthant $\mathfrak{O}_{\epsilon_1,\dots,\epsilon_{n}}=\mathbb{R}_{\epsilon_1}\times\cdots\times\mathbb{R}_{\epsilon_{n}} \subset \mathbb{R}^n$. In particular, we denote by $\mathfrak{O}_{+}^n=\mathfrak{O}_{+,+,\dots,+}$ and $\mathfrak{O}_{-}^{n}=\mathfrak{O}_{-,-,\dots,-}$.
\item For any $x\in \mbR$, $\mathrm{sign}(x)$ is defined by $1$, $0$, and $-1$ if $x>0$, $x=0$, and $x<0$, respectively. 
\end{itemize}
\subsection{$C$-, $G$-matrices and first duality}
A real matrix $B \in \mathrm{M}_{n}(\mathbb{R})$ is said to be {\em skew-symmetrizable} if there exists a positive diagonal matrix $D=\mathrm{diag}(d_1,d_2,\dots,d_n)$, where $d_1,d_2,\dots,d_n \in \mathbb{R}_{>0}$, such that $DB$ is skew-symmetric. This $D$ is called a {\em skew-symmetrizer} of $B$. We verify that every skew-symmetrizable matrix $B=(b_{ij}) \in \mathrm{M}_n(\mathbb{R})$ is {\em sign skew-symmetric}, that is, $\mathrm{sign}(b_{ij})=-\mathrm{sign}(b_{ji})$. 
\begin{definition}
For a skew-symmetrizable matrix $B \in \mathrm{M}_{n}(\mathbb{R})$ and an index $k=1,2,\dots,n$, we define the {\em mutation $\mu_{k}(B)$ in direction $k$} as
\begin{equation}\label{eq: mutation of B matrix}
\mu_{k}(B)=(J_k+[-B]_{+}^{\bullet k})B(J_{k}+[B]_{+}^{k \bullet}).
\end{equation}
This mutation is called a {\em mutation of $B$-matrix}.
\end{definition}
Let $B=(b_{ij}) \in \mathrm{M}_{n}(\mathbb{R})$. By a direct calculation, we verify that the $(i,j)$th entry $b'_{ij}$ of $\mu_{k}(B)$ is given by
\begin{equation}
b'_{ij}=\begin{cases}
-b_{ij}, & \textup{if $i=k$ or $j=k$},\\
b_{ij}+b_{ik}[b_{kj}]_{+}+[-b_{ik}]_{+}b_{kj}, & \textup{if $i,j \neq k$}.
\end{cases}
\end{equation}
Since $x=[x]_{+}-[-x]_{+}$ for any $x \in \mathbb{R}$, the following expression is independent of the choice of $\varepsilon= \pm 1$. That is to say, 
\begin{equation}
\mu_{k}(B)=(J_{k}+[-\varepsilon B]_{+}^{\bullet k})B(J_{k}+[\varepsilon B]_{+}^{k \bullet})
\end{equation}
and, equivalently,
\begin{equation}
b'_{ij}=b_{ij}+b_{ik}[\varepsilon b_{kj}]_{+}+[-\varepsilon b_{ik}]_{+}b_{kj}=b_{ij}+\mathrm{sign}(b_{ik})[b_{ik}b_{kj}]_{+}. 
\end{equation}
The following fundamental properties are satisfied even if we generalize to the real entries.
\begin{lemma}[cf. \cite{FZ02}]
Let $B \in \mathrm{M}_{n}(\mathbb{R})$ be a skew-symmetrizable matrix with a skew-symmetrizer $D$. For any $k=1,2,\dots,n$, we have  
\\
\textup{($a$)} $\mu_{k}(B)$ is also skew-symmetrizable with the same skew-symmetrizer $D$.
\\
\textup{($b$)} Let $B'=\mu_k(B)$. Then, we have $\mu_{k}(B')=B$. Namely, the mutation of $B$-matrix is an involution.
\end{lemma}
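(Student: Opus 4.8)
The plan is to prove (a) first, by reducing skew-symmetrizability of $\mu_k(B)$ to a simple conjugation identity, and then to deduce (b) from the $\varepsilon$-independent form of the mutation formula together with part (a). Throughout, write $P=J_k+[B]_+^{k\bullet}$ and $Q=J_k+[-B]_+^{\bullet k}$, so that $\mu_k(B)=QBP$ by definition. Note that $b_{kk}=0$ since $B$ is sign skew-symmetric.

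For part (a), the first step is to verify the matrix identity $DQ=P^{T}D$. Both $DQ$ and $P^{T}D$ agree with the diagonal matrix $J_kD=DJ_k$ outside the $k$th column, so the identity reduces to the scalar equalities $d_i[-b_{ik}]_+=d_k[b_{ki}]_+$ for $i\neq k$. These follow at once from the skew-symmetrizer relation $d_ib_{ik}=-d_kb_{ki}$: if $b_{ik}\geq 0$ both sides vanish, and if $b_{ik}<0$ both sides equal $-d_ib_{ik}$, using $d_i,d_k>0$. Given this, $D\,\mu_k(B)=D(QBP)=(DQ)BP=P^{T}(DB)P$, and since $DB$ is skew-symmetric and the congruence $X\mapsto P^{T}XP$ preserves skew-symmetry, $D\,\mu_k(B)$ is skew-symmetric. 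Hence $\mu_k(B)$ is skew-symmetrizable with the same skew-symmetrizer $D$.

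For part (b), I first record that the $k$th row and $k$th column of $B'=\mu_k(B)$ are the negatives of those of $B$: this is exactly the case $i=k$ or $j=k$ of the entrywise formula already displayed, and in particular $b'_{kk}=0$. By part (a), $B'$ is skew-symmetrizable, so $\mu_k(B')$ is defined; I apply the mutation formula to $B'$ with the choice $\varepsilon=-1$, namely $\mu_k(B')=(J_k+[B']_+^{\bullet k})\,B'\,(J_k+[-B']_+^{k\bullet})$. Since the $k$th column of $B'$ is the negative of that of $B$, we get $[B']_+^{\bullet k}=[-B]_+^{\bullet k}$, so the left factor is $Q$; likewise $[-B']_+^{k\bullet}=[B]_+^{k\bullet}$, so the right factor is $P$. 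Therefore $\mu_k(B')=QB'P=Q(QBP)P=Q^{2}BP^{2}$, and it remains to check $Q^{2}=P^{2}=I_n$. Writing $Q=J_k+N$ with $N=\sum_{i\neq k}[-b_{ik}]_+E_{ik}$ (the $(k,k)$-term drops out since $b_{kk}=0$), the relations $J_k=I_n-2E_{kk}$, $E_{kk}N=0$, $NE_{kk}=N$, $N^{2}=0$ (all immediate from $E_{ab}E_{cd}=\delta_{bc}E_{ad}$ and the column-$k$ support of $N$) give $Q^{2}=J_k^{2}+J_kN+NJ_k+N^{2}=I_n+N-N=I_n$; the computation for $P=J_k+\sum_{j\neq k}[b_{kj}]_+E_{kj}$ is symmetric. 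Hence $\mu_k(B')=Q^{2}BP^{2}=B$.

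Every step here is a routine calculation; the only thing to get right is the bookkeeping with $E_{kk}$, $J_k$, and the row-/column-supported matrices in the final computation of $Q^2$ and $P^2$, together with the observation in (b) that choosing $\varepsilon=-1$ makes the conjugating matrices of the second mutation coincide \emph{literally} with $Q$ and $P$, which is what forces the cancellation $Q^{2}=P^{2}=I_n$.
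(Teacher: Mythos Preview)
Your proof is correct. The paper does not actually prove this lemma; it merely states it with the attribution ``cf.~[FZ02]'' and moves on, so there is no in-paper argument to compare against. Your approach---reducing (a) to the conjugation identity $DQ=P^{T}D$ via the skew-symmetrizer relation $d_ib_{ik}=-d_kb_{ki}$, and then handling (b) by choosing $\varepsilon=-1$ in the second mutation so that the outer factors become literally $Q$ and $P$ again, reducing everything to $Q^2=P^2=I_n$---is a clean and self-contained way to do this, exactly in the spirit of the matrix-factorization formalism the paper adopts from [NZ12]. The verification of $Q^2=I_n$ via $J_kN=N$, $NJ_k=-N$, $N^2=0$ (using $b_{kk}=0$) is correct, and the symmetric computation for $P$ goes through identically with the row-supported nilpotent.
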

Let $\mathbb{T}_{n}$ be the (labeled) $n$-regular tree, that is, a simple graph where every vertex has degree $n$ and these edges are labeled by $1,2,\dots,n$ distinctly. If two vertices are connected by an edge labeled by $k$, we say that these two vertices are {\em k-adjacent}. We define the {\em distance} $d(t,t')$ between the two vertices $t$ and $t'$ by the number of edges in the shortest path from $t$ to $t'$. 
\par
As in the ordinary cluster theory, we define some collections indexed by $t \in \mathbb{T}_{n}$.
\begin{definition}
A collection of skew-symmetrizable matrices ${\bf B}=\{B_{t}\}_{t \in \mathbb{T}_{n}}$ is called a {\em $B$-pattern} if it satisfies the following condition:
\begin{quote}
For any $k$-adjacent vertices $t,t' \in \mathbb{T}_{n}$, it holds that $B_{t'}=\mu_{k}(B_{t})$.
\end{quote}
We call an element of $B$-pattern a {\em $B$-matrix}.
In the ordinary cluster theory, we also call them {\em exchange matrices}.
If $B$ and $B'$ are in the same $B$-pattern, then $B$ and $B'$ are said to be {\em mutation-equivalent}.
\par
For any skew-symmetrizable matrix $B$, if we set the initial condition $B=B_{t_0}$, then other $B_{t}$ are determined recursively. In this sense, we sometimes write ${\bf B}={\bf B}^{t_0}(B)$, and we refer to $B_{t_0}=B$ as an {\em initial exchange matrix}.
\end{definition}
The main object in this paper is the following patterns.
\begin{definition}\label{def: C-, G-matrices}
Let ${\bf B}=\{B_t\}$ be a $B$-pattern. Then, we define the {\em $C$-pattern} ${\bf C}^{t_0}({\bf B})=\{C^{t_0}_{t}\}_{t \in \mathbb{T}_{n}}$ and the {\em $G$-pattern} ${\bf G}^{t_0}({\bf B})=\{G^{t_0}_{t}\}_{t \in \mathbb{T}_{n}}$ with an initial vertex $t_0 \in \mathbb{T}_{n}$ as follows:
\begin{itemize}
\item $C^{t_0}_{t_0}=G^{t_{0}}_{t_0}=I_{n}$.
\item If $t$ and $t'$ are $k$-adjacent, it holds that
\begin{equation}\label{eq: mutation of C, G matrices}
\begin{aligned}
C^{t_0}_{t'}&=C^{t_0}_{t}J_k+C_{t}^{t_0}[B_t]^{k\bullet}_{+}+[-C_{t}^{t_0}]_{+}^{\bullet k}B_{t},\\
G^{t_{0}}_{t'}&=G^{t_0}_{t}J_{k}+G^{t_0}_{t}[-B_{t}]^{\bullet k}_{+}-B_{t_0}[-C^{t_0}_{t}]^{\bullet k}_{+}.
\end{aligned}
\end{equation}
\end{itemize}
For a given skew-symmetrizable matrix $B \in \mathrm{M}_n(\mathbb{R})$, we also write ${\bf C}^{t_0}(B)={\bf C}^{t_0}({\bf B}^{t_0}(B))$ and ${\bf G}^{t_0}(B)={\bf G}^{t_0}({\bf B}^{t_0}(B))$.
These matrices $C_t$ and $G_t$ are called {\em $C$-matrices} and {\em $G$-matrices}, respectively.
If we fix an initial vertex $t_0$, we omit $t_0$ and simply write $C_t=C^{t_0}_{t}$ and $G_t=G^{t_0}_t$. In this case, we sometimes write $C_{t'}=\mu_{k}(C_{t})$ and $G_{t'}=\mu_{k}(G_{t})$ for $k$-adjacent vertices $t,t' \in \mathbb{T}_{n}$, and call them {\em mutations of a $C$-matrix and a $G$-matrix}, respectively.
\end{definition}
\begin{definition}
For each $C$-matrix $C^{t_0}_{t}$ and $G$-matrix $G^{t_0}_t$, their column vectors are called {\em $c$-vectors} and {\em $g$-vectors}, respectively. We write the $i$th column vector  of $C^{t_0}_{t}$ and $G^{t_0}_{t}$ by ${\bf c}^{t_0}_{i;t}$ and ${\bf g}^{t_0}_{i;t}$.
\end{definition}
For short, the $B$-, $C$-, $G$-patterns  are collectively called {\em matrix patterns} and they have some significant dualities.

The following duality can be shown without any assumption. 
\begin{lemma}[cf. {\cite[(6.14)]{FZ07}}, First duality]
For any $B$-pattern ${\bf B}$ and $t_0,t \in \mathbb{T}_{n}$, we have
\begin{equation}\label{eq: first duality}
G^{t_0}_{t}B_{t}=B_{t_0}C^{t_0}_{t}.
\end{equation}
\end{lemma}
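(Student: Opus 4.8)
The plan is to prove (\ref{eq: first duality}) by induction on the distance $d(t_0,t)$ in $\mathbb{T}_n$, following the integer case. When $t=t_0$ both sides equal $B_{t_0}$ because $C^{t_0}_{t_0}=G^{t_0}_{t_0}=I_n$. For the inductive step, fix $t$ with $G^{t_0}_tB_t=B_{t_0}C^{t_0}_t$ and let $t'$ be $k$-adjacent to $t$ with $d(t_0,t')=d(t_0,t)+1$; write $C_t=C^{t_0}_t$, $G_t=G^{t_0}_t$, and set $P:=J_k+[-B_t]^{\bullet k}_+$ and $Q:=J_k+[B_t]^{k\bullet}_+$. Then (\ref{eq: mutation of B matrix}) and (\ref{eq: mutation of C, G matrices}) read
\[
B_{t'}=PB_tQ,\qquad C_{t'}=C_tQ+[-C_t]^{\bullet k}_+B_t,\qquad G_{t'}=G_tP-B_{t_0}[-C_t]^{\bullet k}_+.
\]

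The computation hinges on two elementary identities, both of which are consequences of the single fact $b_{kk}=0$ (valid since skew-symmetrizable matrices are sign skew-symmetric). First, $P^2=I_n$: expanding and using $E_{kk}J_k=J_kE_{kk}=-E_{kk}$ together with $E_{kk}[-B_t]_+E_{kk}=[-b_{kk}]_+E_{kk}=0$ makes all cross terms cancel. Second, and for the same reason, $[-C_t]^{\bullet k}_+[-B_t]^{\bullet k}_+=0$, hence $[-C_t]^{\bullet k}_+P=[-C_t]^{\bullet k}_+J_k=-[-C_t]^{\bullet k}_+$.

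Now I would compute, using $P^2=I_n$,
\[
G_{t'}B_{t'}=\bigl(G_tP-B_{t_0}[-C_t]^{\bullet k}_+\bigr)PB_tQ=G_tB_tQ-B_{t_0}[-C_t]^{\bullet k}_+PB_tQ .
\]
Applying the inductive hypothesis $G_tB_t=B_{t_0}C_t$ to the first term and $[-C_t]^{\bullet k}_+P=-[-C_t]^{\bullet k}_+$ to the second gives $G_{t'}B_{t'}=B_{t_0}\bigl(C_tQ+[-C_t]^{\bullet k}_+B_tQ\bigr)$. Comparing with $B_{t_0}C_{t'}=B_{t_0}\bigl(C_tQ+[-C_t]^{\bullet k}_+B_t\bigr)$, it only remains to check that $[-C_t]^{\bullet k}_+B_t\,(Q-I_n)=0$. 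But $[-C_t]^{\bullet k}_+B_t$ is the outer product of the $k$th column of $[-C_t]_+$ with the $k$th row of $B_t$, while $Q-I_n=[B_t]^{k\bullet}_+-2E_{kk}$ is supported on its $k$th row; multiplying the $k$th row of $B_t$ into $Q-I_n$ yields only the scalar $b_{kk}=0$. Hence $G_{t'}B_{t'}=B_{t_0}C_{t'}$, which closes the induction.

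I do not expect a genuine obstacle: this is a finite matrix identity that, in contrast to most of the later results, requires neither sign-coherence nor the conjectures of Section~\ref{sec: conjectures}. The only point needing care is the bookkeeping with the blocks $[-B_t]^{\bullet k}_+$, $[B_t]^{k\bullet}_+$, $E_{kk}$, $J_k$: one should avoid expanding the positive parts entrywise and instead keep track only of their support (a single row, resp. column) together with the vanishing of $b_{kk}$, after which every cross term collapses automatically.
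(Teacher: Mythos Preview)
Your proof is correct and is precisely the standard induction argument; the paper itself does not spell out a proof of this lemma, simply citing \cite[(6.14)]{FZ07} and remarking that such claims follow by the same arguments as in \cite[\S II.1, II.2]{Nak23}. Your computation is exactly the verification those references carry out, so there is nothing to add.
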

By using this equality, for each $\varepsilon = \pm 1$, the mutation of $C$- and $G$-matrices are also expressed as follows (cf. \cite[(6.12), (6.13)]{FZ07}, \cite[(2.4)]{NZ12}):
\begin{equation}\label{eq: epsilon expression for C-, G-mutations}
\begin{aligned}
C^{t_0}_{t'}&=C^{t_0}_{t}J_k+C_{t}^{t_0}[\varepsilon B_t]^{k\bullet}_{+}+[-\varepsilon C_{t}^{t_0}]_{+}^{\bullet k}B_{t},\\
G^{t_{0}}_{t'}&=G^{t_0}_{t}J_{k}+G^{t_0}_{t}[- \varepsilon B_{t}]^{\bullet k}_{+}-B_{t_0}[-\varepsilon C^{t_0}_{t}]^{\bullet k}_{+}.
\end{aligned}
\end{equation}
%Now, the definition of $C$-, $G$-pattern is justified as follows.
\begin{lemma}[cf. \cite{FZ07}]
The mutations of $C$-, $G$-patterns are involutions. Namely, for any $t \in \mathbb{T}_n$ and $k=1,\dots,n$, we have $\mu_k(\mu_k(C_t))=C_t$ and $\mu_k(\mu_k(G_t))=G_t$.
\end{lemma}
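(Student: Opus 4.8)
The plan is to verify both identities by applying the defining recursion~(\ref{eq: mutation of C, G matrices}) twice and collapsing the result with the elementary identity $[x]_{+}-[-x]_{+}=x$; in particular no sign-coherence is needed, which is why the lemma is available at this early stage. Concretely, we must show that $\mu_{k}(\mu_{k}(C_{t}))=C_{t}$ and $\mu_{k}(\mu_{k}(G_{t}))=G_{t}$, where $\mu_{k}$ acts on $C$-, $G$-matrices via~(\ref{eq: mutation of C, G matrices}) using the $B$-matrix at the relevant vertex; recall that $\mu_{k}(\mu_{k}(B_{t}))=B_{t}$ is already established.

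\emph{Step 1: behaviour at the index $k$ under one mutation.} Set $B_{t'}=\mu_{k}(B_{t})$, $C_{t'}=\mu_{k}(C_{t})$, $G_{t'}=\mu_{k}(G_{t})$. Since a skew-symmetrizable matrix is sign skew-symmetric we have $b_{kk}=0$, and the entrywise description of the $B$-mutation shows that the $k$th row and $k$th column of $B_{t'}$ are the negatives of those of $B_{t}$. Reading off the $k$th column of the $C$-recursion, the correction terms $C_{t}[B_{t}]_{+}^{k\bullet}$ and $[-C_{t}]_{+}^{\bullet k}B_{t}$ contribute nothing there (again because $b_{kk}=0$), so ${\bf c}_{k;t'}=-{\bf c}_{k;t}$. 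For the $G$-recursion, the matrices $[-B_{t}]_{+}^{\bullet k}$ and $[-C_{t}]_{+}^{\bullet k}$ are supported on their $k$th columns, so ${\bf g}_{j;t'}={\bf g}_{j;t}$ for all $j\neq k$, while ${\bf g}_{k;t'}=-{\bf g}_{k;t}+G_{t}[-{\bf b}_{k}]_{+}-B_{t_0}[-{\bf c}_{k;t}]_{+}$, where ${\bf b}_{k}$ denotes the $k$th column of $B_{t}$.

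\emph{Step 2: the second mutation.} Apply~(\ref{eq: mutation of C, G matrices}) again, now at $t'$ with data $B_{t'}$, $C_{t'}$ and the same initial $B_{t_0}$, and write $b'_{kj}$, ${\bf b}'_{k}$ for the relevant data of $B_{t'}$, so $b'_{kj}=-b_{kj}$ and ${\bf b}'_{k}=-{\bf b}_{k}$ by Step~1. For the $C$-case the $k$th column of $\mu_{k}(C_{t'})$ is $-{\bf c}_{k;t'}={\bf c}_{k;t}$ at once, while for $j\neq k$ the $j$th column equals ${\bf c}_{j;t'}+[b'_{kj}]_{+}{\bf c}_{k;t'}+b'_{kj}[-{\bf c}_{k;t'}]_{+}$; substituting $b'_{kj}=-b_{kj}$, ${\bf c}_{k;t'}=-{\bf c}_{k;t}$ and the Step~1 expression for ${\bf c}_{j;t'}$, the ${\bf c}_{k;t}$-terms combine through $[x]_{+}-[-x]_{+}=x$ into $b_{kj}{\bf c}_{k;t}-b_{kj}{\bf c}_{k;t}=0$, leaving ${\bf c}_{j;t}$. (Using the $\varepsilon$-form~(\ref{eq: epsilon expression for C-, G-mutations}) with opposite signs on the two mutations gives the same collapse even more transparently.) For the $G$-case only the $k$th column needs checking: since $[{\bf b}_{k}]_{+}$ has a vanishing $k$th entry it never sees ${\bf g}_{k;t'}$, so using ${\bf b}'_{k}=-{\bf b}_{k}$, ${\bf c}_{k;t'}=-{\bf c}_{k;t}$, ${\bf g}_{j;t'}={\bf g}_{j;t}$ for $j\neq k$, and $[x]_{+}-[-x]_{+}=x$ entrywise, the $k$th column of $\mu_{k}(G_{t'})$ comes out as ${\bf g}_{k;t}+G_{t}{\bf b}_{k}-B_{t_0}{\bf c}_{k;t}$, and the $k$th column of the first duality~(\ref{eq: first duality}), $G_{t}{\bf b}_{k}=B_{t_0}{\bf c}_{k;t}$, removes the last two terms.

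This is a direct verification with no genuine obstacle; the only nontrivial inputs beyond bookkeeping are $b_{kk}=0$ (governing the $k$th columns) and, for the $G$-matrices alone, the first duality~(\ref{eq: first duality}), which is precisely what cancels the leftover term $G_{t}{\bf b}_{k}-B_{t_0}{\bf c}_{k;t}$. Care is chiefly needed to keep track of which columns move under each mutation — all columns for $C$, only the $k$th for $G$.
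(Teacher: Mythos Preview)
The paper does not give its own proof of this lemma; it is stated with a citation to \cite{FZ07} and the argument is omitted. Your direct verification is correct and is precisely the standard one: apply the recursion twice, use $[x]_{+}-[-x]_{+}=x$ to collapse the $C$-columns, and for the $G$-case invoke the first duality~(\ref{eq: first duality}) to kill the residual term $G_{t}{\bf b}_{k}-B_{t_0}{\bf c}_{k;t}$. The only delicate point --- that $G_{t'}[{\bf b}_{k}]_{+}=G_{t}[{\bf b}_{k}]_{+}$ because the $k$th entry of $[{\bf b}_{k}]_{+}$ vanishes --- you handled correctly.
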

Last, we focus on the entries of these real matrices. If we focus on the integer skew-symmetrizable matrix, only integer entries appear in the mutated matrices. However, we cannot expect this property now. Since $\mathbb{R}$ is rather bigger than the ring that we need to consider, we introduce the following subring of $\mathbb{R}$.
\begin{definition}
For each skew-symmetrizable matrix $B = (b_{ij}) \in \mathrm{M}_{n}(\mathbb{R})$, let $\mathbb{Z}_{B}=\mathbb{Z}[\{b_{ij} \mid i,j=1,\dots,n\}]$ be the subring of $\mathbb{R}$ generated by $\{b_{ij} \mid i,j=1,\dots,n\}$. Note that we have $\mbZ \subset \mathbb{Z}_{B} \subset \mbR$.
\end{definition}
As the following proposition shows, this is a natural subring to consider real $C$-, $G$-matrices.
\begin{proposition}\label{prop: ring ZB}
Let $B \in \mathrm{M}_{n}(\mathbb{R})$ be a skew-symmetrizable matrix. Consider its $B$-pattern ${\bf B}^{t_0}(B)=\{B_{t}\}_{t \in \mathbb{T}_{n}}$.
\\
\textup{($a$)} For any $t \in \mathbb{T}_{n}$, we have $\mathbb{Z}_{B_{t}}=\mathbb{Z}_{B}$.
\\
\textup{($b$)} For any $t \in \mathbb{T}_{n}$, we have
\begin{equation}
B_t,C_t,G_t \in \mathrm{M}_{n}(\mathbb{Z}_{B}).
\end{equation}
\end{proposition}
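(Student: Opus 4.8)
The plan is to prove both parts simultaneously by induction on the distance $d(t_0,t)$ in $\mathbb{T}_n$, with the base case $t=t_0$ being trivial since $B_{t_0}=B$, $C_{t_0}=G_{t_0}=I_n$, and $\mathbb{Z}_{B_{t_0}}=\mathbb{Z}_B$ by definition. For the inductive step, suppose the claim holds at some vertex $t$ and let $t'$ be $k$-adjacent to $t$ with $d(t_0,t')=d(t_0,t)+1$. First I would handle part ($a$): the mutation formula (\ref{eq: mutation of B matrix}), together with the explicit entrywise description $b'_{ij}=b_{ij}+b_{ik}[b_{kj}]_+ + [-b_{ik}]_+ b_{kj}$, shows that every entry of $B_{t'}$ lies in $\mathbb{Z}[\{(B_t)_{ij}\}]=\mathbb{Z}_{B_t}$, because $[x]_+$ of an entry is again an integer combination of that entry and $0$ (more precisely, $[x]_+\in\{0,x\}$, hence $[x]_+\in\mathbb{Z}_{B_t}$ whenever $x\in\mathbb{Z}_{B_t}$, since $\mathbb{Z}_{B_t}$ contains $0$). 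Hence $\mathbb{Z}_{B_{t'}}\subseteq\mathbb{Z}_{B_t}$. The reverse inclusion follows from the involutivity of $B$-mutation ($B_t=\mu_k(B_{t'})$), giving $\mathbb{Z}_{B_t}\subseteq\mathbb{Z}_{B_{t'}}$; thus $\mathbb{Z}_{B_{t'}}=\mathbb{Z}_{B_t}=\mathbb{Z}_B$ by the inductive hypothesis.

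For part ($b$), I would argue that $\mathbb{Z}_B$ is closed under the operation $x\mapsto[x]_+$ — again because $[x]_+\in\{0,x\}\subseteq\mathbb{Z}_B$ — and that $\mathrm{M}_n(\mathbb{Z}_B)$ is closed under matrix multiplication, addition, and the operations $A\mapsto A^{k\bullet}$, $A\mapsto A^{\bullet k}$, $A\mapsto AJ_k$, which merely permute, zero out, or negate entries. The matrix $J_k$ itself lies in $\mathrm{M}_n(\mathbb{Z})\subseteq\mathrm{M}_n(\mathbb{Z}_B)$. Now inspect the recursions: $B_{t'}=\mu_k(B_t)\in\mathrm{M}_n(\mathbb{Z}_{B_t})=\mathrm{M}_n(\mathbb{Z}_B)$ by part ($a$); the $C$-mutation in (\ref{eq: mutation of C, G matrices}) expresses $C_{t'}$ as a sum of products of $C_t$, $J_k$, $[B_t]_+^{k\bullet}$, $[-C_t]_+^{\bullet k}$, and $B_t$, all of which lie in $\mathrm{M}_n(\mathbb{Z}_B)$ by the inductive hypothesis (for $C_t$) and by part ($a$) (for $B_t$); hence $C_{t'}\in\mathrm{M}_n(\mathbb{Z}_B)$. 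Similarly the $G$-mutation expresses $G_{t'}$ in terms of $G_t$, $J_k$, $[-B_t]_+^{\bullet k}$, $B_{t_0}=B$, and $[-C_t]_+^{\bullet k}$, all in $\mathrm{M}_n(\mathbb{Z}_B)$; hence $G_{t'}\in\mathrm{M}_n(\mathbb{Z}_B)$. This closes the induction.

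The only point requiring a moment's care — and the closest thing to an obstacle — is the observation that $[\,\cdot\,]_+$ preserves $\mathbb{Z}_B$; this is what allows all the bracket-truncated matrices appearing in the mutation rules to stay inside $\mathrm{M}_n(\mathbb{Z}_B)$, and it is the sole place where the structure of $\mathbb{Z}_B$ (rather than, say, an arbitrary subgroup) is used — though in fact only the additive-monoid-with-$0$ structure is needed here, since $[x]_+\in\{0,x\}$. Everything else is bookkeeping: the recursions are built from ring operations on matrices together with the combinatorial operations $(\cdot)^{k\bullet}$, $(\cdot)^{\bullet k}$ and multiplication by $J_k$, none of which can introduce a scalar outside $\mathbb{Z}_B$. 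I would also remark that part ($a$) is genuinely needed before part ($b$) can be completed, because the $G$-recursion at step $t'$ refers to the fixed matrix $B_{t_0}$ while the $B$- and $C$-recursions propagate along the tree; keeping $\mathbb{Z}_{B_t}$ constant equal to $\mathbb{Z}_B$ is precisely what makes the ambient ring in which all three patterns live independent of $t$.
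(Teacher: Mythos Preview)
Your proof is correct and follows essentially the same approach as the paper: for part~($a$), you show $\mathbb{Z}_{B_t}=\mathbb{Z}_{B_{t'}}$ for adjacent $t,t'$ via the mutation formula and involutivity, and for part~($b$) you run the induction using that the mutation formulas~(\ref{eq: mutation of B matrix}) and~(\ref{eq: mutation of C, G matrices}) are built from operations that preserve $\mathrm{M}_n(\mathbb{Z}_B)$. Your version is more explicit about why $[x]_+\in\mathbb{Z}_B$ (namely $[x]_+\in\{0,x\}$), which the paper leaves implicit, but otherwise the two arguments coincide.
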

\begin{proof}
($a$) It suffices to show $\mathbb{Z}_{B_{t}}=\mathbb{Z}_{B_{t'}}$ for any adjacent vertices $t,t' \in \mathbb{T}_{n}$. Suppose that $t$ and $t'$ are $k$-adjacent. Then, by the definition of mutation $B_{t'}=\mu_{k}(B)$, each entry of $B_{t'}$ belongs to $\mathbb{Z}_{B_t}$. Thus, $\mathbb{Z}_{B_{t'}} \subset \mathbb{Z}_{B_{t}}$ holds. Since $\mu_{k}$ is an involution, we can do the same argument by considering $B_t=\mu_k(B_{t'})$. Thus, $\mathbb{Z}_{B_{t}} \subset \mathbb{Z}_{B_{t'}}$ also holds. These two inclusions imply $\mathbb{Z}_{B_t}=\mathbb{Z}_{B_{t'}}$.
\\
($b$) We can show the claim by induction because the mutation formulas \eqref{eq: mutation of B matrix} and \eqref{eq: mutation of C, G matrices} define the closed operation within $\mathrm{M}_{n}(\mathbb{Z}_{B})$.
\end{proof}

\subsection{Periodicity}
In Section~\ref{sec: modified and synchro} and Section~\ref{sec: exchange graphs}, we focus on the periodicity of $C$-, $G$-patterns. For this purpose, we introduce some notations and recall the basic properties.
\begin{definition}
Let $\mathfrak{S}_{n}$ be the symmetric group of degree $n$. Then, we introduce the following two kinds of left group action of $\mathfrak{S}_{n}$ on $\mathrm{M}_{n}(\mathbb{R})$ by
\begin{equation}\label{eq: permutation actions}
\sigma A = (a_{\sigma^{-1}(i)\sigma^{-1}(j)}),
\quad
\tilde{\sigma} A = (a_{i\sigma^{-1}(j)}),
\end{equation}
where $A=(a_{ij}) \in \mathrm{M}_{n}(\mathbb{R})$ and $\sigma \in \mathfrak{S}_{n}$.
\end{definition}
Let $P_{\sigma}=(\delta_{i,\sigma^{-1}(j)}) \in \mathrm{M}_{n}(\mathbb{R})$. Then, these operations can also be expressed as
\begin{equation}\label{eq: periodicity and matrix product}
\sigma A = P_{\sigma}^{\top}AP_{\sigma},
\quad
\tilde{\sigma} A = AP_{\sigma}.
\end{equation}
%for any $\sigma \in \mathfrak{S}_{n}$ and $A \in \mathrm{M}_{n}(\mathbb{R})$.

\begin{proposition}[cf. \cite{FZ07}]\label{prop: periodicity}
For any $B$-matrices $B_{t},B_{t'}$, and $C$-matrices $C_{t},C_{t'}$, suppose that there exists $\sigma \in \mathfrak{S}_{n}$ such that $B_{t'}=\sigma B_{t}$ and $C_{t'}=\tilde{\sigma} C_{t}$. Then, for any $k=1,2,\dots,n$, we have
\begin{equation}
\mu_{\sigma(k)}(B_{t'})=\sigma(\mu_{k}(B_{t})),
\ 
\mu_{\sigma(k)}(C_{t'})=\tilde{\sigma}(\mu_{k}(C_{t})).
\end{equation}
Additionally, we assume $G_{t'}=\tilde{\sigma} G_{t}$. Then, we have $\mu_{\sigma(k)}(G_{t'})=\tilde{\sigma}(\mu_{k}(G_{t}))$.
\end{proposition}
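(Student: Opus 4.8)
The statement is the real-entries analogue of the periodicity computation of \cite{FZ07}, and the plan is to reduce it to a short list of formal identities describing how the elementary ingredients of the mutation rules \eqref{eq: mutation of B matrix} and \eqref{eq: mutation of C, G matrices} interact with the permutation matrix $P_{\sigma}$. Throughout I would work with the matrix-product descriptions $\sigma A=P_{\sigma}^{\top}AP_{\sigma}$ and $\tilde{\sigma}A=AP_{\sigma}$ from \eqref{eq: periodicity and matrix product}, using only that $P_{\sigma}$ is orthogonal, i.e. $P_{\sigma}^{\top}P_{\sigma}=I_{n}$.

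The first step is to record the ``transport rules''. From the one-line identity $E_{kk}P_{\sigma}=P_{\sigma}E_{\sigma(k)\sigma(k)}$ one gets $P_{\sigma}^{\top}E_{kk}P_{\sigma}=E_{\sigma(k)\sigma(k)}$, hence $P_{\sigma}^{\top}J_{k}P_{\sigma}=J_{\sigma(k)}$ because $J_{k}=I_{n}-2E_{kk}$. Since entrywise truncation commutes with permuting rows and columns, $P_{\sigma}^{\top}[\pm A]_{+}P_{\sigma}=[\pm\sigma A]_{+}$ and $[\pm A]_{+}P_{\sigma}=[\pm\tilde{\sigma}A]_{+}$. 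Combining these yields the projection rules $P_{\sigma}^{\top}A^{k\bullet}P_{\sigma}=(\sigma A)^{\sigma(k)\bullet}$, $P_{\sigma}^{\top}A^{\bullet k}P_{\sigma}=(\sigma A)^{\bullet\sigma(k)}$, and $A^{\bullet k}P_{\sigma}=(\tilde{\sigma}A)^{\bullet\sigma(k)}$.

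With these in hand the three claimed identities become mechanical, the only care being to insert copies of $P_{\sigma}P_{\sigma}^{\top}=I_{n}$ between the right-hand factors. For the $B$-mutation, conjugating $\mu_{k}(B_{t})=(J_{k}+[-B_{t}]_{+}^{\bullet k})B_{t}(J_{k}+[B_{t}]_{+}^{k\bullet})$ by $P_{\sigma}$ and inserting $P_{\sigma}P_{\sigma}^{\top}$ between the three factors produces $(J_{\sigma(k)}+[-\sigma B_{t}]_{+}^{\bullet\sigma(k)})(\sigma B_{t})(J_{\sigma(k)}+[\sigma B_{t}]_{+}^{\sigma(k)\bullet})=\mu_{\sigma(k)}(\sigma B_{t})$, which is $\mu_{\sigma(k)}(B_{t'})$ since $\sigma B_{t}=B_{t'}$. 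For the $C$-mutation, right-multiply $\mu_{k}(C_{t})=C_{t}J_{k}+C_{t}[B_{t}]_{+}^{k\bullet}+[-C_{t}]_{+}^{\bullet k}B_{t}$ by $P_{\sigma}$: inserting $P_{\sigma}P_{\sigma}^{\top}$ just after $C_{t}$ in the first two terms replaces $C_{t}$ by $\tilde{\sigma}C_{t}=C_{t'}$ and conjugates $J_{k},[B_{t}]_{+}^{k\bullet}$ to $J_{\sigma(k)},[B_{t'}]_{+}^{\sigma(k)\bullet}$, while in the third term $[-C_{t}]_{+}^{\bullet k}P_{\sigma}=[-C_{t'}]_{+}^{\bullet\sigma(k)}$ and inserting $P_{\sigma}P_{\sigma}^{\top}$ before $B_{t}$ turns $B_{t}$ into $B_{t'}$; the terms reassemble to $\mu_{\sigma(k)}(C_{t'})$. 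The $G$-mutation $\mu_{k}(G_{t})=G_{t}J_{k}+G_{t}[-B_{t}]_{+}^{\bullet k}-B_{t_{0}}[-C_{t}]_{+}^{\bullet k}$ is handled identically; the only new point is that the fixed initial matrix $B_{t_{0}}$ must be left untouched, and indeed $B_{t_{0}}[-C_{t}]_{+}^{\bullet k}P_{\sigma}=B_{t_{0}}[-C_{t'}]_{+}^{\bullet\sigma(k)}$, which is exactly the last term of $\mu_{\sigma(k)}(G_{t'})$.

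I do not expect any genuine obstacle: the proof is pure bookkeeping. The one point to stay alert to is the difference between the two actions --- $\sigma$ is two-sided conjugation and so moves the direction index $k\mapsto\sigma(k)$ in both the row- and the column-slot of a projection, whereas $\tilde{\sigma}$ is right multiplication and so, on its own, moves the direction index only in a column-slot --- together with correctly choosing, term by term, whether a $P_{\sigma}$ is absorbed into the adjacent matrix factor (as in $C_{t}\mapsto C_{t'}$) or is spent, via an inserted $P_{\sigma}P_{\sigma}^{\top}$, on conjugating a $J_{k}$ or a projection.
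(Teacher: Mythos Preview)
Your proposal is correct; the transport identities $P_{\sigma}^{\top}J_{k}P_{\sigma}=J_{\sigma(k)}$, $P_{\sigma}^{\top}[\pm A]_{+}P_{\sigma}=[\pm\sigma A]_{+}$, $[\pm A]_{+}P_{\sigma}=[\pm\tilde{\sigma}A]_{+}$ and the derived projection rules are exactly what is needed, and the term-by-term verification you sketch goes through without issue. The paper itself omits the proof entirely (simply citing \cite{FZ07}), so your direct computation via the permutation-matrix realisations \eqref{eq: periodicity and matrix product} is the standard argument and matches what the reference would give.
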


\subsection{Projection of matrix patterns}
In this section, fix one initial exchange matrix $B \in \mathrm{M}_{n}(\mathbb{R})$ and an initial vertex $t_0 \in \mathbb{T}_{n}$. Here, we consider a pattern by restricting mutation direction to $J \subset \{1,2,\dots,n\}$.
\par
To state the claim, we introduce the following subtree $\mathbb{T}_{J} \subset \mathbb{T}_{n}$:
\begin{itemize}
\item $t_0 \in \mathbb{T}_{J}$ and each vertex of $\mathbb{T}_{J}$ has the degree $|J|$ as the subgraph $\mathbb{T}_{J}$.
\item For each vertex, the edges whose one endpoint is this vertex are labeled by the elements of $J$. 
\end{itemize}
\begin{definition}
Let $J \subset \{1,2,\dots,n\}$ and $A=(a_{ij}) \in \mathrm{M}_{n}(\mathbb{R})$. Then, we define the {\em submatrix of $A$ restricted to $J$} by the $|J|\times|J|$ square matrix $A' \in \mathrm{M}_{|J|}(\mathbb{R})$ indexed by $J$ whose entries are the same in $A$. We write it by $A|_{J}$.
\end{definition}
By using these notations, we can consider the $B$-pattern ${\bf B}(B|_{J})=\{(B|_{J})_{t}\}_{t \in \mathbb{T}_J}$, ${\bf C}(B|_{J})=\{(C|_{J})_{t}\}_{t \in \mathbb{T}_{n}}$, and ${\bf G}(B|_{J})=\{(G|_{J})_{t}\}_{t \in \mathbb{T}_{n}}$. This pattern corresponds to the original pattern as follows. This idea has appeared in the various papers, such as \cite{FZ03a}.
\begin{lemma}\label{lem: submatrix}
Let $B \in \mathrm{M}_{n}(\mathbb{R})$ be a skew-symmetrizable matrix and $J \subset \{1,2,\dots,n\}$. Then, for any $t \in \mathbb{T}_{J}$, we have
\begin{equation}
B_t|_{J}=(B|_{J})_{t},\ C_{t}|_{J}=(C|_{J})_{t},\ G_{t}|_{J}=(G|_{J})_{t}.
\end{equation}
\end{lemma}
For example, if $J=\{1,2,\dots,s\}$, we show that for any $t \in \mathbb{T}_{n}$, 
\begin{equation}
B_{t}=\left(\begin{matrix}
(B|_{J})_{t} & X_{t}\\
Y_{t} & Z_{t}
\end{matrix}\right),
\quad
C_{t}=\left(\begin{matrix}
(C|_{J})_{t} & U_{t}\\
O & I_{n-|J|}
\end{matrix}\right),
\quad
G_{t}=\left(\begin{matrix}
(G|_{J})_{t} & O\\
V_{t} & I_{n-|J|}
\end{matrix}\right),
\end{equation}
where $X_{t},Y_{t},Z_{t},U_{t},V_{t}$ are some matrices. The same argument was done in \cite[(4.8)]{FG19} for $C$-matrices.
\subsection{Quiver setting}
We identify a skew-symmetric matrix with an {\em $\mathbb{R}$-valued quiver}. By using this identification, the statements sometimes become more simpler. So, we introduce a quiver notation corresponding to a skew-symmetric matrix.
\begin{definition}\label{def: quiver}
For any skew-symmetric matrix $B \in \mathrm{M}_{n}(\mathbb{R})$, we define the corresponding $\mathbb{R}$-valued quiver $Q(B)$.
\begin{itemize}
\item The vertices are labeled by $1,2,\dots,n$.
\item If $b_{ij}>0$, then there is an arrow from $i$ to $j$, denoted by $i\overset{b_{ij}}{\longrightarrow} j$.
\end{itemize}
We refer the number of vertices as the {\em rank}.
Conversely, for a given quiver $Q$ of rank $n$ (without loops and $2$-cycles), we define the skew-symmetric matrix $B(Q) \in \mathrm{M}_n(\mathbb{R})$ by the above correspondence. So, we often identify an $\mathbb{R}$-valued quiver $Q$ as a skew-symmetric matrix, and we write $Q=(q_{ij}) \in \mathrm{M}_{n}(\mathbb{R})$. Each real number $q_{ij}$ with $i\neq j$ is called a \emph{weight} of $Q$.
\end{definition}
\begin{definition}
For an $\mathbb{R}$-valued quiver $Q$, we define the {\em underlying graph} $\Gamma(Q)$ as the graph obtained by ignoring the direction of the quiver $Q$. (We keep the information of indices of vertices and weight of edges.) A path of $\Gamma(Q)$ is called an {\em undirected path} of $Q$. We write an undirected path consisting of edges $k_0-k_1$, $k_1-k_2$,\dots, $k_{r-1}-k_{r}$ by $(k_0,k_1,\dots,k_r)$. In particular, if $k_0=k_r$ and $k_i \neq k_j$ for any $i,j =1,\dots,n$ with $i\neq j$, we say that this undirected path $(k_0,k_1,\dots,k_r)$ is an {\em undirected cycle} of $Q$.
\end{definition}
If $\Gamma(Q)$ is connected, such quiver $Q$ is said to be {\em connected}.

\section{Skew-symmetrizing method}\label{sec: Skew-symmetrizing method}
In the previous section, we introduce $B$-, $C$-, $G$-patterns including real entries. The reason we want to introduce them is that we can reduce some problems into the skew-symmetric case. A similar idea has already appeared in \cite{FZ03a} for $B$-matrices and in \cite{Rea14} for $G$-fans, which is called {\em rescaling}. In this section, we explain this method.
\par
We fix one skew-symmetrizable matrix $B \in \mathrm{M}_{n}(\mathbb{R})$ with a skew-symmetrizer $D$. We take one positive diagonal matrix $H=\mathrm{diag}(h_1,h_2,\dots,h_n)$, and consider the following transformation.
\begin{definition}[Positive conjugation]
Let $H$ be a positive diagonal matrix. Set
\begin{equation}
\hat{B}=HBH^{-1}.
\end{equation}
This is also a skew-symmetrizable matrix with a skew-symmetrizer $H^{-1}DH^{-1}$. We call such transformation a {\em positive conjugation}.
\end{definition}
In this section, denote by ${\bf B}^{t_0}(\hat{B})=\{\hat{B}_{t}\}_{t \in \mathbb{T}_{n}}$, ${\bf C}^{t_0}(\hat{B})=\{\hat{C}_{t}\}_{t \in \mathbb{T}_{n}}$, and ${\bf G}^{t_0}(\hat{B})=\{\hat{G}_{t}\}_{t \in \mathbb{T}_{n}}$ with the initial exchange matrix $\hat{B}_{t_0}=\hat{B}$.
Then, the real positive conjugation directly induces the following three equalities, see also \cite[Lem.~5.24]{Nak21}. 
\begin{lemma}\label{lem: positive conjugation}
For any $t \in \mathbb{T}_{n}$, we have
\begin{equation}
\hat{B}_{t}=HB_{t}H^{-1},
\ 
\hat{C}_{t}=HC_{t}H^{-1},
\ 
\hat{G}_{t}=HG_{t}H^{-1}.
\end{equation}
\end{lemma}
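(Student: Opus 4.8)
The plan is to prove the three identities simultaneously by induction on the distance $d(t,t_0)$ in $\mathbb{T}_n$, feeding them into one another through the recursive mutation rules \eqref{eq: mutation of B matrix} and \eqref{eq: mutation of C, G matrices}. The only structural ingredient needed is that conjugation by the fixed \emph{positive} diagonal matrix $H$ is compatible with every elementary operation appearing in those rules; this is exactly the point where positivity of the diagonal entries of $H$ is used.

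First I would record the following elementary facts. Write $H=\mathrm{diag}(h_1,\dots,h_n)$ with $h_i>0$. For any $A=(a_{ij})\in\mathrm{M}_n(\mathbb{R})$ one has $(HAH^{-1})_{ij}=h_ih_j^{-1}a_{ij}$ with $h_ih_j^{-1}>0$, so applying $[\,\cdot\,]_+$ entrywise gives $[HAH^{-1}]_+=H[A]_+H^{-1}$. Likewise, since $A^{k\bullet}=E_{kk}A$ and $A^{\bullet k}=AE_{kk}$ only retain, respectively, the $k$th row and the $k$th column, one checks directly that $(HAH^{-1})^{k\bullet}=HA^{k\bullet}H^{-1}$ and $(HAH^{-1})^{\bullet k}=HA^{\bullet k}H^{-1}$, while $HJ_kH^{-1}=J_k$ because $J_k$ is diagonal. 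In short, $A\mapsto HAH^{-1}$ is an algebra automorphism of $\mathrm{M}_n(\mathbb{R})$ that commutes with $[\,\cdot\,]_+$ and with the row/column truncations and fixes $J_k$; in particular it commutes with each of $[B]_+^{k\bullet}$, $[-B]_+^{\bullet k}$, $[-C]_+^{\bullet k}$ appearing in the formulas.

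The base case $t=t_0$ is immediate: $\hat B_{t_0}=\hat B=HBH^{-1}=HB_{t_0}H^{-1}$ by definition, and $\hat C_{t_0}=\hat G_{t_0}=I_n=HI_nH^{-1}=HC_{t_0}H^{-1}=HG_{t_0}H^{-1}$. For the inductive step, suppose the three identities hold at $t$, and let $t'$ be $k$-adjacent to $t$. Conjugating the $B$-mutation formula by $H$ and using the compatibilities above together with $\hat B_t=HB_tH^{-1}$ gives
\[
HB_{t'}H^{-1}=H\mu_k(B_t)H^{-1}=(J_k+[-\hat B_t]_+^{\bullet k})\,\hat B_t\,(J_k+[\hat B_t]_+^{k\bullet})=\mu_k(\hat B_t)=\hat B_{t'}.
\]
Doing the same with the $C$- and $G$-mutation formulas, and inserting $\hat B_t=HB_tH^{-1}$, $\hat C_t=HC_tH^{-1}$, $\hat G_t=HG_tH^{-1}$, and $\hat B_{t_0}=HB_{t_0}H^{-1}$ (the last from the base case), yields
\[
HC_{t'}H^{-1}=\hat C_tJ_k+\hat C_t[\hat B_t]_+^{k\bullet}+[-\hat C_t]_+^{\bullet k}\hat B_t=\hat C_{t'},
\qquad
HG_{t'}H^{-1}=\hat G_tJ_k+\hat G_t[-\hat B_t]_+^{\bullet k}-\hat B_{t_0}[-\hat C_t]_+^{\bullet k}=\hat G_{t'}.
\]
This closes the induction and proves the lemma.

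I do not expect a genuine obstacle here: the argument is bookkeeping, and it is essentially the same computation as in \cite[Lem.~5.24]{Nak21}. The one thing that requires (minor) care is the verification in the first step that $[\,\cdot\,]_+$ and the truncations $(-)^{k\bullet}$, $(-)^{\bullet k}$ truly intertwine with conjugation by $H$; the positivity of the scalars $h_ih_j^{-1}$ is precisely what allows them to be pulled through the entrywise positive part, and this is why the hypothesis that $H$ be positive cannot be dropped. Once this is in place, the identical computation applies verbatim to $B$, $C$, and $G$, which is why it is cleanest to run the three inductions together.
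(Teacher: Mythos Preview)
Your proof is correct and is exactly the ``direct calculation'' that the paper alludes to (the paper gives no further details beyond that phrase). The only content is the observation that conjugation by a positive diagonal matrix commutes with $[\,\cdot\,]_+$, with the truncations $(-)^{k\bullet}$ and $(-)^{\bullet k}$, and fixes $J_k$, after which the induction on $d(t_0,t)$ is routine; you have carried this out cleanly.
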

%We can show this claim by a direct calculation.
%\par
Roughly speaking, the properties of matrix patterns are the same under the positive conjugations.
\par
Now, we set $H=D^{\frac{1}{2}}=\mathrm{diag}(\sqrt{d_1},\sqrt{d_2},\dots,\sqrt{d_n})$. (Algebraically speaking, there are $2^{n}$ choices for $D^{\frac{1}{2}}$ due to the signs of diagonal entries, but we fix $D^{\frac{1}{2}}$ such that all diagonal entries are positive.) We write $D^{-\frac{1}{2}}=(D^{\frac{1}{2}})^{-1}$. By this setting, we can obtain one similar simple representative under the real positive conjugations, which has appeared in \cite[Lem. 8.3]{FZ03a}. 
\begin{lemma}\label{lem: skew-symmetrizing for B}
For any skew-symmetrizable matrix $B=(b_{ij}) \in \mathrm{M}_{n}(\mathbb{R})$ with a skew-symmetrizer $D$, the $(i,j)$th entry of $D^{\frac{1}{2}}BD^{-\frac{1}{2}}$ is $\mathrm{sign}(b_{ij})\sqrt{|b_{ij}b_{ji}|}$. In particular, the matrix $D^{\frac{1}{2}}BD^{-\frac{1}{2}}$ is independent of the choice of a skew-symmetrizer $D$, and it is skew-symmetric.
\end{lemma}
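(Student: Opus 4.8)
The plan is to compute the $(i,j)$th entry of $D^{\frac{1}{2}}BD^{-\frac{1}{2}}$ directly from the diagonal shape of $D^{\frac{1}{2}}$, and then rewrite the resulting scalar using the defining relation of a skew-symmetrizer together with the sign skew-symmetry of $B$ recorded just above the statement.

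First I would note that, since $D$ has strictly positive diagonal entries, $D^{\frac{1}{2}}=\mathrm{diag}(\sqrt{d_1},\dots,\sqrt{d_n})$ and $D^{-\frac{1}{2}}=\mathrm{diag}(1/\sqrt{d_1},\dots,1/\sqrt{d_n})$ make sense; multiplying $B$ on the left by a diagonal matrix rescales its rows and on the right rescales its columns, so the $(i,j)$th entry of $D^{\frac{1}{2}}BD^{-\frac{1}{2}}$ is $\sqrt{d_i/d_j}\,b_{ij}$. Then I would split into two cases. If $b_{ij}=0$, sign skew-symmetry forces $b_{ji}=0$ as well, and the asserted formula reads $0=\mathrm{sign}(0)\sqrt{0}$, which holds. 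If $b_{ij}\neq 0$, I would use that $DB$ skew-symmetric means $d_ib_{ij}=-d_jb_{ji}$; taking absolute values gives $d_i|b_{ij}|=d_j|b_{ji}|$, hence $d_i/d_j=|b_{ji}|/|b_{ij}|$. Substituting this into $\sqrt{d_i/d_j}\,b_{ij}=\mathrm{sign}(b_{ij})\sqrt{d_i/d_j}\,|b_{ij}|$ yields exactly $\mathrm{sign}(b_{ij})\sqrt{|b_{ij}b_{ji}|}$.

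The two ``in particular'' assertions then come for free. The closed-form expression $\mathrm{sign}(b_{ij})\sqrt{|b_{ij}b_{ji}|}$ involves only the entries of $B$ and not $D$, so the matrix $D^{\frac{1}{2}}BD^{-\frac{1}{2}}$ does not depend on the chosen skew-symmetrizer. Moreover, interchanging $i$ and $j$ in that expression flips the sign, using $\mathrm{sign}(b_{ji})=-\mathrm{sign}(b_{ij})$ and the symmetry of $|b_{ij}b_{ji}|$, so the matrix is skew-symmetric.

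Honestly, there is essentially no obstacle here: the content is a one-line computation once the identity $d_ib_{ij}=-d_jb_{ji}$ is in hand. The only points requiring a little care are the bookkeeping of signs and the degenerate case $b_{ij}=0$, and both are handled immediately by invoking the sign skew-symmetry of $B$ noted before the lemma.
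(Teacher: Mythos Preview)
Your proof is correct and is exactly the natural direct computation; the paper itself does not spell out a proof for this lemma (it simply cites \cite[Lem.~8.3]{FZ03} and moves on), so your argument fills in precisely what is left implicit there.
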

\begin{definition}\label{def: Sk map}
For each skew-symmetrizable matrix $B \in \mathrm{M}_{n}(\mathbb{R})$, let
\begin{equation}
\mathrm{Sk}(B)=D^{\frac{1}{2}}BD^{-\frac{1}{2}}=\left(\mathrm{sign}(b_{ij})\sqrt{|b_{ij}b_{ji}|}\right) \in \mathrm{M}_n(\mathbb{R}).
\end{equation}
Note that by Lemma~\ref{lem: skew-symmetrizing for B}, this matrix is skew-symmetric. 
\end{definition}
By \cite[Prop.~8.20]{Rea14} and \Cref{lem: positive conjugation}, we obtain the following corollary immediately.
\begin{proposition}[Skew-symmetrizing method]\label{prop: skew-symmetrizing method}
Let $B \in \mathrm{M}_{n}(\mathbb{R})$ be a skew-symmetrizable matrix with a skew-symmetrizer $D$. Fix an initial vertex $t_0 \in \mathbb{T}_{n}$, and set ${\bf B}^{t_0}(B)=\{B_t\}$, ${\bf C}^{t_0}(B)=\{C_t\}$, ${\bf G}^{t_0}(B)=\{G_t\}$ and ${\bf B}^{t_0}(\mathrm{Sk}(B))=\{\hat{B}_t\}$, ${\bf C}^{t_0}(\mathrm{Sk}(B))=\{\hat{C}_t\}$, ${\bf G}^{t_0}(\mathrm{Sk}(B))=\{\hat{G}_t\}$. Then, we can recover $B_t$, $C_t$, and $G_t$ from the skew-symmetric patterns $\hat{B}_{t}$, $\hat{C}_{t}$, and $\hat{G}_{t}$ by the following correspondence.
\begin{equation}
B_t=D^{-\frac{1}{2}}\hat{B}_{t}D^{\frac{1}{2}},
\ 
C_t=D^{-\frac{1}{2}}\hat{C}_{t}D^{\frac{1}{2}},
\ 
G_t=D^{-\frac{1}{2}}\hat{G}_{t}D^{\frac{1}{2}}.
\end{equation}
Moreover, by setting $C_t=({\bf c}_{1;t},\dots,{\bf c}_{n;t})$, $G_t=({\bf g}_{1;t},\dots,{\bf g}_{n;t})$, $\hat{C}_t=(\hat{\bf c}_{1;t},\dots,\hat{\bf c}_{n;t})$, $\hat{G}_t=(\hat{\bf g}_{1;t},\dots,\hat{\bf g}_{n;t})$, we obtain the following correspondence for any $i=1,2,\dots,n$.
\begin{equation}
{\bf c}_{i;t}=\sqrt{d_i}D^{-\frac{1}{2}}\hat{\bf c}_{i;t},
\quad
{\bf g}_{i;t}=\sqrt{d_i}D^{-\frac{1}{2}}\hat{\bf g}_{i;t}.
\end{equation}
\end{proposition}
%\begin{proof}
%This is a direct consequence of Lemma~\ref{lem: positive conjugation}.
%\end{proof}
For $G$-fan, this idea has appeared in \cite{Rea14}. Here, we can establish this relationship between $C$-, $G$-matrices therein.
\begin{remark}\label{rmk: rk2}
Even if $B \in \mathrm{M}_{n}(\mathbb{Z})$ is an integer skew-symmetrizable matrix, $\mathrm{Sk}(B)$ is not necessarily an integer matrix. For example,
$B=
\left(\begin{smallmatrix}
0 & -2\\
1 & 0
\end{smallmatrix}\right)$
is an integer skew-symmetrizable matrix, but $\mathrm{Sk}(B)=\left(\begin{smallmatrix}
0 & -\sqrt{2}\\
\sqrt{2} & 0
\end{smallmatrix}\right)$
is not an integer matrix. This is the reason why we need to consider the generalization for real entries.
\end{remark}
From \Cref{lem: positive conjugation}, \Cref{prop: skew-symmetrizing method} and \Cref{rmk: rk2}, the following set of skew-symmetric matrices serves an important class for the ordinary cluster algebras.
\begin{equation}
\widehat{\mathcal{S}}_n=\{\mathrm{Sk}(B)\mid \textup{$B \in \mathrm{M}_{n}(\mathbb{Z})$ is an integer skew-symmetrizable matrix}\}.
\end{equation}
We say that every element of $\widehat{\mathcal{S}}_n$ is of \emph{quasi-integer type}, and the following classification is known. Recall from \Cref{def: quiver} that each skew-symmetric matrix $B$ can be identified with an $\mathbb{R}$-valued quiver $Q=Q(B)$.
\begin{proposition}[cf. {\cite[Prop.~1]{MS20}}]\label{prop: classification of quasi-integer type}
Let $Q = (q_{ij})$ be an $\mathbb{R}$-valued quiver.
Then, $Q$ is of quasi-integer type if and only if the following two conditions hold.
\begin{itemize}
\item For any $i,j=1,\dots,n$, we have $q_{ij}^{2} \in \mathbb{Z}$.
\item For any undirected cycle $(k_0,k_1,\dots,k_r)$ of $Q$ with $k_0=k_r$, the product of all weights
\begin{equation}\label{eq: cyclic product condition}
\prod_{j=1}^{r}q_{i_{j-1}i_j}
\end{equation}
is an integer.
\end{itemize}
\end{proposition}

\section{Sign-coherence}\label{sec: sign-coherence}
In the ordinary cluster theory, one of the most important properties is called {\em sign-coherence}. However, when we generalize the real entries, this condition does not always hold in general. In this section, we study the real $C$-, $G$-matrices under this assumption.
\subsection{Definition of sign-coherence}
In this section, we fix an initial vertex $t_0 \in \mathbb{T}_n$, and we write $C$-, $G$-matrices by $C_t$, $G_t$ for any $t\in \mbT_n$.
\par
To define the sign-coherence, we introduce a partial order $\leq$ on $\mathbb{R}^{n}$ such that each corresponding entry satisfies the inequality on $\mathbb{R}$.
\begin{definition}[Sign-coherence]\label{def: sign-coherence}
Consider the $C$-, $G$-patterns ${\bf C}^{t_0}(B)=\{C_t\}_{t \in \mathbb{T}_{n}}$ and ${\bf G}^{t_0}(B)=\{G_{t}\}_{t \in \mathbb{T}_{n}}$ with a skew-symmetrizable matrix $B \in \mathrm{M}_n(\mathbb{R})$, which is associated with an initial vertex $t_0$. 
\\
\textup{($a$)} We say that a $C$-matrix $C_t$ is {\em (column) sign-coherent} if every $c$-vector ${\bf c}_{i;t}$ ($i=1,2,\dots,n$) satisfies ${\bf c}_{i;t} \in \mathbb{R}_{\geq 0}^n \setminus \{\mathbf{0}\}$ or ${\bf c}_{i;t} \in \mathbb{R}_{\leq 0}^n \setminus \{\mathbf{0}\}$. In this case, let $\varepsilon_{i;t}^{t_0}=\varepsilon_{i;t} \in \{\pm1\}$ be the sign of this $c$-vector ${\bf c}_{i;t}$. We say that a $C$-pattern ${\bf C}^{t_0}(B)$ is {\em sign-coherent} if every $C$-matrix $C_t$ is sign-coherent.
\\
\textup{($b$)} We say that a $G$-matrix $G_{t}$ is {\em (row) sign-coherent} if every {\em row vector} of $G_t$ satisfies the similar condition. (Note that this does not mean the sign-coherence of $g$-vectors.) In this case, let $\tau^{t_0}_{i;t}=\tau_{i;t} \in \{\pm 1\}$ be the sign of the $i$th row vector of $G_{t}$. We say that a $G$-pattern ${\bf G}^{t_0}(B)$ is {\em sign-coherent} if every $G$-matrix $G_t$ is row sign-coherent.
\end{definition}
Without ambiguity, we sometimes simplify the three patterns ${\bf B}^{t_0}(B),{\bf C}^{t_0}(B),{\bf G}^{t_0}(B)$ to ${\bf B}(B),{\bf C}(B),{\bf G}(B)$, that is omitting the information of the initial vertex $t_0$.
\begin{definition}\label{def: sign-coherent calss}
Let $B \in \mathrm{M}_{n}(\mathbb{R})$ be a skew-symmetrizable matrix. 
We say that $B$ satisfies the {\em sign-coherent property} if both $C$-pattern and $G$-pattern are sign-coherent. Let ${\bf SC}$ be the set of all skew-symmetrizable matrices which satisfies the sign-coherent property, and we call it the {\em sign-coherent class}.
\par
If $C^{t_0}_{t}$ and $G^{t_0}_{t}$ are sign-coherent whenever $d(t_0,t) \leq d$, we say that $B$ satisfies the {\em sign-coherent property up to $d$}, and we write the set of all these matrices by ${\bf SC}^{\leq d}$. Similarly, we say that a $C$-pattern and a $G$-pattern are sign-coherent up to $d$.
\end{definition}
In the ordinary cluster theory, the following fact is known, and this is the essential fact to control $C$-, $G$-matrices.
\begin{theorem}[\cite{GHKK18}]\label{thm: sign-coherency for integer matrices}
Every integer skew-symmetrizable matrix belongs to the sign-coherent class ${\bf SC}$.
\end{theorem}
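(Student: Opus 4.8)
The plan is to follow the scattering-diagram method of \cite{GHKK18}, which settles the full skew-symmetrizable case directly. One cannot hope to reduce to the skew-symmetric case here (where the representation-theoretic arguments of \cite{DWZ10,Pla11,Nag13} apply), since by Remark~\ref{rmk: rk2} the skew-symmetrizing map $\mathrm{Sk}$ destroys integrality. So I would fix the integer skew-symmetrizable matrix $B$ together with a skew-symmetrizer $D$ and attach to this data the \emph{cluster scattering diagram} $\mathfrak{D}_B$ in $\mathbb{R}^n$: a locally finite collection of codimension-one cones (\emph{walls}), each decorated by a wall-crossing automorphism of a completed structure algebra whose skew-form is governed by $DB$. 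The diagram is built order-by-order in the maximal ideal, starting from the $n$ incoming walls $\{\mathbf{e}_i^{\perp}\}$ with their standard initial functions and adjoining further walls so as to force \emph{consistency}, i.e. path-independence of the path-ordered wall-crossing products.

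The technical heart, and the step I expect to be the main obstacle, is the \textbf{positivity} of $\mathfrak{D}_B$: every wall function is a product of factors $(1+\text{monomial})^{c}$ with $c>0$, so that all wall functions have nonnegative coefficients. I would establish this exactly as in \cite{GHKK18}, through the inductive consistency construction in which the commutators that force new walls at each successive order again land in the positive cone of the associated Lie algebra. It is precisely this positivity that propagates sign-coherence from the manifestly sign-coherent initial walls to \emph{every} wall of the diagram.

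Granting positivity, I would read the $c$- and $g$-vectors off $\mathfrak{D}_B$. The clusters are indexed by the full-dimensional \emph{cluster chambers}, whose generating rays are the $g$-vectors: the cone at $t$ is spanned by the columns of $G^{t_0}_t$, and these chambers assemble into the $G$-fan (Definition~\ref{def: G-fan}) sitting inside $\mathfrak{D}_B$. Passing between two $k$-adjacent cluster chambers is the mutation $\mu_k$, and the primitive normal to the separating wall is the $c$-vector $\mathbf{c}_{k;t}$. Because a wall of a \emph{positive} scattering diagram carries a single well-defined primitive normal lying in the closed half-space cut out by its monomial, this $c$-vector lies entirely in one closed orthant; letting $t$ and $k$ range over all choices yields the (column) sign-coherence of every $C$-matrix $C^{t_0}_t$ in the sense of Definition~\ref{def: sign-coherence}.

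Finally, the (row) sign-coherence of the $G$-matrices is obtained from the (column) sign-coherence of the $C$-matrices by the tropical duality of \cite{NZ12} (repackaged in \cite[\S II.1]{Nak23}). Concretely, under sign-coherence one has a relation of the form $G^{t_0}_t = D^{-1}(C^{t_0}_t)^{-\top}D$ (up to the transpose and conjugation conventions of \cite{NZ12}); since $D$ is a positive diagonal matrix it does not affect signs, so the sign pattern of row $i$ of $G^{t_0}_t$ coincides with that of column $i$ of $(C^{t_0}_t)^{-1}$, and the sign-coherence of these columns is exactly the content of the tropical-duality package (the inverse of a sign-coherent $C$-matrix is again sign-coherent). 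Combining the two statements gives $B\in\mathbf{SC}$.
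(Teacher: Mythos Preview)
The paper does not give its own proof of this statement; it is quoted as a known theorem with the citation \cite{GHKK18}, so there is no internal argument to compare against. Your outline is a faithful sketch of the scattering-diagram proof from that reference, and the first three paragraphs (construction of $\mathfrak{D}_B$, positivity, and reading off the $c$-vectors as wall normals) are correct in spirit.

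The only imprecision is in your last paragraph. The parenthetical claim ``the inverse of a sign-coherent $C$-matrix is again sign-coherent'' is false as a general linear-algebraic statement: for instance $\left(\begin{smallmatrix}1&1\\0&1\end{smallmatrix}\right)$ is column sign-coherent but its inverse $\left(\begin{smallmatrix}1&-1\\0&1\end{smallmatrix}\right)$ is not. What the tropical-duality package of \cite{NZ12} actually gives you is the third duality $G^{t_0}_t=(C^{B^\top;t}_{t_0})^\top$, so row sign-coherence of $G$ is equivalent to column sign-coherence of the $C$-matrices for the \emph{transposed} pattern ${\bf B}(B^\top)$. Since $B^\top$ is again integer skew-symmetrizable, you may apply the scattering-diagram positivity to $B^\top$ as well and then invoke the duality; alternatively, and more directly in the spirit of \cite{GHKK18}, the row sign-coherence of $G$ is immediate from the fact that the cluster chambers of $\mathfrak{D}_B$ are connected components of the complement of the walls, and the initial walls already include all coordinate hyperplanes $\{\mathbf{e}_i^\perp\}$, so each chamber lies in a single orthant. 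Either route closes the gap; just do not present it as a formal consequence of column sign-coherence of $C^{t_0}_t$ alone.
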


\begin{example}\label{ex: sign-incoherent}
In ordinary integer cluster algebra theory, all the $C$-patterns are sign-coherent. However, if we generalize the real entries, we obtain a counterexample. For example, we take an initial exchange matrix
\begin{equation}
B=\left(\begin{matrix}
0 & \frac{1}{2}\\
-\frac{1}{2} & 0
\end{matrix}\right).
\end{equation}
Then, we find a non sign-coherent $C$-matrix as follows:
\begin{equation}
\left(\begin{matrix}
1 & 0\\
0 & 1
\end{matrix}\right)
\overset{1}{\mapsto}
\left(\begin{matrix}
-1 & \frac{1}{2}\\
0 & 1
\end{matrix}\right)
\overset{2}{\mapsto}
\left(\begin{matrix}
-\frac{3}{4} & -\frac{1}{2}\\
\frac{1}{2} & -1
\end{matrix}\right).
\end{equation}
We will give more examples in Theorem~\ref{thm: rank 2 classification} and Theorem~\ref{thm: finite type classifcation}.
\end{example}

\begin{proposition}\label{prop: positive conjegation and sign-coherency}
For any skew-symmetrizable matrix $B \in \mathrm{M}_{n}(\mathbb{R})$, the following three conditions are equivalent.
\begin{itemize}
\item $B$ satisfies the sign-coherent property.
\item $\mathrm{Sk}(B)$ satisfies the sign-coherent property.
\item For any positive diagonal matrix $H \in \mathrm{M}_{n}(\mathbb{R})$, $HBH^{-1}$ satisfies the sign-coherent property.
\end{itemize}
\end{proposition}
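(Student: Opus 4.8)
The plan is to reduce the whole proposition to one elementary fact: conjugation by a positive diagonal matrix rescales every entry of a matrix by a strictly positive factor, and therefore preserves the sign pattern of each row and of each column.

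First I would record the consequences of Lemma~\ref{lem: positive conjugation}. Let $H=\mathrm{diag}(h_1,\dots,h_n)$ with all $h_i>0$ and put $\hat{B}=HBH^{-1}$; then $\hat{C}_t=HC_tH^{-1}$ and $\hat{G}_t=HG_tH^{-1}$ for every $t\in\mathbb{T}_n$, relative to the same initial vertex $t_0$. The $j$th column of $\hat{C}_t$ equals $h_j^{-1}H{\bf c}_{j;t}$, and since $H$ is a positive diagonal matrix, $\mathrm{sign}$ acts entrywise on $H{\bf c}_{j;t}$ exactly as on ${\bf c}_{j;t}$, while the extra factor $h_j^{-1}>0$ changes nothing. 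Hence $\hat{\bf c}_{j;t}\in\mathfrak{O}_{+}^n$ (resp. $\in\mathfrak{O}_{-}^{n}$) if and only if ${\bf c}_{j;t}\in\mathfrak{O}_{+}^n$ (resp. $\in\mathfrak{O}_{-}^{n}$). Dually, the $i$th row of $\hat{G}_t$ is $h_i$ times the $i$th row of $G_tH^{-1}$, and right multiplication by the positive diagonal matrix $H^{-1}$ again only rescales entries by positive numbers, so the $i$th row of $\hat{G}_t$ is sign-coherent precisely when the $i$th row of $G_t$ is. Thus $C_t$ is column sign-coherent iff $\hat{C}_t$ is, and $G_t$ is row sign-coherent iff $\hat{G}_t$ is, for each $t$; running over all $t\in\mathbb{T}_n$, it follows that $B$ satisfies the sign-coherent property if and only if $HBH^{-1}$ does. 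This is the equivalence of the first and third conditions: the first is the instance $H=I_n$ of the third, and conversely the third for $B$ is the displayed equivalence applied with conjugator $H$.

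For the second condition, recall that $\mathrm{Sk}(B)=D^{\frac12}BD^{-\frac12}$ is itself a positive conjugation (Definition~\ref{def: Sk map}), so ``$\mathrm{Sk}(B)$ satisfies the sign-coherent property'' is exactly the instance $H=D^{\frac12}$ of the third condition; combining this with the first–third equivalence already established closes the cycle, since $B=D^{-\frac12}\mathrm{Sk}(B)D^{\frac12}$. One can equally read the same conclusion off the column relations ${\bf c}_{i;t}=\sqrt{d_i}\,D^{-\frac12}\hat{\bf c}_{i;t}$ of Theorem~\ref{thm: skew-symmetrizing method}, which visibly preserve signs.

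I do not expect any genuine obstacle. The only point deserving a word of care is that sign-coherence permits zero entries — a vector counts as sign-coherent when it lies in the closed orthant $\mathfrak{O}_{+}^n$ or $\mathfrak{O}_{-}^{n}$ — and multiplication by a positive scalar maps each closed orthant to itself, so the argument covers degenerate $c$- and $g$-vectors verbatim; one should also keep the initial vertex $t_0$ fixed throughout, so that the equivalence is genuinely vertex-by-vertex and hence passes to the whole pattern.
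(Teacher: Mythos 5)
Your argument is correct and follows the same route as the paper, which deduces the proposition directly from Lemma~\ref{lem: positive conjugation} (resp. Theorem~\ref{thm: skew-symmetrizing method}) by observing that conjugation by a positive diagonal matrix only rescales columns of $C_t$ and rows of $G_t$ by positive factors, hence preserves sign-coherence; your write-up merely supplies the details the paper leaves as "easily shown." No gaps.
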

\begin{proof}
This can be shown easily by \Cref{lem: positive conjugation} and \Cref{prop: skew-symmetrizing method}.
\end{proof}
In particular, the following class satisfies the sign-coherent property.
\begin{corollary}\label{cor: sign-coherency for quasi-integer matrices}
Every quasi-integer skew-symmetrizable matrix belongs to the sign-coherent class ${\bf SC}$. 
\end{corollary}
According to \Cref{prop: classification of quasi-integer type}, this class has been completely and clearly characterized by a combinatorial condition of quivers. %For example, the quiver in Figure~\ref{fig: Quiver for example} belongs to ${\bf SC}$.
%\begin{proof}
%This is a consequence of Theorem~\ref{thm: sign-coherency for integer matrices} and Proposition~\ref{prop: positive conjegation and sign-coherency}.
%\end{proof}

\subsection{Recursion and second duality under the sign-coherence}
When we assume the sign-coherence of $C$-matrices, we simplify the recursion \eqref{eq: mutation of C, G matrices} as follows, see also \cite{NZ12}.
\begin{lemma}
Let $B \in \mathrm{M}_{n}(\mathbb{R})$ be a skew-symmetrizable matrix. Let $t \in \mathbb{T}_{n}$ and suppose that this $C$-matrix $C_{t}$ is sign-coherent. Then, for any $k$-adjacent vertex $t' \in \mathbb{T}_{n}$ to $t$, we have
\begin{equation}\label{CG rec}
\begin{aligned}
C_{t'}&=C_t(J_k+[\varepsilon_{k;t}B_{t}]^{k \bullet}_{+}),\\
G_{t'}&=G_{t}(J_k+[-\varepsilon_{k;t}B_t]^{\bullet k}_{+}).
\end{aligned}
\end{equation}
Moreover, for any $i=1,2,\dots,n$, we obtain the following recursions for $c$- and $g$-vectors.
\begin{equation}\label{eq: mutation for c-,g-vectors}
\begin{aligned}
{\bf c}_{i;t'}&=\begin{cases}
-{\bf c}_{k;t} & i=k,\\
{\bf c}_{i;t}+[\varepsilon_{k;t}b_{ki;t}]_{+}{\bf c}_{k;t} & i \neq k,
\end{cases}
\\
{\bf g}_{i;t'}&=\begin{cases}
-{\bf g}_{k;t}+\sum_{j=1}^{n}[-\varepsilon_{k;t}b_{jk;t}]_{+}{\bf g}_{j;t} & i=k,\\
{\bf g}_{i;t} & i \neq k.
\end{cases}
\end{aligned}
\end{equation}
\end{lemma}
Based on this recursion, we obtain the following fundamental properties of $C$-, $G$-matrices.
\begin{proposition}\label{prop: fundamental properties under sign-coherency}
Let $B \in \mathrm{M}_n(\mathbb{R})$ be a skew-symmetrizable matrix with a skew-symmetrizer $D$. Suppose that its $C$-pattern ${\bf C}(B)$ is sign-coherent up to $d \in \mathbb{Z}_{\geq 0}$. Then, for any $t \in \mathbb{T}_{n}$ with $d(t_0,t)=d+1$, the following statements hold. \textup{(}We do not have to assume the sign-coherence of this $C_{t}$.\textup{)}
\\
\textup{$(a)$} We have $|C_{t}|=|G_{t}| \in \{\pm 1\}$. In particular, $C_{t}$ and $G_{t}$ are unimodular matrices over $\mathbb{Z}_{B}$. Namely,
\begin{itemize}
\item every entry of their inverse matrices $C_{t}^{-1}$, $G_{t}^{-1}$ also belongs to $\mathbb{Z}_{B}$.
\item each $\{{\bf c}_{i;t} \mid i=1,\dots,n\}$ and $\{{\bf g}_{i;t} \mid i=1,\dots,n\}$ is a basis of $(\mathbb{Z}_B)^{\times n}$ as a free $\mathbb{Z}_{B}$-module.
\end{itemize}
\textup{$(b)$} The second duality relation holds:
\begin{equation}\label{eq: second duality}
D^{-1}G_{t}^{\top}DC_{t}=I.
\end{equation}
\textup{($c$)} We have
\begin{equation}\label{eq: from C to B}
DB_t=C_{t}^{\top}DB_{t_0}C_{t}.
\end{equation}
\end{proposition}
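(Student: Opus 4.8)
The plan is to follow the integer-case argument of \cite{NZ12} (cf.\ \cite{Nak23}), the key point being that the simplified recursion \eqref{CG rec} is available throughout. Indeed, since ${\bf C}(B)$ is sign-coherent up to $d$ and $d(t_0,t)=d+1$, along any geodesic $t_0,t_1,\dots,t_{d+1}=t$ in $\mathbb{T}_n$ every $C_{t_i}$ with $i\le d$ is sign-coherent, so \eqref{CG rec} applies at each step and gives $C_{t_{i+1}}=C_{t_i}P_i$, $G_{t_{i+1}}=G_{t_i}Q_i$ with $P_i=J_{k_i}+[\varepsilon_{k_i;t_i}B_{t_i}]^{k_i\bullet}_+$ and $Q_i=J_{k_i}+[-\varepsilon_{k_i;t_i}B_{t_i}]^{\bullet k_i}_+$. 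Because $B_{t_i}$ has zero diagonal, the rank-one update identity $\det(I+uv^{\top})=1+v^{\top}u$ gives $\det P_i=\det Q_i=-1$, hence $\det C_t=\det G_t=(-1)^{d+1}\in\{\pm1\}$, which is the first claim of ($a$). The entries of $C_t$ and $G_t$ lie in $\mathbb{Z}_B$ by Proposition~\ref{prop: ring ZB}, so Cramer's rule forces $C_t^{-1},G_t^{-1}\in\mathrm{M}_n(\mathbb{Z}_B)$; and an invertible matrix over the commutative ring $\mathbb{Z}_B$ has columns forming a $\mathbb{Z}_B$-basis of $(\mathbb{Z}_B)^{n}$, which gives the remaining assertions of ($a$).

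For ($b$) I would prove $G_t^{\top}DC_t=D$ (equivalently \eqref{eq: second duality}) by induction on $d(t_0,t)$, again along a geodesic as above. The base case $t=t_0$ is $I^{\top}DI=D$. For the inductive step, write $t'$ for the next vertex, and abbreviate $k=k_i$, $\varepsilon=\varepsilon_{k;t}$, $B=B_t$, $P=P_i$, $Q=Q_i$; then $G_{t'}^{\top}DC_{t'}=Q^{\top}(G_t^{\top}DC_t)P=Q^{\top}DP$, so it suffices to show $Q^{\top}DP=D$. Here the skew-symmetrizer identity $DB=-B^{\top}D$, i.e.\ $d_ib_{ij}=-d_jb_{ji}$, yields the entrywise equality $[-\varepsilon B^{\top}]_+D=D[\varepsilon B]_+$, whence, using $J_kD=DJ_k$ and $E_{kk}D=d_kE_{kk}$, one gets $Q^{\top}D=(J_k+[-\varepsilon B^{\top}]_+^{k\bullet})D=DJ_k+d_k[\varepsilon B]_+^{k\bullet}$. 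Multiplying by $P=J_k+[\varepsilon B]_+^{k\bullet}$ and expanding, the identities $J_kJ_k=I$, $J_kE_{kk}=-E_{kk}$, $E_{kk}[\varepsilon B]_+J_k=E_{kk}[\varepsilon B]_+$ and $E_{kk}[\varepsilon B]_+E_{kk}=0$ (the last two once more from the vanishing diagonal of $B$) make the two cross terms cancel, leaving $Q^{\top}DP=D$. This proves ($b$); transposing it gives the equivalent form $C_t^{\top}DG_t=D$.

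Part ($c$) is then immediate from ($b$) and the first duality \eqref{eq: first duality}: since $B_{t_0}C_t=G_tB_t$, we obtain $C_t^{\top}DB_{t_0}C_t=C_t^{\top}D\,G_tB_t=(C_t^{\top}DG_t)B_t=DB_t$. The only step carrying genuine content is the identity $Q^{\top}DP=D$ in ($b$): this is where skew-symmetrizability (through $DB_t=-B_t^{\top}D$) and the vanishing of the diagonal of $B_t$ are actually used, and the $E_{kk}$-bookkeeping there is the part most prone to sign errors. Everything else is a determinant identity, an invocation of Proposition~\ref{prop: ring ZB}, or formal manipulation of the two dualities.
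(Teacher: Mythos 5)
Your proof is correct and follows essentially the same route as the paper: part ($a$) is handled exactly as in the paper's proof (determinant $-1$ of each mutation factor from \eqref{CG rec}, induction along the geodesic, and Proposition~\ref{prop: ring ZB} plus Cramer's rule), while your inductive verification of $Q^{\top}DP=D$ for ($b$) and the derivation of ($c$) from ($b$) and the first duality \eqref{eq: first duality} are precisely the arguments of \cite{NZ12, Nak23} to which the paper delegates those parts. The only difference is that you spell out the details the paper cites, and your bookkeeping (including the use of $b_{kk}=0$ and $d_ib_{ij}=-d_jb_{ji}$) checks out.
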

\begin{proof} Firstly, according to \eqref{CG rec}, by the fact that 
\begin{align}
|J_k+[\varepsilon_{k;t}B_{t}]^{k \bullet}_{+}|=|J_k+[-\varepsilon_{k;t} B_{t}]^{\bullet k}_{+}|=-1
\end{align}
and the induction, we obtain that $|C_{t}|=|G_{t}| \in \{\pm 1\}$ for any $t \in \mathbb{T}_{n}$. Then, by Lemma~\ref{prop: ring ZB}, we have $C_{t}, G_{t}\in \mathrm{M}_{n}(\mathbb{Z}_B)$. Thus, $C_t$ and $G_t$ are unimodular matrices. Thus, the claim $(a)$ holds. The proof of claims $(b)$ and $(c)$ can be referred to \cite[Eq.(3.11)]{NZ12}, \cite[Prop 2.3]{Nak23} and \cite[Eq. (2.9)]{NZ12}, \cite[Prop. 2.6]{Nak23} respectively.
\end{proof}
\subsection{Geometric property under the sign-coherence}\label{sec: geometric properties under the sign-coherency}
The second duality (\ref{eq: second duality}) can be seen as a geometric properties in $c$-, $g$-vectors.
In this section, we fix one initial exchange matrix $B \in \mathrm{M}_{n}(\mathbb{R})$ with a skew-symmetrizer $D=\mathrm{diag}(d_1,\dots,d_n)$. We introduce an inner product $\langle\,,\,\rangle_{D}$ on $\mathbb{R}^n$ by
\begin{equation}\label{eq: definition of inner product}
\langle {\bf a},{\bf b}\rangle_{D}={\bf a}^{\top}D{\bf b}.
\end{equation}
Note that the $(i,j)$th entry of $G_{t}^{\top}DC_{t}$ is $\langle {\bf g}_{i;t},{\bf c}_{j;t}\rangle_{D}$. Thus, by considering (\ref{eq: second duality}), we obtain the following geometric relationship between $c$-, $g$-vectors.
\begin{proposition}[cf. {\cite[Prop.~2.16]{Nak23}}]\label{prop: orhogonal relations}
Let $B \in \mathrm{M}_{n}(\mathbb{R})$ be a skew-symmetrizable matrix. Suppose that its $C$-pattern ${\bf C}(B)$ is sign-coherent up to $d \in \mathbb{Z}_{\geq 0}$. Then, for any $t \in \mathbb{T}_{n}$ and $i,j=1,\dots,n$, if $d(t_0,t) \leq d+1$, we have
\begin{equation}\label{eq: orthogonal lemma}
\langle {\bf g}_{i;t},{\bf c}_{j,t} \rangle_{D}= \begin{cases}
d_i & i=j,\\
0 & i \neq j.
\end{cases}
\end{equation}
\end{proposition}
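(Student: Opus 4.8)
The plan is to deduce Proposition~\ref{prop: orhogonal relations} directly from the second duality relation \eqref{eq: second duality}, which is available under the stated hypothesis by Proposition~\ref{prop: fundamental properties under sign-coherency}~$(b)$. The key observation is purely a matter of unwinding the inner product $\langle\,,\,\rangle_{D}$ in terms of matrix multiplication: for column vectors ${\bf a},{\bf b}\in\mathbb{R}^n$ we have $\langle{\bf a},{\bf b}\rangle_D={\bf a}^\top D{\bf b}$ by definition \eqref{eq: definition of inner product}, so the $(i,j)$th entry of the matrix product $G_t^\top D C_t$ is exactly ${\bf g}_{i;t}^\top D\,{\bf c}_{j;t}=\langle{\bf g}_{i;t},{\bf c}_{j;t}\rangle_D$, since the $i$th row of $G_t^\top$ is ${\bf g}_{i;t}^\top$ and the $j$th column of $C_t$ is ${\bf c}_{j;t}$.

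First I would note that the hypothesis $d(t_0,t)\le d+1$ splits into two cases: if $d(t_0,t)\le d$ then $C_t$ is sign-coherent by assumption, and if $d(t_0,t)=d+1$ then Proposition~\ref{prop: fundamental properties under sign-coherency} applies without needing sign-coherence of $C_t$ itself; in either case the second duality \eqref{eq: second duality}, namely $D^{-1}G_t^\top D C_t = I$, holds. Multiplying on the left by $D$ gives $G_t^\top D C_t = D$. Reading off the $(i,j)$th entry of both sides and using the identification above yields
\begin{equation}
\langle{\bf g}_{i;t},{\bf c}_{j;t}\rangle_D = (D)_{ij} = \begin{cases} d_i & i=j,\\ 0 & i\neq j,\end{cases}
\end{equation}
which is precisely \eqref{eq: orthogonal lemma}.

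There is essentially no obstacle here: the entire content is the bookkeeping identity $\big(G_t^\top D C_t\big)_{ij}=\langle{\bf g}_{i;t},{\bf c}_{j;t}\rangle_D$ together with invoking the already-established second duality. The only point requiring a word of care is the range of $t$: one must check that Proposition~\ref{prop: fundamental properties under sign-coherency}~$(b)$ genuinely covers the boundary case $d(t_0,t)=d+1$ (it does, as its statement explicitly allows this), so that \eqref{eq: second duality} is valid throughout the claimed range $d(t_0,t)\le d+1$. Everything else is a one-line computation, and I would present it as such rather than expanding further.
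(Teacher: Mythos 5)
Your proposal is correct and matches the paper's own argument: the proposition is obtained exactly by rewriting the second duality \eqref{eq: second duality} as $G_t^\top D C_t = D$ and reading off the $(i,j)$th entry as $\langle {\bf g}_{i;t},{\bf c}_{j;t}\rangle_D$, which is precisely the observation the paper makes just before stating the result. Your extra remark on the range $d(t_0,t)\le d+1$ (applying Proposition~\ref{prop: fundamental properties under sign-coherency} with a smaller $d$ when $d(t_0,t)\le d$) is a fine way to justify the stated range.
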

Based on this property, we can rephrase the sign-coherence of $c$-vectors into the geometric property of $g$-vectors. To state it, we introduce the notion of cone.
\begin{definition}[cone]
\textup{($a$)} Let ${\bf a}_{1},\dots,{\bf a}_{r}$ be a set of vectors. Then, the following set $\mathcal{C}({\bf a}_1,\dots,{\bf a}_{r})$ is called a {\em \textup{(}polyhedral\textup{)} cone}.
\begin{equation}
\mathcal{C}({\bf a}_1,\dots,{\bf a}_{r})=\left\{\left.\sum_{i=1}^{r}\lambda_{i}{\bf a}_{i}\  \right|\ \lambda_{i} \geq 0\right\} \subset \mathbb{R}^n.
\end{equation}
If we can take ${\bf a_1},\dots,{\bf a}_{r}$ as linearly independent vectors, we say that a cone $\mathcal{C}({\bf a}_1,\dots,{\bf a}_r)$ is {\em simplicial}.
Conventionally, we write $\mathcal{C}(\emptyset)=\{{\bf 0}\}$, and we also call it a simplicial cone.
We denote its {\em relative interior} by $\mathcal{C}^{\circ}({\bf a}_1,\dots,{\bf a}_{r})$, which is the interior of $\mathcal{C}({\bf a}_1,\dots,{\bf a}_{r})$ in the linear subspace spanned by $\mathcal{C}({\bf a}_1,\dots,{\bf a}_{r})$. In particular, the relative interior of a simplicial cone $\mathcal{C}({\bf a}_1,\dots,{\bf a}_{r})$ is given by
\begin{equation}
\mathcal{C}^{\circ}({\bf a}_1,\dots,{\bf a}_{r})=\left\{\left.\sum_{i=1}^{r}\lambda_{i}{\bf a}_{i}\  \right|\ \lambda_{i} > 0\right\}.
\end{equation}
\textup{($b$)} For any cone $\mathcal{C}({\bf a}_1,\dots,{\bf a}_{r})$ and $J \subset \{1,\dots,r\}$, we define the following set.
\begin{equation}
\mathcal{C}_{J}({\bf a}_1,\dots,{\bf a}_{r})=\left\{\left.\sum_{j \in J}\lambda_{j}{\bf a}_{j}\ \right|\ \lambda_{j} \geq 0\right\}.
\end{equation}
When ${\bf a}_1,\dots,{\bf a}_r$ are linearly independent, this set is called a {\em face} of $\mathcal{C}({\bf a}_1,\dots,{\bf a}_r)$.
Note that $\mathcal{C}_{\{1,\dots,r\}}({\bf a}_1,\dots,{\bf a}_{r})=\mathcal{C}({\bf a}_1,\dots,{\bf a}_{r})$ is a face of the cone $\mathcal{C}({\bf a}_1,\dots,{\bf a}_{r})$ itself. Conventionally, we set the trivial face $\mathcal{C}_{\emptyset}({\bf a}_1,\dots,{\bf a}_{r})=\{{\bf 0}\}$.
\end{definition}
\begin{definition}[$G$-cone]\label{def: G-cone}
Let $B \in \mathrm{M}_{n}(\mathbb{R})$ be a skew-symmetrizable matrix. For each $G$-matrix $G_{t}=({\bf g}_{1;t},{\bf g}_{2;t},\dots,{\bf g}_{n;t})$, we call the following set $\mathcal{C}(G_{t})$ a {\em $G$-cone}.
\begin{equation}
\mathcal{C}(G_t)=\mathcal{C}({\bf g}_{1;t}, \dots, {\bf g}_{n;t}).
\end{equation}
We write $\mathcal{C}_J(G_t)=\mathcal{C}_{J}({\bf g}_{1;t},\dots,{\bf g}_{n;t})$ for $J \subset \{1,2,\dots,n\}$. 
\end{definition}
By Proposition~\ref{prop: fundamental properties under sign-coherency}~($a$), if the $C$-pattern ${\bf C}(B)$ is sign-coherent, every $G$-cone $\mathcal{C}(G_t)$ is simplicial.
\par
For each ${\bf v} \in \mathbb{R}^n \setminus \{\mathbf{0}\}$, we define
\begin{equation}
\begin{aligned}
\mathcal{H}_{\bf v}&=\{{\bf x} \in \mathbb{R}^n \mid \langle {\bf x},{\bf v}\rangle_{D}=0\},\\
\mathcal{H}_{\bf v}^{+}&=\{{\bf x} \in \mathbb{R}^n \mid \langle {\bf x},{\bf v}\rangle_{D}>0\},\\
\mathcal{H}_{\bf v}^{-}&=\{{\bf x} \in \mathbb{R}^n \mid \langle {\bf x},{\bf v}\rangle_{D}<0\}.
\end{aligned}
\end{equation}
We write $\overline{\mathcal{H}}^{+}_{\bf v}=\mathcal{H}^{+}_{\bf v}\cup\mathcal{H}_{\bf v}$ and $\overline{\mathcal{H}}^{-}_{\bf v}=\mathcal{H}^{-}_{\bf v}\cup\mathcal{H}_{\bf v}$, which are their closures.
Then, we express $G$-cones by $c$-vectors.
\begin{lemma}\label{lem: c-vector expression of G-cones}
Let $B \in \mathrm{M}_n(\mathbb{R})$ be a skew-symmetrizable matrix with a skew-symmetrizer $D$. Suppose that ${\bf C}(B)$ is sign-coherent up to $d \in \mathbb{Z}_{\geq 0}$. Then, for any $t \in \mathbb{T}_{n}$ with $d(t_0,t) \leq d+1$, we have
\begin{equation}
\mathcal{C}(G_t)=\bigcap_{i=1}^{n} \overline{\mathcal{H}}_{{\bf c}_{i;t}}^{+}.
\end{equation}
\end{lemma}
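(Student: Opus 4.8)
The plan is to prove the two inclusions separately, the only inputs being the orthogonality relations of Proposition~\ref{prop: orhogonal relations} and the fact that the $g$-vectors at $t$ form a basis of $\mathbb{R}^n$. Throughout I use that, since $D=\mathrm{diag}(d_1,\dots,d_n)$ is a positive diagonal matrix, $\langle\,,\,\rangle_D$ is a genuine (symmetric, positive-definite) inner product, so in particular each $d_i>0$ may be divided out.

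First I would handle $\mathcal{C}(G_t)\subseteq\bigcap_{i=1}^n\overline{\mathcal{H}}^+_{{\bf c}_{i;t}}$. Take an arbitrary ${\bf x}=\sum_{j=1}^n\lambda_j{\bf g}_{j;t}$ with all $\lambda_j\ge 0$. For each $i$, bilinearity of $\langle\,,\,\rangle_D$ together with Proposition~\ref{prop: orhogonal relations} gives $\langle{\bf x},{\bf c}_{i;t}\rangle_D=\sum_j\lambda_j\langle{\bf g}_{j;t},{\bf c}_{i;t}\rangle_D=\lambda_i d_i\ge 0$, so ${\bf x}\in\overline{\mathcal{H}}^+_{{\bf c}_{i;t}}$; since $i$ was arbitrary, ${\bf x}$ lies in the intersection.

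For the reverse inclusion I would first record that, because ${\bf C}(B)$ is sign-coherent up to $d$ and $d(t_0,t)\le d+1$, Proposition~\ref{prop: fundamental properties under sign-coherency}~$(a)$ yields $|G_t|\in\{\pm1\}$, so ${\bf g}_{1;t},\dots,{\bf g}_{n;t}$ is a basis of $\mathbb{R}^n$ (this is exactly where the hypothesis $d(t_0,t)\le d+1$, rather than $\le d$, is used, and note that Proposition~\ref{prop: orhogonal relations} holds in the same range). Hence any ${\bf x}\in\bigcap_{i}\overline{\mathcal{H}}^+_{{\bf c}_{i;t}}$ can be written uniquely as ${\bf x}=\sum_j\lambda_j{\bf g}_{j;t}$ with $\lambda_j\in\mathbb{R}$. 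Repeating the computation above, the assumption ${\bf x}\in\overline{\mathcal{H}}^+_{{\bf c}_{i;t}}$ reads $0\le\langle{\bf x},{\bf c}_{i;t}\rangle_D=\lambda_i d_i$, which forces $\lambda_i\ge 0$ for every $i$; therefore ${\bf x}\in\mathcal{C}(G_t)$.

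There is essentially no hard step here; this is the real-entry analogue of \cite[Prop.~2.16 and its corollaries]{Nak23}. The only points requiring care are bookkeeping: verifying that the distance hypothesis $d(t_0,t)\le d+1$ is precisely what makes both Proposition~\ref{prop: orhogonal relations} and the unimodularity part of Proposition~\ref{prop: fundamental properties under sign-coherency} applicable (so that we do \emph{not} need to assume $C_t$ itself is sign-coherent), and noting that positivity of the $d_i$ is what lets us pass between the sign of $\lambda_i$ and the sign of $\langle{\bf x},{\bf c}_{i;t}\rangle_D$.
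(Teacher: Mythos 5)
Your proposal is correct and follows essentially the same argument as the paper: the forward inclusion is immediate from the orthogonality relations $\langle {\bf g}_{j;t},{\bf c}_{i;t}\rangle_D=\delta_{ij}d_i$, and the reverse inclusion expands ${\bf x}$ in the basis $\{{\bf g}_{j;t}\}$ (available by Proposition~\ref{prop: fundamental properties under sign-coherency}) and reads off $\lambda_i\geq 0$ from $d_i\lambda_i=\langle{\bf x},{\bf c}_{i;t}\rangle_D\geq 0$. Your explicit bookkeeping of why the range $d(t_0,t)\leq d+1$ suffices is a fine (if slightly more detailed) match to the paper's reasoning.
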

\begin{proof}
By (\ref{eq: orthogonal lemma}), it is direct that one inclusion $\mathcal{C}(G_t) \subset \bigcap_{i=1}^{n}\overline{\mathcal{H}}_{{\bf c}_{i;t}}^{+}$ holds. Now, we aim to show $\bigcap_{i=1}^{n}\overline{\mathcal{H}}_{{\bf c}_{i;t}}^{+} \subset \mathcal{C}(G_t)$. Take any ${\bf x} \in \bigcap_{i=1}^{n}\overline{\mathcal{H}}_{{\bf c}_{i;t}}^{+}$. Since $\{{\bf g}_{1;t},\dots,{\bf g}_{n;t}\}$ is a basis of $\mathbb{R}^n$, we express ${\bf x}=\sum_{i=1}^{n}x_i{\bf g}_{i;t}$. By (\ref{eq: orthogonal lemma}), we have
\begin{equation}
d_ix_i=\langle {\bf x},{\bf c}_{i;t}\rangle_{D} \geq 0.
\end{equation}
In particular, $x_i \geq 0$ holds for any $i=1,\dots,n$. Thus, we have ${\bf x}=\sum x_i{\bf g}_{i;t} \in \mathcal{C}(G_t)$.
\end{proof}
Through the above expression, sign-coherence imposes the strong restriction of $G$-cones.
\begin{proposition}[cf.~{\cite[Cond.~II.2.28]{Nak23}}]\label{prop: lemma for cones}
Let $B \in \mathrm{M}_{n}(\mathbb{R})$ be a skew-symmetrizable matrix with a skew-symmetrizer $D$. Suppose that $B$ satisfies the sign-coherent property. Then, we have the following statements.
\\
\textup{($a$)} Every $G$-cone $\mathcal{C}(G_t)$ is a subset of the orthant $\mathfrak{O}_{\tau_{1;t},\tau_{2;t},\dots,\tau_{n;t}}$.
\\
\textup{($b$)} The intersection between a $G$-cone $\mathcal{C}(G_t)$ and the positive orthant $\mathfrak{O}_{+}^{n}$ \textup{(}resp. the negative orthant $\mathfrak{O}_{-}^{n}$\textup{)} is expressed as
\begin{equation}\label{eq: G-cones and positive orthant}
\mathcal{C}(G_t) \cap \mathfrak{O}_{+}^{n} = \mathcal{C}_{J}({\bf e}_1,\dots,{\bf e}_n) \quad (\textup{resp}.\ \mathcal{C}(G_t) \cap \mathfrak{O}_{-}^{n} = \mathcal{C}_{J}(-{\bf e}_1,\dots,-{\bf e}_n))
\end{equation}
for some $J \subset \{1,2,\dots,n\}$.
\end{proposition}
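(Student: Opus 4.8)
The plan is to derive both parts directly from the definition of sign-coherence together with the $c$-vector description of $G$-cones in Lemma~\ref{lem: c-vector expression of G-cones} and the orthogonality relation \eqref{eq: orthogonal lemma}. A preliminary remark used throughout: by Proposition~\ref{prop: fundamental properties under sign-coherency}~$(a)$ the matrices $C_t$ and $G_t$ are unimodular, hence invertible, so no column of $C_t$ and no row of $G_t$ is zero; thus every $\varepsilon_{i;t}$ and every $\tau_{i;t}$ lies in $\{\pm1\}$, and in particular the orthant $\mathfrak{O}_{\tau_{1;t},\dots,\tau_{n;t}}$ appearing in $(a)$ is well defined.

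For part $(a)$, a generic element of $\mathcal{C}(G_t)$ is $\sum_{j=1}^{n}\lambda_j{\bf g}_{j;t}$ with $\lambda_j\geq0$, and its $i$th coordinate equals $\sum_{j=1}^{n}\lambda_j(G_t)_{ij}$, a non-negative linear combination of the entries of the $i$th row of $G_t$. Since that row is sign-coherent with sign $\tau_{i;t}$, this coordinate lies in $\mathbb{R}_{\tau_{i;t}}$; ranging over $i=1,\dots,n$ gives $\mathcal{C}(G_t)\subseteq\mathfrak{O}_{\tau_{1;t},\dots,\tau_{n;t}}$.

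For part $(b)$, I would use Lemma~\ref{lem: c-vector expression of G-cones} to write $\mathcal{C}(G_t)\cap\mathfrak{O}_+^n=\bigcap_{i=1}^{n}\bigl(\mathfrak{O}_+^n\cap\overline{\mathcal{H}}^{+}_{{\bf c}_{i;t}}\bigr)$ and analyze each factor. Fix $i$ and take ${\bf x}=(x_1,\dots,x_n)\in\mathfrak{O}_+^n$, so that $\langle{\bf x},{\bf c}_{i;t}\rangle_D=\sum_{j=1}^{n}d_jx_j({\bf c}_{i;t})_j$ with all $d_j>0$ and $x_j\geq0$. If $\varepsilon_{i;t}=1$ every summand is $\geq0$, so $\mathfrak{O}_+^n\subseteq\overline{\mathcal{H}}^{+}_{{\bf c}_{i;t}}$ and this factor equals $\mathfrak{O}_+^n$; if $\varepsilon_{i;t}=-1$ every summand is $\leq0$, so the pairing is $\geq0$ exactly when it vanishes, which forces $x_j=0$ for every $j$ in the support $S_i=\{\,j:({\bf c}_{i;t})_j\neq0\,\}$, and this factor is the coordinate face of $\mathfrak{O}_+^n$ obtained by killing those coordinates. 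Intersecting over all $i$ and setting $J=\{1,\dots,n\}\setminus\bigcup_{i:\varepsilon_{i;t}=-1}S_i$ yields $\mathcal{C}(G_t)\cap\mathfrak{O}_+^n=\mathcal{C}_J({\bf e}_1,\dots,{\bf e}_n)$; the case of $\mathfrak{O}_-^n$ follows verbatim after replacing ${\bf x}$ by $-{\bf x}$, with the same set $J$.

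I do not anticipate a genuine obstacle: both statements are elementary once sign-coherence and \eqref{eq: orthogonal lemma} are available. The points requiring a little care are (i) checking $\varepsilon_{i;t},\tau_{i;t}\neq0$ so that the orthant notation is unambiguous, which is exactly where unimodularity of $C_t,G_t$ is used, and (ii) recalling that Lemma~\ref{lem: c-vector expression of G-cones} needs sign-coherence not only at $t$ but also at all vertices closer to $t_0$ — which is automatic here since $B$ is assumed to satisfy the full sign-coherent property.
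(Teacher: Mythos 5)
Your proof is correct, and part $(b)$ takes a genuinely more direct route than the paper. For $(a)$ you spell out exactly what the paper does in one line (row sign-coherence of $G_t$ forces each coordinate of a non-negative combination of $g$-vectors into $\mathbb{R}_{\tau_{i;t}}$), and your preliminary observation that unimodularity rules out $\varepsilon_{i;t}=0$ and $\tau_{i;t}=0$ is a worthwhile point the paper leaves implicit. For $(b)$ the paper argues by contradiction: it sets $J=\{j \mid {\bf e}_j \in \mathcal{C}(G_t)\}$, assumes a point of $\mathcal{C}(G_t)\cap\mathfrak{O}_{+}^{n}$ outside $\mathcal{C}_J({\bf e}_1,\dots,{\bf e}_n)$, and derives from Lemma~\ref{lem: c-vector expression of G-cones} a $c$-vector with entries of both signs (together with a separate argument, via Lemma~\ref{lem: geometric property of sign-coherency of c}, that $\mathcal{C}(G_t)\subset\mathfrak{O}_{+}^{n}$ forces $\mathcal{C}(G_t)=\mathfrak{O}_{+}^{n}$). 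You instead compute $\mathcal{C}(G_t)\cap\mathfrak{O}_{+}^{n}=\bigcap_i\bigl(\mathfrak{O}_{+}^{n}\cap\overline{\mathcal{H}}^{+}_{{\bf c}_{i;t}}\bigr)$ factor by factor, using column sign-coherence to see that each factor is either all of $\mathfrak{O}_{+}^{n}$ (when $\varepsilon_{i;t}=+1$) or the coordinate face killing the support of ${\bf c}_{i;t}$ (when $\varepsilon_{i;t}=-1$). This avoids the contradiction argument and Lemma~\ref{lem: geometric property of sign-coherency of c} entirely, and it buys an explicit description $J=\{1,\dots,n\}\setminus\bigcup_{i:\varepsilon_{i;t}=-1}S_i$, whereas the paper's $J$ is only characterized as $\{j\mid {\bf e}_j\in\mathcal{C}(G_t)\}$; both proofs rest on the same key input, Lemma~\ref{lem: c-vector expression of G-cones}.

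One small correction: your closing claim that the negative-orthant case holds \emph{with the same set $J$} is not right in general. Adapting your computation to $\mathfrak{O}_{-}^{n}$ swaps the roles of the signs, giving $J'=\{1,\dots,n\}\setminus\bigcup_{i:\varepsilon_{i;t}=+1}S_i$, which can differ from $J$ (e.g.\ for $G_t=\left(\begin{smallmatrix}1&\frac12\\-2&0\end{smallmatrix}\right)$ in Figure~\ref{fig: bad example} one gets $J=\{1\}$ for $\mathfrak{O}_{+}^{2}$ but $J'=\emptyset$ for $\mathfrak{O}_{-}^{2}$). Since the proposition only asserts existence of some $J$ for each intersection, this does not damage the proof, but the parenthetical should be dropped.
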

The claim ($a$) follows from the row sign-coherence of $G_t$. The claim ($b$) can be shown by a similar argument to that in \cite[Cond.~II.2.28]{Nak23}. More precisely, in \cite{Nak23}, it was shown there that, if a $g$-vector $\mathbf{g}_{i;t}$ is contained in $\mathfrak{O}_{+}^n$, then $\mathbf{g}_{i;t}=\mathbf{e}_l$ holds for some $l \in \{1,\dots,n\}$. Although this is no longer true for the real case (see \Cref{ex: non uniqueness of the length}), we conclude that $\mathcal{C}(\mathbf{g}_{i;t})=\mathcal{C}(\mathbf{e}_l)$ holds instead, which is sufficient to prove ($b$).

\section{Conjectures for real $C$-, $G$-matrices}\label{sec: conjectures}
For the real $C$-, $G$-matrices, there are some significant differences from the integer ones. In this section, we introduce two conjectures to overcome these differences.
\subsection{Totally sign-coherence conjecture}
First conjecture is for the sign-coherence. Recall that ${\bf SC}$ means the set of all skew-symmetrizable matrices such that they satisfy the sign-coherent property, which means that all corresponding $C$-, $G$-matrices are sign-coherent. When we discuss a given exchange matrix $B$ and the corresponding $C$-, $G$-pattern ${\bf C}(B)$ and ${\bf G}(B)$, we sometimes want to suppose that other matrices related to $B$ also belong to ${\bf SC}$. Here, we define a class of exchange matrices under this assumption.
This conjecture has essentially appeared in \cite{Rea14}, and this is an important assumption to consider changing the initial exchange matrix in the same mutation-equivalent class.
\begin{conjecture}[{cf. \cite[Def.~8.2]{Rea14}} Totally sign-coherence conjecture]\label{conj: standard hypothesis}
If $B \in \mathrm{M}_{n}(\mathbb{R})$ satisfies the sign-coherent property, then all mutation-equivalent matrices $B' \in {\bf B}(B)$ also satisfy the sign-coherent property.
\end{conjecture}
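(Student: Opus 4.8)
The plan is to reduce the statement to a single mutation and to the skew-symmetric case, and then to read off sign-coherence from the geometry of an associated scattering diagram. First I would observe that it suffices to prove: if $B$ satisfies the sign-coherent property then so does $\mu_k(B)$ for every $k=1,\dots,n$; iterating along a path in $\mathbb{T}_{n}$ then yields the claim for all of ${\bf B}(B)$. Next, by Theorem~\ref{thm: skew-symmetrizing method} together with Proposition~\ref{prop: positive conjegation and sign-coherency}, one may assume $B$ is skew-symmetric, i.e.\ an $\mathbb{R}$-valued quiver $Q$: the operations $B\mapsto\mathrm{Sk}(B)$, positive conjugation, and $\mu_k$ are mutually compatible, and the sign-coherent property is preserved under all of them. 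Thus the target becomes the assertion that \emph{a single mutation of a sign-coherent $\mathbb{R}$-valued quiver is again sign-coherent}.

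For the main step I would work with the cluster scattering diagram $\mathfrak{D}=\mathfrak{D}(Q)$ attached to $Q$, constructed as in \cite{GHKK18} but over the real exchange matrix (the wall-crossing group being the appropriate completed group with real exponents). The key structural fact to exploit is that $\mathfrak{D}$ is \emph{intrinsic to the mutation-equivalence class}: by \eqref{eq: second duality} and Lemma~\ref{lem: c-vector expression of G-cones} the cluster chambers of $\mathfrak{D}$ are the $G$-cones $\mathcal{C}(G^{t_0}_{t})$ and the walls between adjacent ones are the hyperplanes $\mathcal{H}_{{\bf c}}$ of the $c$-vectors; changing the initial vertex from $t_0$ to a $k$-adjacent $t_1$ corresponds only to a piecewise-linear ``mutation map'' that permutes the chambers and transports the picture for $Q$ to the picture for $\mu_k(Q)$. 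Hence, \emph{once $\mathfrak{D}$ is known to be consistent}, sign-coherence of every cluster chamber for the base point $t_0$ — which is exactly the hypothesis — forces the same at $t_1$, because the two pictures differ by an invertible PL change of coordinates preserving the ``single sign of the normal vector'' property of each wall.

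The hard part, and the reason the statement is only conjectural, is establishing \emph{consistency of $\mathfrak{D}(Q)$ for a general real $Q$}. In \cite{GHKK18} consistency is reduced, around codimension-two joints, to rank-$2$ identities and then proved using the integral and positive structure of the tropical vertex group (broken lines, theta functions, the perfect lattice pairing). For real $Q$ the reduction to rank-$2$ sub-quivers still goes through via Proposition~\ref{prop: submatrix}, and Theorem~\ref{thm: rank 2 classification} pins down exactly which rank-$2$ situations are sign-coherent; but in the ``wild'' regime $\sqrt{ab}\ge 2$ the rank-$2$ scattering diagram carries a dense family of walls accumulating on the central ray, so one must show the infinite products still converge and glue consistently with no lattice to lean on. More fundamentally, the hypothesis controls only the cluster chambers, whereas consistency concerns \emph{all} walls of $\mathfrak{D}(Q)$; closing that gap — showing that sign-coherence of the cluster complex already forces the ambient scattering diagram to be consistent, equivalently that the mutation map is a well-defined PL isomorphism between the mutation fans of $B$ and $\mu_k(B)$ — is precisely where a genuinely new idea is needed.

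For completeness I would record why the obvious elementary attempt fails, to motivate the scattering-diagram route. The $g$-vectors satisfy the cocycle relation $G^{t_0}_{t}=G^{t_0}_{t_1}G^{t_1}_{t}$, so one might hope to deduce sign-coherence of the columns of $C^{t_1}_{t}$ from that of the $C^{t_0}_{\bullet}$ via \eqref{eq: second duality}. However, the sign of a column of $C^{t_1}_{t}$ is governed by the position of the cone $\mathcal{C}(G^{t_1}_{t})$ relative to the coordinate hyperplanes of $\mu_k(B)$, not of $B$, and without the global fan (equivalently, scattering) structure there is no handle on this comparison — which is exactly why a basepoint-free consistency statement seems unavoidable.
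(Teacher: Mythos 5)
You have not produced a proof here, and in fairness neither does the paper: the statement you were handed is Conjecture~\ref{conj: standard hypothesis}, which the paper leaves open in general and verifies only in special cases — for quasi-integer type (where it follows from the integer result of \cite{GHKK18} together with Lemma~\ref{lem: positive conjugation} and Proposition~\ref{prop: positive conjegation and sign-coherency}), for the rank-$2$ sign-coherent class (Theorem~\ref{thm: conjectures are true for rank 2}, via the explicit Chebyshev-type formulas for the $C$-matrices), and for the finite types classified in Theorem~\ref{thm: finite type classifcation} (by finite computation). So there is no proof in the paper to compare your proposal against, and your own text concedes that the decisive step is missing.

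Your preliminary reductions are sound: reducing to a single mutation, and reducing to the skew-symmetric case via Theorem~\ref{thm: skew-symmetrizing method} and Proposition~\ref{prop: positive conjegation and sign-coherency}, are both legitimate and in line with how the paper uses these tools. The genuine gap is exactly the one you name: for a general real exchange matrix there is no construction of a consistent scattering diagram playing the role of the one in \cite{GHKK18}, and your hypothesis (sign-coherence of ${\bf C}^{t_0}(B)$ and ${\bf G}^{t_0}(B)$) controls only the cluster chambers, i.e.\ the $G$-cones attached to the base point $t_0$, not the additional walls one would need in order to transport sign-coherence to the new base point $t_1$. One further subtlety your sketch glosses over: the paper's machinery for changing the base point — the dual mutation formula (\ref{eq: dual mutation formula}) and the equivalences in Proposition~\ref{prop: equivalency of sign-coherent class} — is itself established only under Conjecture~\ref{conj: standard and discreteness conjecture}, so it presupposes the very statement you are trying to prove (plus the discreteness conjecture) and cannot be used to close the loop. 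In short: correct framing and a correctly identified obstruction, but no proof — which matches the paper's own status for this statement.
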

\begin{remark}
In \cite{Rea14}, this conjecture was given as the condition $(c)$ in \Cref{cor: equivalency of sign-coherent class} and this is called the {\em standard hypotheses}. Although these two conditions are equivalent under one assumption, we choose the statement as in Conjecture~\ref{conj: standard hypothesis} because this form can be stated in one single $B$-pattern. (For the other forms in \Cref{cor: equivalency of sign-coherent class}, we need to consider other $B$-patterns such as ${\bf B}(B^{\top})$ and ${\bf B}(-B)$.)
\end{remark}

\subsection{Discreteness conjecture}
Another conjecture is for the periodicity of $c$-vectors, and this property becomes trivial when we focus on the integer case.
\begin{conjecture}[Discreteness conjecture]\label{conj: discreteness conjecture}
Let $B \in {\bf SC}$ be a real exchange matrix with a skew-symmetrizer $D=\mathrm{diag}(d_1,\dots,d_n)$. If there exists a $c$-vector ${\bf c}_{i;t}$ \textup{($i=1,\dots,n$)} satisfying ${\bf c}_{i;t}=\alpha{\bf e}_{j}$ for some $j=1,\dots,n$, then we have
\begin{equation}
\alpha=\pm\sqrt{\frac{d_{i}}{d_j}}.
\end{equation}
\end{conjecture}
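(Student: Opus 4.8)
The plan is to reduce to the skew-symmetric case via the skew-symmetrizing method (Theorem~\ref{thm: skew-symmetrizing method}). Put $\widehat B=\mathrm{Sk}(B)$ and write $\widehat C_t=(\widehat{\bf c}_{1;t},\dots,\widehat{\bf c}_{n;t})$ for its $C$-matrices. Since ${\bf c}_{i;t}=\sqrt{d_i}\,D^{-\frac12}\widehat{\bf c}_{i;t}$, the hypothesis ${\bf c}_{i;t}=\alpha{\bf e}_j$ is equivalent to $\widehat{\bf c}_{i;t}=\bigl(\alpha\sqrt{d_j/d_i}\bigr){\bf e}_j$, and $\widehat B$ again lies in ${\bf SC}$ by Proposition~\ref{prop: positive conjegation and sign-coherency}. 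So the conjecture is equivalent to the skew-symmetric statement: \emph{a $c$-vector of a skew-symmetric matrix in ${\bf SC}$ which is proportional to ${\bf e}_j$ must equal $\pm{\bf e}_j$}. This is the form I would work with, the expectation being that $\alpha\sqrt{d_j/d_i}=\pm1$ is forced exactly by the interaction of the integrality of the pattern with sign-coherence.

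For the quasi-integer case this can be carried out: if $\widehat B\in\widehat{\mathcal S}_n$, choose a positive diagonal $H$ with $B'=H\widehat BH^{-1}\in\mathrm{M}_{n}(\mathbb{Z})$; by Lemma~\ref{lem: positive conjugation} the $i$-th column of $\widehat C_t$ is $h_iH^{-1}{\bf c}'_{i;t}$, so $\widehat{\bf c}_{i;t}\parallel{\bf e}_j$ forces ${\bf c}'_{i;t}\parallel{\bf e}_j$, and the integer discreteness statement (Proposition~\ref{prop: discreteness lemma for integer case}), applied with the induced skew-symmetrizer $H^{-2}$ of $B'$, pins down ${\bf c}'_{i;t}=\pm{\bf e}_j$; unwinding the rescalings yields $\alpha=\pm\sqrt{d_i/d_j}$. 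The rank-$2$ and finite-type classes of Theorem~\ref{thm: rank 2 classification} and Theorem~\ref{thm: finite type classifcation} could likewise be verified, either by the same folding trick or by direct computation along mutation paths.

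For a general $B\in{\bf SC}$ I would try to combine integrality with sign-coherence. By Proposition~\ref{prop: fundamental properties under sign-coherency}, $|\widehat C_t|=\pm1$ and $\widehat C_t\in\mathrm{M}_{n}(\mathbb{Z}_{\widehat B})$; expanding the determinant along the column $\alpha{\bf e}_j$ shows that $\alpha$ is a \emph{unit} of $\mathbb{Z}_{\widehat B}$, and the second duality $\widehat G_t^{\top}\widehat C_t=I$ from \eqref{eq: second duality} shows that the $j$-th row of $\widehat G_t$ equals $\tfrac1\alpha{\bf e}_i^{\top}$, so $1/\alpha\in\mathbb{Z}_{\widehat B}$ as well. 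In the integer case $\mathbb{Z}_{\widehat B}=\mathbb{Z}$, whence $\alpha=\pm1$; but in general $\mathbb{Z}_{\widehat B}$ may have infinitely many units, so further input is needed. The candidates I would pursue are: (i) an induction on $d(t_0,t)$, choosing $t$ minimal with a non-initial axis-aligned $c$-vector and analysing the last mutation through the recursion \eqref{eq: mutation for c-,g-vectors} together with column sign-coherence of the adjacent $C$-matrices; (ii) the $G$-cone picture, where by Proposition~\ref{prop: lemma for cones} and Lemma~\ref{lem: c-vector expression of G-cones} the wall $\mathcal H_{{\bf c}_{i;t}}$ of the simplicial cone $\mathcal C(\widehat G_t)$ is then a coordinate hyperplane, which rigidly locks $\widehat G_t$ to the coordinate frame; or (iii) restricting to a suitable rank-$2$ sub-pattern and invoking Theorem~\ref{thm: rank 2 classification}.

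The main obstacle is precisely this gap: the clean integer argument — ``units of $\mathbb{Z}$ are $\pm1$'' — has no counterpart over $\mathbb{Z}_{\widehat B}$, and it is not clear whether sign-coherence \emph{alone} suppresses a genuine non-trivial unit from occurring as such a coefficient. That is why the statement is posed as a conjecture rather than a theorem; I would expect a complete proof to require either a new ``size'' invariant that is monotone along mutations, or a reduction — via folding — to an honestly integral pattern.
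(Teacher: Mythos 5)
Your assessment is accurate: this statement is posed as a \emph{conjecture} in the paper, and the paper offers no general proof — exactly because of the obstruction you identify, namely that the determinant argument only shows $\alpha$ is a unit of $\mathbb{Z}_{B}$, and units need not be $\pm1$ outside the integer setting. Your treatment of the provable cases coincides with the paper's: the unit/determinant argument together with the second duality is precisely Proposition~\ref{prop: discreteness lemma for integer case}, the rescaling compatibility is Proposition~\ref{prop: positive conjugation and discreteness lemma}, and combining them gives the quasi-integer case as in the paper; the rank-$2$ and finite-type cases are likewise settled there only by the explicit computations behind Theorem~\ref{thm: conjectures are true for rank 2} and Theorem~\ref{thm: conjecture for finite type}. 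So your proposal matches the paper's state of knowledge and approach; the remaining general case is genuinely open, and none of your candidate strategies (i)--(iii) is carried out in the paper either.
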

This conjecture can be rephrased by $g$-vectors.
\begin{lemma}\label{lem: for the parallel lemma}
Let $B \in {\bf SC}$ with a skew-symmetrizer $D=\mathrm{diag}(d_1,d_2,\dots,d_n)$, and let $i,j=1,2,\dots,n$ and $t \in \mathbb{T}_{n}$.
\\
\textup{($a$)} A $c$-vector ${\bf c}_{i;t}$ is expressed ${\bf c}_{i;t}=\alpha{\bf e}_{j}$ for some $\alpha \in \mathbb{R}$ if and only if every $j$th entry of g-vectors ${\bf g}_{l;t}$ is $0$ except for $l = i$.
\\
\textup{($b$)} Suppose that the condition in (a) holds. Let $\beta$ be the $j$th entry of ${\bf g}_{i;t}$. Then, we have $\alpha\beta=d_id_{j}^{-1}$.
\par
In particular, for the above $\alpha,\beta$, the following three conditions are equivalent.
\begin{itemize}
\item $\alpha = \pm \sqrt{d_{i}d_{j}^{-1}}$. \textup{(}Conjecture~\ref{conj: discreteness conjecture}\textup{)}
\item $\beta = \pm \sqrt{d_{i}d_{j}^{-1}}$.
\item $\alpha=\beta$.
\end{itemize}
\end{lemma}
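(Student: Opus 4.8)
The plan is to reduce both parts to the orthogonality relations $\langle {\bf g}_{i;t},{\bf c}_{l;t}\rangle_{D}=d_{i}\delta_{il}$ of \Cref{prop: orhogonal relations} together with the unimodularity of $C_{t}$ and $G_{t}$ from \Cref{prop: fundamental properties under sign-coherency}~($a$); since $B\in{\bf SC}$ both are available at every $t\in\mathbb{T}_{n}$. In particular $\{{\bf c}_{1;t},\dots,{\bf c}_{n;t}\}$ and $\{{\bf g}_{1;t},\dots,{\bf g}_{n;t}\}$ are bases of $\mathbb{R}^{n}$, so no $c$-vector and no row of $G_{t}$ is zero; hence in the situation of ($a$) one automatically has $\alpha\neq0$, and (once ($b$) is available) $\beta\neq0$. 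I will also use that $D=\mathrm{diag}(d_{1},\dots,d_{n})$ is positive, so $\langle\,,\,\rangle_{D}$ is a genuine inner product and the $D$-orthogonal complement of the coordinate hyperplane $\{x_{j}=0\}$ is precisely $\mathbb{R}{\bf e}_{j}$ (indeed a vector ${\bf v}$ that is $D$-orthogonal to $\{x_{j}=0\}$ must satisfy $d_{k}v_{k}=0$ for every $k\neq j$).

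For the forward implication of ($a$) I would start from ${\bf c}_{i;t}=\alpha{\bf e}_{j}$ with $\alpha\neq0$: for each $l\neq i$, \Cref{prop: orhogonal relations} gives $0=\langle {\bf g}_{l;t},{\bf c}_{i;t}\rangle_{D}=\alpha d_{j}\,({\bf g}_{l;t})_{j}$, and dividing by $\alpha d_{j}\neq0$ shows the $j$th entry of ${\bf g}_{l;t}$ vanishes. For the converse, assuming the $j$th entry of ${\bf g}_{l;t}$ is $0$ for all $l\neq i$, the $n-1$ vectors $\{{\bf g}_{l;t}:l\neq i\}$ are linearly independent (a subfamily of a basis) and lie in $\{x_{j}=0\}$, hence span it; by the orthogonality relations ${\bf c}_{i;t}$ is $D$-orthogonal to all of them, so ${\bf c}_{i;t}$ lies in the $D$-orthogonal complement of $\{x_{j}=0\}$, which is $\mathbb{R}{\bf e}_{j}$. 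Thus ${\bf c}_{i;t}=\alpha{\bf e}_{j}$.

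For ($b$), with $\beta$ the $j$th entry of ${\bf g}_{i;t}$, I would plug ${\bf c}_{i;t}=\alpha{\bf e}_{j}$ into $\langle {\bf g}_{i;t},{\bf c}_{i;t}\rangle_{D}=d_{i}$ to get $\alpha\beta d_{j}=d_{i}$, i.e. $\alpha\beta=d_{i}d_{j}^{-1}$. Since $d_{i}d_{j}^{-1}>0$, this also forces $\beta\neq0$. Writing $\rho=d_{i}d_{j}^{-1}$, the equivalences are now elementary algebra: $\alpha=\pm\sqrt{\rho}\iff\alpha^{2}=\rho=\alpha\beta\iff\alpha(\alpha-\beta)=0\iff\alpha=\beta$ (cancelling $\alpha\neq0$), and symmetrically $\beta=\pm\sqrt{\rho}\iff\beta=\alpha$ (cancelling $\beta\neq0$); chaining these gives the three-way equivalence.

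There is no serious obstacle; the only spot demanding care is the converse in ($a$), where unimodularity of $G_{t}$ is needed so that the $n-1$ vectors ${\bf g}_{l;t}$ with $l\neq i$ genuinely span the coordinate hyperplane, and unimodularity of $C_{t}$ so that $\alpha\neq0$. As an alternative that handles the converse of ($a$) and part ($b$) in one stroke, one can instead use the second duality in the transposed form $C_{t}^{\top}DG_{t}=D$, equivalently $C_{t}^{\top}=DG_{t}^{-1}D^{-1}$: the vanishing of the off-$i$ entries in the $j$th row of $G_{t}$ forces the $i$th row of $G_{t}^{-1}$ to equal $\beta^{-1}{\bf e}_{j}^{\top}$, and reading off the $i$th column of $C_{t}=D^{-1}(G_{t}^{-1})^{\top}D$ yields simultaneously ${\bf c}_{i;t}=d_{i}d_{j}^{-1}\beta^{-1}{\bf e}_{j}$ and $\alpha\beta=d_{i}d_{j}^{-1}$.
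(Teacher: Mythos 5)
Your proposal is correct and follows essentially the same route as the paper: both directions of ($a$) are derived from the orthogonality relations $\langle {\bf g}_{l;t},{\bf c}_{i;t}\rangle_{D}=d_i\delta_{li}$ together with the fact that the $g$-vectors form a basis (so the relevant $D$-orthogonal complement is one-dimensional and spanned by ${\bf e}_j$), and ($b$) plus the three-way equivalence come from $\langle {\bf g}_{i;t},{\bf c}_{i;t}\rangle_{D}=d_i$ exactly as in the paper. The alternative closing remark via $C_t^{\top}DG_t=D$ is a harmless repackaging of the same duality and adds nothing problematic.
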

\begin{proof}
This is essentially shown by Proposition~\ref{prop: orhogonal relations}. Suppose that ${\bf c}_{i;t} = \alpha{\bf e}_{j}$. Then, for any $l \neq i$, we have $0=\langle {\bf g}_{l;t},{\bf c}_{i;t} \rangle_{D}=\alpha\langle {\bf g}_{l;t},{\bf e}_{j} \rangle_{D}$. Since $\alpha \neq 0$ (if $\alpha=0$, it contradicts with $|C_{t}| \neq 0$), we have $\langle {\bf g}_{l;t},{\bf e}_{j}\rangle_{D}=0$ and this implies that the $j$th entry of ${\bf g}_{l;t}$ is zero. Conversely, suppose that the $j$th entry of ${\bf g}_{l;t}$ is $0$ except for $l=i$. By Proposition~\ref{prop: orhogonal relations}, ${\bf c}_{i;t}$ should belong to the orthogonal complement of $\langle {\bf g}_{l;t} \mid l \neq i\rangle_{\mathrm{vec}}$. Then, by the assumption, ${\bf e}_{j}$ should belong to its orthogonal complement $\langle {\bf g}_{l;t} \mid l \neq i\rangle_{\mathrm{vec}}^{\perp}$. Since $\{{\bf g}_{l;t} \mid l=1,\dots,n\}$ is a basis of $\mathbb{R}^{n}$, the dimension of $\langle {\bf g}_{l;t} \mid l \neq i\rangle_{\mathrm{vec}}^{\perp}$ is one. Thus, it should be spanned by ${\bf e}_j$. In particular, ${\bf c}_{i;t}=\alpha{\bf e}_{j;t}$ for some $\alpha \in \mathbb{R}$. Thus, we conclude that $(a)$ holds. Let $\beta$ be the $j$th entry of ${\bf g}_{i;t}$. Then, by Proposition~\ref{prop: orhogonal relations}, we have $d_{i}=\langle {\bf g}_{i;t},{\bf c}_{i;t} \rangle_{D}=\alpha\langle {\bf g}_{i;t},{\bf e}_{j}\rangle_{D}=d_{j}\alpha\beta$. Thus, $\alpha\beta=d_id_j^{-1}$ holds. The equivalency of three conditions can be shown directly by this equality.
\end{proof}
\begin{example}\label{ex: non uniqueness of the length}
This conjecture is not emphasized in the ordinary cluster algebras because we can easily show it as more stronger condition, see Proposition~\ref{prop: discreteness lemma for integer case}. However, when we consider the real case, this problem seems to be not so easy. To support this conjecture, we give one example which is not the integer case. Set the initial exchange matrx $B=\left(\begin{smallmatrix}
0 & -\frac{1}{2}\\
2 & 0
\end{smallmatrix}\right)$. Note that we can take a skew-symmetrizer $D=\mathrm{diag}(4,1)$. Then, the $C$-pattern is in Figure~\ref{fig: C-pattern with quasi-integer type} and the $G$-pattern is in Figure~\ref{fig: G-pattern with quasi-integer type}. (By this calculation, we show that this $B$ satisfies the sign-coherent property.) Focus on a $C$-matrix and a $G$-matrix inside the boxes. Then, its $c$-vector located on the first column is parallel to ${\bf e}_2$. The length of this vector is $2$. Similarly, the $c$-vector located on the second column is parallel to ${\bf e}_1$, and its length is $\frac{1}{2}$. Thus, Conjecture~\ref{conj: discreteness conjecture} is true for this $B$. We can also see the equivalent phenomenon for $G$-matrices as in Lemma~\ref{lem: for the parallel lemma}.
\begin{figure}[htbp]
\centering
\begin{tikzpicture}
\node (t0) at (0,0) {$\left(\begin{smallmatrix}
1 & 0\\
0 & 1
\end{smallmatrix}\right)$};
\node (t1) at (2,0) {$\left(\begin{smallmatrix}
-1 & 0\\
0 & 1
\end{smallmatrix}\right)$};
\node (t2) at (4,0) {$\left(\begin{smallmatrix}
-1 & 0\\
0 & -1
\end{smallmatrix}\right)$};
\node (t3) at (6,0) {$\left(\begin{smallmatrix}
1 & -\frac{1}{2}\\
0 & -1
\end{smallmatrix}\right)$};
\node (t4) at (8,0) {$\left(\begin{smallmatrix}
0 & \frac{1}{2}\\
-2 & 1
\end{smallmatrix}\right)$};
\node [draw] (t5) at (8,-1.5) {$\left(\begin{smallmatrix}
0 & \frac{1}{2}\\
2 & 0
\end{smallmatrix}\right)$};
\node (t6) at (6,-1.5) {$\left(\begin{smallmatrix}
0 & -\frac{1}{2}\\
2 & 0
\end{smallmatrix}\right)$};
\node (t7) at (4,-1.5) {$\left(\begin{smallmatrix}
0 & -\frac{1}{2}\\
-2 & 0
\end{smallmatrix}\right)$};
\node (t8) at (2,-1.5) {$\left(\begin{smallmatrix}
-1 & \frac{1}{2}\\
-2 & 0
\end{smallmatrix}\right)$};
\node (t9) at (0,-1.5) {$\left(\begin{smallmatrix}
1 & 0\\
2 & -1
\end{smallmatrix}\right)$};
\draw[<->] (t0.east)--($(t0)!0.5!(t1)$) node [above] {$1$}->(t1.west);
\draw[<->] (t1.east)--($(t1)!0.5!(t2)$) node [above] {$2$}->(t2.west);
\draw[<->] (t2.east)--($(t2)!0.5!(t3)$) node [above] {$1$}->(t3.west);
\draw[<->] (t3.east)--($(t3)!0.5!(t4)$) node [above] {$2$}->(t4.west);
\draw[<->] (t4.south)--($(t4)!0.5!(t5)$) node [right] {$1$}->(t5.north);
\draw[<->] (t5.west)--($(t5)!0.5!(t6)$) node [above] {$2$}->(t6.east);
\draw[<->] (t6.west)--($(t6)!0.5!(t7)$) node [above] {$1$}->(t7.east);
\draw[<->] (t7.west)--($(t7)!0.5!(t8)$) node [above] {$2$}->(t8.east);
\draw[<->] (t8.west)--($(t8)!0.5!(t9)$) node [above] {$1$}->(t9.east);
\draw[<->] (t9.north)--($(t9)!0.5!(t0)$) node [left] {$2$}->(t0.south);
\end{tikzpicture}
\caption{$C$-pattern of $B=\left(\begin{smallmatrix}
0 & -\frac{1}{2}\\
2 & 0
\end{smallmatrix}\right)$}\label{fig: C-pattern with quasi-integer type}
\end{figure}
\begin{figure}[htbp]
\centering
\begin{tikzpicture}
\node (t0) at (0,0) {$\left(\begin{smallmatrix}
1 & 0\\
0 & 1
\end{smallmatrix}\right)$};
\node (t1) at (2,0) {$\left(\begin{smallmatrix}
-1 & 0\\
0 & 1
\end{smallmatrix}\right)$};
\node (t2) at (4,0) {$\left(\begin{smallmatrix}
-1 & 0\\
0 & -1
\end{smallmatrix}\right)$};
\node (t3) at (6,0) {$\left(\begin{smallmatrix}
1 & 0\\
-2 & -1
\end{smallmatrix}\right)$};
\node (t4) at (8,0) {$\left(\begin{smallmatrix}
1 & \frac{1}{2}\\
-2 & 0
\end{smallmatrix}\right)$};
\node [draw] (t5) at (8,-1.5) {$\left(\begin{smallmatrix}
0 & \frac{1}{2}\\
2 & 0
\end{smallmatrix}\right)$};
\node (t6) at (6,-1.5) {$\left(\begin{smallmatrix}
0 & -\frac{1}{2}\\
2 & 0
\end{smallmatrix}\right)$};
\node (t7) at (4,-1.5) {$\left(\begin{smallmatrix}
0 & -\frac{1}{2}\\
-2 & 0
\end{smallmatrix}\right)$};
\node (t8) at (2,-1.5) {$\left(\begin{smallmatrix}
0 & \frac{1}{2}\\
-2 & -1
\end{smallmatrix}\right)$};
\node (t9) at (0,-1.5) {$\left(\begin{smallmatrix}
1 & \frac{1}{2}\\
0 & -1
\end{smallmatrix}\right)$};
\draw[<->] (t0.east)--($(t0)!0.5!(t1)$) node [above] {$1$}->(t1.west);
\draw[<->] (t1.east)--($(t1)!0.5!(t2)$) node [above] {$2$}->(t2.west);
\draw[<->] (t2.east)--($(t2)!0.5!(t3)$) node [above] {$1$}->(t3.west);
\draw[<->] (t3.east)--($(t3)!0.5!(t4)$) node [above] {$2$}->(t4.west);
\draw[<->] (t4.south)--($(t4)!0.5!(t5)$) node [right] {$1$}->(t5.north);
\draw[<->] (t5.west)--($(t5)!0.5!(t6)$) node [above] {$2$}->(t6.east);
\draw[<->] (t6.west)--($(t6)!0.5!(t7)$) node [above] {$1$}->(t7.east);
\draw[<->] (t7.west)--($(t7)!0.5!(t8)$) node [above] {$2$}->(t8.east);
\draw[<->] (t8.west)--($(t8)!0.5!(t9)$) node [above] {$1$}->(t9.east);
\draw[<->] (t9.north)--($(t9)!0.5!(t0)$) node [left] {$2$}->(t0.south);
\end{tikzpicture}
\caption{$G$-pattern of $B=\left(\begin{smallmatrix}
0 & -\frac{1}{2}\\
2 & 0
\end{smallmatrix}\right)$}\label{fig: G-pattern with quasi-integer type}
\end{figure}
\end{example}

\begin{example}
Here, we present one counterexample of this conjecture when the sign-coherence fails. Take an initial exchange matrix as
\begin{equation}
B_{t_0}=\left(\begin{matrix}
0 & 2\cos{\frac{2}{5}\pi} & 0\\
-2\cos{\frac{2}{5}\pi} & 0 & 1\\
0 & -1 & 0
\end{matrix}\right).
\end{equation}
By applying mutations along $[2,1,3,1,2,1,3]$, we obtain the following $C$-matrix:
\begin{equation}
\mu_3\mu_1\mu_2\mu_1\mu_3\mu_1\mu_2(C_{t_0})=\left(\begin{smallmatrix}
\frac{9-5\sqrt{5}}{2} & \frac{-7+3\sqrt{5}}{2} & \frac{3-\sqrt{5}}{2}\\
0 & 0 & 1\\
0 & -1 & 0
\end{smallmatrix}\right).
\end{equation}
The first column vector fails to satisfy the condition in \Cref{conj: discreteness conjecture}.
\end{example}

For some class including all integer cases, this conjecture can be shown as follows.
\begin{proposition}\label{prop: discreteness lemma for integer case}
Let $B \in {\bf SC}$ with a skew-symmetrizer $D=\mathrm{diag}(d_1,d_2,\dots,d_n)$. If the group of units $(\mathbb{Z}_{B})^{\times}$ of the ring $\mathbb{Z}_{B}$ is trivial, that is
\begin{equation}
(\mathbb{Z}_{B})^{\times}=\{\pm 1\},
\end{equation}
then ${\bf c}_{i;t}=\alpha{\bf e}_{j}$ implies that $\alpha=\pm1$ and $d_i=d_j$. In particular, Conjecture~\ref{conj: discreteness conjecture} holds.
\end{proposition}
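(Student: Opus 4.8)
The plan is to exploit the unimodularity of $C_t$ and $G_t$ over $\mathbb{Z}_B$ (Proposition~\ref{prop: fundamental properties under sign-coherency}~($a$)) together with the numerical relation $\alpha\beta = d_id_j^{-1}$ furnished by Lemma~\ref{lem: for the parallel lemma}~($b$). The guiding idea is that triviality of $(\mathbb{Z}_B)^{\times}$ pins $\alpha$ (and $\beta$) down to $\pm1$, after which positivity of $d_i,d_j$ forces $d_i=d_j$.

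First I would show $\alpha\in(\mathbb{Z}_B)^{\times}$. Since $\alpha$ is the $(j,i)$-entry of $C_t$, we have $\alpha\in\mathbb{Z}_B$ by Proposition~\ref{prop: ring ZB}~($b$). Conversely, from $C_t{\bf e}_i={\bf c}_{i;t}=\alpha{\bf e}_j$ (and $\alpha\neq0$, as ${\bf c}_{i;t}$ is a column of an invertible matrix) we obtain $C_t^{-1}{\bf e}_j=\alpha^{-1}{\bf e}_i$, so $\alpha^{-1}$ is the $(i,j)$-entry of $C_t^{-1}$; since $C_t$ is unimodular over $\mathbb{Z}_B$, its inverse lies in $\mathrm{M}_n(\mathbb{Z}_B)$, whence $\alpha^{-1}\in\mathbb{Z}_B$. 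Therefore $\alpha\in(\mathbb{Z}_B)^{\times}=\{\pm1\}$, i.e.\ $\alpha=\pm1$.

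Next I would run the mirror argument on the $G$-side. By Lemma~\ref{lem: for the parallel lemma}~($a$), the hypothesis ${\bf c}_{i;t}=\alpha{\bf e}_j$ forces the $j$th row of $G_t$ to be $\beta{\bf e}_i^{\top}$, where $\beta$ is the $j$th entry of ${\bf g}_{i;t}$; equivalently $G_t^{\top}{\bf e}_j=\beta{\bf e}_i$. Since $G_t$, hence $G_t^{\top}$, is unimodular over $\mathbb{Z}_B$, the identical reasoning gives $\beta,\beta^{-1}\in\mathbb{Z}_B$, so $\beta\in(\mathbb{Z}_B)^{\times}=\{\pm1\}$. Now Lemma~\ref{lem: for the parallel lemma}~($b$) yields $d_id_j^{-1}=\alpha\beta\in\{\pm1\}$, and since $d_i,d_j>0$ this forces $d_id_j^{-1}=1$, that is $d_i=d_j$. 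Combining $\alpha=\pm1$ with $d_i=d_j$ gives $\alpha=\pm1=\pm\sqrt{d_id_j^{-1}}$, which is exactly the conclusion of Conjecture~\ref{conj: discreteness conjecture}; and since every integer skew-symmetrizable matrix $B$ has $\mathbb{Z}_B=\mathbb{Z}$ with $\mathbb{Z}^{\times}=\{\pm1\}$, the "in particular" clause follows at once.

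I do not anticipate a genuine obstacle here, as almost everything is already packaged in the earlier results. The only step demanding slight care is the passage $\alpha^{-1},\beta^{-1}\in\mathbb{Z}_B$, which is nothing but the unimodularity of $C_t$ and $G_t$ over $\mathbb{Z}_B$ — equivalently, the assertion in Proposition~\ref{prop: fundamental properties under sign-coherency}~($a$) that the $c$-vectors (resp.\ $g$-vectors) at $t$ form a basis of the free $\mathbb{Z}_B$-module $(\mathbb{Z}_B)^{\times n}$.
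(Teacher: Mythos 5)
Your proof is correct and follows essentially the same route as the paper: both deduce $\alpha,\beta\in(\mathbb{Z}_B)^{\times}=\{\pm1\}$ from the unimodularity of $C_t$ and $G_t$ over $\mathbb{Z}_B$ (Proposition~\ref{prop: fundamental properties under sign-coherency}~($a$)) and then obtain $d_i=d_j$ from $\alpha\beta=d_id_j^{-1}$ (Lemma~\ref{lem: for the parallel lemma}) together with positivity of the $d_k$. The only cosmetic difference is that you read off $\alpha^{-1}$ and $\beta^{-1}$ as entries of the inverse matrices, whereas the paper expands the determinant along the distinguished column/row, writing $|C_t|=\alpha|A|=\pm1$ and $\beta|A'|=\pm1$.
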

\begin{proof}
Since ${\bf c}_{i;t}=\alpha{\bf e}_{j}$, all entries of the $i$th column of the $C$-matrix $C_t$ is $0$ except for $\alpha$. Thus, its determinant is expressed as $|C_{t}|=\alpha|A|$, where $A$ is a matrix obtained by eliminating $j$th row and $i$th column from $C_t$. Since $C_t \in \mathrm{M}_n(\mathbb{Z}_{B})$, we have $|A| \in \mathbb{Z}_{B}$. By Proposition~\ref{prop: fundamental properties under sign-coherency}, it implies that $\alpha|A|=|C_t|=\pm 1$. In particular, $\alpha$ is a unit element of $\mathbb{Z}_{B}$. Since $(\mathbb{Z}_{B})^{\times}=\{\pm 1\}$, we have $\alpha = \pm 1$. Next, we show $d_i=d_j$. By Lemma~\ref{lem: for the parallel lemma}, every entry of the $j$th row in $G_t$ is $0$ except for the $i$th one. Moreover, this $i$th entry $\beta$ is given by $\beta=\pm d_{i}d_j^{-1}$ since $\alpha\beta=d_{i}d_j^{-1}$. Let $A'$ be the matrix obtained by eliminating $j$th row and $i$th column from $G_t$. Then, we have $\beta|A'|= \pm 1$. In particular, $\beta$ is also a unit element of $\mathbb{Z}_{B}$. Since $d_i,d_j > 0$, we have $d_{i}d_{j}^{-1}=1$. This completes the proof. 
\end{proof}
\begin{remark}
For the ordinary cluster algebras, since $B \in \mathrm{M}_{n}(\mathbb{Z})$, then we have $(\mathbb{Z}_{B})^{\times}=\mbZ^{\times}=\{\pm 1\}$ and the property above holds.
\end{remark}
Moreover, this property is preserved under the positive conjugations.
\begin{proposition}\label{prop: positive conjugation and discreteness lemma}
Let $B \in \mathrm{M}_{n}(\mathbb{R})$ be a skew-symmetrizable matrix. Suppose that Conjecture~\ref{conj: discreteness conjecture} holds for this $B$. Then, for any positive diagonal matrix $H \in \mathrm{M}_{n}(\mathbb{R})$, $HBH^{-1}$ also satisfies the Conjecture~\ref{conj: discreteness conjecture}.
\end{proposition}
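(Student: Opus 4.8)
The plan is to transport the parallelism of a $c$-vector through the positive conjugation by means of Lemma~\ref{lem: positive conjugation}, and then to invoke the hypothesis that Conjecture~\ref{conj: discreteness conjecture} holds for $B$. Write $H=\mathrm{diag}(h_1,\dots,h_n)$ and $\hat B=HBH^{-1}$, fix a skew-symmetrizer $D=\mathrm{diag}(d_1,\dots,d_n)$ of $B$, and note that $\hat D:=H^{-1}DH^{-1}=\mathrm{diag}(d_1h_1^{-2},\dots,d_nh_n^{-2})$ is a skew-symmetrizer of $\hat B$. Since Conjecture~\ref{conj: discreteness conjecture} is phrased for matrices in ${\bf SC}$, the hypothesis implicitly includes $B\in{\bf SC}$, whence $\hat B\in{\bf SC}$ by Proposition~\ref{prop: positive conjegation and sign-coherency}; so the conclusion we are after is meaningful.

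First I would record the effect of the conjugation on $c$-vectors. By Lemma~\ref{lem: positive conjugation} we have $\hat C_t=HC_tH^{-1}$ for every $t\in\mathbb{T}_n$, and reading off the $i$th column gives
\[
\hat{\bf c}_{i;t}=HC_tH^{-1}{\bf e}_i=h_i^{-1}H{\bf c}_{i;t}.
\]
Hence $\hat{\bf c}_{i;t}$ is parallel to ${\bf e}_j$ precisely when ${\bf c}_{i;t}$ is parallel to $H^{-1}{\bf e}_j=h_j^{-1}{\bf e}_j$, i.e. to ${\bf e}_j$; and if $\hat{\bf c}_{i;t}=\alpha{\bf e}_j$, then ${\bf c}_{i;t}=\alpha'{\bf e}_j$ with $\alpha'=\alpha h_i h_j^{-1}$.

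It then remains to plug in the hypothesis. Applying Conjecture~\ref{conj: discreteness conjecture} for $B$ (relative to the skew-symmetrizer $D$) to the $c$-vector ${\bf c}_{i;t}=\alpha'{\bf e}_j$ gives $\alpha'=\pm\sqrt{d_i/d_j}$, and therefore
\[
\alpha=\alpha'h_i^{-1}h_j=\pm h_i^{-1}h_j\sqrt{\frac{d_i}{d_j}}=\pm\sqrt{\frac{d_ih_i^{-2}}{d_jh_j^{-2}}}=\pm\sqrt{\frac{\hat d_i}{\hat d_j}},
\]
which is exactly the assertion of Conjecture~\ref{conj: discreteness conjecture} for $\hat B$ relative to the skew-symmetrizer $\hat D$. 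There is no genuine obstacle here; the only points requiring care are the bookkeeping of which diagonal matrix plays the role of the skew-symmetrizer after conjugation and the correct reading of the column identity $\hat{\bf c}_{i;t}=h_i^{-1}H{\bf c}_{i;t}$ off of $\hat C_t=HC_tH^{-1}$. If desired, the analogous statement for $g$-vectors follows in the same way from $\hat G_t=HG_tH^{-1}$ together with Lemma~\ref{lem: for the parallel lemma}, but this is not needed for the proposition.
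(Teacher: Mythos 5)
Your proposal is correct and follows essentially the same route as the paper's proof: transport the column relation through $\hat C_t = HC_tH^{-1}$ from Lemma~\ref{lem: positive conjugation}, apply Conjecture~\ref{conj: discreteness conjecture} for $B$, and rewrite the resulting scalar in terms of the skew-symmetrizer $H^{-1}DH^{-1}$ of $HBH^{-1}$. Your added remark that $HBH^{-1}\in{\bf SC}$ via Proposition~\ref{prop: positive conjegation and sign-coherency} is a small but welcome clarification left implicit in the paper.
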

\begin{proof}
Let $H=\mathrm{diag}(h_1,\dots,h_n)$ and $\hat{B}=HBH^{-1}$. Note that we can take a skew-symmetrizer $H^{-1}DH^{-1}=\mathrm{diag}(d_1h_1^{-2},d_2h_2^{-2},\dots,d_nh_n^{-2})$. Suppose that $\hat{C}_{t} \in {\bf C}(\hat{B})$ satisfies the assumption of Conjecture~\ref{conj: discreteness conjecture}, that is, its $i$th column vector $\hat{\bf c}_{i;t}$ satisfies $\hat{\bf c}_{i;t}=\alpha {\bf e}_{j}$. Then, by Lemma~\ref{lem: positive conjugation}, the original $C$-matrix $C_t$ satisfies $\hat{C}_t=H{C}_tH^{-1}$, and it implies $C_{t}=H^{-1}\hat{C}_tH$. This induces 
\begin{equation}
{\bf c}_{i;t}=h_iH^{-1}\hat{\bf c}_{i;t}=h_iH^{-1}(\alpha{\bf e}_{j})=\alpha h_ih_j^{-1}{\bf e}_j.
\end{equation}
Thus, ${\bf c}_{i;t}$ also satisfies the assumption of Conjecture~\ref{conj: discreteness conjecture}. Then, we have
$\alpha h_ih_j^{-1} = \sqrt{d_id_j^{-1}}$,
and it implies the following desired equality:
\begin{equation}
\alpha=\frac{h_j}{h_i}\sqrt{\frac{d_i}{d_j}}=\sqrt{\dfrac{d_ih_i^{-2}}{d_jh_j^{-2}}}.
\end{equation}
\end{proof}
For some technical reasons, we sometimes need to assume that both Conjecture~\ref{conj: standard hypothesis} and Conjecture~\ref{conj: discreteness conjecture} hold for all the mutation-equivalent matrices. To refer to these conjectures, we combine them together as the following conjecture.
\begin{conjecture}\label{conj: standard and discreteness conjecture}
Let $B \in {\bf SC}$. Then, for any $B' \in {\bf B}(B)$, Conjecture~\ref{conj: standard hypothesis} and Conjecture~\ref{conj: discreteness conjecture} hold.
\end{conjecture}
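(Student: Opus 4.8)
The plan is to reduce the conjecture to the skew-symmetric case via the skew-symmetrizing method, settle it completely on the quasi-integer locus using results already in hand, and then describe the program that would be needed for the genuinely real seeds, where the statement remains open.

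\emph{Reduction to skew-symmetric $B$.} Mutation commutes with positive conjugation (Lemma~\ref{lem: positive conjugation}) and preserves the skew-symmetrizer $D$; by Proposition~\ref{prop: positive conjegation and sign-coherency} the sign-coherent property is invariant under positive conjugation, and by Proposition~\ref{prop: positive conjugation and discreteness lemma} so is the property in Conjecture~\ref{conj: discreteness conjecture}. Hence both halves of Conjecture~\ref{conj: standard and discreteness conjecture} hold for $B$ if and only if they hold for $\mathrm{Sk}(B)$, so it suffices to treat skew-symmetric $B$, i.e.\ $\mathbb{R}$-valued quivers $Q$.

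\emph{The quasi-integer locus.} Quasi-integer type is mutation-invariant: if $Q = HB_0H^{-1}$ with $B_0 \in \mathrm{M}_n(\mathbb{Z})$ then $\mu_k(Q) = H\mu_k(B_0)H^{-1}$ with $\mu_k(B_0)$ again integer, so every $Q' \in {\bf B}(Q)$ is quasi-integer. Then Theorem~\ref{thm: sign-coherency for quasi-integer matrices} gives ${\bf B}(Q) \subset {\bf SC}$, which is Conjecture~\ref{conj: standard hypothesis} for $Q$. For the discreteness half, $(\mathbb{Z}_{B_0'})^{\times} = \{\pm 1\}$ for every $B_0' \in {\bf B}(B_0)$, so Proposition~\ref{prop: discreteness lemma for integer case} yields Conjecture~\ref{conj: discreteness conjecture} for $B_0'$, and Proposition~\ref{prop: positive conjugation and discreteness lemma} transports it to $Q' \in {\bf B}(Q)$. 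Thus Conjecture~\ref{conj: standard and discreteness conjecture} holds unconditionally on $\widehat{\mathcal{S}}_n$, which by Theorem~\ref{thm: classification of quasi-integer type} is exactly the combinatorially characterized family of quivers whose weights square to integers and whose cordless cycles have integral weight-products.

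\emph{The genuinely real seeds, and the main obstacle.} Beyond $\widehat{\mathcal{S}}_n$ the conjecture is open. Within this paper it is verified on two families: the rank-$2$ sign-coherent matrices (Theorem~\ref{thm: rank 2 classification}) and the sign-coherent finite type (Theorem~\ref{thm: finite type classifcation}); for both, the relevant $C$-patterns are either finite or, in rank $2$, explicitly periodic when $\sqrt{ab} = 2\cos\frac{\pi}{m}$ and otherwise explicit, so both conjectures can be checked by direct inspection --- axis-parallel $c$-vectors occur only at the first few mutation steps, where their lengths are read off and equal $\sqrt{d_i d_j^{-1}}$. For a general $B \in {\bf SC}$ the natural route imitates \cite{GHKK18}: attach a consistent scattering diagram over $\mathbb{R}$ to $B$, identify its chamber complex with the putative $G$-fan, and extract both sign-coherence of every mutation and the length normalization of axis-parallel $c$-vectors from the wall-crossing combinatorics. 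The main obstacle is precisely this: the proof of Theorem~\ref{thm: sign-coherency for integer matrices} rests on the scattering-diagram machinery of \cite{GHKK18}, which is set up over an integer model, and there is at present no such theory --- nor a cluster-category substitute --- valid when $\mathrm{Sk}(B)$ has irrational weights not governed by an integer model. Since Example~\ref{ex: sign-incoherent} rules out any unconditional statement, the genuine content of the conjecture is to pin down which $B \in {\bf SC}$ admit such a structure; until that is available it can only be confirmed on the families of Sections~\ref{sec: classification of rank 2} and~\ref{sec: finite type classification}, which is why it is stated as a conjecture.
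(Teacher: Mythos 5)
The statement is a conjecture, and the paper offers no general proof; you rightly refrain from claiming one and instead verify exactly the cases the paper itself settles. Your argument on the quasi-integer locus (mutation-invariance of quasi-integer type via Lemma~\ref{lem: positive conjugation}, then Theorem~\ref{thm: sign-coherency for quasi-integer matrices}, Proposition~\ref{prop: discreteness lemma for integer case}, and Proposition~\ref{prop: positive conjugation and discreteness lemma}), together with your appeal to the rank-$2$ and finite-type verifications, is correct and essentially identical to the paper's own supporting proposition and to Theorems~\ref{thm: conjectures are true for rank 2} and~\ref{thm: conjecture for finite type}.
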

\begin{proposition}
Every skew-symmetrizable matrix of quasi-integer type satisfies Conjecture~\ref{conj: standard and discreteness conjecture}.
\end{proposition}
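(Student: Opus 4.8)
The plan is to reduce everything to the integer case by a single positive conjugation and then quote the results already established. First I would fix a skew-symmetrizable matrix $B$ of quasi-integer type and choose, by definition, a positive diagonal matrix $H$ with $B_{\mathbb{Z}} := HBH^{-1} \in \mathrm{M}_{n}(\mathbb{Z})$. Write ${\bf B}^{t_0}(B_{\mathbb{Z}}) = \{(B_{\mathbb{Z}})_{t}\}_{t \in \mathbb{T}_{n}}$ for its $B$-pattern and ${\bf B}^{t_0}(B) = \{B_{t}\}_{t \in \mathbb{T}_{n}}$. By Lemma~\ref{lem: positive conjugation} we have $(B_{\mathbb{Z}})_{t} = HB_{t}H^{-1}$ for every $t \in \mathbb{T}_{n}$. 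Since the ordinary mutation of $B$-matrices preserves integrality, each $(B_{\mathbb{Z}})_{t}$ lies in $\mathrm{M}_{n}(\mathbb{Z})$, and hence each $B_{t} = H^{-1}(B_{\mathbb{Z}})_{t}H$ is again of quasi-integer type, witnessed by the same $H$. In other words, the quasi-integer property is closed under mutation.

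Next I would dispatch Conjecture~\ref{conj: standard hypothesis} for $B$. By the previous paragraph every $B' \in {\bf B}(B)$ is of quasi-integer type, so Theorem~\ref{thm: sign-coherency for quasi-integer matrices} gives $B' \in {\bf SC}$. In particular $B \in {\bf SC}$ and all of its mutation-equivalent matrices satisfy the sign-coherent property, which is precisely Conjecture~\ref{conj: standard hypothesis}; this also supplies the hypothesis $B \in {\bf SC}$ needed to even state Conjecture~\ref{conj: standard and discreteness conjecture}.

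For the discreteness part I would argue vertex by vertex. Each $(B_{\mathbb{Z}})_{t}$ is an integer skew-symmetrizable matrix, so by Proposition~\ref{prop: ring ZB} its coefficient ring is $\mathbb{Z}_{(B_{\mathbb{Z}})_{t}} = \mathbb{Z}$, whose unit group is $\{\pm 1\}$; hence Proposition~\ref{prop: discreteness lemma for integer case} yields Conjecture~\ref{conj: discreteness conjecture} for $(B_{\mathbb{Z}})_{t}$ (note $(B_{\mathbb{Z}})_{t} \in {\bf SC}$ by Theorem~\ref{thm: sign-coherency for integer matrices}). Since $B_{t} = H^{-1}(B_{\mathbb{Z}})_{t}H$ is a positive conjugation of $(B_{\mathbb{Z}})_{t}$, Proposition~\ref{prop: positive conjugation and discreteness lemma} transfers Conjecture~\ref{conj: discreteness conjecture} to $B_{t}$. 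As $t$ ranges over $\mathbb{T}_{n}$, every $B' \in {\bf B}(B)$ arises this way, so both Conjecture~\ref{conj: standard hypothesis} and Conjecture~\ref{conj: discreteness conjecture} hold for all $B' \in {\bf B}(B)$, i.e.\ Conjecture~\ref{conj: standard and discreteness conjecture} holds for $B$.

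The argument is essentially an assembly of earlier results, so there is no genuine obstacle; the only point needing a moment of care is the closure of the quasi-integer property under mutation, which is immediate from Lemma~\ref{lem: positive conjugation} together with the integrality of mutations of integer $B$-matrices. One should also remark that the equality $\alpha = \pm\sqrt{d_{i}/d_{j}}$ in Conjecture~\ref{conj: discreteness conjecture} does not depend on the particular skew-symmetrizer chosen for $B'$: by Lemma~\ref{lem: for the parallel lemma} it is equivalent to $\alpha\beta = d_{i}d_{j}^{-1}$ forcing $\alpha = \beta$, and whenever a $c$-vector is parallel to ${\bf e}_{j}$ the relevant ratio $d_{i}d_{j}^{-1}$ is pinned down on each connected component, so the transfer along $H$ in Proposition~\ref{prop: positive conjugation and discreteness lemma} is well posed.
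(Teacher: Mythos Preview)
Your proof is correct and follows essentially the same route as the paper: closure of the quasi-integer property under mutation via Lemma~\ref{lem: positive conjugation}, sign-coherence from Theorem~\ref{thm: sign-coherency for quasi-integer matrices}, and discreteness by combining Proposition~\ref{prop: discreteness lemma for integer case} on the integer conjugate with the transfer Proposition~\ref{prop: positive conjugation and discreteness lemma}. The paper's proof is just a terse citation of these same ingredients; your final paragraph on independence from the choice of skew-symmetrizer is extra commentary the paper does not include, but it does no harm.
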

\begin{proof}
For any quasi-integer type matrix $B \in \mathrm{M}_{n}(\mathbb{R})$, we can show that its $B$-pattern consists of quasi-integer type matrices by Lemma~\ref{lem: positive conjugation}. Thus, this is shown by \Cref{cor: sign-coherency for quasi-integer matrices}, Proposition~\ref{prop: discreteness lemma for integer case}, and Proposition~\ref{prop: positive conjugation and discreteness lemma}.
\end{proof}

\section{Dual mutation and $G$-fan under the sign-coherence}\label{sec: dual and fan}
In this section, we investigate the dual mutation and the $G$-fan structure under \Cref{conj: standard and discreteness conjecture} for the real case, which generalizes the ordinary integer case.
\subsection{Dual mutation and third duality} For each $t_0 \in \mathbb{T}_n$ (not assuming $B_{t_0}=B$), we consider the $C$- and $G$-patterns with the initial vertex $t_0$. We write them by ${\bf C}^{t_0}(B_{t_0})=\{C^{t_0}_t\}$ and ${\bf G}^{t_{0}}(B_{t_0})=\{G^{t_0}_t\}$. We consider its $B$-pattern $\tilde{\bf B}=\{\tilde{B}_t\}$, which satisfies $\tilde{B}_t=B_t^{\top}.$ Recall that for any $t_0,t \in \mathbb{T}_{n}$, $\varepsilon_{i;t}^{t_0}$ is the sign of the $i$th column vector of $C^{t_0}_{t}$ and $\tau_{i;t}^{t_0}$ is the sign of the $i$th row vector of $G^{t_0}_{t}$.

Based on \cite[Prop. 1.4]{NZ12}, we obtain the following proposition for the real sign-coherent case. However, there are still some essential differences caused by \Cref{conj: discreteness conjecture}.
\begin{proposition}\label{prop: dual mutation}
Let $B \in {\bf SC}$. Suppose that Conjecture~\ref{conj: standard and discreteness conjecture} holds for this $B$. Then, for any $t_0,t \in \mathbb{T}_{n}$, the following statements hold.
\\
\textup{($a$)} We have the third duality relation:
\begin{equation}\label{eq: thied duality}
C^{t_0}_{t}=(\tilde{G}^{t}_{t_0})^{\top},\quad
G^{t_0}_{t}=(\tilde{C}^{t}_{t_0})^{\top}.
\end{equation}
In particular, we have $\tilde{\varepsilon}^{t}_{k;t_0}=\tau^{t_0}_{k;t}$ and $\tilde{\tau}^{t}_{k;t_0}=\varepsilon^{t_0}_{k;t}$ for any $k=1,2,\dots,n$.
\\
\textup{($b$)} Conjecture~\ref{conj: standard and discreteness conjecture} holds for $B^{\top}$.
\\
\textup{($c$)} For any $k=1,\dots,n$, set $t_1$ as the $k$-adjacent vertex to $t_0$. Then, we have
\begin{equation}\label{eq: dual mutation formula}
\begin{aligned}
C^{t_1}_{t}&=(J_k+[-\tau^{t_0}_{k;t}B_{t_0}]_{+}^{k \bullet})C^{t_0}_{t},\\
G^{t_1}_{t}&=(J_k+[\tau^{t_0}_{k;t}B_{t_0}]^{\bullet k}_{+})G^{t_0}_{t}.
\end{aligned}
\end{equation}
\end{proposition}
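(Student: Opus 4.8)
The plan is to adapt the proof of \cite[Prop.~1.4]{NZ12} (see also \cite[\S~II.2]{Nak23}) to the real setting; the only genuinely new ingredient will be Conjecture~\ref{conj: discreteness conjecture}. I would first observe that ($b$) and ($c$) follow from ($a$). Granting ($a$): since $t_0$ and $t_1$ are $k$-adjacent, $\tilde{G}^{t}_{t_1}$ and $\tilde{G}^{t}_{t_0}$ are related by an \emph{ordinary} $G$-mutation in the pattern $\tilde{\bf B}$ with fixed initial vertex $t$, so \eqref{CG rec} (applicable because $\tilde{C}^{t}_{t_0}=(G^{t_0}_{t})^{\top}$ is column sign-coherent by ($a$)) gives $\tilde{G}^{t}_{t_1}=\tilde{G}^{t}_{t_0}(J_k+[-\tilde{\varepsilon}^{t}_{k;t_0}\tilde{B}_{t_0}]^{\bullet k}_{+})$; transposing, using $\tilde{B}_{t_0}=B_{t_0}^{\top}$ and the identity $\tilde{\varepsilon}^{t}_{k;t_0}=\tau^{t_0}_{k;t}$ from ($a$), yields the formula for $C^{t_1}_{t}$ in \eqref{eq: dual mutation formula}; the formula for $G^{t_1}_{t}$ is obtained the same way, or via \eqref{eq: second duality}. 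For ($b$): ($a$) shows every $\tilde{C}^{t}_{t_0}$ is column sign-coherent and every $\tilde{G}^{t}_{t_0}$ is row sign-coherent, so $B^{\top}$ and all of ${\bf B}(B^{\top})$ lie in ${\bf SC}$; and if a $c$-vector of $\tilde{\bf B}$ is parallel to ${\bf e}_j$, then by ($a$) and Lemma~\ref{lem: for the parallel lemma} applied to ${\bf B}$ this forces a $c$-vector of ${\bf B}$ to be parallel to a coordinate vector, whose length Conjecture~\ref{conj: discreteness conjecture} pins down; chasing this through the relation $\alpha\beta=d_id_j^{-1}$ of Lemma~\ref{lem: for the parallel lemma}~($b$) and using that $D^{-1}$ is a skew-symmetrizer of $B^{\top}$ gives the required length, so Conjecture~\ref{conj: standard and discreteness conjecture} holds for $B^{\top}$.

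It then remains to prove ($a$), which I would do by induction on $d:=d(t_0,t)$, establishing simultaneously, for all pairs at distance $\le d$, the two matrix identities of \eqref{eq: thied duality}, the derived sign identities $\tilde{\varepsilon}^{t}_{k;t_0}=\tau^{t_0}_{k;t}$, $\tilde{\tau}^{t}_{k;t_0}=\varepsilon^{t_0}_{k;t}$, the column/row sign-coherence of $\tilde{C}^{t}_{t_0}$ and $\tilde{G}^{t}_{t_0}$, and the dual-mutation formulas \eqref{eq: dual mutation formula} with the base initial vertex at distance $\le d$ from the current one (this combined statement being invariant under $B\leftrightarrow B^{\top}$, $t_0\leftrightarrow t$, so it may be freely applied to $\tilde{\bf B}$ as well). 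The base case $t=t_0$ is trivial since all matrices equal $I_n$. For the step, take $t'$ which is $k'$-adjacent to $t$ with $d(t_0,t')=d+1$: on the ${\bf B}$-side pass from $C^{t_0}_{t}$ to $C^{t_0}_{t'}$ by \eqref{CG rec} (legitimate because $B_{t_0}\in{\bf SC}$, which uses Conjecture~\ref{conj: standard hypothesis}); on the $\tilde{\bf B}$-side, $\tilde{G}^{t'}_{t_0}$ is obtained from $\tilde{G}^{t}_{t_0}$ by the dual-mutation formula already available at distance $d$, and after transposing and invoking the distance-$d$ sign identity $\varepsilon^{t_0}_{k';t}=\tilde{\tau}^{t}_{k';t_0}$ (read off $C^{t_0}_{t}=(\tilde{G}^{t}_{t_0})^{\top}$) the two sides match, proving the matrix identity at distance $d+1$; the sign-coherence of $\tilde{C}^{t'}_{t_0},\tilde{G}^{t'}_{t_0}$ and the dual-mutation formula at distance $d+1$ then follow just as in the reduction above, the first and second dualities \eqref{eq: first duality}, \eqref{eq: second duality} and \eqref{eq: from C to B} being used to handle the $G$-versions.

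The step I expect to be the main obstacle is the matching in the degenerate case where some $c$-vector ${\bf c}^{t_0}_{i;t}=\alpha{\bf e}_j$ is parallel to a coordinate vector (equivalently, by Lemma~\ref{lem: for the parallel lemma}, the $j$-th row of $G^{t_0}_{t}$ is a nonzero multiple of ${\bf e}_i$): here the tropical sign on the relevant column flips when one changes the initial vertex, and to see that the dualized mutation matrix still has exactly the form $J_k+[-\tau^{t_0}_{k;t}B_{t_0}]^{k\bullet}_{+}$ one must use the precise value of the scalar — the identity to be verified carries a factor $\alpha^{-1}$ which has to be cancelled against the $\sqrt{d_j/d_i}$ produced by \eqref{eq: from C to B}. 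In the ordinary integer theory this is invisible: $\alpha$ is a unit of $\mathbb{Z}$, hence $\pm1$, and $d_i=d_j$ by Proposition~\ref{prop: discreteness lemma for integer case}, so nothing has to be tracked. In the real setting this is precisely the content of Conjecture~\ref{conj: discreteness conjecture}, and this — together with its role in part ($b$) — is the reason the conjecture is imposed; apart from this point all the remaining manipulations are the same computations as in \cite[\S~II.2]{Nak23}.
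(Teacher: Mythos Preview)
Your proposal is correct and follows essentially the same route as the paper: a simultaneous induction on $d(t_0,t)$ carrying the third duality, the transposed sign-coherence, and the dual-mutation formula together, with the degenerate case ${\bf c}^{t_0}_{l;t}=\alpha{\bf e}_k$ isolated as the one place where Conjecture~\ref{conj: discreteness conjecture} is needed. The only cosmetic differences are organizational---the paper packages the three statements as $(a)_d,(b)_d,(c)_d$ and cycles through them, rather than first reducing ($b$),($c$) to ($a$)---and in the degenerate case the paper uses $\alpha=\beta$ (Lemma~\ref{lem: for the parallel lemma}) together with the first duality \eqref{eq: first duality} to identify $(C^{t_0}_t)^{\bullet l}$ with $(G^{t_0}_t)^{k\bullet}$, whereas you propose to reach the same identity via \eqref{eq: from C to B}; both routes boil down to $\alpha^2=d_l/d_k$.
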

\begin{proof}
For the most part, the proof in \cite{NZ12} remains valid. In their proof, they assumed the sign-coherence of $\mathbf{C}(B_t)$ and $\mathbf{C}(B_t^{\top})$ for any $B' \in \Gamma(B)$. Although we need to slightly modify the inductive steps, the sign-coherence of $\mathbf{C}((B')^{\top})$ is guaranteed by ($a$) and the sign-coherence of $G$-matrices for all $B_t \in \mathbf{B}(B)$. However, the following two parts require major modifications. 
\\
Part 1. Prove that \Cref{conj: discreteness conjecture} holds for $B^{\top}$. Suppose that a $c$-vector $\tilde{\mathbf{c}}_{i;t}^{t_0}$ is expressed as $\tilde{\mathbf{c}}_{i;t}^{t_0}=\alpha \mathbf{e}_j$. Note that $D$ is not necessarily a skew-symmetrizer of $B^{\top}$ but $D^{-1}$ is certainly a skew-symmetrizer because $D^{-1}B^{\top}=D^{-1}(DB)^{\top}D^{-1}$ is skew-symmetric. Thus, our desired equality is $\alpha=\sqrt{d_i^{-1}d_j}$.
By $(a)$, we have $G^{t}_{t_0}=(\tilde{C}^{t_0}_{t})^{\top}$. Since $\tilde{\mathbf{c}}_{i;t}^{t_0}$ is the $i$th column vector of $\tilde{C}^{t_0}_{t}$, the vector $(\tilde{\mathbf{c}}_{i;t}^{t_0})^{\top}=\alpha \mathbf{e}_j^{\top}$ appears as the $i$th row vector of $G^{t}_{t_0}$. Since we assume \Cref{conj: discreteness conjecture} for $B_t$, we can apply \Cref{lem: for the parallel lemma}. Thus, we have $\alpha=\pm\sqrt{d_i^{-1}d_j}$ as we desired.
\\
Part 2. We can do the same argument until \cite[(3.10)]{NZ12}. Now, we need to show the following claim.
\begin{quote}
 Suppose that \cite[Claim~2]{NZ12}; that is, suppose that the only non-zero entry in the $l$th column of $C_{t}^{t_0}$ is the $(k,l)$th entry for some $t_0,t \in \mathbb{T}_n$ and $k,l \in \{1,\dots,n\}$. Then, we have \cite[(3.10)]{NZ12}; that is,
  \begin{equation}
  B_{t_0}^{k \bullet}C_{t}^{t_0}=C_{t}^{t_0}B_t^{l \bullet}.
 \end{equation}
\end{quote}
This can be shown as follows. Recall from \Cref{sec: basic notations} that, for arbitrary matrices $X,Y \in \mathrm{M}_n(\mathbb{R})$ and $k=1,\dots,n$, we have
\begin{equation}\label{eq: change of bullet}
X^{\bullet k}Y=XE_{kk}Y=XY^{k \bullet},
\quad
X^{k \bullet}Y=E_{kk}XY=(XY)^{k \bullet}.
\end{equation}
Hence, we have $C_{t}^{t_0}B_t^{l \bullet}=(C^{t_0}_t)^{\bullet l}B_t$. By the assumption, all the entries of $(C^{t_0}_t)^{\bullet l}$ are $0$ except for the $(k,l)$th entry. By \Cref{lem: for the parallel lemma}, this situation also happens for $(G^{t_0}_{t})^{k \bullet}$. Moreover, since we assume \Cref{conj: discreteness conjecture} for $B_{t_0}$, it holds that
\begin{equation}\label{eq: nontivial point in the real setting}
(C^{t_0}_t)^{\bullet l}=(G^{t_0}_{t})^{k \bullet}.
\end{equation}
Namely, we have
\begin{equation}
  C_{t}^{t_0}B_t^{l \bullet}=(C^{t_0}_t)^{\bullet l}B_t=(G^{t_0}_t)^{k \bullet}B_t\overset{\eqref{eq: change of bullet}}{=}(G^{t_0}_tB_t)^{k \bullet} \overset{\eqref{eq: first duality}}{=}(B_{t_0}C_t^{t_0})^{k \bullet} \overset{\eqref{eq: change of bullet}}{=}B_{t_0}^{k \bullet}C_t^{t_0},
\end{equation}
which is our desired equality. By using this equality, we can prove $(c)$ similarly.
\end{proof}
\begin{remark}
In this paper, the equality \eqref{eq: nontivial point in the real setting} is the unique point that the same arguments in the integer case cannot be applicable to the real case except for the sign-coherence.
\end{remark}
By considering (\ref{eq: thied duality}), we give some equivalent conditions to Conjecture~\ref{conj: standard hypothesis}.
\begin{corollary}[cf.~{\cite[Prop.~4.2]{NZ12}, \cite[Prop.~8.19]{Rea14}}]\label{cor: equivalency of sign-coherent class}
Let $B \in \mathrm{M}_{n}(\mathbb{R})$ be a skew-symmetrizable matrix. Suppose that, for any $B' \in {\bf B}(B)$, Conjecture~\ref{conj: discreteness conjecture} holds.
The following conditions are equivalent:
\begin{itemize}
\item[\textup{($a$)}] For any $B' \in {\bf B}(B)$, its $C$-pattern ${\bf C}(B')$ and $G$-pattern ${\bf G}(B')$ are sign-coherent. (Conjecture~\ref{conj: standard hypothesis})
\item[\textup{($b$)}] For any $B' \in {\bf B}(B) \cup {\bf B}(B^{\top})$, its $C$-pattern ${\bf C}(B')$ is sign-coherent.
\item[\textup{($c$)}] For any $B' \in {\bf B}(B) \cup {\bf B}(-B)$, its $C$-pattern ${\bf C}(B')$ is sign-coherent.
\item[\textup{($d$)}] For any $B' \in {\bf B}(B) \cup {\bf B}(B^{\top})$, its $G$-pattern ${\bf G}(B')$ is sign-coherent.
\item[\textup{($e$)}] For any $B' \in {\bf B}(B) \cup {\bf B}(-B)$, its $G$-pattern ${\bf G}(B')$ is sign-coherent.
\end{itemize}
\end{corollary}
\begin{proof}
When we suppose ($a$), the sign-coherence for $B' \in \mathbf{B}(B^{\top})$ is shown by \eqref{eq: thied duality}. Thus, $(a) \Rightarrow (b),(d)$ holds. Moreover, by $-B=D^{-1}B^{\top}D$ and \Cref{lem: positive conjugation}, $(b) \Leftrightarrow (c)$ and $(d) \Leftrightarrow (e)$ hold. The remaining implications $(b) \Rightarrow (a)$ and $(d) \Rightarrow (a)$ are shown by the same way in \cite{NZ12}.
\end{proof}

%\textcolor{red}{Akagi, this part and proof needs your reconstruction from Sec8. Especially about the simplification of Case 1. The part of next subsection has been moved from the section before. Please have a check.}
\subsection{Structure of $G$-fans}
In Section~\ref{sec: geometric properties under the sign-coherency}, we introduce a geometric structure called $G$-cone $\mathcal{C}(G_t)$. (See Definition~\ref{def: G-cone}.) In ordinary cluster theory, it is known that the set of all $G$-cones has the fan structure. We can also investigate this structure under Conjecture~\ref{conj: standard and discreteness conjecture} for the real case.
\begin{definition}\label{def: fan}
A nonempty set $\Delta$ of simplicial cones is called a (simplicial) {\em fan} if it satisfies the following conditions:
\begin{itemize}
\item For any cone $\mathcal{C} \in \Delta$, all faces of $\mathcal{C}$ also belong to $\Delta$.
\item For any pair of cones $\mathcal{C},\mathcal{C}' \in \Delta$, their intersection $\mathcal{C} \cap \mathcal{C}'$ is a face of both $\mathcal{C}$ and $\mathcal{C}'$.
\end{itemize}
\end{definition}
In the cluster algebra theory, the following is one of the most important object.
\begin{definition}[$G$-fan]\label{def: G-fan}
Let $B \in {\bf SC}$. We define the set of simplicial cones
\begin{equation}
\Delta_{{\bf G}}(B)=\{\mathcal{C}_{J}(G^{t_0}_{t}) \mid t \in \mathbb{T}_{n}, J \subset \{1,2,\dots,n\}\},
\end{equation}
and we call it a {\em $G$-fan}.
\end{definition}
The following fact was shown in \cite{GHKK18} in the integer case. Although their proof cannnot be directly applicable, we obtain the following proposition under \Cref{conj: standard and discreteness conjecture} with the same arguments of \cite[Thm.~8.7]{Rea14} and \cite[Thm.~II. 2.17]{Nak23}.
\begin{proposition}[cf.~\cite{Rea14,GHKK18,Nak23}]\label{prop: fan}
Let $B \in {\bf SC}$. Suppose that Conjecture~\ref{conj: standard and discreteness conjecture} holds for this $B$. Then, the $G$-fan $\Delta_{\bf G}(B)$ is really a fan in the sense of Definition~\ref{def: fan}.
\end{proposition}
Since the proof in \cite{Nak23} relies on \eqref{eq: dual mutation formula}, we can do the same argument. Note that \cite[Cond.~II.2.28]{Nak23} is verified as stated in \Cref{prop: lemma for cones}.
\par
In the ordinary cluster algebras, this is a geometric realization of a cluster complex. As in \Cref{thm: ordinary synchronicity}, the periodicity appearing in the $G$-fan is the same as the one of $C$-, $G$-patterns. However, by generalizing the real entries, we can observe the following different phenomenon.
\begin{example}\label{ex: bad phenomenon for periodicity}
Consider the initial exchange matrix
\begin{equation}
B=\left(\begin{matrix}
0 & -\frac{1}{2}\\
2 & 0
\end{matrix}\right).
\end{equation}
Note that it is expressed as
\begin{equation}
B=\left(\begin{matrix}
\frac{1}{2} & 0\\
0 & 1
\end{matrix}\right)
\left(\begin{matrix}
0 & -1\\
1 & 0
\end{matrix}\right)
\left(\begin{matrix}
2 & 0\\
0 & 1
\end{matrix}\right).
\end{equation}
Thus, this matrix is of quasi-integer type. In particular, Conjecture~\ref{conj: standard and discreteness conjecture} holds. By calculating, we obtain the $G$-matrices as in Figure~\ref{fig: G-pattern with quasi-integer type}. So, we draw the $G$-fan as in Figure~\ref{fig: bad example}. In Figure~\ref{fig: bad example}, the blue lines imply the mutation of $G$-matrices. On the other hand, the red lines imply how we can obtain the $G$-cones by the mutation. As this example indicates, the periodicity of $G$-cones (namely, $\mathcal{C}(G_{t'})=\mathcal{C}(G_{t})$) does not imply the periodicity of $G$-matrices. For example, if a $G$-cone $\mathcal{C}(G_{t})$ is the positive orthant $\mathfrak{O}_{+}^{2}$, there are two possibilities $G_t=\left(\begin{smallmatrix}
1 & 0\\
0 & 1
\end{smallmatrix}\right)$
or
$G_t=\left(\begin{smallmatrix}
0 & \frac{1}{2}\\
2 & 0
\end{smallmatrix}\right)$. This means that \Cref{thm: ordinary synchronicity} does not hold by generalizing to the real entries. In Section~\ref{sec: syncro}, we will consider this problem.
\begin{figure}[hbtp]
\begin{tikzpicture}
\draw (-3,0)--(3,0);
\draw (0,-2.5)--(0,3);
\draw (0,0)--(1.25,-2.5);
\draw[red] (1,1) -- (-1,1) -- (-1,-1) -- (3/8,-3/2)--(1.5,-0.75) -- (1,1);
\fill[red] (1,1) circle [radius=0.06];
\fill[red] (-1,1) circle [radius=0.06];
\fill[red] (-1,-1) circle [radius=0.06];
\fill[red] (3/8,-3/2) circle [radius=0.06];
\fill[red] (1.5,-0.75) circle [radius=0.06];

\draw (2,2) node {$\left(\begin{smallmatrix}
1 & 0\\
0 & 1
\end{smallmatrix}\right)$};
\draw (-2,2) node {$\left(\begin{smallmatrix}
-1 & 0\\
0 & 1
\end{smallmatrix}\right)$};
\draw (-2,-2) node {$\left(\begin{smallmatrix}
-1 & 0\\
0 & -1
\end{smallmatrix}\right)$};
\draw (3/4,-3) node {$\left(\begin{smallmatrix}
1 & 0\\
-2 & -1
\end{smallmatrix}\right)$};
\draw (3,-1.5) node {$\left(\begin{smallmatrix}
1 & \frac{1}{2}\\
-2 & 0
\end{smallmatrix}\right)$};
\draw (4,3) node {$\left(\begin{smallmatrix}
0 & \frac{1}{2}\\
2 & 0
\end{smallmatrix}\right)$};
\draw (-3,3) node {$\left(\begin{smallmatrix}
0 & -\frac{1}{2}\\
2 & 0
\end{smallmatrix}\right)$};
\draw (-3,-3) node {$\left(\begin{smallmatrix}
0 & -\frac{1}{2}\\
-2 & 0
\end{smallmatrix}\right)$};
\draw (9/8,-4) node {$\left(\begin{smallmatrix}
0 & \frac{1}{2}\\
-2 & -1
\end{smallmatrix}\right)$};
\draw (5.5,-9/4) node {$\left(\begin{smallmatrix}
1 & \frac{1}{2}\\
0 & -1
\end{smallmatrix}\right)$};
\draw[blue] (1.2,2)--(-1.1,2);
\draw[blue] (-2,1.5)--(-2,-1.5);
\draw[blue] (-1.6,-2.3)--(0,-3);
\draw[blue] (1.6,-2.8)--(2.8,-1.9);
\draw[blue] (3.1,-1)--(4,2.5);
\draw[blue] (3,3)--(-2.1,3);
\draw[blue] (-3.2,2.5)--(-3.2,-2.5);
\draw[blue] (-2.3,-3.5)--(0.2,-4);
\draw[blue] (2,-3.8)--(4.8,-2.8);
\draw[blue] (5,-1.8)--(2.2,1.7);
\end{tikzpicture}
\caption{$G$-pattern and $G$-fan associated with $B=\left(\begin{smallmatrix}
0 & -\frac{1}{2}\\
2 & 0
\end{smallmatrix}\right)$}\label{fig: bad example}
\end{figure}
\end{example}

\section{Classification of sign-coherent class of rank $2$}\label{sec: classification of rank 2}
In this section, we give a classification of sign-coherent class and $G$-fans of rank $2$. In the ordinary cluster theory, a formula for $c$-, $g$-vectors is obtained by \cite{Rea14,GN22} explicitly. We can also obtain such formula for the real cases if we focus on the sign-coherent class.
\subsection{Rank 2 sign-coherent class}
For the rank $2$ case, the classification is given as follows.
\begin{theorem}\label{thm: rank 2 classification}
Let the initial exchange matrix be $B=\left(\begin{smallmatrix}
0 & -a\\
b & 0
\end{smallmatrix}\right)$ with $a,b \in \mathbb{R}_{\geq 0}$. Then, $B$ belongs to ${\bf SC}$ if and only if either of the following holds.
\begin{itemize}
\item $\sqrt{ab}=2\cos{\frac{\pi}{m}}$ holds for some $m \in \mathbb{Z}_{\geq 2}$.
\item $\sqrt{ab} \geq 2$.
\end{itemize}
\end{theorem}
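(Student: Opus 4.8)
The plan is to reduce everything to the skew-symmetric case via the skew-symmetrizing method (Theorem~\ref{thm: skew-symmetrizing method}) and Proposition~\ref{prop: positive conjegation and sign-coherency}, which tells us that $B=\left(\begin{smallmatrix}0 & -a\\ b & 0\end{smallmatrix}\right)$ satisfies the sign-coherent property if and only if $\mathrm{Sk}(B)=\left(\begin{smallmatrix}0 & -\sqrt{ab}\\ \sqrt{ab} & 0\end{smallmatrix}\right)$ does. So everything depends only on the single parameter $\lambda=\sqrt{ab}\ge 0$, and we may work with the skew-symmetric matrix $B_\lambda=\left(\begin{smallmatrix}0 & -\lambda\\ \lambda & 0\end{smallmatrix}\right)$. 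First I would record the explicit form of the $C$-pattern for $B_\lambda$: in rank $2$ the tree $\mathbb{T}_2$ is a bi-infinite path, and iterating the mutation formula \eqref{CG rec} from $C_{t_0}=I$ produces $c$-vectors whose entries obey a linear recurrence governed by the Chebyshev-type relation $u_{k+1}=\lambda^2 u_k - u_{k-1}$ (this is the standard rank-$2$ computation, cf. \cite{Rea14, GN22}, valid over $\mathbb{R}$ once sign-coherence is assumed so that the $[\,\cdot\,]_+$ simplifies). The sign-coherence of $C_\lambda$ then becomes the purely numerical question of whether, along this recurrence, the two entries of each produced vector never have strictly opposite signs.

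Next I would analyze this recurrence by cases on $\lambda$. When $\lambda\ge 2$, the characteristic roots of $x^2-\lambda^2 x+1$ (or rather of the associated second-order recursion after the appropriate substitution) are real; one shows by an explicit induction that the $c$-vectors stay in the union of the closed orthants $\mathfrak{O}_+^2$ and $\mathfrak{O}_-^2$ as one mutates in either direction away from $t_0$ — here the monotonicity/positivity of the real-root regime does the work, and one never leaves the sign-coherent region, giving $B\in{\bf SC}$. When $0\le \lambda<2$, write $\lambda=2\cos\theta$ with $\theta\in(0,\pi/2]$; the recurrence is now of rotational (elliptic) type, and the $c$-vectors, viewed as directions, rotate by a fixed angle related to $\theta$ at each step. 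Sign-coherence holds for \emph{all} of $\mathbb{T}_2$ precisely when this rotation is \emph{periodic} and the finitely many directions produced avoid the open second and fourth quadrants; a direct trigonometric computation shows this happens exactly when $\theta=\pi/m$ for some integer $m\ge 2$, i.e. $\lambda=2\cos(\pi/m)$. Conversely, if $\lambda<2$ is not of this form, the rotation angle is an irrational multiple of $\pi$ (or a rational one with a "bad" denominator), so some iterate lands a $c$-vector strictly inside the open second or fourth quadrant, exhibiting a non-sign-coherent $C$-matrix — Example~\ref{ex: sign-incoherent} with $\lambda=1/2$ (note $1/2$ is not $2\cos(\pi/m)$ for any $m$) is the prototype. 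I would also need to handle the degenerate cases $a=0$ or $b=0$ (then $\lambda=0=2\cos(\pi/2)$, and $B$ is trivially sign-coherent since $C_t=\pm J$-type matrices), which fits the stated classification.

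The main obstacle I expect is the "only if" direction in the regime $0<\lambda<2$: one must show that \emph{whenever} $\lambda\neq 2\cos(\pi/m)$ the rotation dynamics genuinely forces a $c$-vector into an open off-diagonal quadrant, and do so uniformly — this requires controlling the orbit of the rotation and ruling out accidental avoidance, which is where a careful quantitative argument (rather than a soft one) is needed. The "if" direction for $\lambda=2\cos(\pi/m)$ is more bookkeeping: one lists the finitely many $c$-vectors explicitly (there are $m$ of them up to sign, essentially the roots-of-unity pattern) and checks each lies in a closed orthant; this is routine but must be done. The regime $\lambda\ge 2$ is the easiest, being a clean induction on distance from $t_0$ using the real-root positivity. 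Throughout, I would invoke Proposition~\ref{prop: fundamental properties under sign-coherency} and the rank-$2$ version of Proposition~\ref{prop: submatrix} only implicitly, and the reduction to $\lambda$ at the very start keeps the whole argument one-dimensional.
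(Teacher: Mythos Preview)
Your overall strategy---reduce to the single skew-symmetric parameter $\lambda=\sqrt{ab}$ via Proposition~\ref{prop: positive conjegation and sign-coherency}, then analyze the rank-$2$ $C$-pattern through a Chebyshev-type recurrence, splitting into the hyperbolic regime $\lambda\ge 2$ and the trigonometric regime $\lambda<2$---is exactly the paper's approach. But three things need repair.

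First, the recurrence is misstated: it is $u_{k+1}=\lambda\,u_k-u_{k-1}$ (linear in $\lambda$), with characteristic polynomial $x^2-\lambda x+1$; your $\lambda^2$ would put the hyperbolic threshold at $\lambda=\sqrt{2}$ rather than $\lambda=2$.

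Second, and more importantly, your ``only if'' plan for $0<\lambda<2$ via rotation dynamics and density of irrational rotations is unnecessarily hard and, as you yourself flag, is where the argument would stall. The paper bypasses this completely using the closed form $u_k(2\cos\theta)=\sin((k+1)\theta)/\sin\theta$ (Lemma~\ref{lem: lemma for the rank 2 classification}): the entries of the $c$-vector at step $i$ are proportional to $\sin(i\theta)$ and $\sin((i+1)\theta)$. If $\theta\in(0,\pi/2)$ is not of the form $\pi/m$, choose the unique $m$ with $\tfrac{\pi}{m+1}<\theta<\tfrac{\pi}{m}$; then $\sin(i\theta)>0$ for $i=1,\dots,m$ (so Lemma~\ref{lem: calculation of rank 2 c-vectors} applies up to step $m$), while $m\theta<\pi<(m+1)\theta$ forces $\sin(m\theta)>0>\sin((m+1)\theta)$, so $C_{t_m^2}$ is already not sign-coherent. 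No equidistribution or case split on rationality is needed---the first failure is located explicitly.

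Third, you only argue column sign-coherence of the $C$-pattern, but membership in ${\bf SC}$ also requires row sign-coherence of the $G$-pattern. The paper closes this by observing that $B^\top$ has the same form (same $\lambda$) and invoking $(b)\Rightarrow(a)$ of Proposition~\ref{prop: equivalency of sign-coherent class}; equivalently, in the skew-symmetric case one can use $G_t=(C_t^{-1})^\top$ from the second duality and check directly.
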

\begin{remark}
In \cite{DP24, DP25}, real $C$-, $G$-matrices in the case of $\sqrt{ab}=2\cos\frac{\pi}{m}$ have already been constructed by using unfolding from the other finite type. Here, we simply calculate $C$-, $G$-matrices based on the recursion, and we may show that this is a maximal setting to generalize $C$-, $G$-matrices.
\end{remark}
When $m=2$, then $B$ is a zero matrix and it is easy to check that $B\in {\bf SC}$. In addition, thanks to \Cref{prop: skew-symmetrizing method}, it suffices to consider the skew-symmetric case. Thus, we set
\begin{equation}
B=\left(\begin{matrix}
0 & -p\\
p & 0
\end{matrix}\right),
\end{equation}
where $p \in \mathbb{R}_{> 0}$.
\par
Firstly, we will give some examples of the sign-coherent class. For the rank 2 integer case, an explicit formula for $g$-vectors has already known in \cite[Lem.~3.2]{LS15} and \cite[Prop.~9.6]{Rea14}, and also $c$-vectors in \cite[Prop.~3.1]{GN22}. We refer to the expression of \cite{GN22} based on {\em Chebyshev polynomials} of the second kind $U_{n}(p)$ ($n \geq -2$), which is defined as follows:
\begin{equation}
U_{-2}(p)=-1,\ U_{-1}(p)=0,\ U_{n+2}(p)=2pU_{n+1}(p)-U_{n}(p).
\end{equation}
Note that $U_{0}(p)=1$ and $U_{1}(p)=2p$. Set $u_n(p)=U_{n}(\frac{p}{2})$. Then, based on the property of the Chebyshev polynomials, we obtain the following properties for $u_n(p)$.
\begin{lemma}[e.g.~{\cite[(1.4), (1.33b)]{HM03}}]\label{lem: lemma for the rank 2 classification}
We have $u_{n+2}(p)=pu_{n+1}(p)-u_{n}(p)$ for any $n \geq -2$. Moreover, for any $\theta \in \mathbb{R}$, it holds that
\begin{equation}\label{eq: sin expression of un}
\sin{\theta} \cdot u_n(2\cos{\theta})=\sin{(n+1)\theta}, \quad 
\sinh{\theta} \cdot u_n(2\cosh{\theta})=\sinh{(n+1)\theta}.
\end{equation}
\end{lemma}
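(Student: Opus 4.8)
The plan is to dispatch the three assertions by three short inductions, all built on the Chebyshev recursion. First I would note that the relation $u_{n+2}(p)=p\,u_{n+1}(p)-u_n(p)$ is immediate: since $u_n(p)=U_n(p/2)$, substituting $p/2$ for the variable in the defining relation $U_{n+2}(x)=2x\,U_{n+1}(x)-U_n(x)$ gives exactly this, valid for all $n\geq -2$. Along the way this records the base values $u_{-2}(p)=-1$, $u_{-1}(p)=0$, $u_0(p)=1$, $u_1(p)=p$, which I will use below.

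For the trigonometric identity I would fix $\theta\in\mathbb{R}$ and compare the two sequences $f(n)=\sin\theta\cdot u_n(2\cos\theta)$ and $g(n)=\sin((n+1)\theta)$ as functions of $n$. By the first part (applied with $p=2\cos\theta$), $f$ satisfies $f(n+2)=2\cos\theta\,f(n+1)-f(n)$; by the elementary identity $\sin((n+3)\theta)+\sin((n+1)\theta)=2\cos\theta\,\sin((n+2)\theta)$, so does $g$. Since a solution of a second-order linear recurrence is pinned down by two consecutive terms, it then remains only to match $f(-2)=-\sin\theta=\sin(-\theta)=g(-2)$ and $f(-1)=0=\sin 0=g(-1)$, whence $f\equiv g$ for all $n\geq -2$ (in fact for all $n\in\mathbb{Z}$).

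For the hyperbolic identity there are two routes, and I would pick whichever reads more cleanly. The direct route repeats the previous paragraph verbatim with $\cos\theta,\sin$ replaced by $\cosh\theta,\sinh$ and the auxiliary identity replaced by $\sinh((n+3)\theta)+\sinh((n+1)\theta)=2\cosh\theta\,\sinh((n+2)\theta)$, the base cases at $n=-2,-1$ being checked the same way. The slicker route observes that, for each fixed $n$, the trigonometric identity is an identity between entire functions of a complex variable $\theta$ (the left side being $\sin\theta$ times the polynomial $u_n$ evaluated at $2\cos\theta$), hence holds on all of $\mathbb{C}$; specializing $\theta\mapsto i\theta$ and using $\cos(i\theta)=\cosh\theta$, $\sin(i\theta)=i\sinh\theta$, then cancelling the common factor $i$, yields the stated formula.

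I do not expect any genuine obstacle: the lemma merely repackages classical properties of Chebyshev polynomials of the second kind. The one point worth stating carefully is that everything is phrased in product form rather than as $u_n(2\cos\theta)=\sin((n+1)\theta)/\sin\theta$; this is precisely what makes the recurrence-matching argument uniform in $\theta$, including at the zeros of $\sin\theta$ (and of $\sinh\theta$), where both sides simply vanish and no division is involved.
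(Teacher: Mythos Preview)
Your proof is correct. The paper does not supply its own proof of this lemma; it simply cites it as a standard fact about Chebyshev polynomials from \cite{HM03}. Your argument---deducing the recurrence for $u_n$ directly from that of $U_n$, then verifying the trigonometric and hyperbolic identities by matching initial conditions of the common second-order linear recurrence (with the analytic-continuation shortcut as an alternative for the hyperbolic case)---is the standard elementary verification and fills in exactly what the paper leaves to the reference.
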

The calculation of the forthcoming examples depends on the following lemma, but for the proof of Theorem~\ref{thm: rank 2 classification}, we show more general setting. The following expression was essentially obtained by \cite[Prop. 3.1]{GN22} for $p \geq 2$.
\begin{lemma}\label{lem: calculation of rank 2 c-vectors}
Fix an initial vertex $t_0 \in \mathbb{T}_{n}$.
\\
\textup{($a$)} We set the vertices $t_i^{2} \in \mathbb{T}_{n}$ \textup{($i=0,1,2,\dots$)} as follows:
\begin{figure}[H]
\centering
\begin{tikzpicture}
\draw (0,0) node [above] {$t_0=t_0^2$} -- (6,0);
\fill (0,0) circle [radius=0.06];
\foreach \x in {1,2,3}
    {
    \fill (2*\x,0) node [above] {$t_{\x}^{2}$} circle [radius=0.06];
    };
\draw (1,0) node [above] {$2$};
\draw (3,0) node [above] {$1$};
\draw (5,0) node [above] {$2$};
\draw[dotted] (6,0)--(7,0); 
\end{tikzpicture}
\end{figure}
\noindent
Let $k \in \mathbb{Z}_{\geq 1}$. Suppose that all $C_{t_0^2}$, $C_{t_1^2}$, \dots, $C_{t_{k-1}^2}$ are sign-coherent, and their tropical signs $(\varepsilon_{1;t_{i}^2},\varepsilon_{2;t_{i}^2})$ are given by $(\varepsilon_{1;t_0^2},\varepsilon_{2;t_0^2})=(+,+)$, and for any $i=1,\dots,k-1$,
\begin{equation}
(\varepsilon_{1;t_{i}^2},\varepsilon_{2;t_{i}^2})=\begin{cases}
(+,-) & \textup{if $i$ is odd},\\
(-,+) & \textup{if $i$ is even}.
\end{cases}
\end{equation}
(Note that we do not assume the sign-coherence of $C_{t_{k}^{2}}$.) Then, for any $i=0,1,2,\dots,k$, we have
\begin{equation}
C_{t_{i}^2}=\begin{cases}
\left(\begin{matrix}
-u_{i-2}(p) & u_{i-1}(p)\\
-u_{i-1}(p) & u_{i}(p)
\end{matrix}\right)
& \textup{if $i$ is even},\\
\left(\begin{matrix}
u_{i-1}(p) & -u_{i-2}(p)\\
u_{i}(p) & -u_{i-1}(p)
\end{matrix}\right)
& \textup{if $i$ is odd}.
\end{cases}
\end{equation}
\textup{($b$)} We set the vertices $t_{i}^{1} \in \mathbb{T}_{n}$ \textup{($i=0,1,2,\dots$)} as follows:
\begin{figure}[H]
\centering
\begin{tikzpicture}
\draw (0,0) node [above] {$t_0=t_0^1$} -- (6,0);
\fill (0,0) circle [radius=0.06];
\foreach \x in {1,2,3}
    {
    \fill (2*\x,0) node [above] {$t_{\x}^{1}$} circle [radius=0.06];
    };
\draw (1,0) node [above] {$1$};
\draw (3,0) node [above] {$2$};
\draw (5,0) node [above] {$1$};
\draw[dotted] (6,0)--(7,0); 
\end{tikzpicture}
\end{figure}
\noindent
Let $k \in \mathbb{Z}_{\geq 3}$. Suppose that all $C_{t_0^1},C_{t_1^1},\dots,C_{t_{k-1}^1}$ are sign-coherent, and their tropical signs $(\varepsilon_{1;t_{i}^{1}},\varepsilon_{2;t_{i}^{1}})$ are given by $(\varepsilon_{1;t_{0}^{1}},\varepsilon_{2;t_{0}^{1}})=(+,+)$, $(\varepsilon_{1;t_{1}^{1}},\varepsilon_{2;t_{1}^{1}})=(-,+)$,
$(\varepsilon_{1;t_{2}^{1}},\varepsilon_{2;t_{2}^{1}})=(-,-)$, and for any $i=3,4,\dots,k-1$,
\begin{equation}
(\varepsilon_{1;t_{i}^{1}},\varepsilon_{2;t_{i}^{1}})=\begin{cases}
(+,-) & \textup{if $i$ is odd},\\
(-,+) & \textup{if $i$ is even}.
\end{cases}
\end{equation}
Then, for any $i=2,3,\dots,k$, we have
\begin{equation}
C_{t_{i}^{1}}=\begin{cases}
\left(\begin{matrix}
-u_{i-2}(p) & u_{i-3}(p)\\
-u_{i-3}(p) & u_{i-4}(p)
\end{matrix}\right)
& \textup{if $i$ is even},\\
\left(\begin{matrix}
u_{i-3}(p) & -u_{i-2}(p)\\
u_{i-4}(p) & -u_{i-3}(p)
\end{matrix}\right)
& \textup{if $i$ is odd}.
\end{cases}
\end{equation}
\end{lemma}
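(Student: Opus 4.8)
The plan is to prove (a) and (b) together by induction on $i$, using the simplified mutation $(\ref{CG rec})$ and three elementary facts. First, for the rank-$2$ matrix $B$ every mutation gives $\mu_1(B)=\mu_2(B)=-B$, so along either zigzag path $B_{t_i}=(-1)^iB$. Second, the hypotheses on the tropical signs supply exactly the data $\varepsilon_{k;t_i}$ needed to instantiate $(\ref{CG rec})$ at each step, and one checks that the combination of $\varepsilon_{k;t_i}$ with $B_{t_i}=(-1)^iB$ always makes the one-step factor $J_k+[\varepsilon_{k;t_i}B_{t_i}]_+^{k\bullet}$ equal to $\left(\begin{smallmatrix}1&0\\p&-1\end{smallmatrix}\right)$ when $k=2$ and to $\left(\begin{smallmatrix}-1&p\\0&1\end{smallmatrix}\right)$ when $k=1$; in particular the positive part always contributes the entry $p$ (never $0$), which is the precise sense in which the path moves away from $t_0$. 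Third, I will use the recursion $u_{n+2}(p)=pu_{n+1}(p)-u_n(p)$ of Lemma~\ref{lem: lemma for the rank 2 classification} together with $u_{-2}(p)=-1$, $u_{-1}(p)=0$, $u_0(p)=1$, $u_1(p)=p$.

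For part (a) the base cases $i=0$ and $i=1$ give $C_{t_0^2}=I_2$ and $C_{t_1^2}=\left(\begin{smallmatrix}1&0\\p&-1\end{smallmatrix}\right)$, which are exactly the claimed matrices with the relevant Chebyshev indices. For the inductive step, the edge $t_i^2-t_{i+1}^2$ carries label $2$ when $i$ is even and label $1$ when $i$ is odd; multiplying the inductive formula for $C_{t_i^2}$ on the right by the corresponding one of the two fixed $2\times 2$ matrices above and collapsing the resulting entries via $pu_{i-1}(p)-u_{i-2}(p)=u_i(p)$ and $pu_i(p)-u_{i-1}(p)=u_{i+1}(p)$ yields $C_{t_{i+1}^2}$ in the form dictated by the parity of $i+1$. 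Only the sign-coherence of the matrix $C_{t_i^2}$ being mutated is used at the $(i+1)$-st step, and that is available since $i\le k-1$.

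For part (b) the argument is identical, except that the first two edges out of $t_0^1$ are governed by the non-generic tropical signs $(-,+)$ and $(-,-)$, so the base cases $i=2$ and $i=3$ are computed directly: one gets $C_{t_2^1}=\left(\begin{smallmatrix}-1&0\\0&-1\end{smallmatrix}\right)$ and $C_{t_3^1}=\left(\begin{smallmatrix}1&-p\\0&-1\end{smallmatrix}\right)$, which match the stated matrices with Chebyshev indices $(0,-1,-2)$ and $(0,1,-1)$ respectively. From $i\ge 3$ on, the tropical signs revert to the alternating $(+,-)/(-,+)$ pattern, one checks that the one-step factor is again one of the two fixed $2\times 2$ matrices, and the same Chebyshev collapse advances the induction, now producing the shifted indices $i-2$, $i-3$, $i-4$.

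There is no genuine obstacle here; it is a bookkeeping induction. The points needing care are: verifying at each step that the tropical sign $\varepsilon_{k;t_i}$ and the ambient matrix $B_{t_i}=(-1)^iB$ combine so that $[\varepsilon_{k;t_i}B_{t_i}]_+$ picks up $p$ in the off-diagonal slot of row $k$ (a finite check of the four sign combinations along each path), and separating out the non-generic initial segment of the path in (b). Sign-coherence is invoked only in the hypothesised form, i.e.\ for the matrices $C_{t_0},\dots,C_{t_{k-1}}$, since $(\ref{CG rec})$ at the $(i+1)$-st step uses nothing beyond the single tropical sign $\varepsilon_{k;t_i}$.
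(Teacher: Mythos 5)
Your proposal is correct and follows essentially the same route as the paper: induction along each path using the simplified recursion \eqref{CG rec}, observing that each one-step factor is one of two fixed matrices (with the non-generic first two steps of path (b) handled by direct base-case computation), and collapsing entries via the Chebyshev recursion $u_{i}(p)=pu_{i-1}(p)-u_{i-2}(p)$. The paper's proof writes out only one representative parity case and declares the others similar, so your write-up just makes explicit the same finite sign checks the paper leaves to the reader.
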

\begin{proof}
We can show the claim by the induction on $k$. For example, if $k$ is even, then $t_{k-1}^{2}$ and $t_{k}^{2}$ are $1$-adjacent. Thus, we have
\begin{equation}
\begin{aligned}
C_{t_k^{2}}&=C_{t_{k-1}^{2}}(J_1+[\varepsilon_{1;t_{k-1}^{2}}B_{t_{k-1}^{2}}]^{1 \bullet}_{+})=
\left(\begin{matrix}
u_{k-2}(p) & -u_{k-3}(p)\\
u_{k-1}(p) & -u_{k-2}(p)
\end{matrix}\right)
\left(\begin{matrix}
-1 & p\\
0 & 1
\end{matrix}\right)
\\
&=\left(\begin{matrix}
-u_{k-2}(p) & pu_{k-2}(p)-u_{k-3}(p)\\
-u_{k-1}(p) & pu_{k-1}(p)-u_{k-2}(p)
\end{matrix}\right)
=\left(\begin{matrix}
-u_{k-2}(p) & u_{k-1}(p)\\
-u_{k-1}(p) & u_{k}(p)
\end{matrix}\right).
\end{aligned}
\end{equation} The proof of the case that $k$ is odd or for $t_{k}^{1}$ is similar.
\end{proof}
\begin{example}\label{ex: rank 2 C matrices}
Based on Lemma~\ref{lem: calculation of rank 2 c-vectors}, we obtain the expression of all $C$-matrices explicitly. Now, we provide three classes of examples as follows.
\\
({\bf Type $I_2(m)$}) Let $p=2\cos{\theta}$ with $\theta=\frac{\pi}{m}$ ($m \in \mathbb{Z}_{\geq 2}$). In fact, this is of finite type. Since
\begin{equation}
C_{t_1^{2}}=\left(\begin{matrix}
1 & 0\\
p & -1
\end{matrix}\right),
\end{equation}
the assumption of Lemma~\ref{lem: calculation of rank 2 c-vectors} is satisfied for $C_{t_1^{2}}$. Thus, by Lemma~\ref{lem: lemma for the rank 2 classification} and (\ref{eq: sin expression of un}), we have
\begin{equation}\label{eq: Ct2 expression}
C_{t_2^{2}}=\left(\begin{matrix}
-u_{0}(p) & u_{1}(p)\\
-u_{1}(p) & u_{2}(p)
\end{matrix}\right)
=
\frac{1}{\sin \theta}\left(\begin{matrix}
-\sin{\theta} & \sin{2\theta} \\
-\sin{2\theta} & \sin{3\theta}
\end{matrix}\right).
\end{equation}
This is sign-coherent. If $m \geq 3$, its tropical signs are given by $(\varepsilon_{1;{t_{2}^{2}}},\varepsilon_{2;t_{2}^{2}})=(-,+)$. Thus, by using Lemma~\ref{lem: lemma for the rank 2 classification} and (\ref{eq: sin expression of un}) again, we obtain $C_{t_3^{2}}$ like (\ref{eq: Ct2 expression}). By repeating this argument, we show the following claim:

For any $i=1,2,3,\dots,m-1$, the assumption of Lemma~\ref{lem: lemma for the rank 2 classification} holds. Moreover, for any $i=0,1,2,\dots,m$, we have
\begin{equation}\label{eq: Ctk expression}
\begin{aligned}
C_{t_{i}^{2}}=\begin{cases}
\displaystyle{\frac{1}{\sin{\theta}}}\left(\begin{matrix}
-\sin{(i-1)\theta} & \sin{i\theta}\\
-\sin{i\theta} & \sin{(i+1)\theta}
\end{matrix}\right)
& \textup{if $i$ is even},\\
\displaystyle{\frac{1}{\sin\theta}}\left(\begin{matrix}
\sin{i\theta} & -\sin{(i-1)\theta}\\
\sin{(i+1)\theta} & -\sin{i\theta}
\end{matrix}\right)
& \textup{if $i$ is odd}.
\end{cases}
\end{aligned}
\end{equation}
Note that $\theta=\frac{\pi}{m}$. Thus, by $\sin{\pi}=0$ and $\sin{\frac{\pi}{m}}=\sin{\frac{m-1}{m}\pi}=-\sin{\frac{m+1}{m}\pi}$, we have
\begin{equation}
C_{t_{m}^2}=\begin{cases}
\left(\begin{matrix}
-1 & 0\\
0 & -1
\end{matrix}\right)
&
\textup{if $m$ is even},
\\
\left(\begin{matrix}
0 & -1\\
-1 & 0
\end{matrix}\right)
&
\textup{if $m$ is odd}.
\end{cases}
\end{equation}
By a direct calculation, we have
\begin{equation}
C_{t_{m+1}^{2}}=
\begin{cases}
\left(\begin{matrix}
-1 & 0\\
0 & 1
\end{matrix}\right)
&
\textup{if $m$ is even},
\\
\left(\begin{matrix}
0 & -1\\
1 & 0
\end{matrix}\right)
&
\textup{if $m$ is odd},
\end{cases}
\quad
C_{t_{m+2}^{2}}=
\begin{cases}
\left(\begin{matrix}
1 & 0\\
0 & 1
\end{matrix}\right)
&
\textup{if $m$ is even},
\\
\left(\begin{matrix}
0 & 1\\
1 & 0
\end{matrix}\right)
&
\textup{if $m$ is odd}.
\end{cases}
\end{equation}
Thus, a periodicity $C_{t^2_{m+2}}=\tilde{\sigma}C_{t_0}$ appears, where $\sigma=\mathrm{id} \in \mathfrak{S}_2$ if $m$ is even and $\sigma=(1,2) \in \mathfrak{S}_{2}$ if $m$ is odd. Moreover, $B$ also has the same periodicity $B_{t^2_{m+2}}=\sigma B_{t_0}$. Thus, by Proposition~\ref{prop: periodicity}, every $t_{k}^{2}$ ($k\geq m+2$) satisfies $C_{t^2_{k}}=\tilde{\sigma}C_{t^2_{k-m-2}}$. By setting $t_{-k}^{2}=t_{k}^{1}$, the similar relation also holds. Hence, every $C$-matrix is obtained.
\\
({\bf Type $A^{(1)}_{1}$}) Let $p=2$. This is a well-known integer case of affine type. By Lemma~\ref{lem: lemma for the rank 2 classification} and $u_{i}(2)=i+1$, this $C$-pattern is obtained as follows:
\begin{equation}
\begin{aligned}
C_{t_{i}^{2}}&=
\begin{cases}
\left(\begin{matrix}
-i+1 & i\\
-i & i+1
\end{matrix}\right)
&
\textup{if $i \geq 0$ is even},
\\
\left(\begin{matrix}
i & -i+1\\
i+1 & -i
\end{matrix}\right)
&
\textup{if $i \geq 0$ is odd}.
\end{cases}
\\
C_{t_{i+2}^{1}}&=\begin{cases}
\left(\begin{matrix}
-i-1 & i\\
-i & i-1
\end{matrix}\right)
&
\textup{if $i \geq 0$ is even},
\\
\left(\begin{matrix}
i & -i-1\\
i-1 & -i
\end{matrix}\right)
&
\textup{if $i \geq 0$ is odd},
\end{cases}
\end{aligned}
\end{equation}
and $C_{t_1^{1}}=\left(\begin{smallmatrix}
-1 & 0\\
0 & 1
\end{smallmatrix}\right)$.
\\
({\bf Non-affine type}) Let $p > 2$. Then, we can express $p=2\cosh \theta$ for some $\theta > 0$. Since $\sinh(k\theta)>0$ for all $k=1,2,\dots$, we can do the same argument as in (\ref{eq: Ctk expression}) infinitely many times. Thus, we have
\begin{equation}
\begin{aligned}
C_{t_{i}^{2}}&=
\begin{cases}
\displaystyle{\frac{1}{\sinh{\theta}}}\left(\begin{matrix}
-\sinh{(i-1)\theta} & \sinh{i\theta}\\
-\sinh{i\theta} & \sinh{(i+1)\theta}
\end{matrix}\right)
&
\textup{if $i \geq 0$ is even},
\\
\displaystyle{\frac{1}{\sinh{\theta}}}\left(\begin{matrix}
\sinh{i\theta} & -\sinh{(i-1)\theta}\\
\sinh{(i+1)\theta} & -\sinh{i\theta}
\end{matrix}\right)
&
\textup{if $i \geq 0$ is odd}.
\end{cases}
\\
C_{t_{i+2}^{1}}&=\begin{cases}
\displaystyle{\frac{1}{\sinh{\theta}}}\left(\begin{matrix}
-\sinh{(i+1)\theta} & \sinh{i\theta}\\
-\sinh{i\theta} & \sinh{(i-1)\theta}
\end{matrix}\right)
&
\textup{if $i \geq 0$ is even},
\\
\displaystyle{\frac{1}{\sinh \theta}}\left(\begin{matrix}
\sinh{i\theta} & -\sinh{(i+1)\theta}\\
\sinh{(i-1)\theta} & -\sinh{i\theta}
\end{matrix}\right)
&
\textup{if $i \geq 0$ is odd},
\end{cases}
\end{aligned}
\end{equation}
and $C_{t_1^{1}}=\left(\begin{smallmatrix}
-1 & 0\\
0 & 1
\end{smallmatrix}\right)$.
\end{example}
\begin{remark}
In the ordinary cluster theory, there is another affine type $A_{2}^{(2)}$ for the skew-symmetrizable case. The corresponding initial exchange matrix is
\begin{equation}
B=\left(\begin{matrix}
0 & -1\\
4 & 0
\end{matrix}\right).
\end{equation}
However, for $C$-, $G$-patterns, this is similar to the type $A_{1}^{(1)}$. In fact, we can take a skew-symmetrizer by $D=\left(\begin{smallmatrix}
4 & 0\\
0 & 1
\end{smallmatrix}\right)$. Then, we have $\mathrm{Sk}(B) = \left(\begin{smallmatrix}
0 & -2\\
2 & 0
\end{smallmatrix}\right)$. Based on this correspondence, we can recover $C$-, $G$-patterns of this $B$ by \Cref{prop: skew-symmetrizing method}.
\end{remark}
Now, we are ready to prove \Cref{thm: rank 2 classification} as follows.
\begin{proof}[Proof of Theorem~\ref{thm: rank 2 classification}]
The ``if" part is shown by Example~\ref{ex: rank 2 C matrices}. (Note that we can do the same argument for $B^{\top}$. Thus, by $(b) \Rightarrow (a)$ in \Cref{cor: equivalency of sign-coherent class}, the sign-coherence for $G$-patterns also holds.) Now, we aim to show the ``only if" part. 
Let $p$ satisfy $0<p<2$ and $p \neq 2\cos\frac{\pi}{m}$ for any $m \in \mathbb{Z}_{\geq 2}$. Set $p=2\cos \theta$ for some $0<\theta<\frac{\pi}{2}$. Then, there exists $m=2,3,\dots$ such that $\frac{\pi}{m+1}<\theta<\frac{\pi}{m}$.
By doing the same argument as in Example~\ref{ex: rank 2 C matrices} of Type $I_{2}(m)$, we obtain (\ref{eq: Ctk expression}). (Note that $\sin{i\theta} > 0$ holds for any $i=1,\dots,m$ because $i\theta \leq m\theta <\pi$.) Consider $C_{t^2_m}$. Then, the $c$-vector $(\sin{m\theta},\sin{(m+1)\theta})^{\top}/ \sin{\theta}$ or $(\sin{(m+1)\theta},\sin{m\theta})^{\top}/ \sin{\theta}$ appears. However, this is not sign-coherent because $\sin{m\theta}>0>\sin{(m+1)\theta}$ by $m\theta<\pi<(m+1)\theta$. Thus, this $C$-pattern is not sign-coherent.
\end{proof}
Before, we have introduced Conjecture~\ref{conj: standard hypothesis} and Conjecture~\ref{conj: discreteness conjecture}. Now, we have already known the explicit formulas for rank 2 case. We prove that all of them satisfy these conjectures.
\begin{theorem}\label{thm: conjectures are true for rank 2}
Conjecture~\ref{conj: standard hypothesis} and Conjecture~\ref{conj: discreteness conjecture} are true for the sign-coherent class of rank 2. In particular, Conjecture~\ref{conj: standard and discreteness conjecture} also holds.
\end{theorem}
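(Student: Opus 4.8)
The plan is to reduce both conjectures to the skew-symmetric representative and then extract them from the explicit $C$-matrices already computed in Example~\ref{ex: rank 2 C matrices}. Write the initial matrix as $B=\left(\begin{smallmatrix}0 & -a\\ b & 0\end{smallmatrix}\right)$ with $a,b\ge 0$ and put $p=\sqrt{ab}$, so that $\mathrm{Sk}(B)=\left(\begin{smallmatrix}0 & -p\\ p & 0\end{smallmatrix}\right)$. By Proposition~\ref{prop: positive conjegation and sign-coherency} (for Conjecture~\ref{conj: standard hypothesis}) and Proposition~\ref{prop: positive conjugation and discreteness lemma} (for Conjecture~\ref{conj: discreteness conjecture}), it suffices to prove both statements for $\mathrm{Sk}(B)$; so I may assume $B$ is skew-symmetric, in which case the skew-symmetrizer is $D=I$ and Conjecture~\ref{conj: discreteness conjecture} just asserts that any $c$-vector proportional to ${\bf e}_j$ is in fact $\pm{\bf e}_j$.

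For Conjecture~\ref{conj: standard hypothesis} the point is that in rank $2$ a mutation of a $B$-matrix merely changes its sign, so ${\bf B}(B)=\{B,-B\}$. The matrix $-B$ is the conjugate of $B$ by the transposition permutation matrix, hence is again a standard-form rank-$2$ matrix with the same invariant $p=\sqrt{ab}$; since permutation conjugation sends a matrix pattern to a relabelled copy of itself and preserves column sign-coherence, Theorem~\ref{thm: rank 2 classification} applied to $B$ gives $-B\in{\bf SC}$ as soon as $B\in{\bf SC}$. Thus every element of ${\bf B}(B)$ lies in ${\bf SC}$, which is exactly Conjecture~\ref{conj: standard hypothesis}. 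No circularity is introduced: the part of Theorem~\ref{thm: rank 2 classification} concerning $C$-patterns is proved by the direct computation of Example~\ref{ex: rank 2 C matrices}, and Conjecture~\ref{conj: discreteness conjecture} — needed to invoke Proposition~\ref{prop: equivalency of sign-coherent class} for the $G$-pattern half — will be established below using only those same $C$-matrix formulas.

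For Conjecture~\ref{conj: discreteness conjecture} I would split along the cases of Theorem~\ref{thm: rank 2 classification}. If $p\in\mathbb{Z}$ — which covers $I_2(2)$ and $I_2(3)$ ($p=0,1$), $A_1^{(1)}$ ($p=2$), and the integer non-affine cases — then $\mathbb{Z}_{B}=\mathbb{Z}$, hence $(\mathbb{Z}_B)^{\times}=\{\pm1\}$ and Proposition~\ref{prop: discreteness lemma for integer case} applies verbatim. If $p=2\cos\frac{\pi}{m}$ with $m\ge 4$, the pattern is of finite type $I_2(m)$, so the $C$-pattern consists of finitely many matrices up to the $\tilde{\sigma}$-action, all given by the sine formula (\ref{eq: Ctk expression}) and the explicit boundary values in Example~\ref{ex: rank 2 C matrices}. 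By that formula a column of $C_{t}$ is proportional to ${\bf e}_1$ or ${\bf e}_2$ only when one of the sines $\sin(i\theta)$ with $i\in\{0,m\}$ vanishes, i.e. only at $t_0$, at the two neighbours $t_1^1,t_1^2$ of $t_0$, and at the "far" vertices $t_{m-1}^2,t_m^2,t_{m+1}^2$ together with their $\tilde{\sigma}$-images; in each of these the surviving coordinate equals $\pm1$, by the identities $\sin\frac{(m-1)\pi}{m}=\sin\frac{\pi}{m}$ and $\sin\frac{(m+1)\pi}{m}=-\sin\frac{\pi}{m}$. Finally, if $p>2$ write $p=2\cosh\theta$ with $\theta>0$; since $\sinh(k\theta)\ne 0$ for $k\ne 0$, the formulas of Example~\ref{ex: rank 2 C matrices} show that every entry of $C_{t}$ is nonzero once $d(t_0,t)\ge 2$, so no such $c$-vector is proportional to a coordinate vector, while the finitely many $C_{t}$ with $d(t_0,t)\le 1$ (namely $I$, $\mathrm{diag}(-1,1)$, $-I$, and $\left(\begin{smallmatrix}1 & 0\\ \ast & -1\end{smallmatrix}\right)$) again have the relevant coordinate $\pm1$.

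Putting these together, both conjectures hold for every skew-symmetric rank-$2$ matrix in ${\bf SC}$, hence — by the positive-conjugation invariance recalled at the start — for every rank-$2$ matrix in ${\bf SC}$; and since ${\bf B}(B)=\{B,-B\}$ with $-B$ a relabelling of $B$, they hold for every $B'\in{\bf B}(B)$, which is Conjecture~\ref{conj: standard and discreteness conjecture}. I expect the only real work to be the bookkeeping in the finite-type case $I_2(m)$: one must catch every vertex at which a $c$-vector lands on a coordinate axis and confirm, via the two sine identities, that its nonzero coordinate is exactly $\pm1$ — equivalently, that the degenerate $c$-vectors are precisely $\pm{\bf e}_1$ and $\pm{\bf e}_2$, mirroring the fact that in the dihedral root system $I_2(m)$ the only roots proportional to a simple root $\alpha_i$ are $\pm\alpha_i$.
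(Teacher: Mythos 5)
The paper does not actually write out a proof of this theorem: it is asserted as a direct verification from the explicit rank-$2$ formulas of Lemma~\ref{lem: calculation of rank 2 c-vectors} and Example~\ref{ex: rank 2 C matrices}, and your proposal carries out exactly that verification (reduction to $\mathrm{Sk}(B)$ by positive conjugation, ${\bf B}(B)=\{B,-B\}$ in rank $2$ for Conjecture~\ref{conj: standard hypothesis}, and a case-by-case check of Conjecture~\ref{conj: discreteness conjecture} via Proposition~\ref{prop: discreteness lemma for integer case} and the sine/sinh formulas), so in substance it is the paper's intended argument and the logic, including the avoidance of circularity when invoking Proposition~\ref{prop: equivalency of sign-coherent class}, is sound. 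One bookkeeping correction in the case $p>2$: it is not true that every entry of $C_t$ is nonzero once $d(t_0,t)\geq 2$. From the non-affine formulas one has $C_{t^1_2}=-I$ (at distance $2$) and $C_{t^1_3}=\left(\begin{smallmatrix}1 & -p\\ 0 & -1\end{smallmatrix}\right)$ (at distance $3$), both with zero entries, and your list of exceptional matrices at distance $\leq 1$ wrongly includes $-I$. These extra degenerate matrices have their coordinate-axis columns equal to $\pm{\bf e}_j$, so Conjecture~\ref{conj: discreteness conjecture} still holds and the conclusion is unaffected, but the case enumeration should be stated correctly (degenerate columns occur at distances $0,1,2,3$ on the $t^1$-side, not only at distance $\leq 1$).
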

Hence, based on this theorem, \Cref{prop: dual mutation} holds for the sign-coherent class of rank $2$.
\subsection{Rank 2 $G$-fans}
Thanks to \Cref{prop: fan} and Theorem~\ref{thm: conjectures are true for rank 2}, the $G$-fan $\Delta_{\bf G}(B)$ is really a fan for rank 2 and sign-coherent case. We see the examples of these fans. For the integer case, it has already been calculated in \cite[Ex.~9.5, Prop.~9.6]{Rea14}.
\par
Note that, by Proposition~\ref{prop: fundamental properties under sign-coherency}, the relation $C_t^{\top}G_t=I_{2}$ holds for any skew-symmetric $B$. In particular, all $G$-matrices are calculated by $G_{t}=(C_{t}^{-1})^{\top}$.
\begin{example}
In this example, let the initial exchange matrix be $B=\left(\begin{smallmatrix}
0 & -p\\
p & 0
\end{smallmatrix}\right)$ with $p\in \mbR_{>0}$.
\\
({\bf Type $I_{2}(m)$}) Let $p=2\cos\frac{\pi}{m}$ ($m \in \mathbb{Z}_{\geq 3}$). Let $\sigma=\mathrm{id} \in \mathfrak{S}_2$ if $m$ is even and $\sigma=(1,2) \in \mathfrak{S}_2$ if $m$ is odd. Then, every $G$-matrix is obtained by $G_{t_{i}^{2}}=\tilde{\sigma}G_{t_{i-m-2}^{2}}$ and, for any $i=0,1,\dots,m$,
\begin{equation}
G_{t_{i}^{2}}=\begin{cases}
\displaystyle{\frac{1}{\sin{\theta}}}\left(\begin{matrix}
\sin{(i+1)\theta} & \sin{i\theta}\\
-\sin{i\theta} & -\sin{(i-1)\theta}
\end{matrix}\right)
& \textup{if $i$ is even},\\
\displaystyle{\frac{1}{\sin\theta}}\left(\begin{matrix}
\sin{i\theta} &\sin{(i+1)\theta} \\
-\sin{(i-1)\theta}& -\sin{i\theta}
\end{matrix}\right)
& \textup{if $i$ is odd},
\end{cases}
\end{equation}
and
\begin{equation}
G_{t_{m+1}^{2}}=
\begin{cases}
\left(\begin{matrix}
-1 & 0\\
0 & 1
\end{matrix}\right)
&
\textup{if $m$ is even},
\\
\left(\begin{matrix}
0 & -1\\
1 & 0
\end{matrix}\right)
&
\textup{if $m$ is odd},
\end{cases}
\quad
G_{t_{m+2}^{2}}=
\begin{cases}
\left(\begin{matrix}
1 & 0\\
0 & 1
\end{matrix}\right)
&
\textup{if $m$ is even},
\\
\left(\begin{matrix}
0 & 1\\
1 & 0
\end{matrix}\right)
&
\textup{if $m$ is odd}.
\end{cases}
\end{equation}
Thus, the $G$-fan is composed by $m+2$ chambers, see  Figure~\ref{fig: type I27} and Figure~\ref{fig: type I28} for example.
\begin{figure}[hbtp]
\centering
\begin{minipage}{0.45\linewidth}
\centering
\begin{tikzpicture}
\draw (-2,0)--(2,0);
\draw (0,-2)--(0,2);
\foreach \x in {1,2,3}
    {
    \draw (0,0)--(2,{(-2)*(sin(\x*pi/7 r))/(sin((\x+1)*pi/7 r)))});
    };
\foreach \x in {1,2}
    {
    \draw (0,0)--({(2)*(sin(\x*pi/7 r))/(sin((\x+1)*pi/7 r)))},-2);
    };
\end{tikzpicture}
\caption{Type $I_2(7)$}\label{fig: type I27}
\end{minipage}
\begin{minipage}{0.45\linewidth}
\centering
\begin{tikzpicture}
\draw (-2,0)--(2,0);
\draw (0,-2)--(0,2);
\foreach \x in {1,2,3}
    {
    \draw (0,0)--(2,{(-2)*(sin(\x*pi/8 r))/(sin((\x+1)*pi/8 r)))});
    };
\foreach \x in {1,2,3}
    {
    \draw (0,0)--({(2)*(sin(\x*pi/8 r))/(sin((\x+1)*pi/8 r)))},-2);
    };
\end{tikzpicture}
\caption{Type $I_2(8)$}\label{fig: type I28}
\end{minipage}
\end{figure}
\\
({\bf Type} $A_{1}^{(1)}$) Let $p=2$. Then, we have
\begin{equation}
\begin{aligned}
G_{t_{i}^{2}}&=
\begin{cases}
\left(\begin{matrix}
i+1 & i\\
-i & -i+1
\end{matrix}\right)
&
\textup{if $i \geq 0$ is even},
\\
\left(\begin{matrix}
i & i+1\\
-i+1 & -i
\end{matrix}\right)
&
\textup{if $i \geq 0$ is odd}.
\end{cases}
\\
G_{t_{i+2}^{1}}&=\begin{cases}
\left(\begin{matrix}
i-1 & i\\
-i & -i-1
\end{matrix}\right)
&
\textup{if $i \geq 0$ is even},
\\
\left(\begin{matrix}
i & i-1\\
-i-1 & -i
\end{matrix}\right)
&
\textup{if $i \geq 0$ is odd},
\end{cases}
\\
G_{t_1^{1}}&=\left(\begin{matrix}
-1 & 0\\
0 & 1
\end{matrix}\right).
\end{aligned}
\end{equation}
It is known that the $G$-fan covers $\mathbb{R}^2 \setminus \mathcal{C}^{\circ}((1,-1))$ \cite{Rea14}, see  Figure~\ref{fig: G fan of type A11}.
\\
({\bf Non-affine type}) Let $p > 2$. Then, we have
\begin{equation}
\begin{aligned}
G_{t_{i}^{2}}&=
\begin{cases}
\displaystyle{\frac{1}{\sinh{\theta}}}\left(\begin{matrix}
\sinh{(i+1)\theta} & \sinh{i\theta}\\
-\sinh{i\theta} & -\sinh{(i-1)\theta}
\end{matrix}\right)
&
\textup{if $i \geq 0$ is even},
\\
\displaystyle{\frac{1}{\sinh{\theta}}}\left(\begin{matrix}
\sinh{i\theta} & \sinh{(i+1)\theta}\\
-\sinh{(i-1)\theta} & -\sinh{i\theta}
\end{matrix}\right)
&
\textup{if $i \geq 0$ is odd}.
\end{cases}
\\
G_{t_{i+2}^{1}}&=\begin{cases}
\displaystyle{\frac{1}{\sinh{\theta}}}\left(\begin{matrix}
\sinh{(i-1)\theta} & \sinh{i\theta}\\
-\sinh{i\theta} & \sinh{(i+1)\theta}
\end{matrix}\right)
&
\textup{if $i \geq 0$ is even},
\\
\displaystyle{\frac{1}{\sinh \theta}}\left(\begin{matrix}
\sinh{i\theta} & \sinh{(i-1)\theta}\\
-\sinh{(i+1)\theta} & -\sinh{i\theta}
\end{matrix}\right)
&
\textup{if $i \geq 0$ is odd},
\end{cases}
\\
G_{t_1^{1}}&=\left(\begin{matrix}
-1 & 0\\
0 & 1
\end{matrix}\right).
\end{aligned}
\end{equation}
This $G$-fan is illustrated as Figure~\ref{fig: non-affine G-fan}.
By \cite[Prop.~9.6]{Rea14}, it is known that each $G$-fan covers $\{\mathbb{R}^2 \setminus \mathcal{C}({\bf v}_1,{\bf v}_2)\}\cup \{{\bf 0}\}$, where
\begin{equation}
{\bf v}_1=(p-\sqrt{p^2-4},-2)^{\top}, \quad {\bf v}_2=(p+\sqrt{p^2-4},-2)^{\top}.
\end{equation}

\begin{figure}[htbp]
\centering
\begin{minipage}{0.45\linewidth}
\centering
\begin{tikzpicture}
\draw (-2,0)--(2,0);
\draw (0,-2)--(0,2);
\foreach \x in {1,2,...,20}
    {
    \draw (0,0)--(2,{(-2)*(\x/(\x+1))});
    };
\foreach \x in {1,2,...,20}
    {
    \draw (0,0)--({(2)*(\x/(\x+1))},-2);
    };
\draw[dashed] (0,0)--(2,-2);
\end{tikzpicture}
\caption{Type $A_{1}^{(1)}$}\label{fig: G fan of type A11}
\end{minipage}
\begin{minipage}{0.45\linewidth}
\centering
\begin{tikzpicture}
\draw (-2,0)--(2,0);
\draw (0,-2)--(0,2);
\draw (0,0)--(2,-1.232);
\draw (0,0)--(2,-1.348);
\draw (0,0)--(2,-1.402);
\draw (0,0)--(2,-1.43);
\draw (0,0)--(2,-1.444);
\draw (0,0)--(2,-1.451);
\draw (0,0)--(2,-1.455);
\draw (0,0)--(2,-1.457);
\draw (0,0)--(2,-1.458);
\draw (0,0)--(2,-1.459);
\draw (0,0)--(2,-1.459);
\draw (0,0)--(2,-1.459);
\draw (0,0)--(2,-1.46);
\draw (0,0)--(1.232,-2);
\draw (0,0)--(1.348,-2);
\draw (0,0)--(1.402,-2);
\draw (0,0)--(1.43,-2);
\draw (0,0)--(1.444,-2);
\draw (0,0)--(1.451,-2);
\draw (0,0)--(1.455,-2);
\draw (0,0)--(1.457,-2);
\draw (0,0)--(1.458,-2);
\draw (0,0)--(1.459,-2);
\draw (0,0)--(1.459,-2);
\draw (0,0)--(1.459,-2);
\draw (0,0)--(1.46,-2);
\fill[gray, opacity=0.5] (0,0)--(1.46,-2)--(2,-2)--(2,-1.46)--cycle;
\end{tikzpicture}
\caption{Non-affine type}\label{fig: non-affine G-fan}
\end{minipage}
\end{figure}
\end{example}
\section{Classification of sign-coherent class of finite type}\label{sec: finite type classification}
In this section, we aim to classify the sign-coherent class of finite type via Coxeter diagrams. Firstly, we focus on the following class.
\begin{definition}
We say that a $C$-pattern ${\bf C}(B)$ is {\em finite} if the set $\{C_{t} \mid t \in \mathbb{T}_{n}\}$ of all its $C$-matrices is finite.
\end{definition}
Our purpose is to show the following main theorem.
\begin{theorem}\label{thm: finite type classifcation}
Let $B \in \mathrm{M}_{n}(\mathbb{R})$ be skew-symmetric. Suppose that $Q(B)$ is connected. For each $m \in \mathbb{Z}_{\geq 2}$, let $[m]=2\cos\frac{\pi}{m}$. Then, $B$ satisfies both of
\begin{itemize}
\item for any $B' \in {\bf B}(B)$, $B'$ satisfies the sign-coherent property.
\item for any $B' \in {\bf B}(B)$, its $C$-pattern ${\bf C}(B')$ is finite.
\end{itemize}
if and only if the corresponding quiver $Q(B)$ is mutation-equivalent to any of the quiver in Figure~\ref{fig: Coxeter diagrams}. In these diagrams, we omit $[3]=1$.
\begin{figure}[htbp]
\centering
\begin{tikzpicture}[scale=0.8]
\draw (-7.5,0) node {$A_n$\textup{:}};
\foreach \x in {0,1,2}
    {
    \draw[->] (\x-7,0)--(\x-6,0);
    \fill (\x-7,0) circle [radius=0.06];
    };
\draw[dotted] (-4,0)--(-3,0);
\draw[->] (-3,0)--(-2,0);
\fill (-2,0) circle [radius=0.06];

\draw (0.2,0) node {$B_n=C_n$\textup{:}};
\foreach \x in {0,1}
    {
    \draw[->] (\x+1.4,0)--(\x+2.4,0);
    \fill (\x+1.4,0) circle [radius=0.06];
    };
\draw[dotted] (3.4,0)--(4.4,0);
\draw[->] (4.4,0)--(5.4,0);
\fill (5.4,0) circle [radius=0.06];
\draw[->] (5.4,0)--(5.9,0) node [above] {$[4]$}--(6.4,0);
\fill (6.4,0) circle [radius=0.06];
\end{tikzpicture}
\\
\begin{minipage}{0.4 \linewidth}
\centering
\begin{tikzpicture}
\draw (-1.5,0) node {$D_n$\textup{:}};
\draw[->] ({-1/sqrt(2)},{-1/sqrt(2)})--(-0.02,-0.02);
\fill ({-1/sqrt(2)},{-1/sqrt(2)}) circle [radius=0.06];
\draw[->] ({-1/sqrt(2)},{1/sqrt(2)})--(-0.02,0.02);
\fill ({-1/sqrt(2)},{1/sqrt(2)}) circle [radius=0.06];
\foreach \x in {0,1}
    {
    \draw[->] (\x,0)--(\x+1,0);
    \fill (\x,0) circle [radius=0.06];
    };
\draw[dotted] (2,0)--(3,0);
\fill (3,0) circle [radius=0.06];
\draw[->] (3,0)--(4,0);
\fill (4,0) circle [radius=0.06];
\end{tikzpicture}
\end{minipage}
\\
\begin{minipage}{0.4 \linewidth}
\centering
$E_{6}$\textup{:}
\begin{tikzpicture}[scale = 0.8]
\foreach \x in {0,1,...,3}
    {
    \draw[->] (\x,0)--(\x+1,0);
    \fill (\x,0) circle [radius=0.06];
    };
\fill (4,0) circle [radius=0.06];
\draw[->] (2,1)--(2,0.04);
\fill (2,1) circle [radius=0.06];
\end{tikzpicture}
\end{minipage}
\begin{minipage}{0.4 \linewidth}
\centering
$E_{7}$\textup{:}
\begin{tikzpicture}[scale = 0.8]
\foreach \x in {0,1,...,4}
    {
    \draw[->] (\x,0)--(\x+1,0);
    \fill (\x,0) circle [radius=0.06];
    };
\fill (5,0) circle [radius=0.06];
\draw[->] (2,1)--(2,0.04);
\fill (2,1) circle [radius=0.06];
\end{tikzpicture}
\end{minipage}
\\
\begin{minipage}{0.4 \linewidth}
\centering
$E_{8}$\textup{:}
\begin{tikzpicture}[scale = 0.8]
\foreach \x in {0,1,...,5}
    {
    \draw[->] (\x,0)--(\x+1,0);
    \fill (\x,0) circle [radius=0.06];
    };
\fill (6,0) circle [radius=0.06];
\draw[->] (2,1)--(2,0.04);
\fill (2,1) circle [radius=0.06];
\end{tikzpicture}
\end{minipage}
\begin{minipage}{0.3\linewidth}
\centering
$F_{4}$\textup{:}
\begin{tikzpicture}[scale=0.8]
\draw[->] (0,0)--(1,0);
\draw[->] (1,0)--(1.5,0) node [above] {$[4]$}-- (2,0);
\draw[->] (2,0)--(3,0);
\foreach \x in {0,1,2,3}
    {
    \fill (\x,0) circle [radius=0.06];
    };
\end{tikzpicture}
\end{minipage}
\begin{minipage}{0.3\linewidth}
\centering
$H_{3}$\textup{:}
\begin{tikzpicture}[scale=0.8]
\draw[->] (0,0)--(1,0);
\draw[->] (1,0)--(1.5,0) node [above] {$[5]$}-- (2,0);
\foreach \x in {0,1,2}
    {
    \fill (\x,0) circle [radius=0.06];
    };
\end{tikzpicture}
\end{minipage}
\begin{minipage}{0.3\linewidth}
\centering
$H_{4}$\textup{:}
\begin{tikzpicture}[scale=0.8]
\draw[->] (0,0)--(1,0);
\draw[->] (1,0)--(2,0);
\draw[->] (2,0)--(2.5,0) node [above] {$[5]$}-- (3,0);
\foreach \x in {0,1,2,3}
    {
    \fill (\x,0) circle [radius=0.06];
    };
\end{tikzpicture}
\end{minipage}
\begin{minipage}{0.3\linewidth}
\centering
$I_{2}(m)$\textup{:}
\begin{tikzpicture}[scale=0.8]
\draw[->] (0,0)--(0.5,0) node [above] {$[m]$}--(1,0);
\foreach \x in {0,1}
    {
    \fill (\x,0) circle [radius=0.06];
    };
\end{tikzpicture}
\end{minipage}
\caption{Coxeter quivers}\label{fig: Coxeter diagrams}
\end{figure}
\end{theorem}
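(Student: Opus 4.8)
Throughout we are already in the skew-symmetric setting, so the skew-symmetrizing method plays no role inside the proof except to justify the reduction; we treat the two implications separately.

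\emph{The ``if'' direction.} Suppose $Q(B)$ is mutation-equivalent to the Coxeter quiver oriented to one of the diagrams in Figure~\ref{fig: Coxeter diagrams}. Every such diagram is a tree, so any two of its orientations are related by a sequence of source/sink mutations (these are cluster mutations), and it suffices to treat one orientation. For the crystallographic diagrams $A_n$, $B_n{=}C_n$, $D_n$, $E_6$, $E_7$, $E_8$, $F_4$ (and $I_2(3){=}A_2$, $I_2(4){=}B_2$, $I_2(6){=}G_2$) the Coxeter quiver is of quasi-integer type: the two conditions of Theorem~\ref{thm: classification of quasi-integer type} hold because the squared weights lie in $\{1,2,3\}$ and the cordless-cycle condition is vacuous (trees). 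Hence Theorem~\ref{thm: sign-coherency for quasi-integer matrices} gives sign-coherence for all mutation-equivalents, and Lemma~\ref{lem: positive conjugation} puts the $C$-matrices in bijection with those of the corresponding integer Dynkin quiver, which are finite in number by the finite-type classification of \cite{FZ03}. For $I_2(m)$ the $C$-matrices were computed explicitly in Example~\ref{ex: rank 2 C matrices} (Type $I_2(m)$) and are sign-coherent and eventually periodic, hence finite. For the non-crystallographic $H_3,H_4$ one invokes the (un)folding construction of \cite{DP24, DP25}: the $H_3$- (resp.\ $H_4$-) pattern is the folding of the $D_6$- (resp.\ $E_8$-) pattern along the relevant diagram automorphism, so sign-coherence and finiteness of the $C$-pattern descend from $D_6$ (resp.\ $E_8$). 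Equivalently, and uniformly, the $c$-vectors of an acyclic seed are precisely the coordinate vectors in the simple-root basis of the roots of the (possibly non-crystallographic) Coxeter root system attached to the diagram, by the same reflection-recursion argument as in \cite{FZ03, Rea14}; this set is finite, and each root is positive or negative, which is exactly sign-coherence of the corresponding $c$-vector.

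\emph{The ``only if'' direction, setup.} Assume every $B'\in{\bf B}(B)$ is sign-coherent with finite $C$-pattern. Since $B_t=C_t^{\top}BC_t$ for skew-symmetric $B$ (Proposition~\ref{prop: fundamental properties under sign-coherency}$(c)$), a finite $C$-pattern forces a finite $B$-pattern, so $Q(B)$ is of finite mutation type. Moreover, by Proposition~\ref{prop: submatrix} the hypotheses pass to every rank-$2$ restriction $Q'|_{\{i,j\}}$ of every mutation-equivalent quiver $Q'$; by Theorem~\ref{thm: rank 2 classification} together with Example~\ref{ex: rank 2 C matrices} (where the cases $\sqrt{ab}\ge 2$, in particular the weight-$2$ case, were seen to have infinite $C$-pattern), every weight occurring in $Q(B)$ or in any mutation-equivalent quiver must equal $2\cos\frac{\pi}{m}$ for some finite integer $m\ge 3$. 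Thus each mutation-equivalent $Q'$ carries a well-defined, finitely labelled Coxeter graph $G(Q')$, with associated symmetric form $A(G(Q'))$ having $2$ on the diagonal and $-|q'_{ij}|$ off-diagonal.

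\emph{The ``only if'' direction, heart.} I would next show that the mutation class contains an acyclic quiver $Q'$ --- the analogue of ``finite type $\Rightarrow$ mutation-equivalent to Dynkin'', obtained from finiteness of the exchange graph together with the uniform bound $|q'_{ij}|<2$ on the reachable weights (the cyclic minimal obstructions, e.g.\ hyperbolic triangle quivers, being ruled out along the indefinite lines below, or via \cite{BBH11, AC25}). Writing $G'=G(Q')$, I claim $A(G')$ is positive definite. If $A(G')$ were positive semidefinite but not definite, then by the classification of Euclidean Coxeter graphs $G'$ contains an induced affine Dynkin subgraph, which is automatically crystallographic (labels in $\{3,4,6\}$) hence of quasi-integer type; restricting via Proposition~\ref{prop: submatrix} and transferring by Lemma~\ref{lem: positive conjugation} to the corresponding integer affine quiver, whose $C$-pattern is classically infinite, contradicts finiteness. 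If $A(G')$ is indefinite, the Coxeter transformation of the (infinite) associated Coxeter group has spectral radius $>1$; translating this to the cluster side, iterating the ``Coxeter mutation sequence'' $\mu_n\cdots\mu_1$, which fixes the acyclic $Q'$, multiplies the $C$-matrices by a fixed matrix of spectral radius $>1$ over each period, producing infinitely many distinct $C$-matrices --- again a contradiction. Hence $A(G')$ is positive definite, so $G'$ is one of the finite-type Coxeter diagrams of Figure~\ref{fig: Coxeter diagrams}; these are trees, $Q'$ is one of their orientations, and source/sink mutations carry it to the canonical orientation, completing the proof.

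\emph{Main obstacle.} The ``only if'' direction is the hard part, and within it the two flagged steps: (i) upgrading ``finite $C$-pattern'' to ``mutation-equivalent to an acyclic quiver'', the cluster-theoretic input replacing the $2$-finiteness criterion of \cite{FZ03}; and (ii) the growth estimate converting an indefinite Coxeter form into infinitely many $C$-matrices, which requires realizing the expansion of the Coxeter transformation at the level of the $c$-vector recursion \eqref{eq: mutation for c-,g-vectors} while keeping control of the tropical signs $\varepsilon_{k;t}$ along the periodic mutation sequence.
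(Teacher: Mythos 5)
Your reduction of the ``only if'' direction to ``all reachable weights are $0$ or $[m]$'' (via Proposition~\ref{prop: submatrix} and Theorem~\ref{thm: rank 2 classification}) and the quasi-integer treatment of the crystallographic and $I_2(m)$ cases match the paper. But there are two genuine gaps. First, in the ``if'' direction for $H_3$ and $H_4$: the theorem demands sign-coherence and finiteness of ${\bf C}(B')$ for \emph{every} $B'$ in the mutation class, and these classes contain quivers that are not orientations of the Coxeter tree at all --- e.g.\ the cyclic rank-$3$ quiver with all weights $2\cos\frac{\pi}{5}$ reachable from the $H_3$ quiver. Source/sink mutations only reach tree orientations, the folding results of \cite{DP24, DP25} and the ``$c$-vectors are roots'' picture only cover seeds whose underlying diagram is the Coxeter tree, and you cannot transfer sign-coherence from one initial seed to a mutation-equivalent one, since that is exactly Conjecture~\ref{conj: standard hypothesis}, which is open for real entries. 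This is precisely why the paper states that the result of \cite{DP24} is not enough and proves Lemma~\ref{lem: type H} by an exhaustive computer enumeration over all mutation-equivalent initial matrices ($32$ $C$-matrices for $H_3$, $280$ for $H_4$, Appendix~\ref{sec: proof of Lemma type H}).

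Second, your ``only if'' direction replaces the paper's key external input --- the Felikson--Tumarkin classification of mutation-finite real-weighted quivers \cite{FT23}, which after the weight restriction leaves only the orbifold (hence quasi-integer) classes together with $F_4$, $\tilde F_4$, $H_3$, $H_4$ --- by a program you yourself flag as unproven: (i) that a finite sign-coherent $C$-pattern forces an acyclic representative in the mutation class, and (ii) that an indefinite Coxeter form yields infinitely many $C$-matrices via the spectral radius of the Coxeter mutation sequence, with control of the tropical signs $\varepsilon_{k;t}$ along the iteration. Neither step is carried out, and (i) in particular has no analogue of the integer-case $2$-finiteness machinery available here; as written, the ``only if'' direction is a plausible outline rather than a proof. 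If you want to complete the argument without computer verification you would need to actually establish (i) and (ii); otherwise the clean route is the paper's: weights in $\{0\}\cup\{[m]\}$, then \cite{FT23}, then dispose of $\tilde F_4$ by quasi-integrality and of $H_3$, $H_4$ by the finite check.
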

\begin{remark}
For the reader's convenience, we give some values of $[m]=2\cos{\frac{\pi}{m}}$ as follows.
\begin{equation}
\begin{array}{c|ccccc}
m & 2 & 3 & 4 & 5 & 6 \\ \hline
[m] & 0 & 1 & \sqrt{2} & \frac{1+\sqrt{5}}{2} & \sqrt{3} 
\end{array}
\end{equation}
In Figure~\ref{fig: Coxeter diagrams}, there are some coincidences such as $A_2=I_2(3)$ and $B_2=C_2=I_2(4)$. In the ordinary cluster algebras, there is type $G_2$, which can be covered by $G_2=I_2(6)$.
\end{remark}
\begin{remark}
Let $X=A_{n},\dots,I_{2}(m)$. Then, the Coxeter diagram of type $X$ is defined (e.g., \cite[Fig.~1]{Hum90}). The quiver in Figure~\ref{fig: Coxeter diagrams} of type $X$ is obtained by changing the order $m$ of each edge of the Coxeter diagram to $[m]=2\cos{\frac{\pi}{m}}$, and giving the orientation as in the figure. In this procedure, we might consider another orientation, but it does not give an essential problem. As in \cite[Thm.~8.6]{FZ03a}, for any quiver $Q'$ whose underlying graph $\Gamma(Q')$ is obtained from the Coxeter diagram of type $X$ by replacing the order $m$ of each edge to $[m]$, $Q'$ is mutation-equivalent to $Q$. Hence, Theorem~\ref{thm: finite type classifcation} means that the finite $C$-pattern can be classified by the Coxeter diagrams. 
\end{remark}
The proof depends on the following two properties. The claim ($b$) is suggested by Salvatore Stella in the personal communication.
\begin{proposition}
Let $B \in {\bf SC}$. Suppose that its $C$-pattern is sign-coherent and finite. Then, the following statements hold.
\\
\textup{($a$)} Its $B$-pattern ${\bf B}(B)$ is also finite.
\\
\textup{($b$)} Each component of $B$ should be expressed as $0$ or $[m]=2\cos{\frac{\pi}{m}}$ for some $m \in \mathbb{Z}_{\geq 3}$.
\end{proposition}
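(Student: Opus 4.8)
The plan is to dispatch $(a)$ in one line from the dualities of Proposition~\ref{prop: fundamental properties under sign-coherency} and then deduce $(b)$ by restricting to $2\times 2$ submatrices and invoking the rank $2$ classification. For $(a)$, I would first note that since $B\in{\bf SC}$ every $C$-matrix is sign-coherent, so \eqref{eq: from C to B} holds at all $t\in\mathbb{T}_n$: with $D$ a fixed skew-symmetrizer of $B_{t_0}=B$ we have $B_t=D^{-1}C_t^{\top}DB_{t_0}C_t$. Thus $B_t$ is a function of $C_t$ alone, so ${\bf B}(B)=\{B_t\}$ is the image of the finite set $\{C_t\mid t\in\mathbb{T}_n\}$ under this map and is therefore finite. (Alternatively one recovers $G_t$ from $C_t$ by \eqref{eq: second duality} and then $B_t$ from \eqref{eq: first duality}.)

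For $(b)$ I work in the skew-symmetric setting of this section, so $b_{ij}=-b_{ji}$ and $b_{ii}=0$. Fix $i\neq j$ and set $J=\{i,j\}$. By Proposition~\ref{prop: submatrix} the pattern obtained by restricting the mutation directions to $J$ has initial exchange matrix $B|_J$ and matrix pattern $\{C_t|_J\}$, $\{G_t|_J\}$ for $t\in\mathbb{T}_J$. Using the block-triangular shape of $C_t$ and $G_t$ recalled right after Proposition~\ref{prop: submatrix} (after relabelling so that the elements of $J$ come first), each column of $C_t|_J$ is a coordinate truncation of a column of $C_t$ and each row of $G_t|_J$ a truncation of a row of $G_t$; hence column/row sign-coherence is inherited and $B|_J\in{\bf SC}$. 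Similarly $\{C_t|_J\mid t\in\mathbb{T}_J\}$ consists of restrictions of members of the finite family $\{C_t\}$, so the $C$-pattern of $B|_J$ is finite.

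Now $B|_J$ is a rank $2$ skew-symmetric matrix; after possibly swapping the two labels it equals $\left(\begin{smallmatrix}0&-p\\p&0\end{smallmatrix}\right)$ with $p=|b_{ij}|\geq 0$, where $p^{2}=ab$ in the notation of Theorem~\ref{thm: rank 2 classification}. Since $B|_J\in{\bf SC}$, that theorem forces either $p=2\cos\frac{\pi}{m}$ for some $m\in\mathbb{Z}_{\geq 2}$ or $p\geq 2$. If $p\geq 2$, the explicit formulas of Example~\ref{ex: rank 2 C matrices} (type $A_1^{(1)}$ for $p=2$, non-affine type for $p>2$) show that the $C$-matrices $C_{t_i^{2}}$ carry an entry equal to $u_i(p)$, respectively $\sinh((i+1)\theta)/\sinh\theta$, which is strictly increasing in $i$; hence these matrices are pairwise distinct and the $C$-pattern is infinite, contradicting finiteness. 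Therefore $|b_{ij}|=2\cos\frac{\pi}{m}=[m]$ for some $m\geq 2$, and since $[2]=0$ this means $b_{ij}=0$ when $m=2$ and $b_{ij}=\pm[m]$ with $m\geq 3$ otherwise. I do not expect a real obstacle here: the rank $2$ classification and the duality formulas do all the work. The only points that will need a careful (but routine) check are the descent of sign-coherence and of finiteness to the $2\times 2$ sub-pattern, and the observation — read off directly from the closed formulas of Example~\ref{ex: rank 2 C matrices} — that the cases $p\geq 2$ genuinely produce infinitely many distinct $C$-matrices.
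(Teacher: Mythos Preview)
Your proof is correct and follows essentially the same approach as the paper: part $(a)$ comes directly from \eqref{eq: from C to B}, and part $(b)$ is obtained by restricting to the $2\times 2$ sub-pattern via Proposition~\ref{prop: submatrix} and then using the rank~$2$ analysis (Theorem~\ref{thm: rank 2 classification} and Example~\ref{ex: rank 2 C matrices}) to force $|b_{ij}|\in\{0\}\cup\{[m]:m\geq 3\}$. Your write-up is somewhat more careful than the paper's in spelling out why both column and row sign-coherence descend to the sub-pattern and why $p\geq 2$ yields infinitely many distinct $C$-matrices, but the underlying argument is the same.
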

\begin{proof}
By (\ref{eq: from C to B}), the claim ($a$) holds. Now, we aim to show ($b$). If there exists an entry $b_{ij}$ ($i,j=1,2,\dots,n$) such that $b_{ij} \neq 0,[m]$ ($m \in \mathbb{Z}_{\geq 3}$), then it implies that $i\neq j$. Consider sub $C$-matrices induced by $\{i,j\}$. Then, by \Cref{lem: submatrix}, these submatrices are the same as in Example~\ref{ex: rank 2 C matrices}. If $|b_{ij}|<2$, then these sub $C$-matrices are not sign-coherent. Thus, the original $C$-matrices are not sign-coherent. If $|b_{ij}|\geq 2$, then there are infinitely many sub $C$-matrices. Thus, the original $C$-pattern also has infinitely many $C$-matrices, which is a contradiction. Hence, this completes the proof.
\end{proof}
Thanks to this property, the classification of finite $C$-patterns can be reduced to the classification of finite $B$-patterns. Such $B$ is said to be {\em mutation-finite}, and its classification has already been completed by \cite{FT23}.
\begin{proposition}[{\cite[Thm.~A]{FT23}}]
\textup{($a$)} Let $Q$ be an $\mathbb{R}$-valued quiver that has at least one non-integer weight. Suppose that $Q$ satisfies the following conditions.
\begin{itemize}
\item the number of vertices is larger or equal to $3$.
\item $Q$ does not arise from a triangulated orbifold in the sense of \cite{FST12}.
\item $Q$ is mutation-finite.
\end{itemize}
Then, $Q$ is mutation-equivalent to a quiver in the list of \cite[Table~1.1]{FT23}.
\\
\textup{($b$)} In \cite[Table~1.1]{FT23}, consider the quivers satisfying the following condition:
\begin{quote}
Every weight of edges has the form of $[m]=2\cos\frac{\pi}{m}$ for some $m=3,4,\dots$.
\end{quote}
Then, such quivers are only of type $F_4$, $H_3$, $H_4$, and $\tilde{F}_{4}$ (Figure~\ref{fig: type F4tilde}).
\begin{figure}[hbtp]
\centering
\begin{minipage}{0.4\linewidth}
$\tilde{F}_4$\textup{:}
\begin{tikzpicture}
\draw[->] (0,0)->(1,0);
\draw[->] (1,0)--(1.5,0) node [above] {$[4]$}->(2,0);
\draw[->] (2,0)->(3,0);
\draw[->] (3,0)->(4,0);
\foreach \x in {0,1.05,2.05,3.05,4.05}
    \fill (\x,0) circle (2pt);
\end{tikzpicture}
\end{minipage}
\caption{Type $\tilde{F}_4$}\label{fig: type F4tilde}
\end{figure}
\end{proposition}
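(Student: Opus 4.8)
The statement has two parts established by completely different means, and the plan is to treat them separately. Part $(a)$ requires no independent argument: its hypotheses — rank at least $3$, not arising from a triangulated orbifold, and mutation-finite — are verbatim those of \cite[Thm.~A]{FT23}, and its conclusion is exactly that theorem's conclusion. So the plan for $(a)$ is simply to invoke \cite[Thm.~A]{FT23} and cite \cite[Table~1.1]{FT23} for the target list. All the genuine work lies in part $(b)$, which is a finite verification over that table guided by an arithmetic membership test for the set $\mathcal{W}=\{[m]=2\cos\frac{\pi}{m}\mid m=3,4,\dots\}$.

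First I would record the elementary facts about $\mathcal{W}$ that make the test effective. The map $m\mapsto 2\cos\frac{\pi}{m}$ is strictly increasing on $m\geq 3$, with $[3]=1$, $[4]=\sqrt{2}$, $[5]=\frac{1+\sqrt{5}}{2}$, $[6]=\sqrt{3}$, and $\lim_{m\to\infty}[m]=2$; hence every element of $\mathcal{W}$ lies in the half-open interval $[1,2)$, and each $[m]$ is an algebraic integer (the largest real root of the degree-$\varphi(2m)/2$ minimal polynomial of $2\cos\frac{\pi}{m}$). This yields two blanket exclusions: any edge carrying a weight $\geq 2$ cannot lie in $\mathcal{W}$, and any edge carrying an irrational weight outside the explicit short list $\{\sqrt{2},\tfrac{1+\sqrt5}{2},\sqrt3,\dots\}$ is likewise excluded. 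With this criterion fixed, I would walk through the entries of \cite[Table~1.1]{FT23} one family at a time, reading off each quiver's set of edge weights (which is the only data the test uses, so a single representative per mutation class and orientation suffices). For each family exactly one of three things happens: some edge carries a weight $\geq 2$, or some edge carries a weight outside $\mathcal{W}$ — in either case the family is discarded — or all weights lie in $\mathcal{W}$ and the family is retained. The retained families are precisely $F_4$ and $\tilde{F}_4$, whose edge weights are $[4]=\sqrt{2}$ together with $[3]=1$, and $H_3,H_4$, whose edge weights are $[5]=\frac{1+\sqrt5}{2}$ together with $[3]=1$; should any family in the table carry a continuous real weight parameter, the requirement of lying in $\mathcal{W}$ forces that parameter into one of these isolated values, so no new cases arise.

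The main obstacle is the number-theoretic verification rather than the enumeration, since for each weight $w$ occurring in \cite[Table~1.1]{FT23} I must decide definitively whether $w\in\mathcal{W}$, and the delicate cases are weights that are irrational but not visibly equal to a known $[m]$. The clean way to settle these is to compare minimal polynomials, using that $[m]$ is a totally real algebraic integer whose minimal polynomial is determined by $m$; more economically, the strict bound $[m]<2$ disposes of every weight $\geq 2$ at once, after which only finitely many sub-$2$ weights remain to be matched against the explicit list above. Once this test is in place the rest is bookkeeping over the finite table, and the conclusion that only $F_4,H_3,H_4,\tilde{F}_4$ survive follows directly.
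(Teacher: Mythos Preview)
Your proposal is correct and matches the paper's treatment: the paper gives no proof at all for this proposition, simply citing \cite[Thm.~A]{FT23} for part~($a$) and implicitly treating part~($b$) as a direct inspection of the finite list in \cite[Table~1.1]{FT23}. Your plan to invoke the reference for~($a$) and perform the finite weight-membership check for~($b$) is exactly the intended reading; the additional number-theoretic discussion of $\mathcal{W}$ is more explicit than the paper bothers to be, but it is sound and harmless.
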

\begin{remark}
The orbifold is a connected and bordered oriented $2$-dimensional surface with a finite set of marked points and orbifold points with no intersection. Then, the compatible arcs can be defined according to certain conditions. A triangulation of the orbifold is a maximal collection of distinct pairwise compatible arcs and it corresponds to a quiver $Q$. For more details, we will not mention here, but we can refer to \cite{FST12}.
\end{remark}
Note that quivers arising from a triangulated orbifold are of quasi-integer type. Thus, its classification of finite type is given by the ordinary cluster theory. Moreover, we easily check that $F_4$ and $\tilde{F}_4$ are of quasi-integer type. In the ordinary cluster theory, we have already known that $F_4$ is of finite type and $\tilde{F}_4$ is not of finite type (of affine type). Hence, the remaining problem is that to show the following lemma, cf.~{\cite[Thm.~1.4]{DP24}}.
\begin{lemma}\label{lem: type H}
For any quiver $Q$ mutation-equivalent to  one of type $H_3$ or $H_4$, the corresponding $C$-pattern is sign-coherent and finite.
\end{lemma}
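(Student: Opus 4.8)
\medskip
\noindent\textbf{Proof strategy.} The plan is to deduce the lemma from the crystallographic finite types by \emph{unfolding}: type $H_3$ from type $D_6$ and type $H_4$ from type $E_8$. First I would reduce to the acyclic Coxeter quiver $Q$ of type $H_n$ ($n=3,4$) drawn in Figure~\ref{fig: Coxeter diagrams}; replacing $Q$ by a mutation-equivalent quiver only moves the initial vertex inside one fixed $B$-pattern, and an arbitrary $Q'$ in the mutation class will be treated at the very end. To $Q$ one attaches a crystallographic quiver $\widetilde{Q}$ of rank $2n$ --- of type $D_6$ when $n=3$ and of type $E_8$ when $n=4$ --- together with the ``golden'' symmetry of order two whose orbit data produces $Q$; this is exactly the folding/unfolding set-up of \cite{DP24,DP25}. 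What I would use from that set-up is the correspondence: every mutation sequence realizing a seed of $Q$ lifts to a symmetry-equivariant mutation sequence of $\widetilde{Q}$, and under this lift the matrices $C_t,G_t$ attached to $Q$ are recovered from the symmetry-invariant blocks of $C_{\tilde t},G_{\tilde t}$ attached to $\widetilde{Q}$, up to the positive conjugation of Theorem~\ref{thm: skew-symmetrizing method} that turns the Cartan-type skew-symmetrizable shape into the skew-symmetric shape with weights $[m]=2\cos\frac{\pi}{m}$.

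Granting this, the conclusion is short. Since $D_6$ and $E_8$ are of finite type, their $B$-patterns are finite and, by the finite type classification \cite{FZ03}, they have finitely many seeds; hence $\widetilde{Q}$ has finitely many $C$- and $G$-matrices, and by Theorem~\ref{thm: sign-coherency for integer matrices} each of them is sign-coherent. Finiteness of ${\bf C}(Q)$ is then immediate, as each $C_t$ of $Q$ is the image of one of the finitely many $C_{\tilde t}$ under a fixed linear map. Sign-coherence of ${\bf C}(Q)$ follows because the folding map is sign-preserving in the pertinent sense: a sign-coherent $c$-vector of $\widetilde{Q}$ has its whole orbit under the symmetry contained in a single orthant, so the block of $C_{\tilde t}$ that maps to a $c$-vector of $Q$ lies in that orthant and its sign survives the conjugation; applying the same observation to the rows of $G_{\tilde t}$ gives the row sign-coherence of $G_t$. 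Thus $Q$ satisfies the sign-coherent property and ${\bf C}(Q)$ is finite --- and, in particular, Conjecture~\ref{conj: standard and discreteness conjecture} for $Q$ also drops out of the explicit unfolded picture, the length statement of Conjecture~\ref{conj: discreteness conjecture} reducing to the trivial integer case upstairs.

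For a general quiver $Q'$ mutation-equivalent to $H_n$: the mutation class of $\widetilde{Q}$ is finite and stable under the symmetry, so $Q'$ is itself the orbit quotient of a crystallographic quiver $\widetilde{Q'}$, still of finite type, and the two previous paragraphs apply to $Q'$ unchanged. Equivalently, once Conjecture~\ref{conj: standard and discreteness conjecture} is known for $H_n$ one may propagate sign-coherence along the finite mutation class using the dual mutation and third duality of Proposition~\ref{prop: dual mutation}, and then by Proposition~\ref{prop: equivalency of sign-coherent class} sign-coherence of all the $C$-patterns in the class forces sign-coherence of all the $G$-patterns as well. This completes the proof.

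The main obstacle is the first paragraph: folding of cluster patterns is not a formality, and one must verify the mutation correspondence between $Q$ and $\widetilde{Q}$ at \emph{every} seed, i.e.\ that each mutation sequence reaching a seed of $Q$ can be lifted to an equivariant sequence that never leaves the admissible locus of $\widetilde{Q}$. The non-crystallographic foldings $D_6\to H_3$ and $E_8\to H_4$ are precisely the delicate cases where this works, and the bookkeeping involved --- essentially the content of \cite{DP24,DP25}, using the Galois action on $\mathbb{Z}[\tfrac{1+\sqrt{5}}{2}]$ --- is where the real effort lies; once it is secured, finiteness and sign-coherence are inherited almost for free. Should one wish to avoid the folding machinery entirely, a direct fallback is available: both mutation classes are finite (they sit in the Felikson--Tumarkin list \cite{FT23}) and the numbers of seeds of $H_3$ and of $H_4$ are small, so every $C$-pattern in the class can be enumerated by a finite, computer-assisted computation, with Proposition~\ref{prop: fundamental properties under sign-coherency} guaranteeing that the $C$-matrices stay unimodular and defined over $\mathbb{Z}_B$ throughout.
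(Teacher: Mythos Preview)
Your primary unfolding strategy is conceptually attractive and genuinely different from what the paper does: the paper establishes the lemma by a direct computer enumeration of all $C$-matrices for every initial exchange matrix in the (finite) mutation classes of $H_3$ and $H_4$ (Appendix~\ref{sec: proof of Lemma type H}), which is precisely your ``fallback''. The paper explicitly records that \cite{DP24} proves sign-coherence and finiteness of ${\bf C}(B)$ only when the cordless graph $\Gamma(Q(B))$ equals the Coxeter tree, and remarks that this is \emph{not enough}: the mutation class of $H_3$ already contains the cyclic quiver at index $[2,1]$ in Figure~\ref{fig: B-pattern of type H3}, and for such $Q'$ one needs ${\bf C}(Q')$ --- a different $C$-pattern with a different initial vertex --- to be sign-coherent. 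So the step you flag as the ``main obstacle'' is exactly the step that the paper asserts is \emph{not} covered by \cite{DP24,DP25}; your appeal to those references for the admissibility of the golden folding at every seed (including the non-tree seeds) is therefore a gap, not a citation.

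Your ``Equivalently'' route through Proposition~\ref{prop: dual mutation} does not rescue this: that proposition takes Conjecture~\ref{conj: standard and discreteness conjecture} as a hypothesis, and Conjecture~\ref{conj: standard and discreteness conjecture} already contains Conjecture~\ref{conj: standard hypothesis}, i.e.\ the assertion that every $B'\in{\bf B}(B)$ lies in ${\bf SC}$ --- which is the very conclusion you want. There is nothing left to ``propagate'' once that hypothesis is granted, and no way to obtain it from the dual mutation formula alone. In short: your fallback is the paper's actual proof and is correct; your main argument would become a legitimate alternative only if you supply, independently of \cite{DP24,DP25}, a verification that the $D_6\to H_3$ and $E_8\to H_4$ foldings remain compatible with orbit mutation at every seed of the crystallographic side (equivalently, that each of the $6$ respectively $18$ unlabelled $B$-matrices in Figures~\ref{fig: B-pattern of type H3} and~\ref{fig: B-pattern of type H4} admits an unfolding for which the folded $C$-recursion matches \eqref{eq: mutation of C, G matrices}).
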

In \cite{DP24}, they showed that $C$-pattern ${\bf C}(B)$ is sign-coherent and finite if the underlying graph $\Gamma(Q(B))$ is the same as the one in Figure~\ref{fig: Coxeter diagrams}. However, this is not enough to show our claim. We focus on all the quivers mutation-equivalent to any of the quiver oriented to the diagram in Figure~\ref{fig: Coxeter diagrams} and will show this claim by using the computer program. (See Appendix~\ref{sec: proof of Lemma type H}.)
\par
By calculating explicitly, the conjectures are true for every quiver mutation-equivalent to any of \Cref{fig: Coxeter diagrams}.
\begin{theorem}\label{thm: conjecture for finite type}
Conjecture~\ref{conj: standard and discreteness conjecture} is true for any quiver mutation-equivalent to a Coxeter quiver in Figure~\ref{fig: Coxeter diagrams}.
\end{theorem}
Hence, \Cref{prop: dual mutation} and \Cref{prop: fan} holds for the sign-coherent class of finite type.
\section{Modified $C$-, $G$-matrices and their synchronicity}\label{sec: modified and synchro}
As in Example~\ref{ex: bad phenomenon for periodicity}, by generalizing to the real entries, the periodicity appearing in the $G$-fan is different from the one in the $G$-pattern. By Theorem~\ref{thm: ordinary synchronicity}, this phenomenon does not appear in the integer case. In Section~\ref{sec: modified matrices}, we introduce another two matrix patterns called {\em modified $C$-, $G$-patterns}, which is more closely related to the $G$-fan structure. In Section~\ref{sec: syncro}, we obtain their synchronicity properties which are analogue to Theorem~\ref{thm: ordinary synchronicity}.
\subsection{Modified $C$-, $G$-matrices}\label{sec: modified matrices}
We introduce the following two different matrix patterns.
\begin{definition}[Modified $C$-, $G$-matrices]
Let $B \in \mathrm{M}_{n}(\mathbb{R})$ be a skew-symmetrizable matrix with a skew-symmetrizer $D=\mathrm{diag}(d_1,\dots,d_n)$. Let ${\bf C}(B)=\{C_t\}$ and ${\bf G}(B)=\{G_t\}$ be the $C$-pattern and the $G$-pattern. We define the {\em modified $C$-pattern} $\tilde{\bf C}(B;D^{-\frac{1}{2}})=\{\tilde{C}_t\}$ and the {\em modified $G$-pattern} $\tilde{\bf G}(B;D^{-\frac{1}{2}})=\{\tilde{G}_{t}\}$ {\em with a modification factor $D^{-\frac{1}{2}}=\mathrm{diag}\left(\sqrt{d_1}^{-1},\dots,\sqrt{d_{n}}^{-1}\right)$} by
\begin{equation}
\tilde{C}_{t}=C_{t}D^{-\frac{1}{2}},\quad
\tilde{G}_{t}=G_{t}D^{-\frac{1}{2}}.
\end{equation}
We often fix one modification factor $D^{-\frac{1}{2}}$, and the difference of the modification factor does not affect our argument. (The difference can be ignored by taking the inner product $\langle\, ,\, \rangle_{D}$ defined by (\ref{eq: definition of inner product}).) In this case, we omit $D^{-\frac{1}{2}}$ and simply write $\tilde{\bf C}(B;D^{-\frac{1}{2}})=\tilde{\bf C}(B)$ and $\tilde{\bf G}(B;D^{-\frac{1}{2}})=\tilde{\bf G}(B)$.
These matrices $\tilde{C}_{t}$ and $\tilde{G}_{t}$ are called a {\em modified $C$-matrix} and a {\em modified $G$-matrix}. We call each column vector $\tilde{\bf c}_{i;t}$ and $\tilde{\bf g}_{i;t}$ a {\em modified $c$-vector} and a {\em modified $g$-vector}, respectively, and they are given by
\begin{equation}\label{eq: modified c-, g-vectors}
\tilde{\bf c}_{i;t}=\frac{1}{\sqrt{d_i}}{\bf c}_{i;t},
\quad
\tilde{\bf g}_{i;t}=\frac{1}{\sqrt{d_i}}{\bf g}_{i;t}.
\end{equation}
\end{definition}
Then, we give the self-contained recursion for these modified patterns as follows.
\begin{proposition}
Let $B \in \mathrm{M}_{n}(\mathbb{R})$ be a skew-symmetrizable matrix with a skew-symmetrizer $D$. Consider its modified $C$-pattern ${\bf C}(B)=\{\tilde{C}_{t}\}_{t \in \mathbb{T}_{n}}$ and $G$-pattern ${\bf G}(B)=\{\tilde{G}_{t}\}_{t \in \mathbb{T}_{n}}$ with the initial exchange matrix $B_{t_0}=B$. Then, they can be obtained by the following recursion:
\begin{itemize}
\item $\tilde{C}_{t_0}=\tilde{G}_{t_0}=D^{-\frac{1}{2}}$.
\item For any $k$-adjacent vertices $t,t' \in \mathbb{T}_{n}$, it holds that
\begin{equation}\label{eq: recursions for modified C-, G-matrices}
\begin{aligned}
\tilde{C}_{t'}&=\tilde{C}_{t}J_k+\tilde{C}_{t}[\varepsilon \mathrm{Sk}(B_t)]^{k\bullet}_{+}+[-\varepsilon \tilde{C}_{t}]_{+}^{\bullet k}\mathrm{Sk}(B_{t}),\\
\tilde{G}_{t'}&=\tilde{G}_{t}J_{k}+\tilde{G}_{t}[-\varepsilon\mathrm{Sk}(B_{t})]^{\bullet k}_{+}-B_{t_0}[-\varepsilon \tilde{C}_{t}]^{\bullet k}_{+},
\end{aligned}
\end{equation}
where $\varepsilon = \pm1$ is chosen arbitrary. Here, $\mathrm{Sk}(B_t)=D^{\frac{1}{2}}B_tD^{-\frac{1}{2}}$ is given in Definition~\ref{def: Sk map}.
\end{itemize}
\end{proposition}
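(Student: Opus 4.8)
The plan is to obtain both recursions directly from the $\varepsilon$‑form of the mutation rules~\eqref{eq: epsilon expression for C-, G-mutations} for $C_{t}$ and $G_{t}$, by right‑multiplying each equation by the modification factor $D^{-\frac{1}{2}}$ and then rearranging diagonal factors using two elementary facts. So the proof is essentially bookkeeping built on the identities $\tilde{C}_{t}=C_{t}D^{-\frac{1}{2}}$, $\tilde{G}_{t}=G_{t}D^{-\frac{1}{2}}$ and $\mathrm{Sk}(B_{t})=D^{\frac{1}{2}}B_{t}D^{-\frac{1}{2}}$.

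First I would record the two facts that do all of the work. Since $J_k$, $E_{kk}$, $D^{\frac{1}{2}}$ and $D^{-\frac{1}{2}}$ are all diagonal, any two of them commute; and for any positive diagonal matrix $H$ and any $A\in\mathrm{M}_{n}(\mathbb{R})$ one has $[HA]_{+}=H[A]_{+}$ and $[AH]_{+}=[A]_{+}H$, because scaling a row (resp.\ column) by a positive number leaves the signs of its entries unchanged. Two consequences I would extract: $[-\varepsilon\tilde{C}_{t}]_{+}=[-\varepsilon C_{t}D^{-\frac{1}{2}}]_{+}=[-\varepsilon C_{t}]_{+}D^{-\frac{1}{2}}$, hence (commuting $D^{-\frac{1}{2}}$ past $E_{kk}$) $[-\varepsilon\tilde{C}_{t}]^{\bullet k}_{+}=[-\varepsilon C_{t}]^{\bullet k}_{+}D^{-\frac{1}{2}}$; and the entrywise identities $D^{-\frac{1}{2}}[\varepsilon\,\mathrm{Sk}(B_t)]^{k\bullet}_{+}=[\varepsilon B_t]^{k\bullet}_{+}D^{-\frac{1}{2}}$ and $D^{-\frac{1}{2}}[-\varepsilon\,\mathrm{Sk}(B_t)]^{\bullet k}_{+}=[-\varepsilon B_t]^{\bullet k}_{+}D^{-\frac{1}{2}}$, both of which reduce to the observation that the $(k,j)$ entry (resp.\ the $(i,k)$ entry) of $\mathrm{Sk}(B_{t})$ is $\sqrt{d_k/d_j}\,b_{kj;t}$ (resp.\ $\sqrt{d_i/d_k}\,b_{ik;t}$) by Definition~\ref{def: Sk map} and Lemma~\ref{lem: skew-symmetrizing for B}, so that the factors $\sqrt{d_k}$ (resp.\ $\sqrt{d_i}$) cancel on both sides.

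Next I would right‑multiply~\eqref{eq: epsilon expression for C-, G-mutations} by $D^{-\frac{1}{2}}$. For the $C$‑equation the three resulting terms are $C_{t}J_kD^{-\frac{1}{2}}$, $C_{t}[\varepsilon B_t]^{k\bullet}_{+}D^{-\frac{1}{2}}$ and $[-\varepsilon C_{t}]^{\bullet k}_{+}B_tD^{-\frac{1}{2}}$: in the first, move $D^{-\frac{1}{2}}$ past $J_k$ to get $\tilde{C}_{t}J_k$; in the second, use the identity above to write $[\varepsilon B_t]^{k\bullet}_{+}D^{-\frac{1}{2}}=D^{-\frac{1}{2}}[\varepsilon\,\mathrm{Sk}(B_t)]^{k\bullet}_{+}$, giving $C_{t}D^{-\frac{1}{2}}[\varepsilon\,\mathrm{Sk}(B_t)]^{k\bullet}_{+}=\tilde{C}_{t}[\varepsilon\,\mathrm{Sk}(B_t)]^{k\bullet}_{+}$; in the third, write $B_tD^{-\frac{1}{2}}=D^{-\frac{1}{2}}\mathrm{Sk}(B_t)$ and pull $D^{-\frac{1}{2}}$ leftward through $[-\varepsilon C_{t}]^{\bullet k}_{+}$ by the column‑scaling rule to get $[-\varepsilon C_{t}D^{-\frac{1}{2}}]^{\bullet k}_{+}\mathrm{Sk}(B_t)=[-\varepsilon\tilde{C}_{t}]^{\bullet k}_{+}\mathrm{Sk}(B_t)$. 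Summing yields exactly the stated recursion for $\tilde{C}_{t'}$. The $G$‑equation is treated identically: right‑multiplication by $D^{-\frac{1}{2}}$ turns the first two terms into $\tilde{G}_{t}J_k$ and $\tilde{G}_{t}[-\varepsilon\,\mathrm{Sk}(B_t)]^{\bullet k}_{+}$ via the diagonal commutations and the second identity, while the last term $-B_{t_0}[-\varepsilon C_{t}]^{\bullet k}_{+}D^{-\frac{1}{2}}$ becomes $-B_{t_0}[-\varepsilon\tilde{C}_{t}]^{\bullet k}_{+}$. The initial condition is immediate from $C_{t_0}=G_{t_0}=I_n$, and the arbitrariness of $\varepsilon=\pm1$ is inherited from that of~\eqref{eq: epsilon expression for C-, G-mutations} (equivalently, it can be seen directly from $x=[x]_{+}-[-x]_{+}$ applied to the modified formulas).

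I do not expect any genuine obstacle here; the one place where a careless computation could go wrong is the interaction between the one‑sided truncations $(\,\cdot\,)^{k\bullet}$, $(\,\cdot\,)^{\bullet k}$, the positive‑part operation $[\,\cdot\,]_{+}$, and a \emph{non‑identity} diagonal matrix, precisely because $\mathrm{Sk}(B_{t})$ carries the factors $\sqrt{d_i/d_j}$ in its entries. So I would pin down the two entrywise identities once and for all and then treat the rest as formal. It is worth noting that the output of this recursion is a pair of \emph{real} matrices, not matrices over $\mathbb{Z}_{B}$, consistent with $\mathrm{Sk}(B_{t})$ rather than $B_{t}$ appearing — which is exactly the reason for passing to the modified patterns (cf.\ Remark~\ref{rmk: rk2}).
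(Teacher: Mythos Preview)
Your proposal is correct and follows exactly the paper's approach: right-multiply the $\varepsilon$-form recursions~\eqref{eq: epsilon expression for C-, G-mutations} by $D^{-\frac{1}{2}}$ and push the diagonal factor through using that positive diagonals commute with $J_k$, $E_{kk}$ and with $[\,\cdot\,]_{+}$. The paper's proof is terser (it displays only the $G$-equation computation), but the content is identical.
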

In the second equality of (\ref{eq: recursions for modified C-, G-matrices}), note that $B_{t_0}$ appearing in the last term is not $\mathrm{Sk}(B_{t_0})$. However, if we assume $B_{t_0} \in {\bf SC}$, it is not an essential problem.
\begin{proof}
By multiplying $D^{-\frac{1}{2}}$ from right to (\ref{eq: epsilon expression for C-, G-mutations}), we obtain this recursion. For example, the second recursion is obtained by
\begin{equation}
\begin{aligned}
&\ (G_{t}J_{k}+G_{t}[-\varepsilon B_{t}]^{\bullet k}_{+}-B_{t_0}[-\varepsilon C_{t}]^{\bullet k}_{+})D^{-\frac{1}{2}}\\
=&\ (G_{t}D^{-\frac{1}{2}})(D^{\frac{1}{2}}J_kD^{-\frac{1}{2}})+(G_{t}D^{-\frac{1}{2}})[-\varepsilon D^{\frac{1}{2}}B_tD^{-\frac{1}{2}}]^{\bullet k}_{+}-B_{t_0}[-\varepsilon C_{t}D^{-\frac{1}{2}}]\\
=&\ \tilde{G}_{t}J_k+\tilde{G}_{t}[-\varepsilon \mathrm{Sk}(B_t)]^{\bullet k}_{+}-B_{t_0}[-\varepsilon \tilde{C}_{t}]^{\bullet k}_{+}.
\end{aligned}
\end{equation}
\end{proof}
This recursion is essentially controlled by $\mathrm{Sk}(B_t)$. So, we write ${\bf B}(\mathrm{Sk}(B))=\{\tilde{B}_t\}$. (In Section~\ref{sec: Skew-symmetrizing method}, we write it by $\hat{B}_t$, but here we write $\tilde{B}_t$ to align the notation.)
\par
If we assume $B \in {\bf SC}$, we obtain the following recursion.
\begin{proposition}
Let $B \in {\bf SC}$ with a skew-symmetrizer $D$. Then, the recursion (\ref{eq: recursions for modified C-, G-matrices}) for modified $C$- and $G$-matrices is expressed as
\begin{equation}\label{eq: reduced recursion for modified C, G matrices}
\begin{aligned}
\tilde{C}_{t'}=\tilde{C}_{t}(J_k+[\varepsilon_{k;t} \tilde{B}_{t}]^{k\bullet}_{+}),
\quad
\tilde{G}_{t'}=\tilde{G}_{t}(J_{k}+[-\varepsilon_{k;t}\tilde{B}_{t}]^{\bullet k}_{+}).
\end{aligned}
\end{equation}
Moreover, the following recursion for modified $c$-, $g$-vectors holds.
\begin{equation}
\tilde{\bf c}_{i;t'}=\begin{cases}
-\tilde{\bf c}_{i;t} & i = k,\\
\tilde{\bf c}_{i;t}+[\varepsilon_{k;t}\tilde{b}_{ki;t}]_{+}\tilde{\bf c}_{k;t} & i \neq k,
\end{cases}
\quad
\tilde{\bf g}_{i;t}=\begin{cases}
-\tilde{\bf g}_{k;t}+\sum_{j=1}^{n}[-\varepsilon_{k;t}\tilde{b}_{jk;t}]_{+}\tilde{\bf g}_{j;t} & i=k,\\
\tilde{\bf g}_{i;t} & i \neq k,
\end{cases}
\end{equation}
where we set $\tilde{B}_{t}=(\tilde{b}_{ij}) \in \mathrm{M}_{n}(\mathbb{R})$.
\end{proposition}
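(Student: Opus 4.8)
The plan is to deduce the factorized recursion \eqref{eq: reduced recursion for modified C, G matrices} directly from the already-established general recursion \eqref{eq: recursions for modified C-, G-matrices}, in exact parallel with the way the reduced $C$-, $G$-recursion \eqref{CG rec} is obtained from \eqref{eq: epsilon expression for C-, G-mutations} in the ordinary theory (cf.\ \cite[Prop.~1.3]{NZ12}). Since $B \in {\bf SC}$, the $C$-matrix $C_t$ is sign-coherent, so the $k$th column ${\bf c}_{k;t}$ has a sign $\varepsilon_{k;t} \in \{0,\pm 1\}$; by \eqref{eq: modified c-, g-vectors} the modified column $\tilde{\bf c}_{k;t} = d_k^{-1/2}{\bf c}_{k;t}$ has the same sign because $d_k > 0$. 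The recursion \eqref{eq: recursions for modified C-, G-matrices} is valid for any choice of $\varepsilon = \pm 1$, so I would take $\varepsilon = \varepsilon_{k;t}$. Then every entry of $-\varepsilon_{k;t}\tilde{\bf c}_{k;t}$ is $\le 0$, hence $[-\varepsilon_{k;t}\tilde{C}_t]^{\bullet k}_+ = O$, which kills the final term in both lines of \eqref{eq: recursions for modified C-, G-matrices}. What remains is $\tilde{C}_{t'} = \tilde{C}_t J_k + \tilde{C}_t[\varepsilon_{k;t}\mathrm{Sk}(B_t)]^{k \bullet}_+$ and $\tilde{G}_{t'} = \tilde{G}_t J_k + \tilde{G}_t[-\varepsilon_{k;t}\mathrm{Sk}(B_t)]^{\bullet k}_+$; factoring $\tilde{C}_t$, resp.\ $\tilde{G}_t$, out on the left and recalling $\tilde{B}_t = \mathrm{Sk}(B_t)$ (which is Lemma~\ref{lem: positive conjugation} with $H = D^{\frac{1}{2}}$) gives \eqref{eq: reduced recursion for modified C, G matrices}. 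As a consistency check one may instead conjugate \eqref{CG rec} by $D^{\frac{1}{2}}$ and verify $D^{\frac{1}{2}}[B_t]^{k \bullet}_+ D^{-\frac{1}{2}} = [\mathrm{Sk}(B_t)]^{k \bullet}_+$, which follows from the skew-symmetrizer identity $d_k b_{kj;t} = -d_j b_{jk;t}$ --- valid since $D$ remains a skew-symmetrizer of every $B_t$ --- so that $\sqrt{d_k/d_j}\,b_{kj;t}$ is exactly the $(k,j)$-entry of $\mathrm{Sk}(B_t)$, together with the fact that $[\,\cdot\,]_+$ commutes with multiplication by positive scalars.

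For the recursions on modified $c$-, $g$-vectors I would simply extract the $i$th column of each matrix identity in \eqref{eq: reduced recursion for modified C, G matrices}. The $i$th column of $J_k + [\varepsilon_{k;t}\tilde{B}_t]^{k \bullet}_+$ equals $-{\bf e}_k$ when $i = k$ --- here one uses $\tilde{b}_{kk;t} = 0$, since the diagonal of $\mathrm{Sk}(B_t)$ vanishes --- and ${\bf e}_i + [\varepsilon_{k;t}\tilde{b}_{ki;t}]_+ {\bf e}_k$ when $i \neq k$; multiplying on the left by $\tilde{C}_t$ and using $\tilde{C}_t {\bf e}_i = \tilde{\bf c}_{i;t}$ yields the asserted formula for $\tilde{\bf c}_{i;t'}$. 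Similarly, the $i$th column of $J_k + [-\varepsilon_{k;t}\tilde{B}_t]^{\bullet k}_+$ is ${\bf e}_i$ for $i \neq k$ and $-{\bf e}_k + \sum_{j=1}^{n}[-\varepsilon_{k;t}\tilde{b}_{jk;t}]_+ {\bf e}_j$ for $i = k$, and applying $\tilde{G}_t$ on the left gives the formula for $\tilde{\bf g}_{i;t'}$.

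I do not anticipate a genuine difficulty: the argument is the same bookkeeping as in the skew-symmetrizable integer case, and the only points that deserve an explicit word are (i) that the sign $\varepsilon_{k;t}$ is well defined, which is exactly the hypothesis $B \in {\bf SC}$; (ii) that passing from ${\bf c}_{k;t}$ to $\tilde{\bf c}_{k;t}$ preserves this sign, immediate from $d_k > 0$; and (iii) the vanishing of $\tilde{b}_{kk;t}$ in the $i = k$ case. The most error-prone part is purely notational --- keeping track of which of $[\,\cdot\,]^{k \bullet}_+$ and $[\,\cdot\,]^{\bullet k}_+$ sits on which side --- but this is entirely dictated by \eqref{eq: recursions for modified C-, G-matrices} and poses no real obstacle.
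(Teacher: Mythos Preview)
Your proposal is correct and follows essentially the same approach as the paper: substitute $\varepsilon = \varepsilon_{k;t}$ into the general recursion \eqref{eq: recursions for modified C-, G-matrices} and observe that $[-\varepsilon_{k;t}\tilde{C}_t]^{\bullet k}_+ = O$ kills the final term. Your write-up is considerably more detailed than the paper's one-line proof, but the underlying idea is identical.
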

\begin{proof}
By substituting $\varepsilon=\varepsilon_{k;t}$ into (\ref{eq: recursions for modified C-, G-matrices}), we obtain the claim. (Note that $[-\varepsilon_{k;t}C_{t}]^{\bullet k}_{+}=O$ by definition.)
\end{proof}
\begin{remark}
Although the recursion (\ref{eq: reduced recursion for modified C, G matrices}) for the modified $C$-, $G$-matrices are changed like the skew-symmetric case, the dual mutation formula is given as
\begin{equation}\label{eq: dual mutation formula for modified C-, G-matrices}
\begin{aligned}
\tilde{C}^{t_1}_{t}&=(J_k+[-\tau^{t_0}_{k;t}B_{t_0}]_{+}^{k \bullet})\tilde{C}^{t_0}_{t},\\
\tilde{G}^{t_1}_{t}&=(J_k+[\tau^{t_0}_{k;t}B_{t_0}]^{\bullet k}_{+})\tilde{G}^{t_0}_{t},
\end{aligned}
\end{equation}
where $t_0$ and $t_1$ are $k$-adjacent vertices. We obtain it by multiplying $D^{-\frac{1}{2}}$ from right to both sides of (\ref{eq: dual mutation formula}).
\end{remark}
For later, we obtain some relations for modified $C$-, $G$-matrices.
\begin{proposition}\label{prop: fundamental properties of modified matrices}
Let $B \in {\bf SC}$ with a skew-symmetrizer $D=\mathrm{diag}(d_1,d_2,\dots,d_n)$. Consider its modified $C$-, $G$-patterns with the initial exchange matrix $B_{t_0}=B$.
\\
\textup{($a$)} For any $t \in \mathbb{T}_n$ and $i=1,\dots,n$, we have
\begin{equation}
\tilde{\bf c}_{i;t}=\frac{1}{\sqrt{d_i}}{\bf c}_{i;t},\quad \tilde{\bf g}_{i;t}=\frac{1}{\sqrt{d_i}}{\bf g}_{i;t}.
\end{equation}
\textup{($b$)} For any $t \in \mathbb{T}_n$, we have $\tilde{C}_t^{\top}D\tilde{G}_t=I_{n}$. In particular, we have
\begin{equation}\label{eq: orthogonality between modified c-, g-vectors}
\langle \tilde{\bf g}_{i;t},\tilde{\bf c}_{j;t} \rangle_{D}=\begin{cases}
1 & i=j,\\
0 & i \neq j.
\end{cases}
\end{equation}
\textup{($c$)} We have
\begin{equation}\label{eq: duality for modified C-, G-matrices}
\tilde{G}_t\tilde{B}_t=B_{t_0}\tilde{C}_t,
\quad
\tilde{C}_t^{\top}DB_{t_0}\tilde{C}_t=\tilde{B}_t,
\end{equation}
where $\tilde{B}_t=\mathrm{Sk}(B_t)=D^{\frac{1}{2}}B_tD^{-\frac{1}{2}}$.
\\
\textup{($d$)} For any $t \in \mathbb{T}_{n}$, we have
\begin{equation}
\mathcal{C}(\tilde{G}_t)=\mathcal{C}(G_t).
\end{equation}
\end{proposition}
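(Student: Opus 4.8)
The plan is to obtain all four statements by conjugating the corresponding unmodified identities with the square root $D^{\pm\frac12}$ of the fixed skew-symmetrizer. Since $B\in{\bf SC}$, the $C$-pattern is sign-coherent up to every $d$, so \Cref{prop: fundamental properties under sign-coherency} (in particular the second duality \eqref{eq: second duality} and \eqref{eq: from C to B}), \Cref{prop: orhogonal relations}, and the first duality \eqref{eq: first duality} are all available at every $t\in\mathbb{T}_n$; these are the only inputs I would use.

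Part ($a$) is immediate from the definition $\tilde C_t=C_tD^{-\frac12}$: its $i$th column is $C_t\bigl(\tfrac{1}{\sqrt{d_i}}{\bf e}_i\bigr)=\tfrac{1}{\sqrt{d_i}}{\bf c}_{i;t}$, and similarly $\tilde{\bf g}_{i;t}=\tfrac1{\sqrt{d_i}}{\bf g}_{i;t}$. For ($b$), starting from $D^{-1}G_t^{\top}DC_t=I$ I would rewrite it as $C_t^{\top}DG_t=D$ (using that $D$, hence $D^{\pm\frac12}$, is diagonal and symmetric), whence
\[
\tilde C_t^{\top}D\tilde G_t=D^{-\frac12}\bigl(C_t^{\top}DG_t\bigr)D^{-\frac12}=D^{-\frac12}DD^{-\frac12}=I_n ,
\]
and reading off the $(i,j)$ entry gives \eqref{eq: orthogonality between modified c-, g-vectors}; equivalently one can combine ($a$) with \Cref{prop: orhogonal relations} and $\langle\tilde{\bf g}_{i;t},\tilde{\bf c}_{j;t}\rangle_D=\tfrac{1}{\sqrt{d_id_j}}\langle{\bf g}_{i;t},{\bf c}_{j;t}\rangle_D$.

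For ($c$), with $\tilde B_t=\mathrm{Sk}(B_t)=D^{\frac12}B_tD^{-\frac12}$, inserting $D^{\pm\frac12}$ into \eqref{eq: first duality} gives $\tilde G_t\tilde B_t=(G_tB_t)D^{-\frac12}=(B_{t_0}C_t)D^{-\frac12}=B_{t_0}\tilde C_t$, and inserting it into \eqref{eq: from C to B} gives $\tilde C_t^{\top}DB_{t_0}\tilde C_t=D^{-\frac12}(DB_t)D^{-\frac12}=D^{\frac12}B_tD^{-\frac12}=\tilde B_t$. Finally ($d$) follows from ($a$): a convex cone is unchanged when its generators are rescaled by positive scalars, and $\tilde{\bf g}_{i;t}=\tfrac1{\sqrt{d_i}}{\bf g}_{i;t}$ with $\tfrac1{\sqrt{d_i}}>0$, so $\mathcal{C}(\tilde G_t)=\mathcal{C}({\bf g}_{1;t},\dots,{\bf g}_{n;t})=\mathcal{C}(G_t)$.

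There is no real obstacle here; the only care needed is bookkeeping with the conjugations — using that $D^{\pm\frac12}$ commute with $D$, and noting that in ($c$) the factor $B_{t_0}$ on the right is the true initial matrix (not $\mathrm{Sk}(B_{t_0})$), which is fine since \eqref{eq: first duality} and \eqref{eq: from C to B} are already stated with $B_{t_0}$. I expect the write-up to be short and routine.
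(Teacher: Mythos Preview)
Your proposal is correct and follows essentially the same approach as the paper: both obtain ($a$) from the definition, ($b$) by conjugating the second duality \eqref{eq: second duality} with $D^{-\frac12}$, ($c$) by conjugating \eqref{eq: first duality} and \eqref{eq: from C to B}, and ($d$) from positive rescaling of the cone generators. Your write-up is a bit more explicit than the paper's, but the ideas are identical.
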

\begin{proof}
The claim ($a$) has been shown by (\ref{eq: modified c-, g-vectors}). The equality $\tilde{C}_t^{\top}D\tilde{G}_t=I_{n}$ is obtained by (\ref{eq: second duality}), and by considering $(i,j)$th entry of this matrix, we obtain (\ref{eq: orthogonality between modified c-, g-vectors}). The claim ($c$) follows from (\ref{eq: first duality}) and (\ref{eq: second duality}). The claim ($d$) is obvious by the definition of a $G$-cone.
\end{proof}

\subsection{Synchronicity among the $G$-fan and matrix patterns}\label{sec: syncro}
Here, we give some relationship of periodicity.
\par
Firstly, we can obtain the following synchronicity without Conjecture~\ref{conj: standard and discreteness conjecture}.
\begin{lemma}\label{lem: equivalency of the periodicity among modified c- g-vectors}
Let $B \in {\bf SC}$. For any $t,t' \in \mathbb{T}_n$ and $\sigma \in \mathfrak{S}_{n}$, we have the following equivalence.
\begin{equation}
\tilde{C}_{t'}=\tilde{\sigma}\tilde{C}_{t} \Longleftrightarrow \tilde{G}_{t'}=\tilde{\sigma}\tilde{G}_{t}.
\end{equation}
Moreover, if the above condition holds, then we have
\begin{equation}
\mathrm{Sk}(B_{t'})=\sigma \mathrm{Sk}(B_{t}).
\end{equation}
\end{lemma}
\begin{proof}
We only show $\Rightarrow$ here since another side is similar. By Proposition~\ref{prop: fundamental properties of modified matrices}~(b), we have
\begin{equation}
\tilde{G}_t=D^{-1}(\tilde{C}_t^{\top})^{-1},
\quad
\tilde{G}_{t'}=D^{-1}(\tilde{C}_{t'}^{\top})^{-1}.
\end{equation}
By (\ref{eq: periodicity and matrix product}) and $P_{\sigma}^{\top}=P_{\sigma}^{-1}$, the equality $\tilde{C}_{t'}=\tilde{\sigma}\tilde{C}_{t}$ implies $(\tilde{C}_{t'}^{\top})^{-1}=(\tilde{C}_{t}^{\top})^{-1}P_{\sigma}$. Thus, we have
\begin{equation}
\tilde{G}_{t'}=D^{-1}(\tilde{C}_{t'}^{\top})^{-1}=D^{-1}(C_{t}^{\top})^{-1}P_{\sigma}=\tilde{G}_{t}P_{\sigma}=\tilde{\sigma}\tilde{G}_{t}.
\end{equation}
Moreover, by substituting $\tilde{C}_{t'}=\tilde{C}_{t}P_{\sigma}$ into (\ref{eq: duality for modified C-, G-matrices}), we have $\mathrm{Sk}(B_{t'})=P_{\sigma}^{\top}\tilde{C}_{t}^{\top}DB_{t_0}\tilde{C}_{t}P_{\sigma}=\sigma\mathrm{Sk}(B_{t})$.
\end{proof}
By assuming Conjecture~\ref{conj: standard and discreteness conjecture}, we improve this phenomenon as follows.
\begin{proposition}[Cone-Matrix Synchronicity]\label{prop: matrix-cone synchronicity}
Let $B \in {\bf SC}$ with a skew-symmetrizer $D=\mathrm{diag}(d_1,\dots,d_n)$. Suppose that Conjecture~\ref{conj: standard and discreteness conjecture} holds for this $B$. Then, for any $t,t' \in \mathbb{T}_{n}$, the following three conditions are equivalent.
\begin{itemize}
\item[\textup{($a$)}] It holds that $\mathcal{C}(G_t)=\mathcal{C}(G_{t'})$.
\item[\textup{($b$)}] There exists $\sigma \in \mathfrak{S}_{n}$ such that $\tilde{C}_{t'}=\tilde{\sigma}\tilde{C}_{t}$.
\item[\textup{($c$)}] There exists $\sigma \in \mathfrak{S}_{n}$ such that $\tilde{G}_{t'}=\tilde{\sigma}\tilde{G}_{t}$.
\end{itemize}
Moreover, if the above conditions hold, then we can take the same $\sigma \in \mathfrak{S}_n$ such that $\tilde{C}_{t'}=\tilde{\sigma}\tilde{C}_{t}$ and $\tilde{G}_{t'}=\tilde{\sigma}\tilde{G}_{t}$, and it induces
\begin{equation}
\mathrm{Sk}(B_{t'})=\sigma \mathrm{Sk}(B_t).
\end{equation}
\end{proposition}
\begin{proof}
The implication $(b)\Rightarrow (a)$ is shown by Lemma~\ref{lem: c-vector expression of G-cones}, and the implication $(c) \Rightarrow (a)$ is shown by definition of cones.
We now show $(a) \Rightarrow (b),(c)$. For the proof, we need to consider the dual mutation formula (\ref{eq: dual mutation formula}). So, we write the $C,G$-matrices by $C^{t_0}_{t}$, $G^{t_0}_t$ and show the claim by the induction on $d=d(t_0,t')$. For $d=0$, suppose that $\mathcal{C}(G^{t_0}_{t})=\mathcal{C}(G^{t_0}_{t_0})=\mathfrak{O}_{+}^{n}$. Then, since $\mathfrak{O}_{+}^{n}$ is a simplicial cone spanned by ${\bf e}_1,\dots,{\bf e}_n$, there exists $\sigma \in \mathfrak{S}_{n}$ and $\beta_i>0$ such that
\begin{equation}
{\bf g}^{t_0}_{i;t}=\beta_{i}{\bf e}_{\sigma(i)},
\end{equation}
Thus, for each $j$th row ($j=1,2,\dots,n$), $G^{t_0}_{t}$ satisfies the condition ($a$) in Lemma~\ref{lem: for the parallel lemma}. Thus, by Lemma~\ref{lem: for the parallel lemma}~($b$), we have $\beta_{i}=\sqrt{d_id_{\sigma(i)}^{-1}}$ and
\begin{equation}
\tilde{\bf c}^{t_0}_{i;t}=\tilde{\bf c}^{t_0}_{\sigma(i);t_0},
\quad
\tilde{\bf g}^{t_0}_{i;t}=\tilde{\bf g}^{t_0}_{\sigma(i);t_0}.
\end{equation}
Note that $\tilde{\bf c}^{t_0}_{\sigma(i);t_0}=\tilde{\bf g}^{t_0}_{\sigma(i);t_0}=\sqrt{d_{\sigma(i)}}^{-1}{\bf e}_{\sigma(i)}$.
Suppose that the claim holds for some $d$, and let $t' \in \mathbb{T}_{n}$ be the vertex satisfying $d(t_0,t')=d+1$. Suppose that $\mathcal{C}(G^{t_0}_{t})=\mathcal{C}(G^{t_0}_{t'})$. By doing a similar argument, we express that ${\bf g}^{t_0}_{i;t}=\beta_i{\bf g}^{t_0}_{\sigma(i);t'}$.
Take the $k$-adjacent vertex $t_1$ to $t_0$ such that $d(t_1,t')=d$. Then, by Proposition~\ref{prop: dual mutation}, we have
\begin{equation}
{\bf g}^{t_1}_{i;t}=(J_k+[\tau^{t_0}_{k;t}B_{t_0}]^{\bullet k}_{+}){\bf g}^{t_0}_{i;t},
\quad
{\bf g}^{t_1}_{\sigma(i);t'}=(J_k+[\tau^{t_0}_{k;t'}B_{t_0}]^{\bullet k}_{+}){\bf g}^{t_0}_{\sigma(i);t'}.
\end{equation}
Since $\mathcal{C}(G^{t_0}_{t})=\mathcal{C}(G^{t_0}_{t'})$, it belongs to the same orthant. In particular, $\tau^{t_0}_{k;t}=\tau^{t_0}_{k;t'}$ holds. Thus, two vectors ${\bf g}^{t_1}_{i;t}$ and ${\bf g}^{t_1}_{\sigma(i);t'}$ are obtained by applying the same linear transformation $J_k+[\tau^{t_0}_{k;t}B_{t_0}]^{\bullet k}_{+}$ to ${\bf g}^{t_0}_{i;t}$ and ${\bf g}^{t_0}_{\sigma(i);t'}$, respectively. Thus, the relation ${\bf g}^{t_0}_{i;t}=\beta_i{\bf g}^{t_0}_{\sigma(i);t'}$ induces ${\bf g}^{t_1}_{i;t}=\beta_i{\bf g}^{t_1}_{\sigma(i);t'}$. In patricular, $\mathcal{C}(G^{t_1}_{t})=\mathcal{C}(G^{t_1}_{t'})$ holds. Since $d(t_1,t')=d$, we can apply the assumption of induction, that is,
\begin{equation}
\tilde{\bf c}^{t_1}_{i;t}=\tilde{\bf c}^{t_1}_{\sigma(i);t'},
\quad
\tilde{\bf g}^{t_1}_{i;t}=\tilde{\bf g}^{t_1}_{\sigma(i);t'}.
\end{equation}
By applying the dual mutation formula (\ref{eq: dual mutation formula for modified C-, G-matrices}), we obtain $\tilde{\bf c}^{t_0}_{i;t}=\tilde{\bf c}^{t_0}_{\sigma(i);t'}$ and $
\tilde{\bf g}^{t_0}_{i;t}=\tilde{\bf g}^{t_0}_{\sigma(i);t'}$ as we desired. (Note that $\tau^{t_1}_{k;t}=\tau^{t_1}_{k;t'}$ holds by the same reason.)
\par
If ($b$) holds, then by (\ref{eq: duality for modified C-, G-matrices}), we obtain $\mathrm{Sk}(B_{t'})=\sigma \mathrm{Sk}(B_{t})$.
\end{proof}
As a corollary of this proposition, we can show that the phenomenon in Example~\ref{ex: bad phenomenon for periodicity} does not occur for the skew-symmetric case.
\begin{corollary}\label{cor: synchronicity for the skew-symmetric case}
Let $B \in {\bf SC}$ be {\em skew-symmetric} (not skew-symmetrizable). Suppose that Conjecture~\ref{conj: standard and discreteness conjecture} holds for this $B$. Then, for any $t,t' \in \mathbb{T}_n$, we have the following equivalence.
\begin{equation}
\mathcal{C}(G_{t'})=\mathcal{C}(G_t) \Longleftrightarrow [G_{t'}]=[G_{t}] \Longleftrightarrow [C_{t'}]=[C_{t}].
\end{equation}
\end{corollary}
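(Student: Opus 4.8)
The plan is to obtain this as an immediate specialization of the Cone-Matrix Synchronicity theorem (Theorem~\ref{thm: matrix-cone synchronicity}). The key observation is that a skew-symmetric matrix $B$ admits $D = I_n$ as a skew-symmetrizer. First I would fix this $D$: then the modification factor is $D^{-\frac{1}{2}} = I_n$, so the modified $C$- and $G$-patterns coincide with the ordinary ones, $\tilde{C}_t = C_t D^{-\frac{1}{2}} = C_t$ and $\tilde{G}_t = G_t$ for every $t \in \mathbb{T}_n$. (Likewise $\mathrm{Sk}(B_t) = B_t$, since mutation preserves skew-symmetry, so the recursion~\eqref{eq: reduced recursion for modified C, G matrices} is just the ordinary $C$-, $G$-recursion and nothing is lost in the reduction.) The hypotheses of Theorem~\ref{thm: matrix-cone synchronicity} --- that $B \in {\bf SC}$ and that Conjecture~\ref{conj: standard and discreteness conjecture} holds for $B$ --- are exactly those assumed in the corollary, with the chosen skew-symmetrizer taken to be $I_n$.

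With these identifications in place, the three equivalent conditions of Theorem~\ref{thm: matrix-cone synchronicity} become precisely: (a) $\mathcal{C}(G_t) = \mathcal{C}(G_{t'})$; (b) there exists $\sigma \in \mathfrak{S}_n$ with $C_{t'} = \tilde{\sigma} C_t$; (c) there exists $\sigma \in \mathfrak{S}_n$ with $G_{t'} = \tilde{\sigma} G_t$. Unwinding the bracket notation, $[C_{t'}] = [C_t]$ (resp. $[G_{t'}] = [G_t]$) means exactly that such a permutation $\sigma$ exists, so $(a)\Leftrightarrow(b)\Leftrightarrow(c)$ is the asserted chain of equivalences. The ``moreover'' part of Theorem~\ref{thm: matrix-cone synchronicity} even shows that a single $\sigma$ can be chosen to work simultaneously for $C$ and $G$ (and induces $\mathrm{Sk}(B_{t'}) = \sigma\,\mathrm{Sk}(B_t)$, hence $B_{t'} = \sigma B_t$ in the skew-symmetric case), which is consistent with and slightly sharper than the corollary.

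There is essentially no obstacle here: the whole content is the remark that the modification factor collapses to the identity in the skew-symmetric case, reducing Theorem~\ref{thm: matrix-cone synchronicity} to a statement about the genuine $C$-, $G$-patterns. The only point deserving a line of care is that using the specific skew-symmetrizer $D = I_n$ is legitimate --- this is automatic, since $I_n$ is a skew-symmetrizer of $B$ and the conjectural properties in Conjecture~\ref{conj: standard and discreteness conjecture} are phrased intrinsically for $B$; invariance under the choice of skew-symmetrizer is in any case ensured by Proposition~\ref{prop: positive conjegation and sign-coherency} and Proposition~\ref{prop: positive conjugation and discreteness lemma}.
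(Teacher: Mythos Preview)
Your proof is correct and follows exactly the same approach as the paper: take the skew-symmetrizer $D=I_n$, so that the modified patterns collapse to the ordinary ones, and invoke Theorem~\ref{thm: matrix-cone synchronicity}. The paper's own proof is just these two sentences; your additional remarks about unwinding the bracket notation and the legitimacy of choosing $D=I_n$ are accurate elaborations (the former is formalized later as Lemma~\ref{lem: unlabeled cluster equality}).
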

\begin{proof}
We can take $D=\mathrm{diag}(1,1,\dots,1)$ because $B$ is skew-symmetric. Thus, the claim holds by \Cref{prop: matrix-cone synchronicity}.
\end{proof}
We can obtain a similar phenomenon for original $C$-, $G$-matrices. The same result has already appeared in \cite{Nak21} for the integer case. However, we need to reconstruct the proof for the real case.
\begin{proposition}[$C$-$G$ Synchronicity]\label{prop: CG synchronicity}
Let $B \in {\bf SC}$ with a skew-symmetrizer $D=\mathrm{diag}(d_1,\dots,d_n)$. Suppose that Conjecture~\ref{conj: standard and discreteness conjecture} holds for this $B$.
\\
\textup{($a$)} Let $\sigma \in \mathfrak{S}_n$. If either $C_{t'}=\tilde{\sigma}C_t$ or $G_{t'}=\tilde{\sigma}G_t$ holds for some $t,t' \in \mathbb{T}_{n}$, then for any $i=1,2,\dots,n$, we have $d_{i}=d_{\sigma(i)}$ and, equivalently, $DP_{\sigma}=P_{\sigma}D$ holds.
\\
\textup{($b$)} For any $t,t' \in \mathbb{T}_{n}$ and $\sigma \in \mathfrak{S}_{n}$, the following two conditions are equivalent.
\begin{itemize}
\item It holds that $C_{t'}=\tilde{\sigma}C_t$.
\item It holds that $G_{t'}=\tilde{\sigma}G_t$.
\end{itemize}
Moreover, if the above conditions hold, then we have
\begin{equation}\label{eq: synchro for B matrix}
B_{t'}=\sigma B_{t}
\end{equation}
and
\begin{equation}\label{eq: synchronicity for modified patterns}
\mathrm{Sk}(B_{t'})=\sigma \mathrm{Sk}(B_{t}),
\quad
\tilde{C}_{t'}= \tilde{\sigma} \tilde{C}_{t},
\quad
\tilde{G}_{t'}=\tilde{\sigma} \tilde{G}_{t}.
\end{equation}
\end{proposition}
\begin{proof}
($a$) Suppose that $C_{t'}=\tilde{\sigma}C_{t}$. Then, we have
\begin{equation}\label{eq: c-vector equality 1}
{\bf c}_{\sigma(i);t'}={\bf c}_{i;t}.
\end{equation}
On the other hand, by Lemma~\ref{lem: c-vector expression of G-cones}, the assumption $C_{t'}=\tilde{\sigma}C_t$ implies $\mathcal{C}(G_{t'})=\mathcal{C}(G_t)$. Thus, by \Cref{prop: matrix-cone synchronicity}, we have
\begin{equation}\label{eq: c-vector equality 2}
\frac{1}{\sqrt{d_{\sigma(i)}}}{\bf c}_{\sigma(i);t'}=\frac{1}{\sqrt{d_{i}}}{\bf c}_{i;t}.
\end{equation}
To satisfy both (\ref{eq: c-vector equality 1}) and (\ref{eq: c-vector equality 2}), we have $d_i=d_{\sigma(i)}$ for any $i$. This means that $D=\sigma D$ and, by $\sigma D = P_{\sigma}^{\top}DP_{\sigma}$ and $P_{\sigma}^{\top}=P_{\sigma}^{-1}$, it implies that $DP_{\sigma}=P_{\sigma}D$. We can do the same argument in the case of $G_{t'}=\tilde{\sigma}G_t$.
\\
($b$) Suppose that $C_{t'}=\tilde{\sigma}C_t$. By (\ref{eq: second duality}), we express
\begin{equation}
G_{t'}=D^{-1}(C_{t'}^{\top})^{-1}D.
\end{equation}
By $C_{t'}=\tilde{\sigma}C_t=C_tP_{\sigma}$, we have $(C_{t'}^{\top})^{-1}=(C_t^{\top})^{-1}P_{\sigma}$, where we use $P_{\sigma}^{\top}=P_{\sigma}^{-1}$. Thus, we have
\begin{equation}
G_{t'}=D^{-1}(C_{t}^{\top})^{-1}P_{\sigma}D\overset{(a)}{=}D^{-1}(C_{t}^{\top})^{-1}DP_{\sigma}\overset{(\ref{eq: second duality})}{=}G_tP_{\sigma}=\tilde{\sigma}G_t.
\end{equation}
The equality (\ref{eq: synchro for B matrix}) can be shown by using (\ref{eq: from C to B}) as follows:
\begin{equation}
\begin{aligned}
B_{t'}&\overset{(\ref{eq: from C to B})}{=}D^{-1}C_{t'}^{\top}DB_{t_0}C_{t'}=D^{-1}P_{\sigma}^{\top}C_{t}^{\top}DB_{t_0}C_tP_{\sigma}\overset{(a)}{=}P_{\sigma}^{\top}D^{-1}C_t^{\top}DB_{t_0}C_tP_{\sigma}
\\
&\overset{(\ref{eq: from C to B})}{=}P_{\sigma}^{\top}B_{t}P_{\sigma}=\sigma B_t.
\end{aligned}
\end{equation}
Moreover, by ($a$), we also have $D^{-\frac{1}{2}}P_{\sigma}=P_{\sigma}D^{-\frac{1}{2}}$. Thus, we can obtain (\ref{eq: synchronicity for modified patterns}) as follows:
\begin{equation}
\tilde{C}_{t'}=C_{t'}D^{-\frac{1}{2}}=C_tP_{\sigma}D^{-\frac{1}{2}}=C_tD^{-\frac{1}{2}}P_{\sigma}=\tilde{\sigma}\tilde{C}_t.
\end{equation}
\end{proof}

\section{Isomorphism of exchange graphs}\label{sec: exchange graphs}
In the ordinary cluster algebra, one of the main object is the exchange graph, which is a combinatorial structure established by unlabeled seeds (triple of cluster variables, coefficients and exchange matrices) in \cite{FZ02}. In \cite{FL23}, it was proved that for all non-integer quivers
of finite type there is a well-defined geometric notion of an exchange graph , which generalizes the classical integer case. Here, we generalize this classical structure to the following five patterns for the real exchange matrix $B$.
\begin{itemize}
\item exchange graph associated with $C$-pattern ${\bf EG}({\bf C}(B))$
\item exchange graph associated with $G$-pattern ${\bf EG}({\bf G}(B))$
\item exchange graph associated with a $G$-fan ${\bf EG}(\Delta_{\bf G}(B))$
\item exchange graph associated with modified $C$-pattern ${\bf EG}(\tilde{\bf C}(B))$
\item exchange graph associated with modified $G$-pattern ${\bf EG}(\tilde{\bf G}(B))$
\end{itemize}
\par
To define them, we introduce a {\em quotient graph}, which is defined as follows:
\begin{definition}
Let $G=(V,E)$ be a graph with a vertex set $V$ and an edge set $E \subset V\times V$. Let $\sim$ be an equivalence relation on $V$. Then, we define the {\em quotient graph} $\tilde{G}=G/{\sim}$ as follows:
\begin{itemize}
\item The vertex set of $\tilde{G}$ is the equivalence class of $V/{\sim}$.
\item Two vertices $[v_1],[v_2] \in \tilde{G}$ are connected in $G/{\sim}$ if and only if there exist vertices $v'_1 \in [v_1]$ and $v'_2 \in [v_2]$ such that $v'_1$ and $v'_2$ are connected in $G$.
\end{itemize}
\end{definition}

\begin{definition}\label{def: cluster of c-, g-vectors}
%Let $B \in {\bf SC}$.
For any $C$-matrix $C_{t}$ and $G$-matrix $G_t$, we write
\begin{equation}
[C_{t}]=\{{\bf c}_{1;t},\dots,{\bf c}_{n;t}\},
\quad
[G_{t}]=\{{\bf g}_{1;t},\dots,{\bf g}_{n;t}\},
\end{equation}
and we call them an {\em unlabeled cluster of $c$-vectors} and an {\em unlabeled cluster of $g$-vectors}, respectively.
\end{definition}
Here, for simplicity, we omit ``unlabeled" and simply call them clusters of $c$-, $g$-vectors. We also define a {\em cluster of modified $c$-vectors} $[\tilde{C}_{t}]=\{\tilde{\bf c}_{1;t},\dots,\tilde{\bf c}_{n;t}\}$ and a {\em cluster of modified $g$-vectors} $[\tilde{G}_{t}]=\{\tilde{\bf g}_{1;t},\dots,\tilde{\bf g}_{n;t}\}$.
\begin{lemma}\label{lem: unlabeled cluster equality}
Let $B \in {\bf SC}$. Then, for any $C_t,C_{t'} \in {\bf C}(B)$, $[C_{t}]=[C_{t'}]$ holds if and only if there exists $\sigma \in \mathfrak{S}_{n}$ such that
\begin{equation}
C_{t'}=\tilde{\sigma}C_{t}.
\end{equation}
We obtain the same result by replacing $C$-matrices with $G$-matrices, modified $C$-matrices, and modified $G$-matrices.
\end{lemma}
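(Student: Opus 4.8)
The plan is to reduce the statement to the elementary fact that two matrices in $\mathrm{GL}_n(\mathbb{R})$ with the same unordered set of columns differ by a column permutation, using the sign-coherence of $B$ only to guarantee invertibility (hence distinctness of columns), which is precisely the subtle point.

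First I would dispose of the "if" direction, which needs nothing about $B$: if $C_{t'}=\tilde{\sigma}C_{t}=C_tP_{\sigma}$ for some $\sigma\in\mathfrak{S}_n$, then by $(\ref{eq: periodicity and matrix product})$ the $j$th column of $C_{t'}$ is the $\sigma^{-1}(j)$th column of $C_t$, so the two column sets coincide and $[C_{t'}]=[C_{t}]$.

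For the "only if" direction, since $B\in{\bf SC}$, Proposition~\ref{prop: fundamental properties under sign-coherency}~$(a)$ gives $|C_{t}|=|C_{t'}|\in\{\pm1\}$, so each of $\{{\bf c}_{1;t},\dots,{\bf c}_{n;t}\}$ and $\{{\bf c}_{1;t'},\dots,{\bf c}_{n;t'}\}$ is a basis of $\mathbb{R}^n$, in particular a set of $n$ pairwise distinct vectors; thus $|[C_{t}]|=|[C_{t'}]|=n$. Assuming $[C_{t}]=[C_{t'}]$, for each $j$ there is, by distinctness of the columns of $C_t$, a unique index $\pi(j)$ with ${\bf c}_{j;t'}={\bf c}_{\pi(j);t}$; the assignment $\pi$ is injective by distinctness of the columns of $C_{t'}$, hence $\pi\in\mathfrak{S}_n$. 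Setting $\sigma=\pi^{-1}$ gives ${\bf c}_{j;t'}={\bf c}_{\sigma^{-1}(j);t}$ for all $j$, i.e. $C_{t'}=C_tP_{\sigma}=\tilde{\sigma}C_t$.

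Finally I would observe that the same argument applies verbatim to the other three cases: Proposition~\ref{prop: fundamental properties under sign-coherency}~$(a)$ also yields $|G_{t}|\in\{\pm1\}$, so the $g$-vectors of each $G_t$ are pairwise distinct; and $\tilde{C}_t=C_tD^{-\frac12}$, $\tilde{G}_t=G_tD^{-\frac12}$ are products of invertible matrices, hence invertible, so their columns are pairwise distinct as well. In each case the same bijection construction produces $\sigma\in\mathfrak{S}_n$ with $G_{t'}=\tilde{\sigma}G_t$, resp. $\tilde{C}_{t'}=\tilde{\sigma}\tilde{C}_t$, resp. $\tilde{G}_{t'}=\tilde{\sigma}\tilde{G}_t$. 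The only place where care is required — and the closest thing to an obstacle — is exactly the distinctness of the $n$ column vectors: without it, equality of column multisets would not single out a permutation. This distinctness is what sign-coherence provides through the unimodularity in Proposition~\ref{prop: fundamental properties under sign-coherency}~$(a)$; the rest is bookkeeping.
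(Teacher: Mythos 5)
Your proof is correct and follows essentially the same route as the paper: sign-coherence gives unimodularity via Proposition~\ref{prop: fundamental properties under sign-coherency}~($a$), so the columns form a basis of $\mathbb{R}^n$ and are pairwise distinct, whence equality of the unordered column sets forces a column permutation. The paper states this tersely; you simply spell out the bookkeeping, including the modified-matrix cases.
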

\begin{proof}
By $B \in {\bf SC}$, $[C_{t}]$ is a basis of $\mathbb{R}^{n}$ (Proposition~\ref{prop: fundamental properties under sign-coherency}). In particular, $[C_{t}]$ is the set consisting of distinct $n$ elements. Thus, we obtain the claim.
\end{proof}

\begin{definition}\label{def: exchange graph}
%Let $B \in {\bf SC}$. 
For any real exchange matrix $B$, we define the {\em exchange graph associated with $C$-pattern} ${\bf EG}({\bf C}(B))$ as the quotient graph $\mathbb{T}_{n}/{\sim}$, where
\begin{equation}
t \sim t' \Longleftrightarrow [C_t]=[C_{t'}].
\end{equation}
We also define the {\em exchange graph associated with $G$-pattern ${\bf EG}({\bf G}(B))$, with modified $C$-pattern ${\bf EG}(\tilde{\bf C}(B))$, and with modified $G$-pattern ${\bf EG}(\tilde{\bf G}(B))$} by replacing $C$-matrices with their corresponding matrices. Similarly, we define the {\em exchange graph associated with a $G$-fan ${\bf EG}(\Delta_{\bf G}(B))=\mathbb{T}_{n}/{\sim}$} by
\begin{equation}
t \sim t' \Longleftrightarrow \mathcal{C}(G_{t})=\mathcal{C}(G_{t'}).
\end{equation}
\end{definition}
We often view the vertices of each exchange graph as the objects which we used to define the equivalence relation. For example, a vertex of ${\bf EG}({\bf C}(B))$ is a cluster of $c$-vectors $[C_{t}]$. Of course, it does not affect the graph structure. 
\par
We discuss the relationship among these exchange graphs.
We say that two exchange graphs are {\em canonically isomorphic}  if the equivalence relations on $\mathbb{T}_n$ to define each quotient graph are the same. 
In ordinary cluster theory, we establish the exchange graph by the cluster variables, see \cite[Def.~7.1]{FZ02}. It is also regarded as a quotient graph of $\mathbb{T}_n$ in the same manner. Moreover, by \Cref{thm: ordinary synchronicity}, all of them are the same if we consider the integer case. However, this is not true by generalizing to the real case.
\begin{example}
Consider the $G$-pattern and the $G$-fan in Example~\ref{ex: bad phenomenon for periodicity}. Then, in Figure~\ref{fig: bad example}, the blue graph is the exchange graph associated with the $G$-pattern, which is the $10$-cycle, and the red graph is the exchange graph associated with the $G$-fan, which is the $5$-cycle.
\par
As this example indicates, if the $G$-fan is really a fan, an edge of each exchange graph of the $G$-fan can be characterized by the following geometric condition.
\begin{quote}
Two vertices $\mathcal{C}(G_{t})$ and $\mathcal{C}(G_{t'})$ of ${\bf EG}(\Delta_{\bf G}(B))$ are connected if and only if the codimension of its intersection $\mathcal{C}(G_{t})\cap\mathcal{C}(G_{t'})$ is $1$.
\end{quote}
In this sense, the exchange graph of the $G$-fan is the same as the {\em dual graph} of this fan.
\end{example}
Under some conditions, these exchange graphs satisfy the following fundamental properties.
\begin{lemma}\label{lem: fundamental property of exchange graphs}
Let $B \in {\bf SC}$ of rank $n \geq 2$.
\\
\textup{($a$)} The exchange graphs associated with the modified $C$-pattern ${\bf EG}(\tilde{\bf C}(B))$ and the modified $G$-pattern ${\bf EG}(\tilde{\bf G}(B))$ are $n$-regular.
\\
\textup{($b$)} Suppose that Conjecture~\ref{conj: standard and discreteness conjecture} holds for this $B$. Then, the exchange graphs associated with the $C$-pattern ${\bf EG}({\bf C}(B))$, the $G$-pattern ${\bf EG}({\bf G}(B))$, and the $G$-fan ${\bf EG}(\Delta_{\bf G}(B))$ are $n$-regular.
\end{lemma}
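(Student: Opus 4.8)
The plan is to reduce the lemma to two combinatorial facts about the equivalence relation $\sim$ that defines each quotient graph $\mathbb{T}_{n}/\!\sim$. For \emph{any} equivalence relation $\sim$ on $\mathbb{T}_{n}$, the quotient is $n$-regular as soon as: \textbf{(P1)} (congruence) whenever $t\sim t'$ there is $\sigma\in\mathfrak{S}_{n}$ such that, for every $k$, the $k$-adjacent vertex of $t$ is $\sim$-equivalent to the $\sigma(k)$-adjacent vertex of $t'$; and \textbf{(P2)} (local injectivity) for every $t$ the classes $[t],[t_{1}],\dots,[t_{n}]$ are pairwise distinct, where $t_{k}$ is the $k$-adjacent vertex of $t$. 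Indeed, (P1) forces every neighbour of $[t]$ in the quotient to lie in $\{[t_{1}],\dots,[t_{n}]\}$, so $\deg[t]\le n$; (P2) says these are $n$ distinct classes, each joined to $[t]$ by the image of the edge $t$–$t_{k}$ and none equal to $[t]$, so $\deg[t]=n$. Thus everything comes down to checking (P1) and (P2) for the five relations, which by Lemma~\ref{lem: unlabeled cluster equality} have the form ``$X_{t'}=\tilde\sigma X_{t}$ for some $\sigma$'' with $X\in\{C,G,\tilde C,\tilde G\}$, and in addition ``$\mathcal{C}(G_{t})=\mathcal{C}(G_{t'})$'' for the $G$-fan.

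For part (a) I would first note that the system $\bigl(\{\mathrm{Sk}(B_{t})\},\{\tilde C_{t}\},\{\tilde G_{t}\}\bigr)$ obeys mutation rules (the reduced recursion \eqref{eq: reduced recursion for modified C, G matrices}) of exactly the same shape as the ordinary $B,C,G$-recursions: $\{\mathrm{Sk}(B_{t})\}={\bf B}(\mathrm{Sk}(B))$ is a genuine $B$-pattern, and the sign $\varepsilon_{k;t}$ is the same for $C_{t}$ and $\tilde C_{t}$. Since the proof of Proposition~\ref{prop: periodicity} only uses these recursions and the involution property, it applies verbatim to this modified system. Theorem~\ref{thm: equivalency of the periodicity among modified c-, g-vectors} then shows that the single hypothesis $\tilde C_{t'}=\tilde\sigma\tilde C_{t}$ already supplies both $\mathrm{Sk}(B_{t'})=\sigma\,\mathrm{Sk}(B_{t})$ and $\tilde G_{t'}=\tilde\sigma\tilde G_{t}$; feeding these into the (modified) periodicity proposition yields (P1) for the $\tilde C$- and $\tilde G$-relations, which is part (a). For part (b): Theorem~\ref{thm: matrix-cone synchronicity} identifies the $G$-fan relation with the $\tilde C$-relation, so ${\bf EG}(\Delta_{\bf G}(B))$ equals ${\bf EG}(\tilde{\bf C}(B))$ as a quotient graph and inherits $n$-regularity from part (a); Theorem~\ref{thm: CG synchronicity} shows the $C$- and $G$-relations coincide and that $C_{t'}=\tilde\sigma C_{t}$ forces $B_{t'}=\sigma B_{t}$ and $G_{t'}=\tilde\sigma G_{t}$, so Proposition~\ref{prop: periodicity} applies directly and gives (P1) for this common relation.

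Step (P2) I would handle by linear algebra. By Proposition~\ref{prop: fundamental properties under sign-coherency}(a) each of $C_{t},G_{t},\tilde C_{t},\tilde G_{t}$ is invertible, hence has linearly independent columns, and each $c$-vector (resp. modified $c$-vector) is sign-coherent. Using the mutation formulas \eqref{eq: mutation for c-,g-vectors} (and their modified analogues), the $k$-th new $c$-vector is $-{\bf c}_{k;t}$ and the remaining new $c$-vectors are ${\bf c}_{i;t}+[\cdots]_{+}{\bf c}_{k;t}$; a short case check shows that a coincidence of a column of $C_{t_{k}}$ (outside the common ones) with a column of $C_{t}$ or of $C_{t_{l}}$ ($l\ne k$) would produce either two proportional columns or a nontrivial dependence among three distinct columns of $C_{t}$, contradicting invertibility — sign-coherence being needed only to exclude $-{\bf c}_{k;t}={\bf c}_{j;t}$. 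For $G$- and $\tilde G$-matrices this is even easier since $G_{t_{k}}$ shares $n-1$ columns with $G_{t}$. This gives (P2) in all five cases.

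I expect the main obstacle to be pinning down (P1) precisely, i.e. verifying that each relation is a genuine congruence with a \emph{consistent} labelling permutation $\sigma$: for part (a) this rests on upgrading $\tilde C$-periodicity to simultaneous $(\mathrm{Sk}(B),\tilde C,\tilde G)$-periodicity through Theorem~\ref{thm: equivalency of the periodicity among modified c-, g-vectors}, and for part (b) on the synchronicity Theorems~\ref{thm: matrix-cone synchronicity} and \ref{thm: CG synchronicity}, which in turn use Conjecture~\ref{conj: standard and discreteness conjecture}. Example~\ref{ex: bad phenomenon for periodicity} shows the modified and unmodified relations genuinely differ in general, so the delicate point is to keep careful track of which relation is in play; once that is done, (P2) is routine.
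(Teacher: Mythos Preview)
Your proposal is correct and follows essentially the same approach as the paper. The paper also establishes $n$-regularity by checking exactly your (P1) and (P2): it shows $[t],[t_1],\dots,[t_n]$ are pairwise distinct via a linear-dependence contradiction with Proposition~\ref{prop: fundamental properties under sign-coherency}(a), and it shows every neighbour of $[t]$ is some $[t_i]$ by invoking Lemma~\ref{lem: unlabeled cluster equality}, then Theorem~\ref{thm: equivalency of the periodicity among modified c-, g-vectors} (for part~(a)) or Theorem~\ref{thm: CG synchronicity} (for part~(b)) to upgrade the periodicity to the $B$-level, and finally Proposition~\ref{prop: periodicity}; the $G$-fan case is reduced to the modified $G$-pattern case via Theorem~\ref{thm: matrix-cone synchronicity}, exactly as you do.
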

\begin{proof}
Firstly, we will show the case for four matrix patterns. Since the following proof works well for each pattern, we show the claim for $C$-pattern. Let $\sim$ be the equivalence relation on $\mathbb{T}_{n}$ to define the exchange graph of $C$-pattern.
Let $t \in \mathbb{T}_{n}$ be any vertex. For each $i=1,2,\dots,n$, let $t_{i}$ be the $i$-adjacent vertex to $t$. Then, by definition, for each exchange graph, we can show that $[t] \neq [t_i]$ and $[t_i] \neq [t_j]$ if $i \neq j$. (For example, if $[C_{t}] = [C_{t_i}]$, it induces a nontrivial linear relation among $\{{\bf c}_{i;t}\mid i=1,\dots,n\}$ by considering the mutation of $c$-vectors (\ref{eq: mutation for c-,g-vectors}). However, it contradicts to Proposition~\ref{prop: fundamental properties under sign-coherency}~(a). Thus, this claim holds for $C$-pattern. We can do the same argument for other patterns.) Thus, we can find distinct $n$-vertices $[t_i]$ ($i=1,2,\dots,n$) connected to $[t]$. Suppose that $[t'] \in \mathbb{T}_n/{\sim}$ is connected to $[t]$. We show that $[t']$ is the same as $[t_i]$ for some $i$. Since $[t]$ and $[t']$ are connected, there exist $s \in [t]$ and $s' \in [t']$ such that $s$ and $s'$ are adjacent in $\mathbb{T}_{n}$. Then, by Lemma~\ref{lem: unlabeled cluster equality}, there exists $\sigma \in \mathfrak{S}_{n}$ such that $C_{s}=\tilde{\sigma}C_{t}$. By \Cref{prop: CG synchronicity}, we also have $B_{s}=\sigma B_{t}$. (When we consider modified $C$-pattern or modified $G$-pattern, we obtain $\tilde{B}_s=\sigma \tilde{B}_{t}$ from \Cref{lem: equivalency of the periodicity among modified c- g-vectors}.) Suppose that $s$ and $s'$ are $k$-adjacent. Namely, we have $C_{s'}=\mu_{k}(C_s)$. Now, we already know that $C_{s}=\tilde{\sigma} C_{t}$ and $B_{s}=\sigma B_{t}$. By Proposition~\ref{prop: periodicity}, this means that $C_{s'}=\mu_{k}(C_s)=\tilde{\sigma}(\mu_{\sigma^{-1}(k)}(C_t))=\tilde{\sigma}(C_{t_{\sigma^{-1}(k)}})$. Thus, we have $[C_{s'}]=[C_{t_{\sigma^{-1}(k)}}]$, which implies $[s'] = [t_{\sigma^{-1}(k)}]$ in ${\bf EG}({\bf C}(B))$. Since $s' \in [t']$, we have $[t']=[t_{\sigma^{-1}(k)}]$ as we desired.
\par
Next, we show the claim for the $G$-fan. By \Cref{prop: matrix-cone synchronicity}, the equality  $\mathcal{C}(G_t)=\mathcal{C}(G_{t'})$ is equivalent to $\tilde{G}_{t'}=\tilde{\sigma}\tilde{G}_{t}$ for some $\sigma \in \mathfrak{S}_{n}$. Thus, we can do the same argument for the exchange graph associated with the modified $G$-pattern and show the claim.
\end{proof}
In the following, we summarize the relationship among these exchange graphs.
\begin{theorem}\label{thm: relationship among exchange graphs}
Let $B \in {\bf SC}$ of rank $n$.\\
\textup{($a$)} The following canonical graph isomorphism holds.
\begin{equation}
{\bf EG}(\tilde{\bf C}(B)) \cong {\bf EG}({\bf \tilde{G}}(B)).
\end{equation}
\textup{($b$)} Suppose that Conjecture~\ref{conj: standard and discreteness conjecture} holds for this $B$. Then, the following canonical graph isomorphisms hold.
\begin{equation}
{\bf EG}(\tilde{\bf C}(B)) \cong {\bf EG}(\tilde{\bf G}(B)) \cong {\bf EG}(\Delta_{\bf G}(B)).
\end{equation}
\textup{($c$)} Suppose that Conjecture~\ref{conj: standard and discreteness conjecture} holds for this $B$. Then, 
the following canonical graph isomorphism holds.
\begin{equation}\label{eq: synchronicity among C-, G-patterns}
{\bf EG}({\bf C}(B)) \cong {\bf EG}({\bf G}(B)).
\end{equation}
\textup{($d$)} Suppose that Conjecture~\ref{conj: standard and discreteness conjecture} holds for this $B$. We define the equivalence relation $\approx$ on the set of all bases of $\mathbb{R}^{n}$.
\begin{equation}
\{{\bf u}_1,\dots,{\bf u}_n\} \approx \{{\bf v}_1,\dots,{\bf v}_n\} \Longleftrightarrow\ \textup{$\exists \sigma \in \mathfrak{S}_n$ and $\exists \lambda_i \in \mathbb{R}_{>0}$ such that ${\bf v}_{\sigma(i)}=\lambda_i{\bf u}_i$}.
\end{equation}
Here, we identify the vertices of ${\bf EG}({\bf C}(B))$ and ${\bf EG}({\bf G}(B))$ as the clusters of $c$-, $g$-vectors. (Then, the above $\approx$ is an equivalence relation of these vertex sets.)
Then, we have the following canonical graph isomorphisms.
\begin{equation} 
{\bf EG}(\tilde{\bf C}(B)) \cong {\bf EG}(\tilde{\bf G}(B)) \cong {\bf EG}(\Delta_{\bf G}(B)) \cong {\bf EG}({\bf C}(B))/{\approx} \cong {\bf EG}({\bf G}(B))/{\approx}.
\end{equation}
\end{theorem}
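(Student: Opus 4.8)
The plan is to reduce the whole theorem to the statement that a short list of equivalence relations on $\mathbb{T}_n$ all coincide (possibly under Conjecture~\ref{conj: standard and discreteness conjecture}); once that is done, every claimed canonical isomorphism is automatic, since two exchange graphs are canonically isomorphic precisely when the equivalence relations presenting them as quotients of $\mathbb{T}_n$ agree. For $t,t'\in\mathbb{T}_n$ I will consider the relations ``$[C_t]=[C_{t'}]$'', ``$[G_t]=[G_{t'}]$'', ``$[\tilde C_t]=[\tilde C_{t'}]$'', ``$[\tilde G_t]=[\tilde G_{t'}]$'', ``$\mathcal{C}(G_t)=\mathcal{C}(G_{t'})$'', ``$[C_t]\approx[C_{t'}]$'' and ``$[G_t]\approx[G_{t'}]$'', with $\approx$ the relation of the theorem. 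The running dictionary is Lemma~\ref{lem: unlabeled cluster equality}: $[\tilde C_t]=[\tilde C_{t'}]$ holds iff $\tilde C_{t'}=\tilde{\sigma}\tilde C_t$ for some $\sigma\in\mathfrak{S}_n$, and the same for the $C$-, $G$- and $\tilde G$-matrices.

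First I would settle (a): combining the dictionary with Theorem~\ref{thm: equivalency of the periodicity among modified c-, g-vectors}, which requires no conjecture, gives $[\tilde C_t]=[\tilde C_{t'}]\iff\tilde C_{t'}=\tilde{\sigma}\tilde C_t\iff\tilde G_{t'}=\tilde{\sigma}\tilde G_t\iff[\tilde G_t]=[\tilde G_{t'}]$, so the two defining relations are literally the same and ${\bf EG}(\tilde{\bf C}(B))\cong{\bf EG}(\tilde{\bf G}(B))$. Assuming Conjecture~\ref{conj: standard and discreteness conjecture}, I would then append ``$\mathcal{C}(G_t)=\mathcal{C}(G_{t'})$'' to this chain via the equivalence $(a)\Leftrightarrow(b)\Leftrightarrow(c)$ of Theorem~\ref{thm: matrix-cone synchronicity}; this is exactly (b). Part (c) is the same step carried out for the unmodified matrices: Lemma~\ref{lem: unlabeled cluster equality} and Theorem~\ref{thm: CG synchronicity} yield $[C_t]=[C_{t'}]\iff C_{t'}=\tilde{\sigma}C_t\iff G_{t'}=\tilde{\sigma}G_t\iff[G_t]=[G_{t'}]$.

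The substance of (d) is to show, under Conjecture~\ref{conj: standard and discreteness conjecture}, that the relation ``$[C_t]\approx[C_{t'}]$'' equals ``$\mathcal{C}(G_t)=\mathcal{C}(G_{t'})$'', and likewise ``$[G_t]\approx[G_{t'}]$'' equals ``$\mathcal{C}(G_t)=\mathcal{C}(G_{t'})$''. Granting this, the quotient ${\bf EG}({\bf C}(B))/{\approx}$ is, as a quotient of $\mathbb{T}_n$, simply $t\sim t'\iff[C_t]\approx[C_{t'}]$ (because $\approx$ is coarser than cluster equality, the two successive quotients collapse into one), hence it agrees with the other quotients by (b) and (c), and similarly for ${\bf EG}({\bf G}(B))/{\approx}$. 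For the forward direction with $c$-vectors, if ${\bf c}_{\sigma(i);t'}=\lambda_i{\bf c}_{i;t}$ with $\lambda_i>0$ then $\overline{\mathcal{H}}^{+}_{{\bf c}_{\sigma(i);t'}}=\overline{\mathcal{H}}^{+}_{{\bf c}_{i;t}}$, since positive rescaling of a vector does not change the closed half-space it defines; plugging this into the identity $\mathcal{C}(G_t)=\bigcap_{i}\overline{\mathcal{H}}^{+}_{{\bf c}_{i;t}}$ of Lemma~\ref{lem: c-vector expression of G-cones} (valid under the sign-coherent property) gives $\mathcal{C}(G_{t'})=\mathcal{C}(G_t)$. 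For the converse, from $\mathcal{C}(G_t)=\mathcal{C}(G_{t'})$ Theorem~\ref{thm: matrix-cone synchronicity} produces $\sigma$ with $\tilde C_{t'}=\tilde{\sigma}\tilde C_t$, i.e. $\tilde{\bf c}_{\sigma(i);t'}=\tilde{\bf c}_{i;t}$, and using $\tilde{\bf c}_{i;t}={\bf c}_{i;t}/\sqrt{d_i}$ from Proposition~\ref{prop: fundamental properties of modified matrices}(a) this reads ${\bf c}_{\sigma(i);t'}=\sqrt{d_{\sigma(i)}/d_i}\,{\bf c}_{i;t}$ with positive coefficient, so $[C_t]\approx[C_{t'}]$ with the same $\sigma$. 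The $g$-vector statement is obtained identically: the forward direction is even simpler, since the cone generated by a finite set of vectors is unchanged under permuting them and positively rescaling each, and the backward direction uses $\tilde G_{t'}=\tilde{\sigma}\tilde G_t$ together with $\tilde{\bf g}_{i;t}={\bf g}_{i;t}/\sqrt{d_i}$ (alternatively via $\mathcal{C}(\tilde G_t)=\mathcal{C}(G_t)$, Proposition~\ref{prop: fundamental properties of modified matrices}(d)). That $\approx$ is an equivalence relation on the relevant vertex sets is a routine check I would dispatch in one line.

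I expect almost no genuine difficulty here: the real content is packaged inside Theorems~\ref{thm: equivalency of the periodicity among modified c-, g-vectors}, \ref{thm: matrix-cone synchronicity} and \ref{thm: CG synchronicity}, and what is left is the bookkeeping of translating ``cluster equality'', ``cone equality'' and ``equality up to positive scaling'' through the modification factor $D^{-1/2}$. The one point I would state with care is that $\overline{\mathcal{H}}^{+}_{{\bf v}}$ is insensitive precisely to \emph{positive} rescalings of ${\bf v}$ — which is exactly why $\approx$ in (d) is defined with $\lambda_i\in\mathbb{R}_{>0}$ and why it matches cone equality rather than equality up to sign — and that in (d) the permutation $\sigma$ coming from cone equality is visibly the same one witnessing $\approx$. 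The only other thing to keep track of is that (b), (c) and (d) all invoke Conjecture~\ref{conj: standard and discreteness conjecture} (through the two synchronicity theorems) whereas (a) does not.
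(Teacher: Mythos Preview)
Your proposal is correct and follows essentially the same route as the paper: reduce each ``canonical isomorphism'' to equality of the defining equivalence relations on $\mathbb{T}_n$, and then read off parts (a), (b), (c) directly from Theorems~\ref{thm: equivalency of the periodicity among modified c-, g-vectors}, \ref{thm: matrix-cone synchronicity}, \ref{thm: CG synchronicity} via Lemma~\ref{lem: unlabeled cluster equality}. The only place you deviate is in the converse direction of part~(d) for $c$-vectors: the paper argues geometrically that equal $n$-dimensional simplicial cones have facet normals agreeing up to positive scalars, whereas you instead invoke Theorem~\ref{thm: matrix-cone synchronicity} to get $\tilde C_{t'}=\tilde\sigma\tilde C_t$ and then unscale by $D^{-1/2}$ to exhibit the explicit factors $\sqrt{d_{\sigma(i)}/d_i}$; both arguments are short and valid, and yours has the small bonus of producing the scaling constants explicitly.
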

\begin{proof}
The claim ($a$) follows from \Cref{lem: equivalency of the periodicity among modified c- g-vectors} and the claim ($b$) follows from \Cref{prop: matrix-cone synchronicity}. Furthermore, the claim ($c$) follows from \Cref{prop: CG synchronicity}. To prove ($d$), we need to show that ${\bf EG}(\Delta_{\bf G}(B)) \cong {\bf EG}({\bf C}(B))/{\approx}$ and ${\bf EG}(\Delta_{\bf G}(B)) \cong {\bf EG}({\bf G}(B))/{\approx}$. The latter follows from $\mathcal{C}(G_{t})=\mathcal{C}(G_{t'}) \Leftrightarrow [G_{t}] \approx [G_{t'}]$. By Lemma~\ref{lem: c-vector expression of G-cones}, $[C_{t}]\approx[C_{t'}] \Rightarrow \mathcal{C}(G_{t})=\mathcal{C}(G_{t'})$ holds. Conversely, if $\mathcal{C}(G_{t})=\mathcal{C}(G_{t'})$, since they are $n$-dimensional cones, it implies that all normal vectors of their $(n-1)$-dimensional faces have the same direction. By Lemma~\ref{lem: c-vector expression of G-cones}, their normal vectors are parallel to the $c$-vectors, which implies that $[C_{t}] \approx [C_{t'}]$. Thus, ${\bf EG}({\bf C}(B))/{\approx} \cong {\bf EG}(\Delta_{\bf G}(B))$ holds.
\end{proof}
\subsection*{Acknowledgements} 
 The authors would like to express their sincere gratitude to Tomoki Nakanishi for his thoughtful guidance. The authors also wish to thank Peigen Cao, Changjian Fu, Yasuaki Gyoda, Fang Li, Lang Mou and Salvatore Stella for their valuable discussions and insightful suggestions. We also thank Nathan Reading for explaining the reference and valuable comments. In addition, Z. Chen wants to thank Xiaowu Chen, Zhe Sun and Yu Ye for their help and support. R. Akagi is supported by JSPS KAKENHI Grant Number JP25KJ1438 and Chubei Itoh Foundation. Z. Chen is supported by National Natural Science Foundation of China (Grant No. 124B2003) and China Scholarship Council (Grant No. 202406340022).
\clearpage
\appendix
\section{Proof of Lemma~\ref{lem: type H}}\label{sec: proof of Lemma type H}
The purpose of this section is to share the program to calculate all $C$-matrices. The SageMath source code used for the computations in this paper is available in the GitHub repository \cite{Aka26}.
\subsection{Example of the program}
We will use the program for Sage Math 9.3 written in Program~code~\ref{program}. The main functions are the following.
\\
{\bf B\_pattern($B_0$, $l$)}\par
Arguments are a skew-symmetrizable matrix $B_0$ and a positive integer $l \in \mathbb{Z}_{\geq 1}$. Return is separated as the following four objects:
\begin{itemize}
\item ($B$-pattern) All distinct $B$-matrices obtained by applying mutations at most $l$ times to $B_0$ up to the action of permutations.
\item (Periodicity) All minimal periodicity up to permutationd.
\item (Finiteness) If all $B$-matrices are obtained by applying $i \leq l-1$ times, it returns ``finite, maximum depth = $i$". If not, return ``undeterminable".
\item (Size) The number of distinct $B$-matrices applying mutations at most $l$ times.
\end{itemize}
We can see one example in \Cref{fig: B-pattern of type A3}. Each index $[k_1,k_2,\dots,k_r]$ means that the corresponding matrix $B$ below is obtained by $B=\mu_{k_r}\cdots\mu_{k_2}\mu_{k_1}(B_0)$.  In Periodicity, each permutation $[p_1,p_2,\dots,p_n]$ corresponds to $\sigma=(i \mapsto p_i) \in \mathfrak{S}_n$, and the following ``same as ${\bf w}$" means that the $B$-matrix $B^{\bf w}$ located at the index ${\bf w}$ is the same as this matrix up to the difference of this permutation $B^{\bf w}=\sigma B$.
\\
{\bf C\_pattern($B_0$, $l$)}
\par
Arguments are the same as the ones of B\_pattern($B_0$, $l$). Returns are also almost the same by replacing $B$-matrices to $C$-matrices, but additionally, it returns the following data.
\begin{itemize}
\item (Sign-coherence) If all $C$-matrices obtained by applying mutations at most $l$ times are sign-coherent, it returns ``sign-coherent up to $l$". If not, it returns ``incoherent" and the list of all indices whose $C$-matrices are not sign-coherent.
\end{itemize}
We can see a sign-coherent example in \Cref{fig: C-pattern of type A2} and an incoherent example in \Cref{fig: incoherent C-pattern}.
\begin{figure}[htbp]
  \centering
  \includegraphics[width=\textwidth, keepaspectratio]{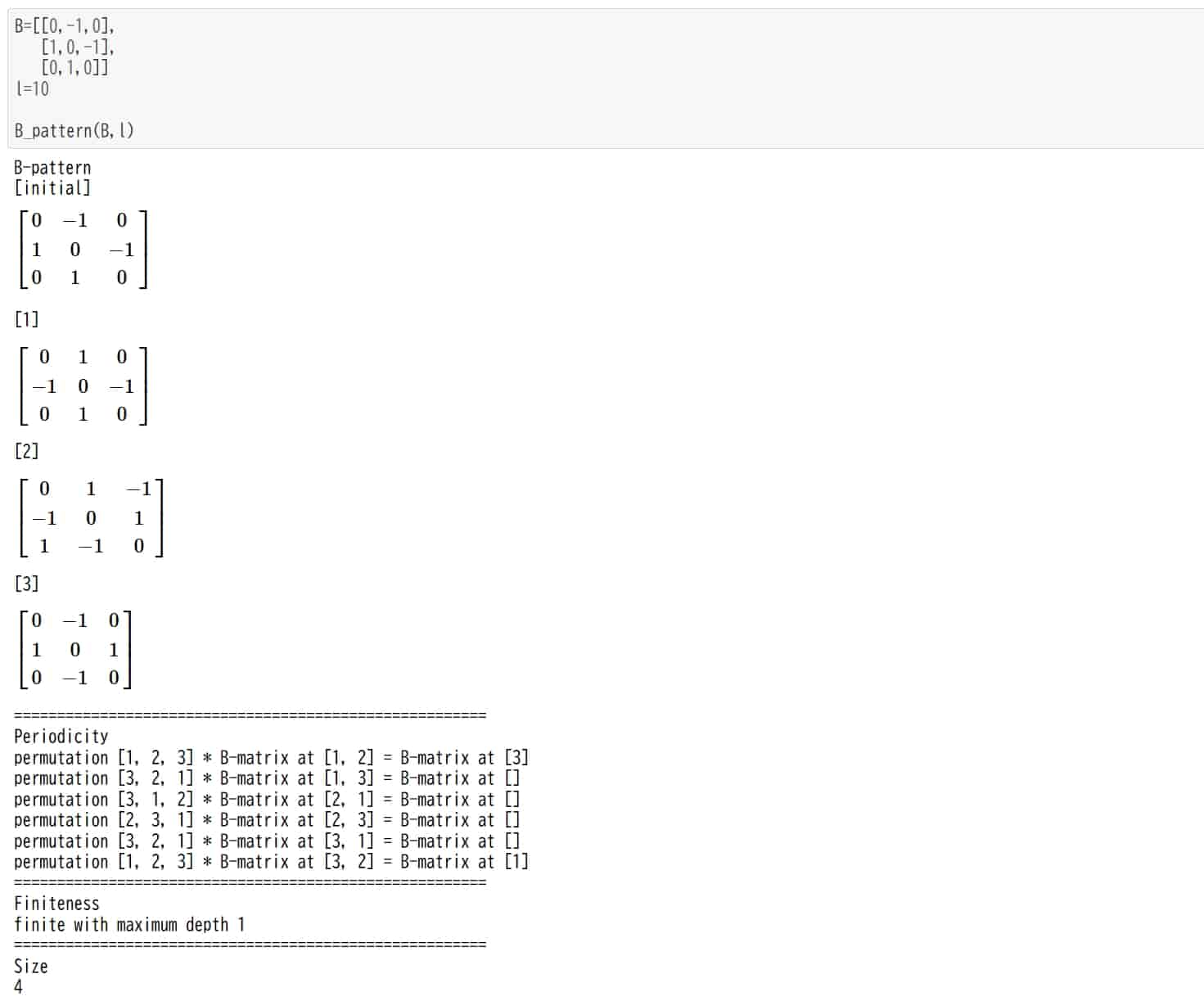}
  \caption{Example of a $B$-pattern.}
  \label{fig: B-pattern of type A3}
\end{figure}
\begin{figure}[htbp]
  \centering
  \includegraphics[width=\textwidth, keepaspectratio]{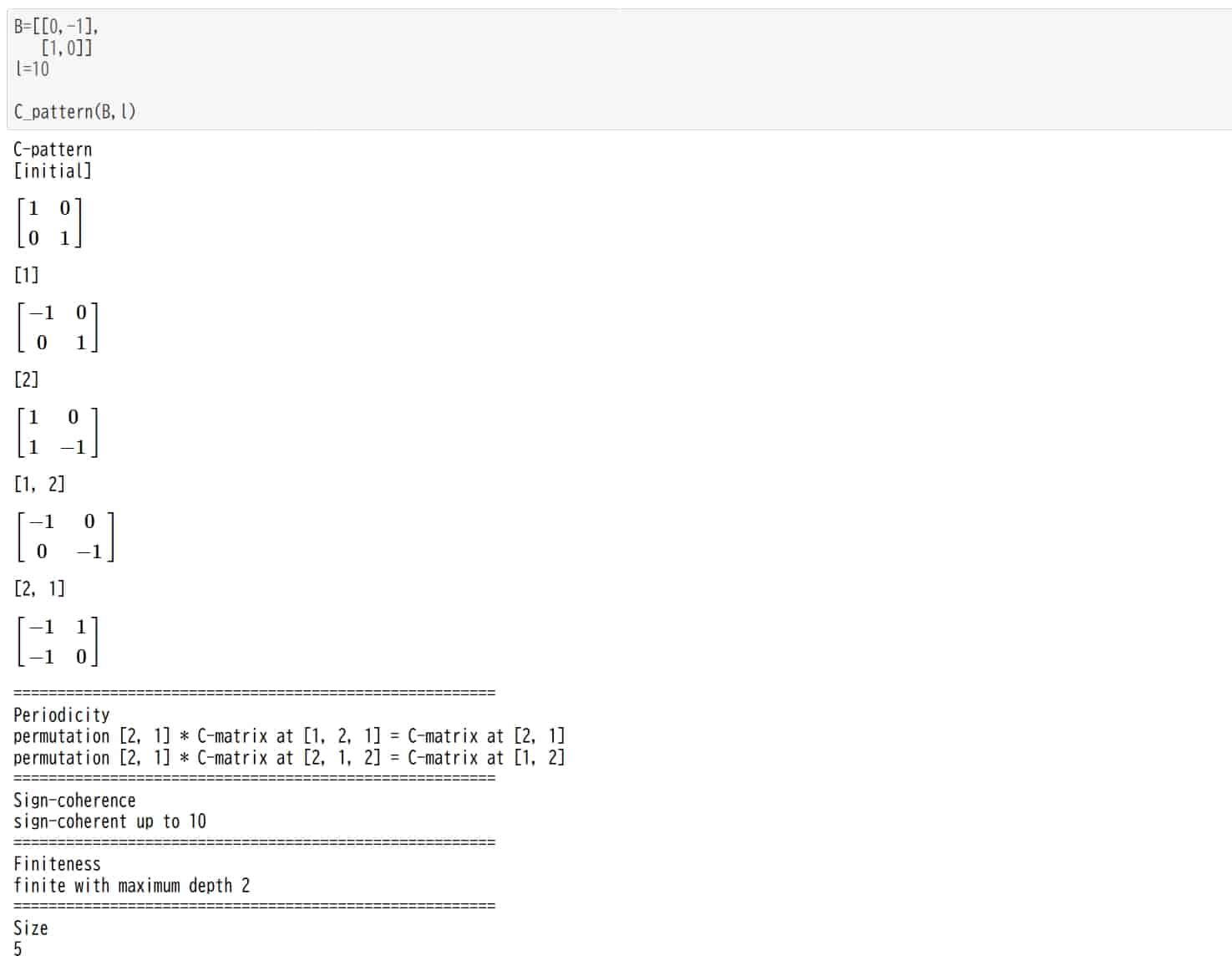}
  \caption{Example of a sign-coherent $C$-pattern.}
  \label{fig: C-pattern of type A2}
\end{figure}
\begin{figure}[htbp]
  \centering
  \includegraphics[width=\textwidth, keepaspectratio]{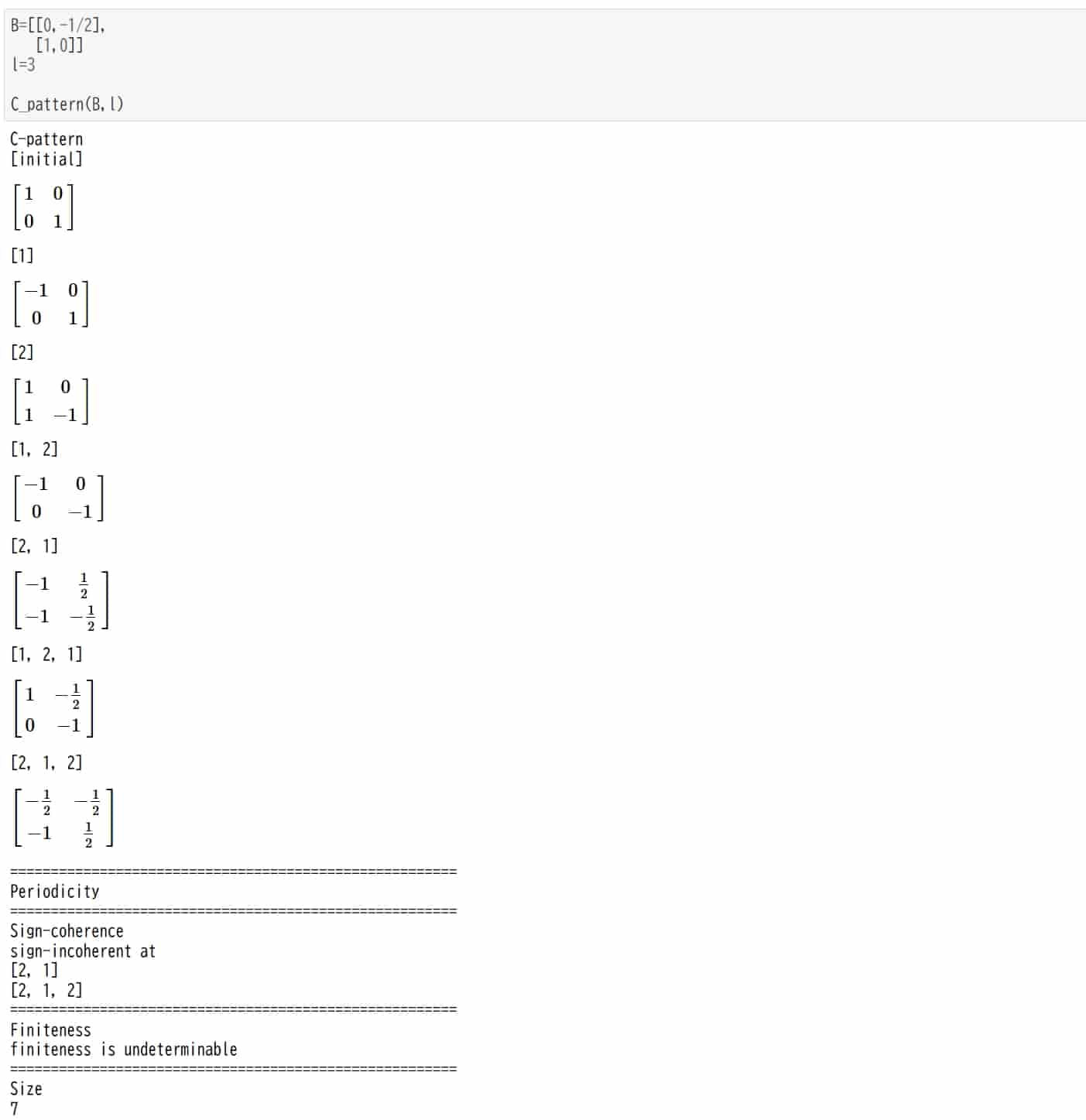}
  \caption{Example of a sign-incoherent $C$-pattern.}
  \label{fig: incoherent C-pattern}
\end{figure}
\subsection{Results for type $H_3$ and $H_4$}
For simplicity, we set $\phi=2\cos{\frac{\pi}{5}}=\frac{1+\sqrt{5}}{2}$.
By using this program, we can show Lemma~\ref{lem: type H} by only finitely many times calculation. For the reader's convenience, we give a $B$-pattern of type $H_3$ in Figure~\ref{fig: B-pattern of type H3}, and of type $H_4$ in Figure~\ref{fig: B-pattern of type H4}. (Note that, to show Lemma~\ref{lem: type H}, we also need to check their transposition.) So, we can finish the proof by only finitely many calculations. However, we need so many pages to write all $C$-patterns. Here, we write the only one case whose initial exchange matrix is
\begin{equation}
B=\left(\begin{matrix}
0 & -\phi & \phi\\
\phi & 0 & -\phi\\
-\phi & \phi & 0
\end{matrix}\right),
\end{equation}
in Figure~\ref{fig: C-pattern of type H3}. (We can find this $B$-matrix at $[2,1]$ in Figure~\ref{fig: B-pattern of type H3}.)
\par
If we run this program, we should set $l=7$ for type $H_3$ and $l=11$ for type $H_4$. (Note that we need a little long time to complete it. The authors needed to wait about 10 minutes for each initial exchange matrix of type $H_4$.)
\par
We summarize the important properties which we can easily obtain from this program.
\begin{proposition}
\textup{($a$)} Let $B$ be mutation-equivalent to any of type $H_3$. Then, the number of distinct $C$-matrices (up to the difference of permutation) is $32$. Moreover, we can obtain all $C$-matrices by applying mutations at most $6$ times.
\\
\textup{($b$)} Let $B$ be mutation-equivalent to any of type $H_4$. Then, the number of distinct $C$-matrices (up to the difference of permutation) is $280$. Moreover, we can obtain all $C$-matrices by applying mutations at most $10$ times.
\end{proposition}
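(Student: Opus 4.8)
The plan is to verify both statements by an exhaustive breadth-first closure on the exchange tree $\mathbb{T}_n$ ($n=3$ for part~($a$), $n=4$ for part~($b$)), carried out in exact arithmetic over the ring $\mathbb{Z}_{B}=\mathbb{Z}[\tfrac{1+\sqrt{5}}{2}]$. Since every quiver of type $H_3$ (resp.\ $H_4$) is a skew-symmetric matrix, Lemma~\ref{lem: positive conjugation} and the skew-symmetrizing method allow one to work directly with the given matrix without any rescaling, and Proposition~\ref{prop: ring ZB} guarantees that every $B$-, $C$-, $G$-matrix encountered has entries in $\mathbb{Z}_{B}$. The golden ratio $\varphi=\tfrac{1+\sqrt{5}}{2}$ satisfies $\varphi^2=\varphi+1$, so $\mathbb{Z}_{B}$ is the ring of integers of $\mathbb{Q}(\sqrt{5})$; in particular all arithmetic is exact and every sign comparison needed for sign-coherence is decided rigorously, with no floating-point ambiguity.

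First I would reduce the statement to finitely many computations. By the preceding proposition, the $B$-pattern of the Coxeter quiver of type $H_3$ (resp.\ $H_4$) is finite, so up to the $\mathfrak{S}_n$-action there are only finitely many matrices mutation-equivalent to it, each of which is a legitimate initial exchange matrix for some vertex of $\mathbb{T}_n$. Hence it suffices to enumerate this finite list (the function {\bf B\_pattern} of Appendix~\ref{sec: proof of Lemma type H} does exactly this) and run the closure procedure below once for each representative.

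The core procedure is the standard one for computing finite mutation classes. Starting from $(B_{t_0},C_{t_0})=(B,I_n)$ one computes, level by level, all pairs $(B_t,C_t)$ with $d(t_0,t)\le d$ using the recursions \eqref{eq: mutation of B matrix} and \eqref{eq: mutation of C, G matrices}, storing each pair up to the simultaneous diagonal $\mathfrak{S}_n$-action via a canonical representative, and testing each $C_t$ for column sign-coherence en route. The termination point is that once a complete level produces no equivalence class of pairs that had not already appeared at distance $\le d-1$, the procedure has explored the entire (finite) quotient of $\mathbb{T}_n$: since $\mathbb{T}_n$ is connected and the mutated $C$-matrix is a function of the current pair $(B_t,C_t)$ alone, no new class can appear at any larger distance. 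Projecting the collected classes onto their $C$-components yields the set of $C$-matrices up to permutation; the assertion to confirm is that it has cardinality $32$ for $H_3$ and $280$ for $H_4$, that the first occurrence of each element is at distance $\le 6$ (resp.\ $\le 10$), and that every $C_t$ seen is sign-coherent — which at the same time proves Lemma~\ref{lem: type H}. Consistency of the rank-$2$ subpatterns with Example~\ref{ex: rank 2 C matrices} via Proposition~\ref{prop: submatrix} supplies an independent sanity check.

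The main obstacle is computational scale rather than anything conceptual: for $H_4$ one must traverse $\mathbb{T}_4$ to depth $11$, i.e.\ on the order of $4\cdot 3^{10}$ vertices before quotienting, so the implementation must deduplicate pairs efficiently through a well-chosen canonical form for each $\mathfrak{S}_n$-orbit and must keep all entries as exact elements of $\mathbb{Z}[\varphi]$. One minor theoretical point worth recording is why tracking the pair $(B_t,C_t)$, and not $C_t$ alone, is essential: two vertices sharing a $C$-matrix but carrying different $B$-matrices could have different mutation successors, so the closure must be taken in the $(B,C)$-graph — equivalently, once sign-coherence has been confirmed on the finite explored set, identity \eqref{eq: from C to B} shows $B_t$ is already determined by $C_t$ together with the fixed data $(B_{t_0},D)$, so the two bookkeeping schemes agree and no circularity arises.
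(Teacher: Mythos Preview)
Your proposal is correct and matches the paper's approach: the paper proves this proposition by running the Sage program described in Appendix~\ref{sec: proof of Lemma type H} (the functions \texttt{B\_pattern} and \texttt{check\_sign\_coherence\_of\_C\_pattern}) with $l=7$ for $H_3$ and $l=11$ for $H_4$, exactly the breadth-first closure over $(B_t,C_t)$-pairs in exact $\mathbb{Z}[\varphi]$-arithmetic that you describe. Your remarks on the termination criterion, on iterating over every $B$-matrix in the finite mutation class, and on why tracking the pair rather than $C_t$ alone is a priori necessary are all sound and agree with what the program does.
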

By Corollary~\ref{cor: synchronicity for the skew-symmetric case}, the number of $G$-cones is the same as the number of distinct clusters of $c$-vectors. We summarize the number of $G$-cones and $g$-vectors for each finite type in (\ref{eq: Number finite type}). In the above row, that is, for the types corresponding to crystallographic root systems, these numbers have already been obtained in \cite{FZ03a}. See also \cite[Fig.~5.17]{FWZ16}. Due to \Cref{thm: ordinary synchronicity}, the number of cones and seeds is the same. Note that, by \Cref{prop: dual mutation}, these numbers only depend on the $B$-pattern, not the initial exchange matrix.
\begin{equation}\label{eq: Number finite type}
\begin{aligned}
&\begin{array}{|c||c|c|c|c|c|c|c|c|c|c|}
\hline
X_n & A_n & B_n=C_n & D_n & E_6 & E_7 & E_8 & F_4\\
\hline
\#\textup{$G$-cones} & \frac{1}{n+2}\binom{2n+2}{n+1} & \binom{2n}{n} & \frac{3n-2}{n}\binom{2n-2}{n-1} & 833 & 4160 & 25080 & 105\\
\hline
\#\textup{$g$-vectors} & \frac{n(n+3)}{2} & n(n+1) & n^2 & 42 & 70 & 128 & 28
\\
\hline
\end{array}
\\
&\begin{array}{|c||c|c|c|}
\hline
X_n & H_3 & H_4 & I_2(m)\\
\hline
\#\textup{$G$-cones} & 32 & 280 & m+2\\
\hline
\#\textup{$g$-vectors} & 18 & 64 & m+2\\
\hline
\end{array}
\end{aligned}
\end{equation}
These numbers have already appeared in the theory of Coxeter groups. It is known that the number of $g$-vectors coincides with the number of {\em almost positive roots} \cite{FZ03a}, and the number of $G$-cones coincides with the number of chambers induced by the almost positive roots in the Coxeter arrangements. The number of chambers has already been given by \cite[Prop.~3.9]{FZ03b}, and also see \cite[Fig.~5.1, Fig.~5.15]{FR07} for the non-crystallographic type.

\begin{figure}
\begin{equation*}
\begin{gathered}
\begin{gathered}
\textup{initial}\\
\left(\begin{smallmatrix}
0 & -\phi &0 \\
\phi &0 &-1 \\
0 &1 &0 \\
\end{smallmatrix}\right)
\end{gathered}\ 
\begin{gathered}
[1]\\
\left(\begin{smallmatrix}
0 & \phi &0 \\
- \phi &0 &-1 \\
0 &1 &0 \\
\end{smallmatrix}\right)
\end{gathered}\ 
\begin{gathered}
[2]\\
\left(\begin{smallmatrix}
0 &\phi &- \phi \\
- \phi &0 &1 \\
\phi &-1 &0 \\
\end{smallmatrix}\right)
\end{gathered}\ 
\begin{gathered}
[3]\\
\left(\begin{smallmatrix}
0 &- \phi &0 \\
\phi &0 &1 \\
0 &-1 &0 \\
\end{smallmatrix}\right)
\end{gathered}\ 
\begin{gathered}
[1, 3]\\
\left(\begin{smallmatrix}
0 &\phi &0 \\
- \phi &0 &1 \\
0 &-1 &0 \\
\end{smallmatrix}\right)
\end{gathered}\ 
\begin{gathered}
[2, 1]\\
\left(\begin{smallmatrix}
0 &- \phi &\phi \\
\phi &0 &- \phi \\
- \phi &\phi &0 \\
\end{smallmatrix}\right)
\end{gathered}\ 
\end{gathered}
\end{equation*}
\caption{$B$-pattern of type $H_3$}\label{fig: B-pattern of type H3}
\end{figure}
%%%%%%%%%%%%%%%%%%%%%%%%%%%%%% type H3
%%%%%%%%%%%%%%%%%%%%%%%%%%%%%% type H4
\begin{figure}
\begin{equation*}
\begin{gathered}
\begin{gathered}
\textup{initial}\\
\left(\begin{smallmatrix}
0 &- \phi &0 &0 \\
\phi &0 &-1 &0 \\
0 &1 &0 &-1 \\
0 &0 &1 &0 \\
\end{smallmatrix}\right)
\end{gathered}\ 
\begin{gathered}
[1]\\
\left(\begin{smallmatrix}
0 &\phi &0 &0 \\
- \phi &0 &-1 &0 \\
0 &1 &0 &-1 \\
0 &0 &1 &0 \\
\end{smallmatrix}\right)
\end{gathered}\ 
\begin{gathered}
[2]\\
\left(\begin{smallmatrix}
0 &\phi &- \phi &0 \\
- \phi &0 &1 &0 \\
\phi &-1 &0 &-1 \\
0 &0 &1 &0 \\
\end{smallmatrix}\right)
\end{gathered}\ 
\begin{gathered}
[3]\\
\left(\begin{smallmatrix}
0 &- \phi &0 &0 \\
\phi &0 &1 &-1 \\
0 &-1 &0 &1 \\
0 &1 &-1 &0 \\
\end{smallmatrix}\right)
\end{gathered}\ 
\begin{gathered}
[4]\\
\left(\begin{smallmatrix}
0 &- \phi &0 &0 \\
\phi &0 &-1 &0 \\
0 &1 &0 &1 \\
0 &0 &-1 &0 \\
\end{smallmatrix}\right)
\end{gathered}\ 
\\
\begin{gathered}
[1, 2]\\
\left(\begin{smallmatrix}
0 &- \phi &0 &0 \\
\phi &0 &1 &0 \\
0 &-1 &0 &-1 \\
0 &0 &1 &0 \\
\end{smallmatrix}\right)
\end{gathered}\ 
\begin{gathered}
[1, 3]\\
\left(\begin{smallmatrix}
0 &\phi &0 &0 \\
- \phi &0 &1 &-1 \\
0 &-1 &0 &1 \\
0 &1 &-1 &0 \\
\end{smallmatrix}\right)
\end{gathered}\ 
\begin{gathered}
[1, 4]\\
\left(\begin{smallmatrix}
0 &\phi &0 &0 \\
- \phi &0 &-1 &0 \\
0 &1 &0 &1 \\
0 &0 &-1 &0 \\
\end{smallmatrix}\right)
\end{gathered}\ 
\begin{gathered}
[2, 1]\\
\left(\begin{smallmatrix}
0 &- \phi &\phi &0 \\
\phi &0 &- \phi &0 \\
- \phi &\phi &0 &-1 \\
0 &0 &1 &0 \\
\end{smallmatrix}\right)
\end{gathered}\ 
\begin{gathered}
[2, 3]\\
\left(\begin{smallmatrix}
0 &0 &\phi &- \phi \\
0 &0 &-1 &0 \\
- \phi &1 &0 &1 \\
\phi &0 &-1 &0 \\
\end{smallmatrix}\right)
\end{gathered}\ 
\\
\begin{gathered}
[2, 4]\\
\left(\begin{smallmatrix}
0 &\phi &- \phi &0 \\
- \phi &0 &1 &0 \\
\phi &-1 &0 &1 \\
0 &0 &-1 &0 \\
\end{smallmatrix}\right)
\end{gathered}\ 
\begin{gathered}
[3, 2]\\
\left(\begin{smallmatrix}
0 &\phi &0 &- \phi \\
- \phi &0 &-1 &1 \\
0 &1 &0 &0 \\
\phi &-1 &0 &0 \\
\end{smallmatrix}\right)
\end{gathered}\ 
\begin{gathered}
[3, 4]\\
\left(\begin{smallmatrix}
0 &- \phi &0 &0 \\
\phi &0 &0 &1 \\
0 &0 &0 &-1 \\
0 &-1 &1 &0 \\
\end{smallmatrix}\right)
\end{gathered}\ 
\begin{gathered}
[1, 2, 1]\\
\left(\begin{smallmatrix}
0 &\phi &0 &0 \\
- \phi &0 &1 &0 \\
0 &-1 &0 &-1 \\
0 &0 &1 &0 \\
\end{smallmatrix}\right)
\end{gathered}\ 
\begin{gathered}
[1, 3, 4]\\
\left(\begin{smallmatrix}
0 &\phi &0 &0 \\
- \phi &0 &0 &1 \\
0 &0 &0 &-1 \\
0 &-1 &1 &0 \\
\end{smallmatrix}\right)
\end{gathered}\ 
\\
\begin{gathered}
[2, 1, 3]\\
\left(\begin{smallmatrix}
0 &1 &- \phi &0 \\
-1 &0 &\phi &- \phi \\
\phi &- \phi &0 &1 \\
0 &\phi &-1 &0 \\
\end{smallmatrix}\right)
\end{gathered}\ 
\begin{gathered}
[2, 1, 4]\\
\left(\begin{smallmatrix}
0 &- \phi &\phi &0 \\
\phi &0 &- \phi &0 \\
- \phi &\phi &0 &1 \\
0 &0 &-1 &0 \\
\end{smallmatrix}\right)
\end{gathered}\ 
\begin{gathered}
[2, 1, 3, 1]\\
\left(\begin{smallmatrix}
0 &-1 &\phi &0 \\
1 &0 &0 &- \phi \\
- \phi &0 &0 &1 \\
0 &\phi &-1 &0 \\
\end{smallmatrix}\right)
\end{gathered}\ 
\end{gathered}
\end{equation*}
\caption{$B$-pattern of type $H_4$}\label{fig: B-pattern of type H4}
\end{figure}
%%%%%%%%%%%%%%%%%%%%%%%%%%%%%%%%% tupe H4
%%%%%%%%%%%%%%%%%%%%%%%%%%%%%%%%% C-pattern
\begin{figure}
\begin{equation*}
\begin{gathered}
\begin{gathered}
{\tiny \textup{initial}}\\
\left(\begin{smallmatrix}
1 &0 &0 \\
0 &1 &0 \\
0 &0 &1 \\
\end{smallmatrix}\right)
\end{gathered}\ 
\begin{gathered}
[1]\\
\left(\begin{smallmatrix}
-1 &0 &\phi \\
0 &1 &0 \\
0 &0 &1 \\
\end{smallmatrix}\right)
\end{gathered}\ 
\begin{gathered}
[2]\\
\left(\begin{smallmatrix}
1 &0 &0 \\
\phi &-1 &0 \\
0 &0 &1 \\
\end{smallmatrix}\right)
\end{gathered}\ 
\begin{gathered}
[3]\\
\left(\begin{smallmatrix}
1 &0 &0 \\
0 &1 &0 \\
0 &\phi &-1 \\
\end{smallmatrix}\right)
\end{gathered}\ 
\begin{gathered}
[1, 2]\\
\left(\begin{smallmatrix}
-1 &0 &\phi \\
0 &-1 &1 \\
0 &0 &1 \\
\end{smallmatrix}\right)
\end{gathered}\ 
\begin{gathered}
[1, 3]\\
\left(\begin{smallmatrix}
\phi &0 &- \phi \\
0 &1 &0 \\
\phi &0 &-1 \\
\end{smallmatrix}\right)
\end{gathered}\ 
\\
\begin{gathered}
[2, 1]\\
\left(\begin{smallmatrix}
-1 &\phi &0 \\
- \phi &\phi &0 \\
0 &0 &1 \\
\end{smallmatrix}\right)
\end{gathered}\ 
\begin{gathered}
[2, 3]\\
\left(\begin{smallmatrix}
1 &0 &0 \\
\phi &-1 &0 \\
1 &0 &-1 \\
\end{smallmatrix}\right)
\end{gathered}\ 
\begin{gathered}
[3, 1]\\
\left(\begin{smallmatrix}
-1 &1 &0 \\
0 &1 &0 \\
0 &\phi &-1 \\
\end{smallmatrix}\right)
\end{gathered}\ 
\begin{gathered}
[3, 2]\\
\left(\begin{smallmatrix}
1 &0 &0 \\
0 &-1 &\phi \\
0 &- \phi &\phi \\
\end{smallmatrix}\right)
\end{gathered}\ 
\begin{gathered}
[1, 2, 1]\\
\left(\begin{smallmatrix}
1 &- \phi &\phi \\
0 &-1 &1 \\
0 &0 &1 \\
\end{smallmatrix}\right)
\end{gathered}\ 
\begin{gathered}
[1, 2, 3]\\
\left(\begin{smallmatrix}
-1 &\phi &- \phi \\
0 &0 &-1 \\
0 &1 &-1 \\
\end{smallmatrix}\right)
\end{gathered}\ 
\\
\begin{gathered}
[1, 3, 1]\\
\left(\begin{smallmatrix}
- \phi &0 &1 \\
0 &1 &0 \\
- \phi &0 &\phi \\
\end{smallmatrix}\right)
\end{gathered}\ 
\begin{gathered}
[1, 3, 2]\\
\left(\begin{smallmatrix}
\phi &0 &- \phi \\
0 &-1 &0 \\
\phi &0 &-1 \\
\end{smallmatrix}\right)
\end{gathered}\ 
\begin{gathered}
[2, 1, 2]\\
\left(\begin{smallmatrix}
\phi &- \phi &0 \\
1 &- \phi &0 \\
0 &0 &1 \\
\end{smallmatrix}\right)
\end{gathered}\ 
\begin{gathered}
[2, 1, 3]\\
\left(\begin{smallmatrix}
-1 &\phi &0 \\
- \phi &\phi &0 \\
0 &0 &-1 \\
\end{smallmatrix}\right)
\end{gathered}\ 
\begin{gathered}
[2, 3, 1]\\
\left(\begin{smallmatrix}
-1 &0 &1 \\
- \phi &-1 &\phi \\
-1 &0 &0 \\
\end{smallmatrix}\right)
\end{gathered}\ 
\begin{gathered}
[2, 3, 2]\\
\left(\begin{smallmatrix}
1 &0 &0 \\
\phi &1 &- \phi \\
1 &0 &-1 \\
\end{smallmatrix}\right)
\end{gathered}\ 
\\
\begin{gathered}
[3, 1, 2]\\
\left(\begin{smallmatrix}
0 &-1 &0 \\
1 &-1 &0 \\
\phi &- \phi &-1 \\
\end{smallmatrix}\right)
\end{gathered}\ 
\begin{gathered}
[3, 1, 3]\\
\left(\begin{smallmatrix}
-1 &1 &0 \\
0 &1 &0 \\
- \phi &\phi &1 \\
\end{smallmatrix}\right)
\end{gathered}\ 
\begin{gathered}
[3, 2, 1]\\
\left(\begin{smallmatrix}
-1 &0 &0 \\
0 &-1 &\phi \\
0 &- \phi &\phi \\
\end{smallmatrix}\right)
\end{gathered}\ 
\begin{gathered}
[3, 2, 3]\\
\left(\begin{smallmatrix}
1 &0 &0 \\
0 &\phi &- \phi \\
0 &1 &- \phi \\
\end{smallmatrix}\right)
\end{gathered}\ 
\begin{gathered}
[1, 2, 1, 3]\\
\left(\begin{smallmatrix}
1 &0 &- \phi \\
0 &0 &-1 \\
0 &1 &-1 \\
\end{smallmatrix}\right)
\end{gathered}\ 
\begin{gathered}
[1, 3, 1, 2]\\
\left(\begin{smallmatrix}
- \phi &0 &1 \\
0 &-1 &0 \\
- \phi &0 &\phi \\
\end{smallmatrix}\right)
\end{gathered}\ 
\\
\begin{gathered}
[2, 1, 2, 3]\\
\left(\begin{smallmatrix}
\phi &- \phi &0 \\
1 &- \phi &0 \\
0 &0 &-1 \\
\end{smallmatrix}\right)
\end{gathered}\ 
\begin{gathered}
[2, 3, 1, 2]\\
\left(\begin{smallmatrix}
-1 &0 &1 \\
- \phi &1 &0 \\
-1 &0 &0 \\
\end{smallmatrix}\right)
\end{gathered}\ 
\begin{gathered}
[3, 1, 2, 3]\\
\left(\begin{smallmatrix}
0 &-1 &0 \\
1 &-1 &0 \\
0 &- \phi &1 \\
\end{smallmatrix}\right)
\end{gathered}\ 
\begin{gathered}
[3, 2, 1, 3]\\
\left(\begin{smallmatrix}
-1 &0 &0 \\
0 &\phi &- \phi \\
0 &1 &- \phi \\
\end{smallmatrix}\right)
\end{gathered}\ 
\begin{gathered}
[1, 2, 1, 3, 2]\\
\left(\begin{smallmatrix}
1 &0 &- \phi \\
0 &0 &-1 \\
0 &-1 &0 \\
\end{smallmatrix}\right)
\end{gathered}\ 
\begin{gathered}
[1, 3, 1, 2, 3]\\
\left(\begin{smallmatrix}
0 &0 &-1 \\
0 &-1 &0 \\
1 &0 &- \phi \\
\end{smallmatrix}\right)
\end{gathered}\ 
\\
\begin{gathered}
[2, 3, 1, 2, 3]\\
\left(\begin{smallmatrix}
0 &0 &-1 \\
- \phi &1 &0 \\
-1 &0 &0 \\
\end{smallmatrix}\right)
\end{gathered}\ 
\begin{gathered}
[1, 2, 1, 3, 2, 1]\\
\left(\begin{smallmatrix}
-1 &0 &0 \\
0 &0 &-1 \\
0 &-1 &0 \\
\end{smallmatrix}\right)
\end{gathered}\ 
\end{gathered}
\end{equation*}
\caption{$C$-pattern with the initial exchange matrix $B_0=\left(\begin{smallmatrix}
0 & -\phi & \phi\\
\phi & 0 & -\phi\\
-\phi & \phi & 0
\end{smallmatrix}\right)$}\label{fig: C-pattern of type H3}
\end{figure}

%%%%%%%%%%%%%%%%%%%%%%%%%%%%%%%%% C-pattern
\clearpage
\lstinputlisting[caption=Matrix mutations, label=program, showstringspaces=false]{C_matrix_mutation.py}
 \newpage

%=======================================

\end{document}